\tikzset{
  box/.style={
    rectangle, rounded corners,
    draw=black, thick,
    align=center,
    minimum height=1cm,
    minimum width=4cm
  },
  arrow/.style={
    ->, thick
  }
}
\newtheorem{theorem}{Theorem}
\newtheorem{lemma}{Lemma}
\newtheorem{definition}{Definition}
\newtheorem{corollary}{Corollary}
\newtheorem{assumption}{Assumption}
\declaretheoremstyle[headfont=\bf,bodyfont=\normalfont]{ex}
\declaretheorem[style=ex]{example}
\declaretheoremstyle[bodyfont=\normalfont]{rm}
\declaretheorem[style=rm]{remark}
\DeclareMathOperator*{\argmin}{arg\,min}
\newcommand {\hs} {\text{HS}}
\newcommand {\localcomp} {{\mathcal{R}_n}}
\newcommand {\emlocalcomp} {{\widehat{\mathcal{R}}_n}}
\def\bK{\mathbf{K}}
\def\bI{\mathbf{I}}
\def\bX{\mathbf{X}}
\def\bZ{\mathbf{Z}}
\def\bW{\mathbf{W}}
\def\bY{\mathbf{Y}}  
\def\be{\bm{e}}
\def\NN{\mathbb{N}}
\def\OO{\mathbb{O}}
\def\RR{\mathbb{R}}
\def\PP{\mathbb{P}}
\def\EE{\mathbb{E}}
\def\SS{\mathbb{S}}
\def\cK{\mathcal{K}}
\def\cO{\mathcal{O}}
\def\cH{\mathcal H}
\def\cD{\mathcal D}
\def\cE{\mathcal E}
\def\cR{\mathcal R}
\def\cZ{\mathcal Z}
\def\cF{\mathcal F}
\def\cT{\mathcal T}
\def\cL{\mathcal L}
\def\cB{\mathcal B}
\def\cX{\mathcal{X}}
\def\rI{\mathrm{I}}
\def\rII{\mathrm{II}}
\def\wh{\widehat}
\def\wt{\widetilde}
\def\T{\top}
\def\F{\text{F}}
\def\i{\infty}
\def\tr{{\rm Tr}}
\def\op{{\rm op}}
\def\Var{{\rm Var}}
\def\d{~{\rm d}}
\def\diag{{\rm diag}}
\def\sw{\Sigma_W}
\def\sz{\Sigma_Z}
\def\fh{f_{\cH}}
\def\Cov{{\rm Cov}}
\def\l{\left}
\def\r{\right}
\def\b{\big}
\def\B{\Big}
\newcommand{\normmm}{{\vert\kern-0.25ex\vert\kern-0.25ex\vert}}
\newcommand{\bignormmm}{{\big\vert\kern-0.25ex\big\vert\kern-0.25ex\big\vert}}
\newcommand{\Bignormmm}{{\Big\vert\kern-0.25ex\Big\vert\kern-0.25ex\Big\vert}}
\long\def\comment#1{}
\newcommand{\ydsout}[1]{}
\newcommand{\rank}{\ensuremath{{\rm rank}}}
\newcommand{\RKHS}{\ensuremath{\mathscr{F}}}
\newcommand{\StateSpgoodh}[1]{\ensuremath{}}
\newcommand{\StateSpbadh}[1]{\ensuremath{}}
\newcommand{\Data}{\ensuremath{\mathcal{D}}}
\newcommand{\distr}{\ensuremath{\upmu}}
\newcommand{\distrdata}{\ensuremath{\bar{\distr}}}
\newcommand{\sdistrdata}{\ensuremath{\sdistr_{\Data}}}
\newcommand{\sdistr}{\ensuremath{\xi}}
\DeclarePairedDelimiterX{\anglep}[1]{(}{)}{#1}
\newcommand{\@spanstar}[1]{{\rm span}\anglep*{#1}}
\newcommand{\@spannostar}[2][]{{\rm span}\anglep[#1]{#2}}
\newcommand{\Span}{\@ifstar\@spanstar\@spannostar}
\DeclarePairedDelimiterX{\dfun}[2]{(}{)}{#1 \;\delimsize\|\; #2}
\newcommand{\@trunstar}[2]{\chi^2\dfun*{#1}{#2}}
\newcommand{\@trunnostar}[3][]{\chi^2\dfun[#1]{#2}{#3}}
\newcommand{\chisq}{\@ifstar\@trunstar\@trunnostar}
\DeclarePairedDelimiterX{\inprod}[2]{\langle}{\rangle}{#1, \, #2}
\DeclarePairedDelimiterX{\kulldiv}[2]{(}{)}{#1\;\delimsize\|\;#2}
\newcommand{\@kullstar}[2]{D_{\text{KL}}\kulldiv*{#1}{#2}}
\newcommand{\@kullnostar}[3][]{D_{\text{KL}}\kulldiv[#1]{#2}{#3}}
\newcommand{\kull}{\@ifstar\@kullstar\@kullnostar}
\newcommand{\@hilinstar}[2]{\inprod*{#1}{#2}_{\RKHS}}
\newcommand{\@hilinnostar}[3][]{\inprod[#1]{#2}{#3}_{\RKHS}}
\newcommand{\hilin}{\@ifstar\@hilinstar\@hilinnostar}
\newcommand{\@mudatainstar}[2]{\inprod*{#1}{#2}_{\distrdata}}
\newcommand{\@mudatainnostar}[3][]{\inprod[#1]{#2}{#3}_{\distrdata}}
\newcommand{\mudatain}{\@ifstar\@mudatainstar\@mudatainnostar}
\newcommand{\@mudatahinstar}[3]{\inprod*{#2}{#3}_{\distrdata}}
\newcommand{\@mudatahinnostar}[4][]{\inprod[#1]{#3}{#4}_{\distrdata}}
\newcommand{\mudatahin}{\@ifstar\@mudatahinstar\@mudatahinnostar}
\DeclarePairedDelimiterX{\defabs}[1]{|}{|}{#1}
\newcommand{\@absstar}[1]{\defabs*{#1}}
\newcommand{\@absnostar}[2][]{\defabs[#1]{#2}}
\newcommand{\abs}{\@ifstar\@absstar\@absnostar}
\DeclarePairedDelimiterX{\norm}[1]{\|}{\|}{#1}
\newcommand{\@normstar}[1]{\norm*{#1}_{\RKHS}}
\newcommand{\@normnostar}[2][]{\norm[#1]{#2}_{\RKHS}}
\newcommand{\hilnorm}{\@ifstar\@normstar\@normnostar}
\DeclareFontFamily{U}{matha}{\hyphenchar\font45}
\DeclareFontShape{U}{matha}{m}{n}{
	<-6> matha5 <6-7> matha6 <7-8> matha7
	<8-9> matha8 <9-10> matha9
	<10-12> matha10 <12-> matha12
}{}
\DeclareSymbolFont{matha}{U}{matha}{m}{n}
\DeclareFontFamily{U}{mathx}{\hyphenchar\font45}
\DeclareFontShape{U}{mathx}{m}{n}{
	<-6> mathx5 <6-7> mathx6 <7-8> mathx7
	<8-9> mathx8 <9-10> mathx9
	<10-12> mathx10 <12-> mathx12
}{}
\DeclareSymbolFont{mathx}{U}{mathx}{m}{n}
\DeclareMathDelimiter{\vvvert} {0}{matha}{"7E}{mathx}{"17}%
\DeclarePairedDelimiterX{\opnorm}[1]{\vvvert}{\vvvert}{#1}
\newcommand{\@hilopnormstar}[1]{\opnorm*{#1}_{\RKHS}}
\newcommand{\@hilopnormnostar}[2][]{\opnorm[#1]{#2}_{\RKHS}}
\newcommand{\hilopnorm}{\@ifstar\@hilopnormstar\@hilopnormnostar}
\newcommand{\@muopnormstar}[1]{\opnorm*{#1}_{\distr}}
\newcommand{\@muopnormnostar}[2][]{\opnorm[#1]{#2}_{\distr}}
\newcommand{\muopnorm}{\@ifstar\@muopnormstar\@muopnormnostar}
\newcommand{\@mudataopnormstar}[1]{\opnorm*{#1}_{\distrdata}}
\newcommand{\@mudataopnormnostar}[2][]{\opnorm[#1]{#2}_{\distrdata}}
\newcommand{\mudataopnorm}{\@ifstar\@mudataopnormstar\@mudataopnormnostar}
\newcommand{\@supnormstar}[1]{\norm*{#1}_{\infty}}
\newcommand{\@supnormnostar}[2][]{\norm[#1]{#2}_{\infty}}
\newcommand{\supnorm}{\@ifstar\@supnormstar\@supnormnostar}
\newcommand{\@munormstar}[1]{\norm*{#1}_{\distr}}
\newcommand{\@munormnostar}[2][]{\norm[#1]{#2}_{\distr}}
\newcommand{\munorm}{\@ifstar\@munormstar\@munormnostar}
\newcommand{\@mudatanormstar}[1]{\norm*{#1}_{\distrdata}}
\newcommand{\@mudatanormnostar}[2][]{\norm[#1]{#2}_{\distrdata}}
\newcommand{\mudatanorm}{\@ifstar\@mudatanormstar\@mudatanormnostar}
\newcommand{\@xidatanormstar}[1]{\norm*{#1}_{\sdistrdata}}
\newcommand{\@xidatanormnostar}[2][]{\norm[#1]{#2}_{\sdistrdata}}
\newcommand{\xidatanorm}{\@ifstar\@xidatanormstar\@xidatanormnostar}
\newcommand{\@distrnormstar}[2]{\norm*{#1}_{#2}}
\newcommand{\@distrnormnostar}[3][]{\norm[#1]{#2}_{#3}}
\newcommand{\distrnorm}{\@ifstar\@distrnormstar\@distrnormnostar}
\newcommand{\@psinormstar}[2]{\norm*{#2}_{\psi_{#1}}}
\newcommand{\@psinormnostar}[3][]{\norm[#1]{#3}_{\psi_{#2}}}
\newcommand{\psinorm}{\@ifstar\@psinormstar\@psinormnostar}
\newcommand{\featureh}[1]{\ensuremath{\phi}}
\newcommand{\Lip}{\ensuremath{L}}
\newcommand{\Lipf}[1]{\ensuremath{\Lip_{f}}}
\newenvironment{carlist}
{\begin{list}{$\bullet$}
		{\setlength{\topsep}{0.1in} \setlength{\partopsep}{0in}
			\setlength{\parsep}{0.1in} \setlength{\itemsep}{\parskip}
			\setlength{\leftmargin}{0.15in} \setlength{\rightmargin}{0.08in}
			\setlength{\listparindent}{0in} \setlength{\labelwidth}{0.08in}
			\setlength{\labelsep}{0.1in} \setlength{\itemindent}{0in}}}
	{\end{list}}
\newcommand{\bcar}{\begin{carlist}}
	\newcommand{\ecar}{\end{carlist}}
 \title{Kernel Ridge Regression with Predicted Feature Inputs and Applications to Factor-Based Nonparametric Regression}
\author{Xin Bing\thanks{Department of 
Statistical Sciences, University of Toronto, Toronto, Canada. E-mail: \texttt{xin.bing@utoronto.ca}} 
\hspace{1cm}Xin He\thanks{School of Statistics and Data Science, Shanghai University of Finance and Economics, Shanghai, China. E-mail: \texttt{he.xin17@mail.shufe.edu.cn}}
\hspace{1cm}Chao Wang\thanks{School of Statistics and Data Science, Shanghai University of Finance and Economics, Shanghai, China. E-mail: \texttt{wang.chao@163.sufe.edu.cn}}.}
\begin{document}

\maketitle


\begin{abstract}
Kernel methods, particularly kernel ridge regression (KRR), are time-proven, powerful nonparametric regression techniques known for their rich capacity, analytical simplicity, and computational tractability. The analysis of their predictive performance has received continuous attention for more than two decades. However, in many modern regression problems where the feature inputs used in KRR cannot be directly observed and must instead be inferred from other measurements, the theoretical foundations of KRR remain largely unexplored.
In this paper, we introduce a novel approach for analyzing KRR with predicted feature inputs. Our framework is not only essential for handling predicted feature inputs—enabling us to derive risk bounds without imposing any assumptions on the error of the predicted features—but also strengthens existing analyses in the classical setting by allowing arbitrary model misspecification, requiring weaker conditions under the squared loss, particularly allowing both an unbounded response and an unbounded function class, and being flexible enough to accommodate other convex loss functions. We apply our general theory to factor-based nonparametric regression models and establish the minimax optimality of KRR when the feature inputs are predicted using principal component analysis.
Our theoretical findings are further corroborated by simulation studies and real-data analyses using pretrained LLM embeddings for the downstream prediction task.
\end{abstract}

\noindent{\em Keywords:} Reproducing kernel Hilbert space, kernel ridge regression,  kernel complexity, nonparametric regression, factor regression model, dimension reduction.



\section{Introduction}\label{sec:intro}

Regression is one of the most fundamental problems in statistics and machine learning where the goal is to construct a prediction function $f:\cZ \to \RR$ based on $n$ independent pairs $(Y_i, Z_i) \in (\RR, \cZ)$ for $i\in [n]:=\{1,\ldots,n\}$ such that for a new pair $(Y, Z)$, the prediction $f(Z)$ is close to $Y$. Among existing regression approaches, Kernel Ridge Regression (KRR) is a cornerstone of nonparametric regression due to its flexibility, analytical simplicity, and computational efficiency \citep{steinwart2008support,cucker2002mathematical}, making it widely applicable across diverse fields, including finance, bioinformatics, signal processing, image recognition, natural language processing, and climate modeling. In this paper, we study the prediction of KRR when the regression features $Z_i$'s are not directly observed and must be predicted from other measurements.

Let $K: {\cZ \times \cZ}\to \RR$  be a pre-specified kernel function. In the classical setting where  $(Y_i,Z_i)$ for $i\in [n]$ are observed, KRR searches for the best predictor over the function class $\cH_K$, the Reproducing Kernel Hilbert Space (RKHS) induced by $K$, by minimizing the penalized least squares loss:
\begin{align}\label{def_KRR}
\wt f =\argmin_{f\in \cH_K}  \left\{ \frac{1}{n} \sum_{i=1}^n\bigl(Y_i- f(Z_i)\bigl)^2
+\lambda\|f\|_{K}^2 \right\},
\end{align}
where $\lambda>0$ is some regularization parameter and $\|\cdot\|_K$ is the endowed norm of $\cH_K$. 
It is well known that the RKHS induced by a universal kernel, such as the Gaussian or Laplacian kernel, is a large function space, as any continuous function can be approximated arbitrarily well by an intermediate function in the RKHS under the infinity norm \citep{micchelli2006universal}.  More importantly, the representer theorem \citep{kimeldorf1971some} ensures that $\wt f$ in \eqref{def_KRR} admits a closed-form solution and can be computed effectively.  Theoretical properties of $\wt f$ have been constantly studied over the past two decades, see, for instance, \cite{caponnetto2007optimal, smale2007learning,steinwart2008support, fischer2020sobolev,rudi2017generalization}, to just name a few.  

However, in many modern regression problems across machine learning, natural language processing, healthcare, and finance, the feature inputs $Z_1,\ldots, Z_n$ in \eqref{def_KRR} are not directly observed and must be predicted from other measurable features. Consequently, regression such as KRR is performed by regressing the response onto the predicted feature inputs. Below, we present prominent examples, including well-known feature extraction and representation learning techniques in large language models and deep learning. 

\begin{example}[Dimension reduction via PCA]
    KRR, or more broadly, nonparametric regression approaches, are particularly appealing due to their flexibility in approximating any form of the regression function when the feature dimension is small and the sample size is large. However, in many modern applications, the number of features is often comparable to, or even greatly exceeds, the sample size. Classical non-parametric approaches including KRR are affected by the curse of dimensionality. To address the high-dimensionality issue, one common approach is to first find a low-dimensional representation of the observed high-dimensional features and then regress the response onto the obtained low-dimensional representation. 
    Principal Component Analysis (PCA) is the most commonly used method to construct a small number of linear combinations of the original features, known as the  Principal Components (PCs) \citep{Hotelling1957}. The prediction is then made by regressing the response onto the obtained PCs. When this regression step is performed using ordinary least squares, the approach is known as Principal Component Regression (PCR) \citep{stock2002forecasting,bing2021prediction}, which is fully justified under the following factor-based linear regression model:
    \begin{align}\label{model_Y_linear}
        Y = Z^\T \beta + \epsilon,\\\label{model_X}
        X = AZ + W.
    \end{align}
    Here both $X\in \RR^p$ and $Y\in \RR$ are observable while $Z\in \RR^r$, with $r \ll p$, are some unobservable, latent factors. The $p\times r$ loading matrix  $A$ is deterministic but unknown, the vector $\beta$ contains the linear coefficients of the factor $Z$, and both $\epsilon \in \RR$ and $W \in \RR^p$ represent additive errors. The leading PCs in PCR can be regarded as a linear prediction of the latent factor $Z$. While the linear factor model in \eqref{model_X} is reasonable and widely adopted in the literature \citep{Anderson2003,ChamberlainRothschild1983,LawleyMaxwell,Bai-factor-model-03}, PCR often yields unsatisfactory predictive performance due to its reliance on linear regression in the low-dimensional space. As a running example of our general theory, we adopt KRR to capture more complex nonlinear relationships between $Y$ and  $Z$ in place of \eqref{model_Y_linear}, while still using \eqref{model_X} to model the dependence between $X$ and $Z$. 
\end{example}

\begin{example}[Feature extraction via autoencoder] 
Unlike PCA, an autoencoder is a powerful alternative in machine learning and data science for constructing a nonlinear low-dimensional representation of high-dimensional features \citep{Goodfellow-et-al-2016}. Specifically, an autoencoder learns an encoding function $E$
that maps the original features $X\in \cX\subseteq \RR^p$ to its latent representation in $\cZ\subseteq \RR^r$
while simultaneously learning a decoding function $D: \cZ \to \cX$  to reconstruct the original features from the encoded representation \citep{bank2023machine}. 
Both the encoder and decoder are typically parameterized by using deep neural networks as $E_\theta$ and $D_\vartheta$, respectively.  
Given  a metric $ d: \cX \times \cX: \to \RR^+$, for instance,  $d(x,x')= \|x-x'\|_2^2$ for any $x,x'\in \cX$, the encoder and decoder are learned by solving the optimization problem:
\[
(\wh \theta , \wh \vartheta )   : =  \argmin _{ \theta , \vartheta} \frac{1}{n}
\sum_{i=1}^n d\b((D _ \vartheta \circ E_\theta)(X_i), X_i  \b) . 
\] 
The trained autoencoder constructs a non-linear low-dimensional representation of the original features, given by $E_{\wh \theta}(X)$, which are then utilized in various downstream supervised learning tasks \citep{Berahmand2024Autoencoders,Chen2023Machine}, with applications found in finance, healthcare   and climate science.
\end{example}

\begin{example}[Representation learning in large language models]  
    Recent advancements in natural language processing have demonstrated the effectiveness of large language models (LLMs), such as Transformer-based architectures, in learning meaningful representations of text data \citep{vaswani2017attention, devlin2019bert}. These models convert text input into numerical vector representations, commonly referred to as word embeddings or contextualized embeddings, which capture semantic and syntactic properties of words and phrases.
    Formally, a large language model learns an embedding function  $E: \cX \to \cZ$, where $\cX$ represents the input text space (e.g., tokenized sequences) and $\cZ$ is the latent representation space. This includes the traditional static word embeddings (e.g., Word2Vec or GloVe) where the same token is mapped to a unique representation. Recently, modern transformer-based models produce contextualized embeddings  based on the surrounding words. Given a sequence of tokens  $X = (x_1, x_2, \dots, x_T)$, a trained transformer model computes the embeddings as: 
    \[
    E_{\wh \theta}(X) = (E_{\wh\theta}(x_1), E_{\wh\theta}(x_2), \ldots, E_{\wh\theta}(x_T)),
    \]
    where  $E_{\wh \theta}$ denotes the parametrized embedding function, {\em pretrained} on large-scale corpora using unsupervised learning objectives, such as masked language modeling \citep{devlin2019bert}. The embeddings serve as powerful feature representations for various downstream supervised learning tasks.
    In regression applications, the embeddings of the input text together with their corresponding responses are used  to train a regression function $\wh f: \cZ \to \RR$ with the final predictor  given by:
    $
       \wh f \circ  E_{\wh \theta} : \cX \to \RR,
    $
    where $\circ$ denotes the composition of two functions.
    This approach is widely adopted in natural language processing  tasks such as sentiment analysis with continuous sentiment scores, automated readability assessment, and financial text-based forecasting, where pretrained embeddings are used to significantly improve prediction performance \citep{raffel2020exploring}. Our real data analysis in \cref{sec_real_data} provides two concrete examples of this approach.
\end{example}

Motivated by the aforementioned examples, in this paper we study  the KRR in \eqref{def_KRR} when the feature inputs  $Z_i$'s need to be predicted from other measurable features. Concretely, consider $n$ i.i.d. training data $\cD = \{(Y_i, X_i)\}_{i=1}^n \in (\RR, \cX)^n$  with $\cX \subseteq \RR^p$ and the response $Y_i$ generated according to the following non-parametric regression model
\begin{equation}\label{model}
   Y=f^*(Z)+\epsilon,
\end{equation}
where $Z\in \cZ\subseteq \RR^r$  is some random latent factor with some $r\in \NN$, $f^*:\cZ\to \RR$ is the true regression function, and $\epsilon$ is some regression error with zero mean and finite second moment $\sigma^2 < \i$.  Let $\wh g: \cX \to \cZ$ be any generic measurable predictor of $Z$, constructed independently of $\cD$. The KRR predictor $\wh f$ is obtained as in \eqref{def_KRR}  with  $Z_i$ replaced by its  predicted value   $\wh g(X_i)$ for $i\in [n]:=\{1,\ldots, n\}$.  

The final predictor of KRR using $\wh g$-predicted inputs is 
$
\wh f\circ \wh g: \cX \to \RR,
$
with  its  excess risk defined as 
\begin{align} \label{def_excess_risk}
     \cE(\wh f\circ\wh g)  ~ := ~  \EE\bigl[Y- (\wh f \circ\wh g)(X)\bigr]^2- \sigma^2 ~ =~    \EE\bigl[f^*(Z) - (\wh f \circ\wh g)(X)\bigr]^2,
\end{align} 
where $(Y,X)$ is a new random pair following the same model as $\cD$. Our main goal is to provide a general treatment for analyzing $\cE(\wh f\circ\wh g)$ that applies to any generic predictor  $\wh g$. 
As detailed below, existing analyses of KRR in the classical setting cannot be used to study excess risk when the feature inputs are subject to prediction errors.
The framework we develop is indispensable for handling predicted features and also strengthens classical results: it allows arbitrary model misspecification, requires weaker conditions under the squared loss, and flexibly accommodates other convex loss functions in \eqref{def_KRR}. We start by reviewing existing approaches for analyzing KRR in the classical setting. 

\subsection{Existing approaches for analyzing KRR}\label{sec_intro_related}
In the classical setting where the features $Z_i$'s in \eqref{def_KRR} are directly observed, existing techniques for bounding the excess risk of $\wt f$ under model \eqref{model} mainly fall into two streams.

The first proof strategy relies on the integral operator approach (see, e.g., \cite{smale2007learning, caponnetto2007optimal, blanchard2018optimal, fischer2020sobolev,rudi2017generalization}, and references therein). Denote by  $\rho$
  the probability measure of the feature $Z\in \cZ$ and by $
    \cL^2(\rho)= \{f: \int_\cZ f^2(z)\d\rho (z)<\infty \}
 $  its induced space of square-integrable functions, equipped with the inner product  $\langle \cdot, \cdot \rangle_\rho$ and norm $\|\cdot\|_{\rho}$. For any $z \in \cZ$, by writing $K_z:=K(z, \cdot)\in \cH_K$, the integral operator is $L_K: \cL^2(\rho)\to \cL^2(\rho)$, defined as
 \begin{align*}
    L_K: f \mapsto \int_{\mathcal{Z}} K_z f(z)\d\rho(z).
\end{align*} 
 Let  $\{\mu_j\}_{j\ge 1}$ be the eigenvalues of $L_K$ and $\{\phi_j\}_{j\ge 1}$ be the corresponding eigenfunctions, assuming their existence.
 When the regression function $f^*$ belongs to $\cH_K$ and the eigenvalues $\mu_j$'s  satisfy the following polynomial decay condition
 \begin{equation}\label{cond_poly}
   c j^{-1/\alpha} \le  \mu_j \le  C j^{-1/\alpha},\qquad \text{for all $j\ge 1$ and some $\alpha\in (0,1)$}, 
 \end{equation}
\cite{caponnetto2007optimal} established the minimax optimal rate of $\|\wt f - f^*\|_\rho^2$, which depends on the decaying rate $\alpha$. While the integral operator approach allows for sharp rate analysis of KRR, it oftentimes has limitations in handling model misspecification, and the required conditions on $\mu_j$'s could be difficult to verify.  Without assuming \eqref{cond_poly}, \cite{smale2007learning} derives upper bounds of $\|\wt f - f^*\|_\rho^2$ but their analysis still requires $f^*\in \cH_K$ along with some additional smoothness condition on $f^*$.
Allowing $f^*\notin \cH_K$ is addressed by \cite{rudi2017generalization} in the context of learning with random features where the authors resort to a so-called effective dimension condition $\sum_j \mu_j/(\mu_j + \delta) \le C \delta^{-\alpha}$, for all $\delta >0$ and some $\alpha \in [0,1]$, which is related to but weaker than  \eqref{cond_poly}.
Alternatively, \cite{fischer2020sobolev,pmlr-v202-zhang23x} handle $f^*\notin \cH_K$ by relying on the smoothness condition $f^* = L_K^r g^* $ for some $g^*\in \cL^2(\rho)$ and some $r\in (0,1/2]$ as well as the upper bound requirement in condition \eqref{cond_poly}.
 
  The other proof strategy for analyzing KRR adopts the empirical risk minimization perspective and employs probabilistic tools from empirical process theory to establish excess risk bounds. See, e.g., \cite{bartlett2005local, mendelson2010regularization,steinwart2009optimal}; more recently, \cite{ma2022optimally, duan2024optimal}, and references therein. Inherited from the classical learning theory,  this proof strategy has the potential to deal with model misspecification and can be easily applied to KRR in \eqref{def_KRR} in which the squared loss is replaced by other convex loss functions \citep{Eberts}. However, the existing works typically assume boundedness conditions on the response variable and the function class, or restrictive conditions on the eigen-decomposition of  $L_K$. For instance, the analyses in \cite{mendelson2010regularization,ma2022optimally,duan2024optimal} require 
  the eigenfunctions $\{\phi_j\}_{j\ge 1}$ of $L_K$ to be uniformly bounded, a useful condition for obtaining improved $\|\cdot\|_\infty$ bound but is not always satisfied even for the most popular kernel functions \citep{zhou2002covering}. On the other hand, the authors of \cite{bartlett2005local}  derive prediction risk bounds for general empirical risk minimization based on {\em local Rademacher complexity}, and applied them to analyze $\wt f_{bd}$, which is computed from \eqref{def_KRR} with $\lambda = 0$ but constrained to the unit ball $\cB_{\cH} := \{f: \|f\|_K \le 1\}$. A key step in their proof is  connecting the local Rademacher complexity of $\cB_{\cH}$ with the {\em kernel complexity function}
\begin{equation}\label{kernel_complexity}
          R(\delta) = \biggl({1\over n}\sum_{j=1}^\i \min\{\delta,\mu_j\}\biggr)^{1/2},\qquad \forall~  \delta \ge 0,
\end{equation}
   using the result proved in \cite{mendelson2002geometric}.
   The kernel complexity function depends not only on  $\cH_K$ but also on the probability measure $\rho$ of $Z$ through the integral operator $L_K$, and is  commonly used to characterize the complexity of $\cH_K$.  Indeed, it is closely related with the effective dimension \citep{caponnetto2007optimal,rudi2017generalization}, covering numbers \citep{cucker2002mathematical} and related capacity measures \citep{steinwart2008support}. Let  $\delta_n>0$ be the fixed point of $R(\delta)$ such that $R(\delta_n) = \delta_n$. 
   The risk bound of $\wt f_{bd}$ in \cite{bartlett2005local} reads
  \begin{equation}\label{rate_classic}
      \PP\left\{\|\wt f_{bd} - f^*\|_\rho^2 
      ~ \lesssim~  \delta_n + {t \over n}\right\} \ge 1-e^{-t},\qquad \forall~  t\ge 0.
  \end{equation}
  However, the analysis of \cite{bartlett2005local}  requires the response variable $Y$ and $\cH_K$ to be bounded, as well as $f^*\in \cH_K$. The case $f^*\notin \cH_K$ is considered in \cite{steinwart2009optimal} where the authors derive an oracle inequality for the prediction risk of a truncated version of the KRR predictor $\wt f$. But their analysis requires $Y$ to be bounded, condition \eqref{cond_poly} and the approximation error to satisfy $\inf_{f\in \cH_K}\{\|f^*-f\|_\rho^2 + \lambda\|f\|_K^2\} \le C\lambda^{\beta}$ for some $\beta \in (0,1]$ and $\lambda>0$. The last requirement turns out be related to the aforementioned smoothness condition $f^* = L_K^r g^*$ with $r\in (0,1/2]$.

  Summarizing, even in the classical setting where the feature inputs are observed, existing analyses of KRR require either that the true regression function belongs to the specified RKHS, that both the RKHS and the response $Y$ are bounded, or that certain conditions on the eigenvalues or eigenfunctions of the RKHS hold. As explained in the next section, our new framework of analyzing KRR removes such restrictions under the squared loss in \eqref{def_KRR}, while remaining applicable to the analysis of general convex loss functions.

  More substantially, when the feature $Z_i$'s in \eqref{def_KRR} must be predicted by $\wh g(X_i)$, neither of the two proof strategies above applies. This is because both the integral operator $L_K$ and the kernel complexity function $R(\delta)$ depend on the probability measure $\rho$ of  $Z$. When regressing $Y$ on the predicted feature $\wh g(X)$, each strategy requires a different integral operator $L_{x,K}$ and hence a different kernel complexity function, both depending on the probability measure $\rho_x$ of $\wh g(X)$. Let $\{\mu_{x,j}\}_{j\ge 1}$ be the eigenvalues of $L_{x,K}$, and define the corresponding kernel complexity function as
  \[
     R_x(\delta) = \biggl({1\over n}\sum_{j=1}^\i \min\{\delta, \mu_{x,j}\}\biggr)^{1/2},\qquad \forall~ \delta \ge 0.
  \]
  for $\delta \ge 0$.
  Establishing a direct relationship between either $R_x(\delta)$ and $R(\delta)$, or between $\{\mu_j\}_{j\ge 1}$ and $\{\mu_{x,j}\}_{j\ge 1}$, is generally intractable without imposing strong assumptions on the relationship between $\rho_x$ and $\rho$. Consequently, handling predicted feature inputs requires a new proof strategy.

\subsection{Our contributions}\label{sec_intro_contri}

We summarize our main contributions in this section.

\paragraph*{Non-asymptotic excess risk bounds of KRR with predicted feature inputs.}  Our first contribution is to derive new non-asymptotic upper bounds on the excess risk of the predictor $\wh f\circ \wh g$. In particular, our risk bounds are valid for any generic predictor $\wh g$ {\em without} imposing any assumptions on the proximity of $\wh g(X)$  to $Z$. Our results also allow for arbitrary model misspecification and require neither the response nor the RKHS to be bounded. 
To allow for model misspecification, we assume in \cref{ass_f_H} of \cref{sec_theory_KRR_decomp} only the existence of $\fh$ as the $\cL^2(\rho)$ projection of $f^*$ onto $\cH_K$. Using $\fh$,  the excess risk of $\wh f\circ \wh g$ is decomposed in \eqref{bd_irre_error} of \cref{sec_theory_KRR_decomp} as 
\begin{align} \label{eq_intro_1}
     \cE(\wh f\circ\wh g) ~ \lesssim ~ \EE  [  \ell_{\wh f\circ \wh g}(Y,X)  ] + \EE \Delta_{\wh g} + \|\fh-f^*\|_\rho^2
\end{align}
where $\|\fh-f^*\|_\rho^2$ represents the approximation error due to model misspecification,  
$
    \Delta_{\wh g}: = \| K_Z - K_{\wh g(X)}\|_K^2
$
reflects the error of predicting $Z$ by $\wh g(X)$ through the kernel function $K$, and 
$
      \ell_{\wh f\circ \wh g}(y,x)  := [y - (\wh f\circ \wh g)(x) ]^2 - [y - (\fh\circ \wh g)(x) ]^2
$
denotes the squared loss of $\wh f\circ \wh g$ relative to $\fh\circ \wh g$. Analyzing the first term $\EE  [  \ell_{\wh f\circ \wh g}(Y,X)  ]$ in \eqref{eq_intro_1} turns out to be the main technical challenge, and its bound also depends on both $\EE \Delta_{\wh g}$ and $\|\fh -f^*\|_\rho^2$.
In \cref{thm_risk} of \cref{sec_theory_KRR_risk}, we state our main result:
    for any $\eta \in (0,1)$ and a suitable choice of $\lambda$ in \eqref{def_KRR}, with probability $1-\eta$, one has
    \begin{align}\label{rate_intro}
        \cE(\wh f\circ \wh g) 
          ~ \lesssim ~  \delta_n\log(1/\eta)  +\EE \Delta_{\wh g}  +  \|\fh-f^*\|_\rho^2+ \frac{\log(1/\eta)}{n}.  
    \end{align} 
\cref{rate_intro} holds for any $f^*\in \cL^2(\rho)$ and any predictor $\wh g$ that is constructed independently of $\cD$.
In particular, it does not impose any restrictions on $\EE \Delta_{\wh g}$, the prediction error for $Z$, nor on the approximation error $\|\fh - f^*\|_\rho^2$. Regardless of their magnitudes, both terms appear {\em additively} in the risk bound, a particular surprising result for $\EE \Delta_{\wh g}$. This is a new result to the best of our knowledge. Even in the classical setting, the rate in \eqref{rate_intro} with $\EE\Delta_{\wh g} = 0$ generalizes the existing result in \eqref{rate_classic} by allowing arbitrary model misspecification, as well as unboundedness of both the response and the RKHS. Moreover, even when the model is correctly specified, our analysis achieves the same optimal rate without imposing any regularity or decaying conditions on $\mu_j$'s, such as those in \eqref{cond_poly}, in contrast to existing integral operator approaches, for instance, \cite{fischer2020sobolev,steinwart2009optimal,caponnetto2007optimal,rudi2017generalization}.

In \cref{sec_theory_latent} we derive more transparent expression of $\EE \Delta_{\wh g}$: for any kernel  $K$ that satisfies the Lipschitz property in \cref{ass_Lip_K},
$\EE \Delta_{\wh g}$ gets simplified to the $\cL^2$ prediction error of $\wh g(X)$, given by $\EE\|\wh g(X)- Z\|^2_2$. The term $\delta_n$ in \eqref{rate_intro} is shown in \cref{thm_lb_excess_risk} of \cref{sec_theory_lb} to be indispensable in the minimax sense. To discuss its order, we derive in \cref{sec_theory_delta} its slow rate in \cref{cor_slow_rates} and its fast rates under three common classes of kernel functions: linear kernels in \cref{cor_K_linear}, kernels with polynomially decaying eigenvalues in \cref{cor_K_poly}, and kernels with exponentially decaying eigenvalues in \cref{cor_K_exp}. Our new risk bounds generalize existing results in all cases by characterizing the dependence on both the approximation error and the error in predicting the feature inputs.

In \cref{sec_general_loss}, we further extend our analysis to other convex loss functions in \eqref{def_KRR} that satisfy some classical smoothness and strong convexity conditions.

\paragraph*{A new framework for analyzing KRR with predicted feature inputs.} Along with our new risk bounds comes a novel approach to analyzing the KRR with predicted inputs. The framework we develop offers two key contributions: (1) it improves existing analyses in the classical setting by accommodating arbitrary model misspecification and allowing both the response variable and the RKHS to be unbounded; (2) it is essential for handling predicted feature inputs, enabling us to derive risk bounds {\em without} imposing any assumptions on the proximity of $\wh g(X)$ to $Z$.

To accomplish the first point, we develop a new approach that combines the integral operator method with the local Rademacher complexity based empirical process theory from \cite{bartlett2005local}, along with a careful reduction argument. This hybrid analysis is necessary because the integral operator approach cannot easily handle model misspecification, while the empirical process theory requires the boundedness of both the response variable and the RKHS. Specifically, as seen in the decomposition \eqref{eq_intro_1}, the key is to bound from above $\EE[\ell_{\wh f\circ \wh g}(Y,X)]$. Since  $\sup_{f\in \cH_K} \|f\|_K$ could be unbounded, we first apply a reduction argument, showing that bounding $\EE[\ell_{\wh f\circ \wh g}(Y,X)]$ can be reduced to controlling the empirical process
\begin{equation}\label{emp_process}
    \sup_{f\in \uline{\cF_b}}\Bigl\{\EE[\ell_{f\circ \wh g}(Y,X)] - 2\EE_n[\ell_{f\circ \wh g}(Y,X)]\Bigr\}
\end{equation}
where $\uline{\cF_b}$ is a {\em bounded} subset of $\cH_K$, given by \cref{def_local_ball} of \cref{sec_theory_KRR_proof}. We use $\EE_n$ in this paper to denote the expectation with respect to the empirical measure of $n$ i.i.d. samples.
Since $Y$ could also be unbounded, we need to bound the cross-term $\EE_n [\epsilon (f\circ \wh g)(X) - \epsilon (\fh \circ \wh g)(X)]$ uniformly over $f \in \uline{\cF_b}$. This is  accomplished by employing both the empirical process theory based on local Rademacher complexity from \cite{bartlett2005local} and the integral operator approach, as detailed in \cref{sec_cross_term}; see also \cref{lem_epsilon}. Our analysis only requires moment conditions on the regression error $\epsilon$. Finally, we again apply the local Rademacher complexity technique of \cite{bartlett2005local} to control the remaining bounded components of the empirical process \eqref{emp_process}. Notably, our hybrid approach extends to learning algorithms in \eqref{def_KRR} that use convex losses beyond the squared loss, as illustrated in \cref{sec_general_loss}.

Another fundamental difficulty in our analysis arises from handling predicted feature inputs. As noted in the end of \cref{sec_intro_related}, the primary challenge lies in establishing a precise connection between the local Rademacher complexity and the kernel complexity function $R(\delta)$ in the presence of errors in predicting the features. The existing analysis in \cite{mendelson2002geometric} only establishes a connection to $R_x(\delta)$, which depends on the probability measure $\rho_x$ of $\wh g(X)$. And it is challenging to relate $R_x(\delta)$ with $R(\delta)$ without imposing restrictive assumptions on the closeness of  $\rho_x$ and $\rho$. To circumvent this issue, rather than working with the population-level local Rademacher complexity and $R_x(\delta)$—both of which rely on $\rho_x$—we instead focus on their {\em empirical} counterparts. The first step is to relate the population-level local Rademacher complexity to its empirical counterpart where we borrow existing concentration inequalities in \cite{boucheron2000sharp,boucheron2003concentration,bartlett2005local}. By adapting the argument in \citet[Lemma 6.6]{bartlett2005local}, we further link the empirical local Rademacher complexity to $\wh R_x (\delta)$, the empirical counterpart of $R_x(\delta)$. The next step is to connect $\wh R_x (\delta)$ to $\wh  R(\delta)$, with the latter being the empirical counterpart of $R(\delta)$. This is the most challenging step, as $\wh R_x (\delta)$ and $\wh  R(\delta)$  depend, respectively, on the empirical integral operators associated with $\wh g(X_i)$ and $Z_i$ for $i\in [n]$. Our proof reveals that this requires to control the spectral difference between two $n\times n$ kernel matrices with entries equal to $K(\wh g(X_i), \wh g(X_j))$ and $K(Z_i, Z_j)$ for $i,j\in [n]$. As accomplished in \cref{lem_bd_wh_R,lem_bd_bar_Delta} of \cref{sec_bd_wh_R_delta}, this is highly non-trivial as we do not impose any requirements on the proximity of $\wh g(X_i)$ to $Z_i$. Finally, we establish sharp concentration inequalities between $\wh  R(\delta)$ and its population-level counterpart, $R(\delta)$,  to close the loop of relating the local Rademacher complexity to $R(\delta)$.  Completing the loop requires a series of technical lemmas, collected in  \cref{supp_lem_4,lem_bd_em_local_rade_gb,lem_bd_wh_R,lem_bd_bar_Delta,lem_bd_wt_R} of \cref{app_sec_radem_emp,sec_pf_bd_emlocom,sec_bd_wh_R_delta,sec_bd_wt_R}. Developing this proof strategy is part of our contribution, and we provide a more detailed discussion in \cref{sec_main_relate_complexity}. For illustration, Figure \ref{Illustration_proof} below depicts our roadmap of relating the local Rademacher complexity to $R(\delta)$, represented by the solid arrow.

 \begin{figure}[ht]
    \centering
    \centerline{\includegraphics[width=0.85\textwidth]{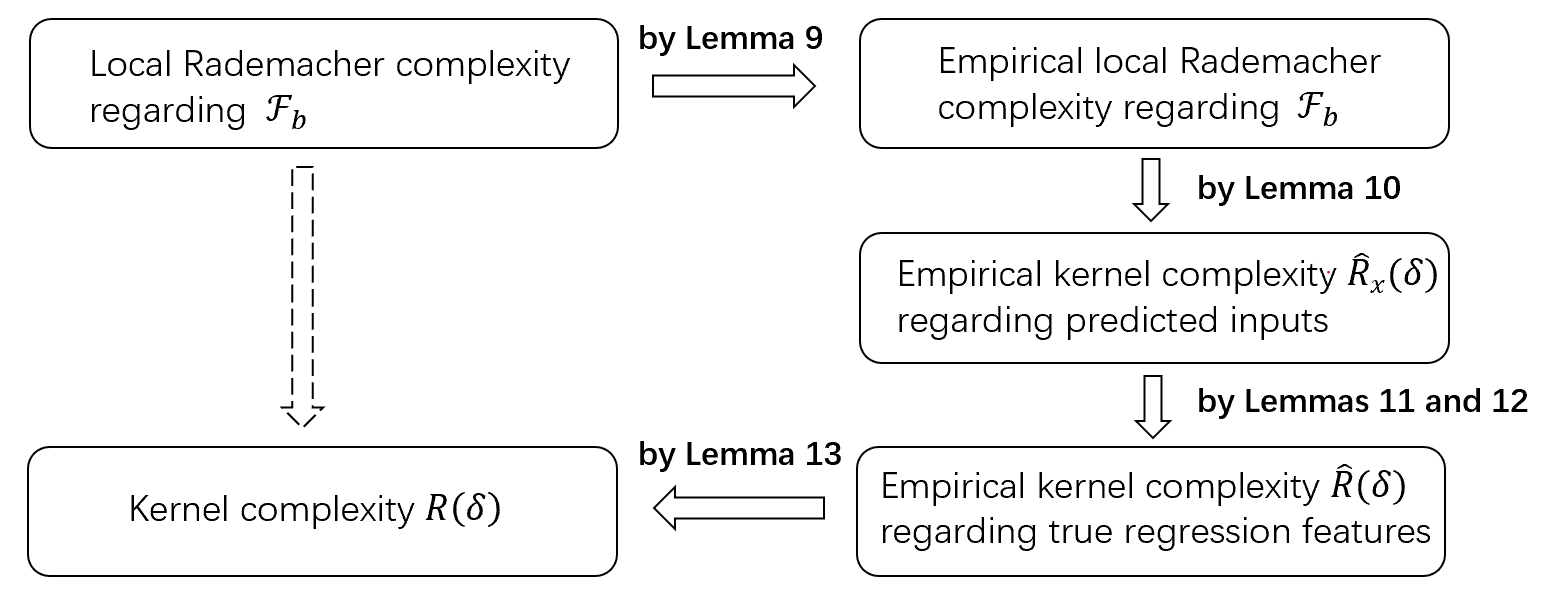}}
    \caption{Proof strategy of relating local Rademacher complexity to kernel complexity $R(\delta)$.}
    \label{Illustration_proof}
\end{figure}

\paragraph*{Application to factor-based non-parametric regression models.} 
Our third contribution is to provide a complete analysis of using KRR with feature inputs predicted by PCA under the factor-based nonparametric regression models \eqref{model_X} and \eqref{model}.  As an application of our general theory, together with the existing results on high-dimensional factor models, we state in \cref{thm_PCA} of \cref{sec_theory_PCA} an explicit upper bound of the excess risk of the KRR with inputs predicted by PCA. 
In \cref{thm_lb_excess_risk} of \cref{sec_theory_PCA}, we further establish a matching minimax lower bound of 
 the excess risk 
 under models \eqref{model_X} and \eqref{model}, thereby demonstrating the minimax optimality of KRR using the leading PCs. 

Prediction consistency of Principal Component Regression (PCR) has been established in \cite{stock2002forecasting} under the factor-based linear regression models \eqref{model_Y_linear} and \eqref{model_X}.  Explicit rates of the excess risk are later given in \cite{bing2021prediction}, where a general linear predictor of $X_i$, including PCA as a particular instance, is used to predict $Z_i$. In a more recent work \citep{fan2024factor}, the authors investigate the predictive performance of fitting neural networks between the response and the retained PCs, proving minimax optimal risk bounds under the factor model \eqref{model_X} when the regression function belongs to a hierarchical composition of functions in the  H\"{o}lder class.
Under model \eqref{model_X} and \eqref{model} with $f^*$ belonging to some RKHS, our results in \cref{sec_PCA} provide the first minimax optimal risk bound for using KRR with the leading PCs. Moreover, our theory extends beyond the factor model \eqref{model_X}, as it applies to any predictor $\wh g$ of $Z$ and accommodates arbitrary dependence between $X$ and $Z$.\\

This paper is organized as follows.  The algorithm of KRR with predicted inputs is stated in \cref{sec_method_krr}. Theoretical results of analyzing KRR with predicted inputs are stated in \cref{sec_theory_KRR}. The risk decomposition is given in \cref{sec_theory_KRR_decomp} while the risk bounds along with the main assumptions are stated in \cref{sec_theory_KRR_risk}. Risk bounds for specific kernels are given in \cref{sec_theory_cor} while in \cref{sec_theory_KRR_proof} we explain the proof sketch and highlight the main technical difficulties. In \cref{sec_PCA} we apply our general result to factor-based nonparametric regression models. In \cref{sec_general_loss} we extend our analysis and derive risk bounds for general convex loss functions.  Simulation studies and real data analysis that corroborate our theoretical findings are stated in \cref{app_sec_sim,sec_real_data}, respectively. All proofs are deferred to the Appendix. \\

 \noindent {\bf Notation.} 
For any $a,b\in \RR$, we write $a \wedge b= \min\{a,b\}$ and $a \vee b= \max\{a,b\}$. 
For any integer $d$, we let $[d] = \{1,\ldots,d\}$.  
For any symmetric, semi-positive definite matrix $Q \in  \RR^{d\times d}$,
 we use $\lambda_1(Q)\ge \lambda_1(Q)\ge\ldots \ge  \lambda_d (Q)$ to denote its eigenvalues. For any matrix $A$, we use $\|A\|_\op$ and $\|A\|_{\F}$ to denote its operator norm and Frobenius norm, respectively. 
We use $\bI_d$  to denote the $d\times d$ identity matrix. For $d_1\ge d_2$, we write $\OO_{d_1\times d_2}$ for  matrices with orthonormal columns. 
The inner product and the endowed norm in the Euclidean space are denoted as  $\langle\cdot,\cdot\rangle_2$ and $\|\cdot\|_2$, respectively. For any function $f: \cZ \to \RR$, $\|f\|_\infty =\sup_{z \in \cZ}|f(z)|$.  For any two sequences $a_n$ and $b_n$, 
 we write $a_n \lesssim b_n$ if there exists some constant $C$ such that $a_n\le C b_n$ for all $n$. We write $a_n\asymp b_n$ if $a_n\lesssim b_n$ and $b_n\lesssim a_n$.  In this paper, we use $c,c', C$ and $C'$ to denote constants whose values may vary line by line unless otherwise stated.

\section{Kernel ridge regression with predicted feature inputs}\label{sec_method_krr}

In this section, we state the algorithm for Kernel Ridge Regression (KRR) with predicted feature inputs. Recall that $\cD = \{(Y_i,X_i)\}_{i=1}^n \in (\RR, \cX)^n$ contains the training data.  
Let $\wh g:\cX \to \cZ$ be any measurable function used to predict the features $Z_i\in \cZ$. For instance, $\wh g$ could be obtained from one of the feature extraction or representation learning approaches mentioned in the Introduction. The predicted feature inputs are $\wh g(X_1),\ldots, \wh g(X_n)$.  

Let  $K:{\cZ \times \cZ}\to \RR$ be some pre-specified kernel function and denote by $\cH_K$ the Reproducing Kernel Hilbert Space (RKHS) induced by $K$. Write the equipped inner product in $\cH_K$ as $\langle\cdot,\cdot\rangle_K$ and the endowed norm as $\|\cdot\|_K$. We propose to regress the response $\bY = (Y_1, \ldots, Y_n)^\T$ onto  $\wh g(X_1),\ldots, \wh g(X_n)$ by solving the following optimization problem:
\begin{align}\label{def_f_hat}
\wh f =\argmin_{f\in \cH_K}  \left\{ \frac{1}{n} \sum_{i=1}^n\bigl(Y_i- (f\circ \wh g)(X_i)\bigl)^2
+\lambda\|f\|_{K}^2 \right\}.
\end{align}
By letting {$K_{v}:=K(v, \cdot)\in \cH_K$} for any $v \in \cZ$, the reproducing property asserts that
$    
    \langle f, K_{v}\rangle_K=f(v)
$
for all $f \in \cH_K$. As a result, the representer theorem \citep{kimeldorf1971some} states that  the  minimization in \eqref{def_f_hat} admits a closed-form solution: 
\begin{align}\label{def_f_hat_closedform}
   \wh f =\frac{1}{\sqrt{n}}\sum_{i=1}^n \wh \alpha_i K_{\wh g(X_i)},
\end{align}
where the representer coefficients  $\wh \alpha=(\wh \alpha_1,\ldots, \wh \alpha_n)^\T$ are obtained by
\begin{align}\label{coeff}
    \wh \alpha &=\argmin_{\alpha \in \RR^n}  \left \{ 
    {1\over n}\bigl\|\bY - \sqrt{n} ~ \bK_x \alpha \bigr\|_2^2
    +\lambda~ \alpha^\T \bK_x \alpha \right \} = {1\over \sqrt n}\bigl(\bK_x +\lambda \bI_n\bigr)^{-1}\bY.
\end{align}
Here   $\bK_x$ is the $n\times n$ kernel matrix with entries equal to $n^{-1} K  (\wh g(X_i),\wh g(X_j))$ for $i, j\in [n]$. 
Finally, for a new data point $X \in \cX$, we predict its corresponding response $Y$ by 
\begin{align}\label{def_f_hat_pred}
    (\wh f \circ \wh g) (X)=\frac{1}{\sqrt{n}}\sum_{i=1}^n \wh \alpha_i K\left(\wh g(X_i), \wh g(X)\right).
\end{align}


\begin{remark}[Independence between $\wh g$ and $\cD$]\label{rem_g_hat}
    Our theory in \cref{sec_theory_KRR} requires that $\wh g$ be constructed independently of the training data $\cD$. This independence simplifies our analysis, especially given the technical complexity of the current framework. Moreover, such independence can lead to better results in many statistical problems, including smaller prediction or estimation errors and weaker conditions -- a phenomenon has been observed in various problems, such as estimating the optimal instrument in sparse high-dimensional instrumental variable models \citep{belloni2012sparse}, inferring a low-dimensional parameter in the presence of high-dimensional nuisance parameters \citep{DDML}, and performing discriminant analysis using a few retained principal components of high-dimensional features \citep{bing2023optimal}. See, also, our simulation studies in \cref{app_sec_sim}.
    On the other hand, in many applications such as the examples mentioned in the Introduction, auxiliary data without labels or responses are readily available or at least easy to obtain. When this is not the case, one can still use a procedure called $k$-fold cross-fitting \citep{DDML}. The 2-fold version of this method first splits the data into equal parts, computes $\wh g$ on one subset, and then performs the regression step using the other subset, before switching their roles. The final prediction is then made by the average of the two resulting predictors.
\end{remark}

\section{Theory of KRR with predicted feature inputs}\label{sec_theory_KRR}

 In this section we  present our theoretical guarantees of the KRR estimator $\wh f$ in \eqref{def_f_hat} that is based on a generic predictor $\wh g:\cX\to \cZ$,   
constructed independently of $\cD$. 
The quantity of our interest is the excess risk of the prediction function  
$    \wh f \circ \wh g :~  \cX \to \RR $, given by  \eqref{def_excess_risk}. 
Since the excess risk is  random depending on $\cD$ and $\wh g$, our goal is to establish its rate of convergence in probability.   We start by decomposing it into three interpretable terms.

 \subsection{Decomposition of the excess risk}\label{sec_theory_KRR_decomp}

For any deterministic $f\in \cL^2(\rho)$, we prove in \cref{app_sec_proof_risk_decomp} that
\begin{align}\label{eq_risk_decomp}
    \cE \bigl ( \wh f \circ \wh g \bigr )  =  \EE\bigl[ (Y  - (\wh f\circ \wh g)(X) )^2 -  (Y  - ( f\circ \wh g)(X) )^2  \bigr]    
    + \EE \left [ f^*(Z)- (f\circ \wh g)(X)   \right ]^2.
\end{align}
To specify the choice of  $f$, we make the following blanket assumption. Recall that $\|\cdot\|_\rho$ is the induced norm of $\cL^2(\rho)$.
\begin{assumption}\label{ass_f_H}
    There exists some $f_\cH\in \cH_K$ such that 
    $
         \|f_\cH -  f^*\|_\rho^2  =    \inf_{f\in \cH_K } \|f - f^*\|_\rho^2. 
    $
\end{assumption}
\noindent \cref{ass_f_H} is weaker than assuming $f^* \in \cH_K$.   If the latter holds, we immediately have $f_\mathcal{H} = f^*$. Since $\cH_K$ is convex, existence of $\fh$ also ensures its uniqueness \citep{cucker2002mathematical}.   As discussed in \cite{rudi2017generalization}, existence of $\fh$   can be  ensured if $\cH_K$ in \cref{ass_f_H} is replaced by $\{f\in \cH_K : \|f\|_K\le R\}$ for any finite radius $R$. Alternatively, we can choose $f_{\cH,\lambda} = \argmin_{f\in \cH_K}\{\|f-f^*\|_\rho^2 + \lambda\|f\|_K^2\}$ which, for $\lambda>0$, is always unique and exists \citep{cucker2002mathematical}. Notably, our results apply to  any function $f\in \cH_K$, in replace of $\fh$, including these examples.

By choosing $f = \fh$ in \eqref{eq_risk_decomp}, the second term on the right-hand side of \eqref{eq_risk_decomp} represents the irreducible error from two sources.   Indeed, 
we prove in \cref{app_sec_proof_risk_decomp} that for any $\theta \ge 1$, 
\begin{align}\label{bd_irre_error}
    \EE \left [  f^*(Z)- (\fh \circ \wh g)(X)   \right ]^2 
    &~ \le~  (1+\theta) \|\fh \|_K^2 ~ \EE\Delta_{\wh g} +  {1+\theta \over \theta}\|f_\cH - f^*\|_\rho^2.
\end{align} 
The quantity 
\begin{align}\label{def_Delta}
    \EE \Delta_{\wh g}:= \EE \| K_Z - K_{\wh g(X)}\|_K^2
\end{align}
is referred to as the {\em kernel-related latent error}, representing the error in using $\wh g(X)$ to predict the latent factor $Z$, composited with the kernel function $K$. The term $\|f_\cH - f^*\|_\rho^2$ represents the {\em approximation error} due to model misspecification when $f^* \notin \cH_K$. We provide detailed discussion on both terms after stating our main result in \cref{thm_risk} below. Back to the decomposition in  \eqref{eq_risk_decomp}, the first term of the right hand side, with  $ \fh$ in place of $f$, 
represents the prediction risk of $\wh f\circ \wh g$ relative to $\fh\circ \wh g$. Its analysis is the main challenge and is detailed in the next section.

  \subsection{Non-asymptotic upper bounds of the excess risk}\label{sec_theory_KRR_risk}

In this section we establish non-asymptotic upper bounds of the excess risk $\cE(\wh f\circ \wh g)$ in \eqref{def_excess_risk}. An important component of our analysis is the empirical process theory based on local Rademacher complexity developed in \cite{bartlett2005local}, building on a line of previous works such as \cite{koltchinskii2000rademacher,massart2000some,Lugosi}. In our context, local Rademacher complexity gets translated into the complexity of the RKHS. We begin with a brief review of RKHS basics, then discuss its complexity measures and the assumptions underlying our analysis.

We make the following assumption on the kernel function, which is satisfied by many commonly used kernels, including the Gaussian and Laplacian.
\begin{assumption}\label{ass_bd_K}
The kernel function $K$ is continuous,   symmetric and positive semi-definite.\footnote{We say $K$ is positive semi-definite if for all finite sets $\{z_1,\ldots,z_m\}\subset \cZ$ the $m\times m$ matrix  whose $(i,j)$ entry is $K(z_i, z_j)$ is positive semi-definite.} Moreover, there exists some positive constant $\kappa< \i$ such that $\sup_{z,{z}'\in \cZ} K(z,{z}')\le \kappa^2$.
\end{assumption}
\noindent A key element in our analysis is the integral operator  $L_K:  \mathcal{L}^2(\rho) \to \mathcal{L}^2(\rho)$, defined  as
\begin{align}\label{def_L_K}
    L_K f:=\int_{\mathcal{Z}} K_{z} f(z)\d\rho(z). 
\end{align}
We assume that the integral operator $L_K$ can be eigen-decomposed.
\begin{assumption}\label{ass_mercer}
There exist a sequence of eigenvalues $\{\mu_j\}_{j=1}^\infty$ arranged in non-increasing order and corresponding eigenfunctions $\{\phi_j\}_{j=1}^\infty$ that form  an
orthonormal basis of $\cL^2(\rho)$, such that
$
L_{K}   =   \sum_{j=1}^\i \mu_{j}  \langle\phi_{j}, \cdot \rangle_\rho~  \phi_{j}. 
$
\end{assumption}
\noindent  \cref{ass_mercer} is standard in the kernel‐methods literature. It is guaranteed, for example, by Mercer's theorem \citep{mercer1909xvi} when the kernel domain is compact and \cref{ass_bd_K} holds, though it can also hold in more general settings (see, e.g., \citet[Theorem 12.20]{wainwright2019high}). As a result of Assumption \ref{ass_mercer},  
the kernel function $K$ admits
\begin{align}\label{eq_eigen_decomp}
K(z,{z}')=\sum_{j=1}^\infty\mu_j\phi_j(z)\phi_j({z}'), \qquad  \forall ~ z,z'\in\cZ. 
\end{align}
In the case $\mu_j>0$ for all $j\ge 1$,  the RKHS  can be explicitly written as 
\[
\cH_K= \{f=\sum_{j=1}^\infty\gamma_j\phi_j:
\ \sum_{j=1}^\infty {\gamma_j^2 }/{\mu_j}
<\infty \}.
\]
For any $f_1=\sum_{j=1}^\infty\alpha_j\phi_j$
with $\alpha_j=\langle f_1 , \phi_j\rangle_\rho$  and $f_2=\sum_{j=1}^\infty\beta_j\phi_j$ with $\beta_j=\langle f_2 , \phi_j\rangle_\rho$, the equipped  inner product of $\cH_K$ is
\[
\langle f_1, f_2\rangle_{K}=\sum_{j=1}^\infty{\alpha_j\beta_j}/{\mu_j}.
\]
If for some $k\in \NN$, $\mu_j > 0$ for $j\le k$ and $\mu_j=0$ for all $j>k$, the RKHS reduces to a $k$-dimensional function space spanned by  $\{\phi_1,\ldots, \phi_k\}$. In the rest of this paper, we are only interested in non-degenerate kernels, that is,  $\mu_1>0$.

As mentioned in the Introduction,  complexity of the function class  $\cH_K$ is commonly measured by the  kernel complexity function  $R(\delta)$, given by \eqref{kernel_complexity}.
As detailed in \cref{sec_theory_KRR_proof}, $R(\delta)$ is related with the (local) Rademacher complexity \citep{mendelson2002geometric}. Its fixed point, the  positive solution to
$
   R(\delta)= \delta,
$
defines the {\em critical radius} $\delta_n$, a key  quantity in our excess risk bounds.
Both existence and uniqueness
of  
$\delta_n$  are guaranteed as $R(\delta)$ is a sub-root function of $\delta$  
 \citep[Lemma 3.2]{bartlett2005local}. Here we recall that
\begin{definition}
   A function $\psi: [0,\infty)\to  [0,\infty)$ is called sub-root if it is  non-decreasing, and if $\delta\mapsto \psi(\delta)/\sqrt{\delta}$ is non-increasing for $\delta >0$. 
\end{definition}
\noindent We review other properties of sub-root functions in \cref{app_sec_subroot}. 

Finally, we assume the regression error in model \eqref{model}  to be sub-Gaussian. This simplifies our analysis and can be relaxed to sub-exponential tails or bounded finite moments. 
\begin{assumption}\label{ass_reg_error}
There exists some constant $\gamma_\epsilon<\infty$  such that
{$\EE[\exp(t\epsilon)]\le \exp(t^2\gamma_\epsilon^2)$} for all $t\in \RR$.
\end{assumption} 

The following theorem is our main result which provides non-asymptotic upper bounds of the excess risk of $\wh f\circ \wh g$, given in \eqref{def_excess_risk}.

\begin{theorem}\label{thm_risk}
Grant model (\ref{model}) and
Assumptions  \ref{ass_f_H}--\ref{ass_reg_error}. For  any  $\eta\in (0,1)$, by choosing 
\begin{equation}\label{lb_lambda} 
\lambda  =  C\l(      \delta_n\log(1/\eta)  +\EE \Delta_{\wh g}  +  \|\fh-f^*\|_\rho^2 + \frac{\log(1/\eta)}{n} \r)
\end{equation}
in \eqref{def_f_hat},
with probability at least $1-\eta$, one has
\begin{align}\label{bd_pred_rate}
    \cE(\wh f\circ \wh g)  
      ~ \le ~  C'\l(  \delta_n\log(1/\eta)  +\EE \Delta_{\wh g}   +  \|\fh-f^*\|_\rho^2+ \frac{\log(1/\eta)}{n} \r) .
\end{align}  
Here both positive constants $C$ and $C'$ depend only on $\kappa$, $\|f^*\|_\i$, $\|\fh\|_K$ and $\gamma_\epsilon^2$.
\end{theorem}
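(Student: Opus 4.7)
The plan is to follow the hybrid strategy outlined in Section~1.2.2: combine the population-level risk decomposition with an empirical-process analysis based on local Rademacher complexities, and then feed the resulting complexity terms through a chain of comparisons that ends at the fixed point $\delta_n$ of \eqref{critical_ra}. I would start from the decomposition \eqref{eq_risk_decomp} instantiated at $f=\fh$, which together with \eqref{bd_irre_error} (with $\theta$ of constant order) already accounts for the $\EE\Delta_{\wh g}$ and $\|\fh-f^*\|_\rho^2$ contributions in \eqref{bd_pred_rate}. What remains is to bound the relative risk
\begin{equation*}
T \;:=\; \EE\bigl[(Y-(\wh f\circ\wh g)(X))^2-(Y-(\fh\circ\wh g)(X))^2\bigr],
\end{equation*}
where the expectation is taken over a fresh $(Y,X)$ and conditional on $(\cD,\wh g)$.

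Since $\sup_{f\in\cH_K}\|f\|_K$ is unbounded, I would first exploit the ridge penalty to localize: comparing $\wh f$ to $\fh$ via the optimality of \eqref{def_f_hat} and using the choice of $\lambda$ in \eqref{lb_lambda} to dominate $\lambda\|\fh\|_K^2$, a peeling argument produces a high-probability bound $\|\wh f\|_K\le b$ for an explicit constant-order $b$. Controlling $T$ then reduces to bounding the one-sided empirical process
\begin{equation*}
\sup_{f\in\uline{\cF_b}}\Bigl\{\EE[\ell_{f\circ\wh g}(Y,X)]-2\EE_n[\ell_{f\circ\wh g}(Y,X)]\Bigr\}
\end{equation*}
over a bounded ball $\uline{\cF_b}\subset\cH_K$. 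Expanding the squared loss yields a quadratic piece in $(f-\fh)\circ\wh g$, which I would handle by the local Rademacher inequality of \cite{bartlett2005local} applied to the shifted class, plus a cross term $\EE_n[\epsilon\{(f\circ\wh g)(X)-(\fh\circ\wh g)(X)\}]$ that I would bound uniformly in $f$ by combining the sub-Gaussian tail of Assumption~\ref{ass_reg_error} with a second local Rademacher/integral-operator argument so that its contribution also scales as $\delta_n$ rather than $n^{-1/2}$.

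The main obstacle is relating the local Rademacher complexity that arises from regressing on $\wh g(X)$ to the population kernel complexity $R(\delta)$ defined through $L_K$ on $\rho$, since the classical reduction only delivers a bound in terms of an auxiliary $R_x(\delta)$ depending on the unknown law of $\wh g(X)$. My plan is to traverse the chain depicted in Figure~\ref{Illustration_proof} in four steps. First, pass from the population local Rademacher complexity to its empirical counterpart via the concentration results of \cite{boucheron2000sharp,boucheron2003concentration,bartlett2005local}. Second, adapt \citet[Lemma~6.6]{bartlett2005local} to bound the empirical local Rademacher complexity by $\wh R_x(\delta)$, the sub-root built from the spectrum of the Gram matrix with entries $K(\wh g(X_i),\wh g(X_j))$. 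Third---the most delicate step---compare $\wh R_x(\delta)$ with $\wh R(\delta)$, the analogous object built from $K(Z_i,Z_j)$, by bounding the spectral perturbation between the two $n\times n$ Gram matrices purely in terms of $\EE\Delta_{\wh g}$; here I would use Assumption~\ref{ass_bd_K} and a Cauchy--Schwarz manipulation in the spirit of \eqref{bd_irre_error} so that the perturbation enters additively, without requiring any closeness of $\wh g(X_i)$ to $Z_i$. Fourth, concentrate $\wh R(\delta)$ around $R(\delta)$ so that the relevant fixed point inherits the scale $\delta_n$.

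Assembling these ingredients, the empirical-process bound translates $T$ into a sum of $\delta_n\log(1/\eta)+n^{-1}\log(1/\eta)$ plus residuals controlled by $\EE\Delta_{\wh g}$ and $\|\fh-f^*\|_\rho^2$; choosing $\lambda$ of the order prescribed in \eqref{lb_lambda} simultaneously closes the peeling and makes the penalty of the same order as the statistical fluctuations, and a union bound over the events underlying the reduction, the local-complexity inequality, the Rademacher-to-empirical concentration, and the Gram-matrix perturbation yields \eqref{bd_pred_rate}. The additive (rather than multiplicative) appearance of $\EE\Delta_{\wh g}$ in the final rate is a consequence of routing the feature-prediction error through the reproducing property in \eqref{bd_irre_error} and through the spectral-perturbation step in the complexity chain, rather than through the local Rademacher analysis itself.
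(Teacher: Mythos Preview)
The proposal is correct and follows essentially the same route as the paper: the same risk decomposition via \eqref{eq_risk_decomp} and \eqref{bd_irre_error}, the same reduction to the bounded local class $\uline{\cF_b}$ in \eqref{def_local_ball}, the same splitting of $\ell_{f\circ\wh g}$ into a bounded piece $h_{f\circ\wh g}$ handled by local Rademacher complexity plus a sub-Gaussian cross term, and exactly the four-step chain $\psi_x\to\wh\psi_x\to\wh R_x\to\wh R\to R$ of \cref{sec_main_relate_complexity} to land on $\delta_n$. The only visible difference is cosmetic: you phrase the localization as a peeling step that first yields $\|\wh f\|_K\le b$, whereas the paper never establishes such a bound directly and instead uses a convexity/interpolation argument along the segment between $\wh f$ and $\fh$ (\cref{used_lem_sec_A}) together with a two-case contradiction to force $\wh f\in\uline{\cF_b}$.
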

    The explicit dependence of $C$ and $C'$ on $\kappa$, $\|f^*\|_\i$, $\|\fh\|_K$  and $\gamma_\epsilon^2$ can be found in the proof.
    Since the proof of \cref{thm_risk} is rather long, we offer its sketch and highlight the main technical difficulties in  \cref{sec_theory_KRR_proof}  while defer its full statement to \cref{app_sec_estimation_error_bound}.

From \cref{thm_risk}, the excess risk is determined by the critical radius $\delta_n$, the kernel-related latent error $\EE \Delta_{\wh g}$ and the approximation error $\|\fh-f^*\|_\rho^2$.  We discuss each term in detail shortly. It is worth emphasizing that \cref{thm_risk} does not impose any assumptions on the magnitudes of either $\EE \Delta_{\wh g}$ or $\|\fh - f^*\|_\rho^2$. Surprisingly, regardless of the magnitude of $\EE \Delta_{\wh g}$, it appears {\em additively} in the risk bound.  Finally, achieving  
the risk bound in \eqref{bd_pred_rate} requires to choose $\lambda$ as in \eqref{def_f_hat}, which in practice could be selected via cross-validation.

\begin{remark}[Comparison with the existing theory in the classical setting]
Although \cref{thm_risk} is derived for the case where the feature inputs are predicted, it is informative to compare it with the existing results  in the classical setting. When $Z_1,\ldots, Z_n$ are observed and used in \eqref{def_KRR}, setting $\EE\Delta_{\wh g} = 0$ with $\wh g(z) = z$ for $z\in \cZ$ simplifies the risk bound  \eqref{bd_pred_rate} to
\begin{equation}\label{rate_KRR_classical}
 \cE(\wh f\circ \wh g) ~ \lesssim ~       \delta_n\log(1/\eta)  + \frac{\log(1/\eta)}{n}  +  \|\fh-f^*\|_\rho^2
\end{equation}
which, to the best of our knowledge,  is also new in the literature of kernel learning methods. As mentioned in \cref{sec_intro_related}, most existing analyses of KRR cannot handle arbitrary model misspecification without requiring regularity and decay properties of the eigenvalues $\mu_j$'s. When the model is correctly specified, our risk bound in \eqref{rate_KRR_classical} matches that in \eqref{rate_classic}, derived by \cite{bartlett2005local}. However, their analysis requires both the response variable and the RKHS to be bounded, which we do not assume. Compared to the integral operator approach, our analysis  does not impose any restrictive conditions on the eigenvalues $\mu_j$'s, which are often required in the existing literature \citep{caponnetto2007optimal,lin2018distributed,steinwart2009optimal,fischer2020sobolev}.
For instance, \cite{caponnetto2007optimal,rudi2017generalization} require  $\mu_1\ge\lambda$ in their analysis,  a condition that is somewhat counterintuitive.
Such refinement of our result stems from a distinct proof strategy, contributing to a new analytical framework for kernel-based learning. We provide  in \cref{sec_theory_KRR_proof} further discussion on technical details.
\end{remark}


\begin{remark}[Discussion on the approximation error]\label{rem_approx_error}
If we choose a universal kernel, such as the Gaussian or Laplacian, it is known that the induced RKHS
is dense in  the space of bounded continuous functions ${\cal C}(\cal Z)$ under the infinity norm  \citep{micchelli2006universal}.  In this case $\|\fh - f^*\|_\rho = 0$ if $f^* \in {\cal C}(\cZ)$. Indeed,
for any $\vartheta >0$,  there must exist some $f_\vartheta \in {\cal H}_K$ such that $\|f_{\vartheta} - f^*\|_{\infty} \leq \vartheta$, which implies
\[
\|f_{\cH} - f^*\|_\rho \leq     \|f_{\vartheta} - f^*\|_\rho \leq \|f_{\vartheta} - f^*\|_{\infty}\leq \vartheta.
\]
The leftmost inequality uses the portrayal of $\fh$ in \cref{ass_f_H}.
\end{remark}

\subsubsection{Discussion on the kernel-related latent error}\label{sec_theory_latent}

Recall from \eqref{def_Delta} that the term $\EE \Delta_{\wh g} $  reflects the error of $\wh g(X)$ in predicting $Z$ through the kernel function $K$.
To obtain more transparent expression, 
we need the following Lipschitz property of the map $\cZ\to \cH_K$, that is, $z\mapsto K_z$ for all $z\in \cZ$. 
\begin{assumption}\label{ass_Lip_K}
There exists some constant $C_K>0$ such that   
\begin{align*}\bigl   \|K_z-K_{z'} \bigr  \|_K\le C_K \|z-z'\|_2, \qquad \forall \ z,z'\in \cZ. 
\end{align*}
\end{assumption}

\noindent\cref{ass_Lip_K} can be verified for various kernels. One sufficient condition is the following uniform boundedness condition of the mixing partial derivatives of the kernel function \citep{blanchard2011generalizing}: there exists some  constant $C>0$ such that 
\[
\sup_{z,z' \in \cZ} \left\| \frac{\partial^2K(z, z')}{\partial z \partial z'} \right\|_{\op} \le~  C.
\]
 Such a condition holds for several commonly used kernels, such as the linear kernel and the Gaussian kernel \citep{blanchard2019concentration,sun2022nystrom}.
Under \cref{ass_Lip_K}, the kernel-related latent error can be bounded by
\begin{equation}\label{def_eps_g_hat}
   \EE\Delta_{\wh g} ~ \le ~ C_K^2 ~ \EE \|\wh g(X)-Z\|_2^2,
\end{equation}
entailing consistent prediction of $Z$ itself by using $\wh g(X)$.  We remark that this can be relaxed when  $K$ is invariant to orthogonal transformations, that is, $K(z,z') = K(Qz,Qz')$ for any $z,z'\in \cZ$ and any $Q\in \OO_{r\times r}$. When this  is the case, we only need consistent prediction of $Z$ up to an orthogonal transformation, in the precise sense that \eqref{def_eps_g_hat} can be replaced by
\begin{equation}\label{rem_pred_Z}
    \EE\Delta_{\wh g} ~ \le ~ C_K^2  \inf_{Q\in \OO_{r\times r}} \EE \|\wh g(X) - QZ\|_2^2.
\end{equation}
This claim is proved in \cref{app_sec_proof_rem_pred_Z}.
Two common classes of kernels invariant under orthogonal transformations are
\begin{itemize}[itemsep = 0mm]
    \item[(1)] Inner product kernel: $K(z,z')= h(\langle z,z'\rangle_2)$, e.g. linear kernels, polynomial kernels;
    \item[(2)] Radial basis kernel: $K(z,z')= h(\|z-z'\|_2)$, e.g. exponential kernels, Gaussian kernels.
\end{itemize}
Here $h: \RR\to \RR$ is a kernel-specific function. 

\subsubsection{Discussion on the critical radius}\label{sec_theory_delta}
The critical radius $\delta_n$, defined as $R(\delta_n) = \delta_n$, is an important factor in the risk bound \eqref{bd_pred_rate}. Finding its exact expression is generally difficult. To quantify its rate, 
by the sub-root property of $R(\delta)$ and   \cref{ass_bd_K},  
we prove in \cref{slow_rate_delta} the {\em slow rate} 
$
        \delta_n    \le    {\kappa/ \sqrt n}.
$
Together with \cref{thm_risk}, we obtain the following slow rate of the excess risk.
\begin{corollary}[Slow rate of the excess risk]\label{cor_slow_rates}
Grant model \eqref{model} and  Assumptions \ref{ass_f_H}--\ref{ass_Lip_K}. 
For any $\eta\in(0,1)$ with $\log (1/\eta) \le \sqrt n$, 
by choosing $\lambda$ as in \eqref{lb_lambda} with $\delta_n$ replaced by $\kappa/\sqrt{n}$,
\begin{equation}\label{slow_rate_risk}
    \PP\left\{\cE(\wh f\circ \wh g)~ \le ~   C'\l(  \frac{ \log (1/\eta) }{\sqrt{n}}   
    + \EE \|\wh g(X)-Z\|_2^2  +  \|\fh-f^*\|_\rho^2\r)\right\} \ge 1-\eta
\end{equation}
Here the positive constant $C'$ depends  only on $\kappa$, $\|f^*\|_\i$, $\|\fh\|_K$, $\gamma_\epsilon^2$ and $C_K$.
\end{corollary}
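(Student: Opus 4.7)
The plan is to derive the slow rate \eqref{slow_rate_risk} directly from \cref{thm_risk} by substituting the slow rate bound on $\delta_n$ and the Lipschitz bound on $\EE\Delta_{\wh g}$. Three ingredients are needed: (i) the sub-root upper bound $\delta_n \le \kappa/\sqrt{n}$, (ii) the inequality $\EE\Delta_{\wh g} \le C_K^2\,\EE\|\wh g(X) - Z\|_2^2$ under \cref{ass_Lip_K}, and (iii) a monotonicity argument showing that enlarging $\lambda$ in \cref{thm_risk} still yields a valid bound.

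First I would establish the slow rate for $\delta_n$. By \cref{ass_bd_K} and positive semi-definiteness, $K(z,z) \le \kappa^2$ pointwise, so $\sum_{j\ge 1}\mu_j = \tr(L_K) = \int K(z,z)\,\mathrm{d}\rho(z) \le \kappa^2$. Plugging $\min\{\delta,\mu_j\} \le \mu_j$ into \eqref{kernel_complexity} yields $R(\delta) \le \kappa/\sqrt{n}$ for every $\delta \ge 0$. In particular, evaluating at the fixed point $\delta_n = R(\delta_n)$ gives $\delta_n \le \kappa/\sqrt{n}$. This is essentially the content of the cited bound \eqref{slow_rate}.

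Second, under \cref{ass_Lip_K} the bound \eqref{def_eps_g_hat} already gives $\EE\Delta_{\wh g} \le C_K^2\,\EE\|\wh g(X) - Z\|_2^2$. Third, I would invoke \cref{thm_risk} but with $\lambda$ chosen as in \eqref{lb_lambda} with $\delta_n$ replaced by the upper bound $\kappa/\sqrt{n}$. Since $\kappa/\sqrt{n} \ge \delta_n$, this new choice of $\lambda$ is at least as large as the one prescribed in \cref{thm_risk}; inspection of the proof of \cref{thm_risk} (in which the role of $\lambda$ is to dominate each of the four error terms) confirms that any choice of $\lambda$ at least this large yields the same form of high-probability bound, with $\kappa/\sqrt{n}$ in place of $\delta_n$. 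Substituting gives
\begin{equation*}
    \cE(\wh f\circ \wh g) ~ \le ~ C'\left(\frac{\kappa\log(1/\eta)}{\sqrt{n}} + C_K^2\,\EE\|\wh g(X) - Z\|_2^2 + \|\fh - f^*\|_\rho^2 + \frac{\log(1/\eta)}{n}\right)
\end{equation*}
with probability at least $1-\eta$.

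Finally, I would use the hypothesis $\log(1/\eta) \le \sqrt{n}$, which implies $\log(1/\eta)/n \le \log(1/\eta)/\sqrt{n}$, to fold the last term into the first and absorb the constants into a new $C'$ depending also on $C_K$ and $\kappa$. This yields exactly \eqref{slow_rate_risk}. I do not anticipate a major obstacle since the result is a direct corollary; the only subtlety worth double-checking is the monotonicity-in-$\lambda$ claim, i.e., that the proof of \cref{thm_risk} tolerates replacing $\delta_n$ by any larger quantity in the definition of $\lambda$ without breaking any variance-balancing step — this is standard for KRR-type bounds but should be explicitly verified against the proof in \cref{app_sec_estimation_error_bound}.
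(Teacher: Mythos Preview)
Your proposal is correct and follows essentially the same approach as the paper. The paper's own proof is in fact terser: it only establishes $\delta_n \le \kappa/\sqrt n$ via the same trace bound $\sum_j \mu_j = \EE[K(Z,Z)] \le \kappa^2$ and then implicitly defers the substitution into \cref{thm_risk}, the Lipschitz bound \eqref{def_eps_g_hat}, and the absorption of $\log(1/\eta)/n$ to the reader; your write-up makes these steps explicit, and your monotonicity-in-$\lambda$ check is justified by inspecting \eqref{rate_lbd_proof}, which is a one-sided lower bound on $\lambda$.
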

The first  $n^{-1/2}$-type rate is well-known in the literature of kernel methods when $Z_1,\ldots, Z_n$ are observed and no assumptions on the smoothness of $f^*\in \cH_K$ or decaying properties of the eigenvalues $\mu_j$'s are made. See, for instance, Corollary 5 in \cite{smale2007learning}. Our new result also accounts for  model misspecification and the error of predicting feature inputs.

In many situations, however, {\em fast rates} of $\delta_n$  are attainable. \cite{yang2017randomized} introduces the {\em statistical dimension}, defined as the largest index $j$ for which the kernel eigenvalues $\mu_j$ exceeds $\delta_n$, that is,  
$
d(\delta_n)= \max\left\{j\ge 0: {\mu}_j\ge \delta_n \right\}
$
with $\mu_0 := \i$. 
By the expression of $R(\delta)$ in \eqref{kernel_complexity}, one has
\[
    \delta_n = R(\delta_n ) = \biggl( {d(\delta_n)\over n} ~  \delta_n   +  {1\over n}\sum_{j > d(\delta_n)}  \mu_j\biggr)^{1/2}.
\]
If $\sum_{j > d(\delta_n)}  \mu_j \lesssim d(\delta_n) \delta_n $ holds,  then
$\delta_n \asymp  {d(\delta_n) / n}$, hence the name statistical dimension for $d(\delta_n)$. Any kernel whose eigenvalues satisfy such condition is said to be {\em regular} \citep{yang2017randomized}. 
More importantly, for any regular kernel, \citet[Theorem 1]{yang2017randomized} shows that, in the fixed design setting ($Z_i$'s are treated as deterministic) with $\fh = f^*$, an empirical version of $\delta_n$ serves as  a fundamental lower bound for the {\em in-sample} prediction risk:
\[
    \inf_{f}\sup_{\|f^*\|_K \le 1} \EE_n [f(Z) - f^*(Z)]^2 ~ \gtrsim ~ \wh \delta_n 
\]
where the infimum is taken over all estimators based on $\{(Y_i, Z_i)\}_{i=1}^n$, and 
$\wh \delta_n$ is the positive solution to  $\wh  R(\delta) =\delta$ with $\wh  R(\delta)$ being the empirical kernel complexity function, given in \eqref{def_em_kercomp_true_input}. 
For random designs, we show in \cref{thm_lb_excess_risk} of \cref{sec_theory_lb} that $\wh \delta_n$ in the lower bound is replaced by $\delta_n$. 
In the next section we derive explicit expressions of $\delta_n$ for some specific kernels, with the resulting results summarized in \cref{table_cor}.
\begin{table}[ht]
    \centering
    \renewcommand{\arraystretch}{0.6}   
    \setlength{\tabcolsep}{12pt}        
    \caption{Expressions of $\delta_n$ for several specific kernels}
    \label{table_cor}
    \begin{tabular}{l|c c c}
    \toprule
     & Linear kernel & Polynomial kernels & Exponential kernels \\
    \midrule
    $\delta_n$ &
    $r / n $ &
    $n^{-\frac{2\alpha}{2\alpha+1}}$ &
    $\log(n) / n$ \\
    \bottomrule
    \end{tabular}
\end{table}
 
\subsection{Explicit excess risk bounds for specific kernels}\label{sec_theory_cor}

To establish explicit rates of $\delta_n$, we consider three common classes of kernels, including the linear kernels, kernels with polynomially decaying eigenvalues and kernels with exponentially decaying eigenvalues. We also present simplified excess risk bounds of Theorem \ref{thm_risk}  for each class. For ease of presentation, we assume $\fh = f^*$ and 
$\kappa = 1$ 
in the rest of this section.

\subsubsection{Linear kernels}\label{sec_linear_kernel}
Consider $\cZ \subseteq \RR^r$ for some $r\in \NN$. We start with the linear kernel 
$$
    K(z,{z}')=  z^\T{z}',\qquad \forall ~ z,z'\in \cZ.
$$
Denote by $\sz$ the second moment matrix of $Z$ and write its eigen-decomposition as $\sz= \sum_{j=1}^r \sigma_j v_j v_j^\T$ where the eigenvalues are arranged in non-increasing order.
For simplicity, we assume $\rank(\sz) = r$ whence the induced $\cH_K$ also has rank $r$. For each $j\in [r]$, define $\phi_j:\cZ \to \RR$ as 
$$
    \phi_j(z):=  \sigma_j^{-1/2}  v_j^\T z,\qquad \forall ~  z\in \cZ.
$$
It is easy to verify that 
\begin{align}\label{K_decomp_linear}
 K(z,z')=   z^\top z' =   z^\T \sum_{j=1}^r  v_j v_j^\T   z' = \sum_{j=1}^r  \sigma_j \phi_j(z)\phi_j(z'),\qquad \forall\ z,z'\in\cZ.
\end{align}  
The following corollary states the simplified excess risk bound of \cref{thm_risk} for the linear kernel. Its proof is deferred to \cref{app_sec_cor_K_linear}.  
The notation $\lesssim_{\eta}$ denotes the inequality holds up to $\log(1/\eta)$.
 
\begin{corollary}\label{cor_K_linear}
Consider the linear kernel. Grant model \eqref{model} with $f^*\in \cH_K$ and Assumptions \ref{ass_f_H}--\ref{ass_Lip_K}. For any $\eta\in(0,1)$, with probability at least $1-\eta$, one has
\begin{equation}\label{bd_risk_linear}
    \cE(\wh f\circ \wh g)~ \lesssim_{\eta}  ~  \frac{r}{n} + \EE \|\wh g(X)-Z\|_2^2.
\end{equation}
\end{corollary}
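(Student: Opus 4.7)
The plan is to apply Theorem \ref{thm_risk} directly and verify that each of the three terms on the right-hand side of \eqref{bd_pred_rate} is controlled by one of the summands on the right-hand side of \eqref{bd_risk_linear}. Since $f^\star \in \cH_K$ by assumption, the approximation error term $\|\fh - f^\star\|_\rho^2$ vanishes outright (by uniqueness of the projection in \cref{ass_f_H}, $\fh = f^\star$). The term $\log(1/\eta)/n$ is already absorbed into the $r/n$ bound (since $r \ge 1$) after the $\lesssim_\eta$ convention, so the only nontrivial work is to (i) bound the kernel-related latent error $\EE \Delta_{\wh g}$ and (ii) compute the critical radius $\delta_n$ in the linear-kernel case.

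For the latent error, I would not invoke \cref{ass_Lip_K} through inequality \eqref{def_eps_g_hat}, because the reproducing identity yields the sharper identity
\begin{align*}
\|K_z - K_{z'}\|_K^2 &= K(z,z) - 2K(z,z') + K(z',z') \\
&= \|z\|_2^2 - 2z^\T z' + \|z'\|_2^2 = \|z - z'\|_2^2.
\end{align*}
Taking expectations gives $\EE \Delta_{\wh g} = \EE \|Z - \wh g(X)\|_2^2$ exactly, which matches the second term in \eqref{bd_risk_linear}.

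For the critical radius, I would use the spectral representation \eqref{K_decomp_linear}, which identifies the kernel with the Mercer expansion of \cref{ass_mercer} having eigenvalues $\sigma_1,\dots,\sigma_r$ and eigenfunctions $\phi_j(z) = \sigma_j^{-1/2}v_j^\T z$ (a quick computation confirms $\langle \phi_i,\phi_j\rangle_\rho = \delta_{ij}$ using $\Sigma_Z v_j = \sigma_j v_j$). All remaining eigenvalues vanish, so
\[
R(\delta) = \Biggl(\frac{1}{n}\sum_{j=1}^r \min\{\delta, \sigma_j\}\Biggr)^{1/2} \le \sqrt{\frac{r\delta}{n}}.
\]
Setting the right-hand side equal to $\delta$ yields $\delta_n \le r/n$.

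Plugging $\delta_n \lesssim r/n$, $\EE\Delta_{\wh g} = \EE\|\wh g(X) - Z\|_2^2$, and $\|\fh - f^\star\|_\rho^2 = 0$ into the conclusion \eqref{bd_pred_rate} of \cref{thm_risk} gives
\[
\cE(\wh f \circ \wh g) \;\lesssim\; \frac{r\log(1/\eta)}{n} + \EE\|\wh g(X) - Z\|_2^2 + \frac{\log(1/\eta)}{n} \;\lesssim_\eta\; \frac{r}{n} + \EE\|\wh g(X) - Z\|_2^2,
\]
which is precisely \eqref{bd_risk_linear}. I expect no real obstacle here, since the corollary is essentially a transparent specialization of \cref{thm_risk}; the only points requiring care are verifying the Mercer eigendecomposition for the linear kernel (straightforward from the SVD of $\Sigma_Z$) and noting that the kernel-latent-error identity is exact rather than merely a Lipschitz bound.
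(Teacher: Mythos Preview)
Your proposal is correct and follows essentially the same route as the paper: bound $R(\delta)^2 \le r\delta/n$ from the finite-rank Mercer expansion \eqref{K_decomp_linear}, conclude $\delta_n \le r/n$, and plug into \cref{thm_risk}. Your extra observations (that $\EE\Delta_{\wh g} = \EE\|\wh g(X)-Z\|_2^2$ holds as an \emph{identity} for the linear kernel, and that $\|\fh-f^*\|_\rho=0$) are correct refinements the paper leaves implicit; the paper simply invokes \cref{lem_subroot_monotone} where you argue the fixed-point bound directly.
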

    The term $r/n$ is as expected for the prediction risk under linear regression models using $\{(Y_i,Z_i)\}_{i=1}^n$. Note that it only depends on the dimension of the latent factor $Z$ instead of the ambient dimension of $X$. 
     The second term, $\EE \|\wh g(X)-Z\|_2^2$, represents the error in  using $\wh g(X)$ to predict  $Z$.
    When $\wh g$ is chosen as PCA (see \cref{sec_theory_PCA} for details), the predictor $\wh f\circ \wh g$ coincides with the Principal Component Regression (PCR). Our risk bound aligns with existing works of  PCR \citep{stock2002forecasting,bing2021prediction}, although it holds for more general predictor $\wh g$.

\subsubsection{Kernels with polynomially decaying eigenvalues}\label{sec_cor_K_poly}

We consider the kernel class satisfying the  $\alpha$-polynomial decay condition in their eigenvalues, that is, there exists some $\alpha > 1/2$ and some absolute constant $C>0$ such that ${\mu}_j\le C j^{-2\alpha}$ for all $j\ge 1$. Several widely used kernels, including the Sobolev kernel and the Laplacian kernel, belong to this class. See, Chapters 12 and 13 in \cite{wainwright2019high}, for more examples. The quantity $\alpha$ controls the complexity of $\cH_K$, with a smaller $\alpha$ indicating a more complex $\cH_K$.

The following corollary states simplified risk bounds of \cref{thm_risk} for such kernels with polynomially decaying eigenvalues. Its proof can be found in \cref{app_sec_cor_K_poly}. 
\begin{corollary}\label{cor_K_poly}
     Consider the polynomial decay kernels. Grant model \eqref{model} with $f^*\in \cH_K$ and Assumptions \ref{ass_f_H}--\ref{ass_Lip_K}.
     For any $\eta\in(0,1)$, the following holds with probability at least $1-\eta$,
     \begin{equation}\label{bd_excess_risk_poly}
                \cE(\wh f\circ \wh g)~ \lesssim_{\eta} ~ n^{-\frac{2\alpha}{2\alpha+1}}   +  \EE \|\wh g(X)-Z\|_2^2.
        \end{equation} 
\end{corollary}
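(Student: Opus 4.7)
\textbf{Proof proposal for Corollary \ref{cor_K_poly}.} The plan is to invoke Theorem \ref{thm_risk} directly and reduce the statement to a computation of the critical radius $\delta_n$ under the polynomial eigenvalue decay $\mu_j \le C j^{-2\alpha}$. Since we assume $f^* \in \cH_K$, Assumption \ref{ass_f_H} gives $\fh = f^*$ and hence the approximation error $\|\fh - f^*\|_\rho^2 = 0$. Under Assumption \ref{ass_Lip_K}, the kernel-related latent error satisfies $\EE \Delta_{\wh g} \le C_K^2 \EE \|\wh g(X) - Z\|_2^2$ by \eqref{def_eps_g_hat}. Plugging these into \eqref{bd_pred_rate} yields
\begin{equation*}
    \cE(\wh f \circ \wh g) \lesssim_\eta \delta_n + \EE \|\wh g(X) - Z\|_2^2 + \frac{1}{n},
\end{equation*}
so it remains only to show $\delta_n \lesssim n^{-2\alpha/(2\alpha+1)}$ (and to note $1/n$ is dominated by this rate when $\alpha > 1/2$).

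Next I would compute an upper bound on $R(\delta)$ using the polynomial decay. Setting the threshold index $k_\delta := \lceil (C/\delta)^{1/(2\alpha)} \rceil$, I split the sum defining $R(\delta)^2$ into $j \le k_\delta$, where I bound $\min\{\delta, \mu_j\} \le \delta$, and $j > k_\delta$, where I use $\min\{\delta, \mu_j\} \le \mu_j \le C j^{-2\alpha}$ and an integral comparison (using $2\alpha > 1$) to bound the tail by a constant times $k_\delta^{1-2\alpha}$. Both pieces scale like $\delta^{1 - 1/(2\alpha)}$, giving
\begin{equation*}
    R(\delta) \;\le\; \sqrt{\frac{C'\, \delta^{1 - 1/(2\alpha)}}{n}}
\end{equation*}
for some constant $C'$ depending only on $\alpha$ and $C$.

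Finally, the fixed-point equation $R(\delta_n) = \delta_n$ becomes, upon squaring, $\delta_n^2 \lesssim \delta_n^{1 - 1/(2\alpha)}/n$, which rearranges to $\delta_n^{(2\alpha+1)/(2\alpha)} \lesssim 1/n$, i.e., $\delta_n \lesssim n^{-2\alpha/(2\alpha+1)}$. Combined with the earlier reduction, this gives \eqref{bd_excess_risk_poly}.

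I do not expect any serious obstacle here: once Theorem \ref{thm_risk} and the Lipschitz reduction \eqref{def_eps_g_hat} are in hand, the only work is the standard critical-radius computation for polynomially decaying eigenvalues. The only point requiring a bit of care is ensuring that $\alpha > 1/2$ is used both to make the tail series $\sum_{j > k_\delta} j^{-2\alpha}$ convergent and to guarantee that the resulting rate $n^{-2\alpha/(2\alpha+1)}$ dominates the residual $1/n$ term from Theorem \ref{thm_risk}.
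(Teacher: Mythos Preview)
Your proposal is correct and follows essentially the same approach as the paper's proof: invoke Theorem~\ref{thm_risk} with $\fh=f^*$ and the Lipschitz bound~\eqref{def_eps_g_hat}, then bound $\delta_n$ by splitting $R(\delta)^2$ at the index where $\mu_j$ crosses $\delta$, using integral comparison on the tail (via $\alpha>1/2$), and solving the resulting fixed-point inequality. The paper phrases the split in terms of the statistical dimension $d(\delta)$ and appeals to Lemma~\ref{lem_subroot_monotone} to pass from $R(\delta_0)\le\delta_0$ to $\delta_n\le\delta_0$, but the content is identical to your $k_\delta$ computation.
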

     The first term in the risk bound \eqref{bd_excess_risk_poly} corresponds to  the minimax optimal prediction risk established in \cite{caponnetto2007optimal}  
    when $n$ i.i.d. pairs of $(Y_i,Z_i)$ are observed. As $\alpha$ decreases and the corresponding $\cH_K$ becomes more complex, the first term decays more slowly, while the term $\EE \|\wh g(X)-Z\|_2^2$ becomes easier to absorb into the first term.

An important example of an RKHS induced by polynomial decay kernels is the well-known Sobolev RKHS. Specifically, consider the case where $\cZ\subset \RR^r$ is open and bounded,  and $\rho$ has a density with respect to the uniform distribution on $\cZ$ \citep{steinwart2009optimal}.
 For any integer $s >  r/2$, let $W_{s}(\cZ)$ denote the Sobolev space of smoothness $s$.  When $s= r =1$, the Sobolev space $W_{1}(\cZ)$ is an RKHS induced by $K(z,z')= \min\{z,z'\}$. For more general settings, we refer the reader to \cite{novak2018reproducing} for the explicit form of the kernel function that induces $W_{s}(\cZ)$.
As described by \citet[Equation 4 on page 119]{edmunds1996function}, the eigenvalues $\{\mu_j\}_{j\ge 1}$ corresponding to $W_s(\cZ)$ satisfy the polynomial decay condition with $\alpha= s/r>1/2$. 
As a result,  the excess risk bound in \cref{cor_K_poly} becomes
\begin{align*}
    \cE(\wh f\circ \wh g)~ \lesssim_{\eta}  ~ n^{-\frac{2s}{2s+r}}   +  \EE \|\wh g(X)-Z\|_2^2.
\end{align*}
The first term follows the classical nonparametric rate but depends only on $r$, the dimension of the latent factor $Z$, rather than the ambient dimension of $\cX$. It tends to zero as long as $r = o(\log n)$ for any fixed smoothness $s$. By contrast, when applying KRR directly to predict the response from $X_1, \ldots, X_n \in \cX\subseteq \RR^p$, the excess risk bound would be $n^{-\frac{2s}{2s+p}}$, which does not vanish once $\log n = \cO(p)$. When $r \ll p$, reducing to a lower dimension before applying KRR could lead to a significant advantage for prediction. See a concrete example in \cref{sec_PCA}.

\subsubsection{Kernels with exponentially  decaying eigenvalues}

We consider in this section RKHSs induced by kernels whose eigenvalues exhibit exponential decay, that is,  there exists some $\gamma > 0$  such that $\mu_j\le \exp(-\gamma j)$ for all $j\ge 1$. It is well known that the eigenvalues of the Gaussian kernel satisfy this exponential decay condition.  

The following corollary states the simplified risk bounds of \cref{thm_risk} for such kernels with exponentially decaying eigenvalues. Its proof can be found in \cref{app_sec_cor_K_exp}.

\begin{corollary}\label{cor_K_exp}
Consider the exponential decay kernels. Grant model \eqref{model} with $f^*\in \cH_K$ and Assumptions  \ref{ass_f_H}--\ref{ass_Lip_K}.
For any $\eta\in(0,1)$, the following holds with probability at least $1-\eta$,
\begin{equation}\label{bd_excess_risk_exp}
\cE(\wh f\circ \wh g)~ \lesssim_{\eta}  ~  \frac{\log n}{n}  +  \EE \|\wh g(X)-Z\|_2^2.
\end{equation}
\end{corollary}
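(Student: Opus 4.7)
The plan is to apply Theorem \ref{thm_risk} as a black box and reduce the proof to estimating the critical radius $\delta_n$ for kernels with exponentially decaying eigenvalues. Since we assume $f^* \in \cH_K$, Assumption \ref{ass_f_H} forces $\fh = f^*$, so the approximation error $\|\fh - f^*\|_\rho^2$ vanishes. By Assumption \ref{ass_Lip_K} and the bound \eqref{def_eps_g_hat} already derived in \cref{sec_theory_latent}, the kernel-related latent error satisfies $\EE \Delta_{\wh g} \le C_K^2 \EE \|\wh g(X) - Z\|_2^2$. Substituting these into \eqref{bd_pred_rate} reduces the proof of \eqref{bd_excess_risk_exp} to showing that $\delta_n \lesssim \log(n)/n$, since the remaining $\log(1/\eta)/n$ term is absorbed into the $\lesssim_\eta$ notation.

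To control $\delta_n$, I would analyze the kernel complexity function
\[
    R(\delta) = \Bigl(\frac{1}{n}\sum_{j=1}^\infty \min\{\delta, \mu_j\}\Bigr)^{1/2}
\]
using the exponential decay $\mu_j \le e^{-\gamma j}$. The standard device is truncation: for any $\delta \in (0, 1)$, choose the threshold index $k(\delta) := \lceil \gamma^{-1} \log(1/\delta) \rceil$, so that $\mu_j \le \delta$ for $j > k(\delta)$. Splitting the sum gives
\[
    R(\delta)^2 \le \frac{1}{n}\Bigl( k(\delta)\, \delta + \sum_{j > k(\delta)} e^{-\gamma j}\Bigr) \le \frac{1}{n}\Bigl(\frac{\delta \log(1/\delta)}{\gamma} + \delta + \frac{\delta}{1-e^{-\gamma}}\Bigr) \lesssim \frac{\delta \log(1/\delta)}{n},
\]
where the last inequality holds for $\delta$ small enough. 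The fixed-point equation $R(\delta_n) = \delta_n$ then forces $\delta_n \lesssim \log(1/\delta_n)/n$, and a routine check (e.g., plugging in $\delta = c\log(n)/n$ and verifying both sides balance) yields $\delta_n \lesssim \log(n)/n$.

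The main obstacle is purely computational: verifying that the tail sum $\sum_{j > k(\delta)} e^{-\gamma j}$ is indeed of order $\delta$ (immediate from geometric series) and then solving the transcendental fixed-point inequality $\delta_n \lesssim \log(1/\delta_n)/n$ cleanly to obtain the announced $\log(n)/n$ rate. Beyond that, one substitutes the bound into \eqref{bd_pred_rate}, with $\lambda$ chosen according to \eqref{lb_lambda} but now with $\delta_n$ replaced by $\log(n)/n$, and collects the three terms to conclude \eqref{bd_excess_risk_exp}.
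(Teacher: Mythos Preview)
Your proposal is correct and follows essentially the same route as the paper: both apply Theorem~\ref{thm_risk} as a black box, note that $\fh = f^*$ kills the approximation error and that Assumption~\ref{ass_Lip_K} converts $\EE\Delta_{\wh g}$ into $\EE\|\wh g(X)-Z\|_2^2$, and then reduce to bounding $\delta_n$ via the same truncation of $\sum_j \min\{\delta,\mu_j\}$ at the index where $\mu_j \approx \delta$. The paper phrases the truncation in terms of the statistical dimension $d(\delta)$ and bounds the tail by an integral, while you use an explicit $k(\delta)=\lceil \gamma^{-1}\log(1/\delta)\rceil$ and a geometric series, but these are cosmetically different implementations of the same argument.
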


The first term in the bound of \eqref{bd_excess_risk_exp} coincides with the minimax optimal rate in the classical setting for an exponentially decaying kernel (see, for instance, Example 2 in \cite{yang2017randomized} under the fixed design setting). It is interesting to note that absorbing the error from predicting feature inputs into the first term requires a higher accuracy of $\wh g(X)$ in predicting $Z$ for exponentially decaying kernels compared to polynomially decaying kernels.

\subsection{Proof techniques of \texorpdfstring{\cref{thm_risk}}{\texttwoinferior}}\label{sec_theory_KRR_proof}
In this section, we provide a proof sketch of \cref{thm_risk} and highlight its main challenges.  The proof consists of three main components, which are discussed separately in the following subsections.

\subsubsection{Bounding  the empirical process with a reduction argument} 

Recall that the {\em excess} loss function of $f\circ \wh g$ relative to $\fh\circ \wh g$ is: 
$$
     \ell_{f\circ \wh g}(y,x) := \bigl (y - ( f\circ \wh g)(x) \bigr )^2 - \bigl (y - (\fh\circ \wh g)(x) \bigr )^2,\qquad \forall~ x\in \cX,y\in \RR.
$$
The first component of our proof employs a delicate reduction argument, showing that proving \cref{thm_risk} can be reduced to bounding from above the empirical process 
    \begin{equation}\label{target_proof}
        \sup_{f\in \uline{\cF_b} } \Bigl\{\EE[\ell_{f\circ \wh g}(Y,X)] - 2\EE_n[\ell_{f\circ \wh g}(Y,X)]\Bigr\}
    \end{equation}
  where   $\EE_n$ denotes the expectation with respect to the empirical measure, and 
  $\uline{\cF_b}$ is the local RKHS-ball around $\fh$, given by
    \begin{equation}\label{def_local_ball}
        \uline{\cF_b} :=\left\{f\in\cF_b: 
      \EE[\ell_{f\circ \wh g}(Y,X)]\le  \lambda  \right\},\qquad  \cF_b :=\left\{f\in\cH_K: 
     \|f-f_{\cH}\|_K\le 3\|f_{\cH}\|_K\right\}
    \end{equation} 
    with $\lambda$ given in \eqref{lb_lambda}. 
    Since $\uline{\cF_b}\subseteq \cF_b$ and $\cF_b$
    is a bounded function class, which can be seen from 
    \begin{equation*}
         \|f\|_\i \le  4 \kappa  \|\fh \|_K,\qquad \forall~  f\in  \cF_b,
    \end{equation*}
    by the reproducing property, 
    this reduction argument allows us later to apply existing empirical process theory which is only applicable for bounded function classes. Moreover, the reduction argument ensures that our bound in \cref{thm_risk}  depends only on $\|\fh\|_K^2$, a finite quantity that can be much smaller than $\sup_{f\in \cH_K}\|f\|_K^2$, which could be infinite.   
            
     The second part of this component is to bound from above the empirical process \eqref{target_proof}.  Since the response variable $Y$ is not assumed to be bounded,  existing empirical process theory for bounded functions cannot be directly applied. Instead we rewrite the excess loss function as 
    \[
       \EE[\ell_{f\circ \wh g}(Y,X)]  = \EE\left[\bigl(f^*(Z)- (f\circ \wh g)(X)\bigr)^2-\bigl(f^*(Z)-  (f_\cH\circ \wh g)(X)\bigr)^2\right] =:\EE\left[h_{f\circ \wh g}(Z, X)\right]
    \]
    for the function $h_{f\circ \wh g}: \cZ \times \cX \to \RR$. Also note that its empirical version satisfies
    \begin{align*}
         \EE_n[\ell_{f\circ \wh g}(Y,X)] =  \EE_n\left[h_{f\circ \wh g}(Z, X)\right] +  2\EE_n\bigl[\epsilon(f_{\cH}\circ \wh g)(X)- \epsilon(f\circ \wh g)(X)\bigr].
    \end{align*}    
Our proof in this step 
relies on the local Rademacher complexity,  defined as:   for any $\delta\ge 0$, 
\begin{align}\label{def_wt_psi}
     \psi_x (\delta) :=  \localcomp  \l\{  f \circ \wh g -\fh \circ \wh g:  f\in \cF_b, ~      \EE\l[   (f \circ \wh g)(X)-(\fh \circ \wh g)(X ) \r]^2 \le \delta \r\} . 
\end{align}
The detailed definition of $\localcomp$ can be found in \cref{app_sec_Rade}. 
Let $\delta_x$ be the fixed point  of
$ \psi_x$ such that $\psi_x(\delta_x) = \delta_x$.   A key step is to establish the following result that holds uniformly over $f\in \cF_b$,
    \begin{align}\label{bd_h_proof} 
        \EE\left[h_{f\circ \wh g}(Z,X)\right] & ~ \lesssim ~ \EE_n\left[h_{f\circ \wh g}(Z,X)\right] +\delta_x +  \EE \Delta_{\wh g} +  \|f^*-\fh\|_\rho^2.
    \end{align}
    This is proved in \cref{lem_bd_h_f} where we apply the empirical process result based on the local Rademacher complexity in \cite{bartlett2005local}.  A notable difficulty in our case is to take into account both the approximation error and the error of predicting feature inputs. 
    The next step towards bounding the empirical process \eqref{target_proof} is to bound from above the following cross-term uniformly over $f\in \uline{\cF_b}$, 
    \[
         \EE_n\bigl[\epsilon(f_{\cH}\circ \wh g)(X)- \epsilon(f\circ \wh g)(X)\bigr].
    \]  
     This  also turns out to be quite challenging due to the presence of $\wh g(X_1),\ldots, \wh g(X_n)$, and we sketch its proof in \cref{sketch_cross_term} separately. Note that this difficulty could have been avoided if the response  $Y$ were assumed to be bounded.
    Combining \eqref{bd_h_proof} with the bound of the cross-term stated below yields upper bounds for the empirical process \eqref{target_proof}.

\subsubsection{Uniformly bounding the cross-term} \label{sketch_cross_term}
 
The proof of bounding the cross-term over the function space $\uline{\cF_b}$ consists of three steps. The first step reduces bounding the cross-term to bounding $\EE_n [ (f\circ \wh g) (X) - (\fh \circ \wh g) (X)  ]^2 $ through the {\em empirical} kernel complexity function $\wh R_x (\delta)$, defined as  
\begin{align}\label{def_em_kercomp}
    \wh R_x (\delta) =  \biggl(\frac{1}{n}\sum_{j=1}^n 
\min \l \{ \delta, \wh {\mu}_{x,j} \r\}\biggr)^{1/2}, \qquad \forall  \ \delta\ge 0. 
\end{align}
Here we use $\wh \mu_{x,1}\ge \cdots\ge\wh \mu_{x,n}$ to denote the eigenvalues of the kernel matrix $\bK_x$ with entries $n^{-1}K(\wh g(X_i),\wh g(X_j))$ for $i,j\in [n]$. Since $\wh R_x (\delta)$  depends only on the predicted inputs, its introduction is pivotal not only for 
uniformly bounding the cross-term but also, as detailed in the next section, for determining the rate of $\delta_x$, the
fixed point to the local Rademacher complexity in \eqref{def_wt_psi}. 
We show  in  \cref{lem_cross_term_1} of \cref{sec_cross_term} that for any $q>  0$, 
with probability $1-\eta$, the following holds uniformly over the function class $\{f\in \cF_b: \EE_n[(f\circ\wh g)(X)- (\fh\circ\wh g)(X)]^2\le q \}$,
\begin{align*}
    \EE_n [\epsilon(f\circ \wh g)(X) - \epsilon(\fh\circ \wh g)(X)] ~ \lesssim ~   \sqrt{\log(1/\eta)}  ~ 
    \wh R_x(q).
\end{align*}  
The next step connects $\EE_n [ (f\circ \wh g) (X) - (\fh \circ \wh g) (X)  ]^2$ 
to its population-level counterpart. By leveraging the empirical process result in \cite{bartlett2005local}, we show in  \cref{lem_cross_term_2}  that with probability $1-\eta$ and uniformly over $f\in \cF_b$,
\begin{align*}
    \EE_n [ (f\circ \wh g) (X) - (\fh \circ \wh g) (X)  ]^2 ~ \lesssim ~  \EE [ (f\circ \wh g) (X) - (\fh \circ \wh g) (X)  ]^2 
    +  \delta_x  { + \frac{\log{(1/\eta)}}{n}}.
\end{align*}
Finally, as $\uline{\cF_b}$ is defined via the excess loss function $\EE[\ell_{f\circ \wh g}(Y,X)]$, we further prove that  
\[
    \EE \l  [  (f\circ \wh g) (X) - (\fh \circ \wh g) (X)  \r]^2 ~  \lesssim ~ \EE[\ell_{f\circ \wh g}(Y,X)] +\EE\Delta _{\wh g}  +\|\fh -f^*\|_\rho^2
\]
so that combining the three displays above yields the final uniform bound for the cross-term, stated in the following lemma, with the detailed proof provided in  \cref{sec_cross_term}.  
\begin{lemma}
    Grant model \eqref{model} with Assumptions \ref{ass_f_H}--\ref{ass_reg_error}. Fix any $\eta\in (0,1)$. With probability at least $1-\eta$,  the following holds uniformly over $f\in \uline{\cF_b}$,
     \[
         \EE_n\bigl[\epsilon(f_{\cH}\circ \wh g)(X)- \epsilon(f\circ \wh g)(X)\bigr]~ \lesssim ~   \sqrt{\log(1/\eta)}  ~ \wh R_x\l(\lambda  + \delta_x   \r).
     \]
\end{lemma}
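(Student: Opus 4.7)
The plan is to chain the three bounds set out in Section 3.3.2 and then absorb lower-order terms using the specific choice of $\lambda$ in \eqref{lb_lambda}. Throughout, I would condition on $\wh g$, which is independent of the training data $\cD$ by \cref{rem_g_hat}, so any quantity depending only on the distribution of $\wh g(X)$ (in particular $\delta_x$ and population $L^2$ norms) becomes deterministic in this conditioning.

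First I would use the loss--distance conversion
\[
  \EE\bigl[(f\circ \wh g)(X)-(\fh\circ \wh g)(X)\bigr]^2 \;\lesssim\; \EE[\ell_{f\circ \wh g}(Y,X)] + \EE\Delta_{\wh g} + \|\fh-f^*\|_\rho^2,
\]
which follows by writing $Y - (f\circ \wh g)(X)$ as $[f^*(Z) - (f\circ\wh g)(X)] + \epsilon$, expanding the excess squared loss, and applying the same Cauchy--Schwarz / kernel reproducing argument used in \eqref{bd_irre_error}. Restricted to $f\in \uline{\cF_b}$, where by definition $\EE[\ell_{f\circ \wh g}(Y,X)]\le \lambda$, this yields a population bound of order $\lambda + \EE\Delta_{\wh g} + \|\fh-f^*\|_\rho^2$. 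Next, I would invoke \cref{lem_cross_term_2} (the localized empirical versus population $L^2$ comparison over $\cF_b$) to transfer this to the empirical norm at the price of an additive $\delta_x + \log(1/\eta)/n$. Because the choice of $\lambda$ in \eqref{lb_lambda} already dominates $\EE\Delta_{\wh g} + \|\fh-f^*\|_\rho^2 + \log(1/\eta)/n$, the resulting empirical bound collapses to
\[
  \EE_n\bigl[(f\circ \wh g)(X)-(\fh\circ \wh g)(X)\bigr]^2 \;\le\; q \;:=\; C(\lambda + \delta_x),
\]
uniformly over $f\in \uline{\cF_b}$, on an event of probability at least $1-\eta/2$.

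With this deterministic (given $\wh g$) radius $q$ in hand, I would then apply \cref{lem_cross_term_1} to the class $\{f\in \cF_b:\EE_n[(f\circ \wh g)(X)-(\fh\circ \wh g)(X)]^2 \le q\}$, which on the event above contains $\uline{\cF_b}$. That yields, with probability at least $1-\eta/2$,
\[
  \EE_n\bigl[\epsilon(\fh\circ \wh g)(X) - \epsilon(f\circ \wh g)(X)\bigr]
  \;\lesssim\; \sqrt{\log(1/\eta)}\,\wh R_x(q)
  \;\lesssim\; \sqrt{\log(1/\eta)}\,\wh R_x(\lambda + \delta_x),
\]
where the last inequality uses monotonicity of the sub-root function $\wh R_x$ together with $q\asymp \lambda+\delta_x$. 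A union bound over the two $\eta/2$-events delivers the claimed uniform statement at level $\eta$.

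The main obstacle, in my view, is that \cref{lem_cross_term_1} requires a \emph{deterministic} empirical radius $q$, whereas the natural radius attached to $\uline{\cF_b}$ is a random empirical $L^2_n$-quantity; bridging this is precisely the role of Step 2, which upgrades the random empirical radius to a deterministic envelope of order $\lambda+\delta_x$ via the population distance and the localized Rademacher bound. A secondary subtlety is that the uniformity in \cref{lem_cross_term_1} is over a class described by an \emph{empirical} $L^2_n$-constraint rather than the RKHS-ball constraint defining $\uline{\cF_b}$, so one must verify, on the good event of Step 2, that the latter class is contained in the former. Conditioning on $\wh g$ from the outset keeps $\delta_x$ and $\wh R_x$ well-defined non-random objects (given the data) and makes the two union bounds clean.
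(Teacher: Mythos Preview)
Your proposal is correct and follows essentially the same three-step approach as the paper: convert the excess-loss constraint on $\uline{\cF_b}$ to a population $L^2$ bound via \eqref{bd_irre_error}, pass to the empirical $L^2$ norm using \cref{lem_cross_term_2} (paying $\delta_x+\log(1/\eta)/n$), and then apply \cref{lem_cross_term_1} with the resulting deterministic radius, absorbing $\EE\Delta_{\wh g}+\|\fh-f^*\|_\rho^2+\log(1/\eta)/n$ into $\lambda$ via \eqref{lb_lambda}. You also correctly flag the key subtlety that \cref{lem_cross_term_1} needs a deterministic empirical radius, which is exactly why Step~2 is needed before the cross-term bound can be invoked.
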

\noindent It thus remains to derive upper bounds for 
$\delta_x$ and $\wh R_x (\cdot)$, which are outlined  in the next section. 

\subsubsection{Relating the local Rademacher complexity to the kernel complexity}\label{sec_main_relate_complexity}

The two previous components necessitate studying the fixed point $\delta_x$ in \eqref{bd_h_proof}, which in turn requires relating the local Rademacher complexity $\psi_x (\delta)$ to the kernel complexity function $R(\delta)$ in \eqref{kernel_complexity}. This arises exclusively from using predicted feature inputs and is highly non-trivial without imposing any assumptions on the error of predicting features.

To appreciate the difficulty, we start by revisiting existing analysis in the classical setting. When  $\{Z_i\}_{i=1}^n$ are observed and used as the input vectors in the empirical risk minimization \eqref{def_f_hat},  the local Rademacher complexity becomes 
\[
     \psi_z (\delta) :=  \localcomp  \l\{  f   -\fh :  f\in \cF_b, ~      \EE\l[   f (Z)-\fh(Z) \r]^2 \le \delta \r\},\qquad \forall ~ \delta \ge 0.
\]
\citet[Lemma 42]{mendelson2002geometric} shows that 
$\psi_z (\delta)$  is sandwiched by the kernel complexity 
$R(\delta)$ in \eqref{bd_h_proof},
up to some constants that only depend on the radius of $\cF_b$. Armed with this result, it is easy to prove that the fixed point $\delta_z$ of $\psi_z(\delta)$ satisfies $\delta_z \lesssim \delta_n$. 
One key argument in \cite{mendelson2002geometric} is the fact that  $\EE[\phi^2_j(Z_i)]=1$ for all $i\in [n]$ and $j\ge 1$, as $\{\phi_j\}_{j=1}^\i$, defined in \cref{ass_mercer}, form an orthonormal basis of the $\cL^2$ space induced by the probability measure $\rho$ of $Z$.

Returning to our case, the intrinsic difficulty arises from the mismatch between the predicted features $\wh g(X)$, which are used in the regression, and the latent factor $Z$ whose probability measure $\rho$ induces the integral operator $L_K$ hence the kernel complexity function 
 $R(\delta)$.
Indeed, if we directly characterize the complexity of $\cH_K$ via the integral operator $L_{x,K}$, induced by the probability measure $\rho_x$ of  $\wh g (X)$, we need to assume $L_{x,K}$ admits the Mercer decomposition as in \cref{ass_mercer} with eigenvalues $\{\mu_{x,j}\}_{j\ge 1}$ (note that this is already difficult to justify due to its dependence on  $\rho_x$ and $\wh g$). Then, by repeating the argument in \cite{mendelson2002geometric}, the local Rademacher complexity $\psi_x (\delta)$ in \eqref{def_wt_psi}
    is bounded by (in order)  
    \[
    R_x(\delta)= \biggl(\frac{1}{n}\sum_{j=1}^\i \min\{\delta,\mu_{x,j}  \}\biggr)^{1/2}.
    \]
    Establishing a direct relationship between $R_x(\delta)$ and $R(\delta)$, however, is generally intractable without imposing strong assumptions on the relationship between $\rho_x$ and $\rho$. 

To deal with this mismatch issue, we have to work on the {\em empirical} counterparts of the aforementioned quantities. Start with the  empirical counterpart  of $\psi_x (\delta)$, defined as:  for any $\delta\ge 0$, 
\begin{align}\label{target_em_locom_sec}
    \wh \psi _x (\delta) :=  \emlocalcomp  \l\{ f \circ \wh g-\fh \circ \wh g: ~ f\in \cF_b,~    \EE_n \l[  (f \circ \wh g)(X)-(\fh \circ \wh g)(X)\r]^2 \le \delta \r\}.
\end{align}
By borrowing the results in \cite{boucheron2000sharp,boucheron2003concentration,bartlett2005local},
we first establish the connection between the population and empirical local  Rademacher complexities in \cref{supp_lem_4} of \cref{app_sec_radem_emp}: 
\begin{align} \label{first_bound}
\psi _x (\delta) ~  \lesssim ~     \wh \psi _x (\delta)   +  \frac{\log(1/\eta)}{n},
\end{align}
provided that  $\delta$ is not too small. 
Subsequently,   we relate the empirical local Rademacher complexity $\wh \psi _x (\delta)$ to the empirical kernel complexity $\wh R_x (\delta)$ in \eqref{def_em_kercomp}. 
Both quantities depend on the predicted inputs $\wh g(X_1),\ldots, \wh g(X_n)$. By following the argument used in the proof of Lemma 6.6 of \citet{bartlett2005local}, 
we prove in  \cref{lem_bd_em_local_rade_gb} of \cref{sec_pf_bd_emlocom}: 
\begin{align} \label{second_bound} 
  \wh  \psi _x (\delta) ~  \lesssim ~  \wh R_x (\delta),   \qquad \forall~  \delta \ge 0.  
\end{align}

We proceed to derive an upper bound for $\wh R_x (\delta)$, a quantity that also appears in the bound of the cross-term in \cref{sketch_cross_term}. Analyzing $\wh R_x (\delta)$  is crucial for quantifying the increased complexity of $\cH_K$ caused by the predicted feature inputs. Indeed, when $\wh g(X_i)$ predicts $Z_i$ accurately,  the kernel matrix $\bK_x$ should be close the kernel matrix $\bK$, whose entries are $n^{-1} K(Z_i, Z_j)$ for $i,j\in [n]$. As a result,  the empirical kernel complexity
$\wh R_x (\delta)$ approximates 
\begin{align}\label{def_em_kercomp_true_input}
    \wh  R(\delta) =  \biggl(\frac{1}{n}\sum_{j=1}^n 
\min \l \{ \delta, \wh {\mu}_j \r\}\biggr)^{1/2}, \qquad \forall  \ \delta\ge 0, 
\end{align}
with  $\wh \mu_1\ge \cdots \ge \wh \mu_n$ being the eigenvalues of $\bK$.
Note that
$\wh  R(\delta)$  can be regarded as  the empirical counterpart of 
$R(\delta)$, and it is expected due to concentration that $\wh  R(\delta)$  and $R(\delta)$ have the same order. However, when the prediction error of $\wh g(X_i)$ is not negligible, it should manifest in the gap between 
$\wh R_x (\delta )$ and   $\wh  R(\delta)$. 
Our results in  \cref{lem_bd_wh_R,lem_bd_bar_Delta} and \cref{cor_lem7_and_lem8} of \cref{sec_bd_wh_R_delta}   characterize how the error of $\wh g(X_i)$ in predicting $Z_i$ inflates the empirical kernel complexity:  
\begin{align} \label{third_bound}
   \wh R_x (\delta) ~  \lesssim ~ \wh  R(\delta) + \sqrt{\frac{ \EE\Delta_{\wh g}}{n} } + \frac{\log (1/\eta)}{n} . 
\end{align}
The final step is to bound from above  $\wh  R(\delta)$ by its population-level counterpart $R(\delta)$. This is proved in \cref{lem_bd_wt_R} of \cref{sec_bd_wt_R}: 
\begin{align} \label{fourth_bound}
 \wh  R(\delta)  ~  \lesssim ~ R(\delta)  + \frac{\sqrt{\log(1/\eta)}}{n}.  
\end{align}
By collecting the results in \eqref{first_bound}, \eqref{second_bound}, \eqref{third_bound} and \eqref{fourth_bound}, we conclude that
\begin{align}\label{final_bound}
  \psi_x (\delta)  ~  \lesssim ~   R(\delta)+\sqrt{\frac{\EE \Delta_{\wh g} }{n} } + \frac{ \sqrt{\log(1/\eta)}}{n},
\end{align}
from which  we finally derive the order of $\delta_x$ in \cref{lem_bd_td_delta} of \cref{app_sec_lrc_kc} as 
\begin{align*}
    \delta_x ~  \lesssim ~   \delta_n  + \EE\Delta_{\wh g} + \frac{ \sqrt{\log(1/\eta)}}{n}. 
\end{align*}

\section{Application to factor-based regression models}\label{sec_PCA}

To provide a concrete application of our developed theory in Section \ref{sec_theory_KRR}, we consider the factor model  \eqref{model_X} where the observable features $X \in \cX = \RR^p$ are linearly related with the latent factor $Z \in \cZ = \RR^r$ with $r\ll p$. By centering, we can assume $Y$, $X$ and $Z$ in models  \eqref{model_X} and \eqref{model} have zero mean.  
Under such model, it is reasonable to choose $\wh g$ as a linear function to predict $Z$, as detailed in \cref{sec_method_PCA}. In \cref{sec_theory_PCA} we state the excess risk bound of the corresponding predictor while in \cref{sec_theory_lb} we prove a matching minimax lower bound, thereby establishing the minimax optimality of the proposed predictor.

\subsection{Prediction of latent factors using PCA}\label{sec_method_PCA}

We first discuss the choice of $\wh g$ for  predicting the latent factor $Z$. As mentioned in \cref{rem_g_hat}, we construct such predictor from an auxiliary data $\bX'\in \RR^{n'\times p}$ (for simplicity, we assume $n' = n$) that is independent of the training data $\cD$.
Under the factor model \eqref{model_X}, the $n\times p$ matrix $\bX' = (X_1',\ldots, X_n')^\T$ satisfies 
\begin{equation}\label{model_X_mat}
    \bX' = \bZ' A^\T + \bW',
\end{equation}
with $\bZ' = (Z_1',\ldots,Z_n')^\T$ and $\bW' = (W_1',\ldots, W_n')^\T$. Prediction of $\bZ'$ and estimation of $A$ under 
 \eqref{model_X_mat}
 has been extensively studied in the literature of factor models. One classical way  is to solve the following constrained least squares problem:
\begin{equation}\label{crit_Z_hat}
\begin{split}
    &(\wh \bZ', \wh A) ~ = ~ \argmin_{\bZ'\in \RR^{n\times r},\ A\in \RR^{p\times r}}~  {1\over np}\bigl\|
     \bX' - \bZ' A^\T 
    \bigr\|_\F^2,\\
    &\textrm{subject to}\quad  \bZ^{'\T}\bZ'  \textrm{ is diagonal},\quad  A^\T A=p ~ \bI_r.
\end{split}
\end{equation}
The above formulation uses  $r$, the true number of latent factors. In the factor model literature, there exist several methods that are provably consistent for selecting $r$. See, for instance, \cite{Ahn-2013,Bai-Ng-K,bing2020adaptive,bing2021prediction}. 
Write the singular value decomposition (SVD) of  the normalized $\bX'$ as
\[
\frac{1}{\sqrt{np}}\bX'~= ~\sum_{i=1}^{p} d_i  u_i v_i^\T.
\]
Let $U_r = (u_1,\ldots, u_r)\in \OO_{n\times r}$, $V_r = (v_1,\ldots,v_r)\in \OO_{p\times r}$  and  $D_r = \diag(d_1,\ldots, d_r)$ containing the largest $r$ singular values. 
It is known (see, for instance, \cite{Bai-factor-model-03}) that the constrained optimization in \eqref{crit_Z_hat}  admits the following closed-form solution
\begin{equation}\label{def_B_hat}
    \wh A = \sqrt{p}~  V_r ,\qquad \wh\bZ' = \sqrt{n}~ U_r D_r = \bX'   (\wh A  /  p ) =: \bX' \wh B^\T.
\end{equation}  
Given $\wh B \in \RR^{r\times p}$ as above, we can predict $Z_1,\ldots, Z_n$ in the training data  by $\wh B X_1, \ldots, \wh B X_n$ and then proceed with the kernel ridge regression in \eqref{def_f_hat} of Section \ref{sec_theory_KRR}  to obtain the regression predictor $\wh f$. For a new data point $X$ from model \eqref{model_X}, its final prediction is 
\[
  (\wh f\circ \wh B) (X) =  \frac{1}{\sqrt{n}}\sum_{i=1}^n \wh \alpha_i ~ K(\wh B X_i, \wh B 
 X).
\]
Its excess risk bound is stated in the next section, as an application of our theory in \cref{sec_theory_KRR}.

    \subsection{Upper bounds of the excess risk of \texorpdfstring{$\wh f \circ \wh B$}{\texttwoinferior}}\label{sec_theory_PCA}

    From our theory in \cref{sec_theory_KRR}, by writing  $\sz = \Cov(Z)$ and $\sw = \Cov(W)$,
    prediction error of $\wh g(X) = \wh B X$ appears in the excess risk bound in the form of  
    \begin{equation}\label{def_eps_g_linear}
        \EE\|Z- \wh B X\|^2_2 =  \|\sz^{1/2}(\wh B A - \bI_r)\|_\F^2 + \|\sw^{1/2}\wh B^\T\|_\F^2.
    \end{equation} 
   The second equality is due to the independence between $Z$ and $W$. The matrix $\wh B$ must therefore ensure that $\|\wh B A - \bI_r\|_\F$ vanishes. Since this essentially requires identifying  $A$, we adopt  the following identifiability and regularity conditions commonly used in the factor model literature (see, for instance, \cite{Bai-factor-model-03,fan2013large,bai2020simpler}).  Throughout this section, we also treat the number of latent factors $r$ as fixed. 

 \begin{assumption}\label{ass_A}
     The matrix $A$ and $\sz$ satisfy 
     $p^{-1}A^\T A = \bI_r$ and $\sz = \diag(\lambda_1, \ldots \lambda_r)$ with $\lambda_1, \ldots, \lambda_r$ being distinct. Moreover, there exist some absolute constants $0 < c\le C<\i$ such that $\max_{j\in[p]}\|[\sw]_{j\cdot}\|_1 < C$ and $c\le \lambda_r \le \lambda_1 \le C$.
 \end{assumption}

 \begin{assumption}\label{ass_tails_ZW}
     Both random vectors $Z$ and $W$ under model \eqref{model_X} have sub-Gaussian tails, that is, $\EE[\exp(u^\T Z)] \le \|u\|_2^2\gamma_z^2$ and $\EE[\exp(v^\T W)] \le  \|v\|_2^2\gamma_w^2$ for all $u\in \RR^r$ and $v\in \RR^p$.
 \end{assumption}

    Both \cref{ass_A} and \cref{ass_tails_ZW} are commonly assumed in the literature of factor models with the number of features allowed to diverge \citep{Bai-factor-model-03,fan2013large}. 
    The requirement of $\lambda_1, \ldots, \lambda_r$ being distinct in \eqref{ass_A}   ensures that  $Z$ can be consistently predicted. Such requirement can be dropped when using kernels invariant under orthogonal transformations (see also \cref{sec_theory_latent}).
    
    Under Assumptions \ref{ass_A} and \ref{ass_tails_ZW}, the existing literature (see, for instance, \cite{bai2020simpler}) ensures that 
    $
            \EE\|Z- \wh B X\|^2_2  = \cO_\PP(  n^{-1}  +p^{-1}).
    $
    Consequently, \cref{thm_risk} together with $\EE\Delta_{\wh g} \lesssim \EE\|Z- \wh B X\|^2_2$ and \cref{def_eps_g_linear} yields the following excess risk bounds of $\wh f\circ \wh B$.  
    
    \begin{corollary}\label{thm_PCA}
        Under conditions of \cref{thm_risk}, further grant model \eqref{model_X} with Assumptions \ref{ass_A} and \ref{ass_tails_ZW}. By choosing $\lambda  \asymp \delta_n \log(1 /\eta) +    \log(1 /\eta) / n  + 1/p + {\|\fh-f^*\|_\rho^2}$
        in \eqref{def_f_hat},  one has 
        \begin{align*}
        \cE(\wh f\circ \wh B)
         ~ = ~  \cO_{\PP}\left(  \delta_n  +     { {1\over n }} + {1\over p}  + {\|\fh-f^*\|_\rho^2} \right)
        \end{align*}  
    where $\cO_\PP$ is  with respect to the law of both $\cD$ and $\bX'$.
    \end{corollary}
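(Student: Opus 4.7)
The plan is to apply Theorem~\ref{thm_risk} conditionally on the auxiliary sample $\bX'$, then invoke the classical PCA consistency theory for approximate factor models to control the only remaining data-dependent quantity, namely the kernel-related latent error $\EE\Delta_{\wh B}$. Concretely, since $\wh B$ is a measurable function of $\bX'$ only, and $\bX'$ is independent of the training data $\cD$, the independence requirement of Theorem~\ref{thm_risk} is satisfied. Applying that theorem with $\wh g(\cdot) = \wh B\,(\cdot)$ and the prescribed $\lambda$ gives, conditionally on $\bX'$ and with probability at least $1-\eta$ over $\cD$,
\[
\cE(\wh f \circ \wh B) \;\lesssim\; \delta_n \log(1/\eta) + \EE\bigl[\Delta_{\wh B} \,\big|\, \bX'\bigr] + \|\fh - f^*\|_\rho^2 + \log(1/\eta)/n.
\]

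Next I would convert the kernel-related latent error into a Euclidean prediction error. Under the Lipschitz property of the kernel (Assumption~\ref{ass_Lip_K}), inequality~\eqref{def_eps_g_hat} yields $\EE\Delta_{\wh B} \le C_K^2\, \EE\|\wh B X - Z\|_2^2$, and by the independence of $Z$ and $W$ together with $\sz = \Cov(Z)$, $\sw = \Cov(W)$, the identity in~\eqref{def_eps_g_linear} rewrites this as
\[
\EE\bigl\|\wh B X - Z\bigr\|_2^2 \;=\; \bigl\|\sz^{1/2}(\wh B A - \bI_r)\bigr\|_\F^2 + \bigl\|\sw^{1/2}\wh B^\T\bigr\|_\F^2.
\]
Under Assumptions~\ref{ass_A} and~\ref{ass_tails_ZW}, the distinctness of the eigenvalues of $p^{-1}A^\T A$, the sub-Gaussian tails of $Z$ and $W$, and the sparsity of $\sw$ in row-sum are exactly the standard conditions in the approximate factor model literature guaranteeing that the PCA-based estimator satisfies $\|\wh B A - \bI_r\|_\F^2 + \|\wh B^\T\|_\F^2 / p = \cO_\PP(1/n + 1/p)$; I would invoke \cite{Bai-factor-model-03,fan2013large,bai2020simpler} for this rate, noting that sign ambiguity is controlled by the eigengap assumption on $p^{-1} A^\T A$.

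Combining the two displays, and taking a union bound over the randomness of $\cD$ (conditional on $\bX'$) and $\bX'$ itself, one obtains
\[
\cE(\wh f\circ \wh B) \;=\; \cO_\PP\!\left(\delta_n + \frac{1}{n} + \frac{1}{p} + \|\fh - f^*\|_\rho^2\right)
\]
with respect to the joint law of $\cD$ and $\bX'$, matching the claim. The main obstacle is in the second step: the factor-model results as stated often quantify the in-sample error $n^{-1}\sum_i\|\wh B X_i' - Z_i'\|_2^2$, whereas I need the out-of-sample risk $\EE[\|\wh B X - Z\|_2^2 \mid \bX']$ for a fresh $X$. Resolving this requires control on $\|\wh B A - \bI_r\|_\op$ and $\|\wh B^\T\|_\op$, which in turn follows from Davis--Kahan type perturbation bounds on the top-$r$ singular subspace of $\bX'/\sqrt{np}$; the eigengap and sub-Gaussian tail conditions make this routine but is the one place where care is needed beyond a direct citation.
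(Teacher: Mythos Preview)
Your proposal is correct and follows essentially the same route as the paper: apply Theorem~\ref{thm_risk} with $\wh g=\wh B$ (valid since $\wh B$ depends only on the independent auxiliary sample $\bX'$), bound $\EE\Delta_{\wh B}\lesssim\EE\|\wh BX-Z\|_2^2$ via Assumption~\ref{ass_Lip_K}, and then invoke the approximate factor model literature (the paper cites \cite{bai2020simpler}) for the $\cO_\PP(1/n+1/p)$ rate on the out-of-sample prediction error. The paper simply absorbs your ``obstacle'' (in-sample versus out-of-sample error) into the citation rather than spelling out the Davis--Kahan argument, so your treatment is if anything slightly more explicit.
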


The term $1/p$ arises exclusively from using PCA to predict the latent factors under Assumptions \ref{ass_A} and \ref{ass_tails_ZW}. For linear kernels and $f_*\in \cH_K$, we reduce to the factor-based linear regression models \eqref{model_Y_linear} and \eqref{model_X}. Our results coincide with the excess risk bounds of PCR derived in \cite{bing2021prediction}. When $f^*$ belongs to a hierarchical composition of functions in the H\"{o}lder class with some smoothness parameter $\gamma>0$, \cite{fan2024factor} derived the optimal prediction risk  $(\log n / n)^{-{2\gamma\over 2\gamma + 1}} + \log(p) / n + 1 / p$, achieved by regressing the response onto the leading PCs via deep neural nets. \cref{thm_PCA} on the other hand provides the excess risk bound of kernel ridge regression using the leading PCs under a broader function class. The obtained rate in the next section is shown to be minimax optimal over a wide range of kernel functions.
    


\subsection{Minimax lower bound of the excess risk
}\label{sec_theory_lb}

To benchmark the obtained upper bounds in \cref{thm_PCA}, 
we  establish the minimax lower bounds  
of the excess risk $\cE(h)$  for any measurable function $h: \RR^p \to \RR$ under models  \eqref{model_X} and \eqref{model}. Although the factor model  \eqref{model_X} is a particular instance for modeling the relationship between $X$ and $Z$, the obtained minimax lower bounds remain valid for the space of joint distributions of $Z$ and $X$ under more general dependence. To specify the RKHS for model \eqref{model}, we consider the following class of kernel functions  
\[
    \cK := \b\{K ~  \text{is regular,  universal, and satisfies Assumptions \ref{ass_bd_K} \& \ref{ass_mercer}} \b \}. 
\]
Regular kernels were already required in the past work of \citet{yang2017randomized} to derive the minimax lower bounds for estimating $f^*$ under the classical KRR setting with a fixed design. 
In our case, we further need the kernel function to be universal in order to characterize the effect of 
predicting $Z$. As mentioned in \cref{rem_approx_error}, there is no approximation error for universal kernels under mild assumptions. Our lower bounds below will hold pointwise for each $K\in \cK$. 

For the purpose of establishing minimax lower bounds, it suffices to choose $\epsilon \sim N(0, \sigma^2)$ under model \eqref{model} and $(Z, W)$ being jointly Gaussian under model \eqref{model_X}. For this setting, we use $\PP_\theta$ to denote the set of all distributions of $\cD$, parametrized by 
\[
    \theta \in \Theta :=\l\{(A, \sz, \sw, f^* ) :  
    \text{$A, \sz$ and $\sw$ satisfy \cref{ass_A}}, ~ f^*\in \cH_K
    \r\}.
\]
Let  $\EE_{\theta}$ denote its corresponding expectation.
The following theorem states the minimax lower bounds of the excess risk under the above specification. 

\begin{theorem}\label{thm_lb_excess_risk}
Under  models \eqref{model_X} and \eqref{model}, 
consider any kernel function $K\in \cK$. 
Assume $n$ is sufficiently large such that $d(\delta_n) \ge 128 \log 2$. 
There exists some constant $c>0$ depending on $\sigma^2$ only and some absolute constant $c'>0$ such that  
\begin{align}\label{eq_lb_excess_risk}
     \inf_{h} \sup_{\theta \in \Theta}~  \EE_{\theta} \l[   f^* (Z)-  h   (X)  \r]^2  ~  \ge~     c ~ \delta_n  +  {c'\over p}.
\end{align}
The infimum is over all measurable functions $h: \cX \to \RR$  constructed from $\cD$.
\end{theorem}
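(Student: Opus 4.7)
The plan is to prove the two summands $c\delta_n$ and $c'/p$ separately via suitable sub-families of $\Theta$ and combine them using $\max(a,b)\ge (a+b)/2$. The first captures the classical KRR minimax error for estimating $f^*$, and the second captures the irreducible information loss of $Z$ through the factor model, which persists no matter how large $n$ is.

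For the $c\delta_n$ bound, I restrict to the sub-family $\Theta_1 \subset \Theta$ with $A_0$ fixed satisfying $A_0^\top A_0 = p I_r$, $\sz = I_r$, and $\sw = 0$, letting only $f^*$ vary in $\cH_K$. With $\sw = 0$ the factor model gives $X = A_0 Z$ deterministically, so for any measurable $h$ one has $\EE[(f^*(Z)-h(X))^2] = \|f^* - \widetilde h\|_\rho^2$ with $\widetilde h(z) := h(A_0 z)$, reducing the problem to the classical KRR lower bound with observations $(Y_i, Z_i)$. Using the eigenbasis from \cref{ass_mercer} and the regularity of $K$, I would build a Varshamov--Gilbert packing $\{f_1,\ldots,f_M\} \subset \cH_K$ of the form $f_i = \alpha \sqrt{\delta_n /d(\delta_n)}\sum_{j \le d(\delta_n)} \tau_j^{(i)} \phi_j$ with $\tau^{(i)} \in \{-1,+1\}^{d(\delta_n)}$ and a small tuning constant $\alpha$, giving $\log M \gtrsim d(\delta_n)$ and pairwise separation $\|f_i - f_j\|_\rho^2 \gtrsim \alpha^2 \delta_n$. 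By the data-processing inequality,
\begin{equation*}
  \mathrm{KL}\bigl(P_{\theta_i}^{\otimes n}\,\big\|\,P_{\theta_j}^{\otimes n}\bigr) \le \frac{n\,\|f_i - f_j\|_\rho^2}{2\sigma^2}\asymp \frac{\alpha^2 n\delta_n}{\sigma^2},
\end{equation*}
and since $K$ is regular the fixed-point equation $R(\delta_n)=\delta_n$ gives $n\delta_n \lesssim d(\delta_n)$. Choosing $\alpha$ small enough that $\mathrm{KL}\le (1/4)\log M$ and applying Fano's inequality produces $\inf_h \max_i \EE[\|f_i -\widetilde h\|_\rho^2] \gtrsim \delta_n/\sigma^2$, giving the first summand.

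For the $c'/p$ bound, I apply the minimax--maximin inequality: for any $\theta_0 \in \Theta$,
\begin{equation*}
  \inf_h \sup_{\theta \in \Theta} \EE_\theta\bigl[(f^*(Z) - h(X))^2\bigr] \ge \inf_h \EE_{\theta_0}\bigl[(f_0^*(Z) - h(X))^2\bigr] = \EE_{\theta_0}\bigl[\Var_{\theta_0}(f_0^*(Z)\mid X)\bigr],
\end{equation*}
where the last equality uses independence of the test pair from $\cD$, which makes the Bayes predictor $X\mapsto \EE_{\theta_0}[f_0^*(Z)\mid X]$ optimal regardless of the training sample. Pick $\theta_0$ with $\sz = I_r$, $A_0^\top A_0 = p I_r$, $\sw = I_p$, and Gaussian $(Z,W)$; then $Z \mid X \sim \Gauss(\mu_0(X), (p+1)^{-1} I_r)$, whence $\EE[\Var(z_1\mid X)] = 1/(p+1)$ for the first coordinate function $z_1$. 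By \cref{ass_mercer}, $\{\phi_j\}$ is an orthonormal basis of $L^2(\rho)$, so $\cH_K$ is dense in $L^2(\rho)$ (via truncation), and I may pick $f_0^* \in \cH_K$ with $\|f_0^* - z_1\|_\rho \le \frac{1}{2}(p+1)^{-1/2}$. Since the map $f \mapsto f(Z) - \EE[f(Z)\mid X]$ is a contraction from $L^2(\rho)$ to $L^2(\PP)$, the triangle inequality gives
\begin{equation*}
  \sqrt{\EE_{\theta_0}[\Var_{\theta_0}(f_0^*(Z)\mid X)]} \ge \sqrt{\EE[\Var(z_1\mid X)]}-\|f_0^* - z_1\|_\rho \ge \tfrac{1}{2}(p+1)^{-1/2},
\end{equation*}
yielding $\EE_{\theta_0}[\Var(f_0^*(Z)\mid X)] \ge c'/p$ for an absolute constant $c'>0$.

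The main obstacle is the constant-tracking in the Fano step for the first bound: one must simultaneously achieve $\log M \gtrsim d(\delta_n)$, pairwise $L^2(\rho)$-separation $\gtrsim \sqrt{\delta_n}$, and pairwise $\mathrm{KL}\le (1/4)\log M$. Regularity of $K$ is the structural ingredient that converts $n\delta_n$ into $d(\delta_n)$ via the fixed-point equation, and the hypothesis $d(\delta_n) \ge 128\log 2$ is precisely what is needed to absorb the $\log 2$ additive term in Fano's inequality while retaining a constant fraction of $\delta_n$.
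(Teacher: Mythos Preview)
Your proposal is correct and follows essentially the same two-step strategy as the paper: Step~1 reduces to the classical KRR lower bound via a noiseless factor model and applies Fano with a packing built from the top $d(\delta_n)$ eigenfunctions (the paper uses the ellipse packing of \cite{yang2017randomized} rather than Varshamov--Gilbert on the hypercube, but the mechanics are identical); Step~2 extracts the $1/p$ term from the Bayes conditional variance of a near-linear $f^*\in\cH_K$ (the paper takes a sup over all unit linear functionals to obtain $\|\Sigma_{Z\mid X}\|_{\op}$, whereas you fix the first coordinate and use a contraction argument---both work).

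Two small technical fixes are needed. First, your choice $A_0^\top A_0 = pI_r$ violates the distinct-eigenvalue clause of \cref{ass_A}, so replace it by any diagonal $p^{-1}A_0^\top A_0 = \diag(\lambda_1,\dots,\lambda_r)$ with distinct $\lambda_j\in[c,C]$; nothing else in either step changes. Second, your density claim (``$\{\phi_j\}$ is an orthonormal basis of $L^2(\rho)$, so $\cH_K$ is dense via truncation'') is not justified by \cref{ass_mercer} alone, since if some $\mu_j=0$ the corresponding $\phi_j\notin\cH_K$; you should instead invoke the universality of $K\in\cK$, which is exactly what the paper does.
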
  

The minimax lower bound in \eqref{eq_lb_excess_risk} consists of two terms: the first term $\delta_n$ accounts for the model complexity of $\cH_K$, which depends on the kernel function $K$, while the second term  $1/p$ reflects the irreducible error due to not observing  $Z$. In conjunction with \cref{thm_PCA}, by noting that  $\delta_n \asymp d(\delta_n)/n \ge 1/n$ and $\|\fh - f^*\|_\rho = 0$, the lower bound in \cref{thm_lb_excess_risk} is also tight and the predictor $\wh f\circ \wh B$ is minimax optimal. 

Finally, we remark that although our lower bound in \cref{thm_lb_excess_risk} is stated for universal kernels, inspecting its proof reveals that the same results hold for kernels that induce an RKHS containing all linear functions.  

\section{Extension to other loss functions}\label{sec_general_loss}
Although so far we have been focusing on KRR with predicted inputs under the squared loss function, 
our analytical framework can be adapted to general loss functions that are strongly convex and smooth.  In this section, we give details on such extension.

Specifically,  let $L(\cdot, \cdot): \RR \times \RR \to \RR$ be a general loss function 
with $f^*$ being the best predictor of $Y$ over all measurable functions $f: \cZ\to \RR$, that is, 
$$
    f^*(z) = \argmin_{f}~ \EE[L(Y,f(z)) \mid Z = z],\qquad\forall~ z\in \cZ.
$$
Fix any measurable function $\wh g: \cX\to \cZ$ that is used to predict the feature inputs $Z_1,\ldots, Z_n$. 
For the specified  loss function $L$ and the predicted inputs $\wh g(X_1),\ldots,\wh g(X_n)$,  we solve the following optimization problem: 
\begin{align}\label{def_f_hat_lip}
\widehat{f} = \argmin_{f\in \mathcal{H}_K}  \Big\{ \frac{1}{n} \sum_{i=1}^nL\l(Y_i,(f\circ \wh g)(X_i)\r) +\lambda\|f\|_{K}^2 \Big\}. 
\end{align} 
For a new data point $X\in \cX$, we predict its corresponding 
response $Y$ 
via
$(\wh f\circ \wh g) (X)$. 
Our target of this section  is to bound from above 
the excess risk of $\wh f\circ\wh g$,
\[
\cE(\wh f\circ \wh g):=\EE\l[ L(Y, (\wh f\circ \wh g)(X) ) - L(Y, f^*(Z))\r]. 
\]
In addition to Assumptions \ref{ass_f_H}--\ref{ass_mercer}, 
our analysis for general loss functions requires the following two 
assumptions. Recall that $\cF_b$ is given in \eqref{def_local_ball}.
 
\begin{assumption} \label{assu_lip_loss}
For any $y\in \RR$, the loss function  $L(y,\cdot)$ is convex. 
 Moreover, for any $y\in \RR$ and $f, f' \in \cF_b$, there exists a constant $C_\ell$ such that
    \[
    \l| L(y,(f\circ \wh g)(x)) -  L(y,(f'\circ \wh g)(x))\r|
      \le    C_\ell \l|(f\circ \wh g)(x)-(f' \circ \wh g)(x)\r|,\quad \forall~x\in \cX. 
    \]
\end{assumption}
Many commonly used loss functions satisfy \cref{assu_lip_loss}. For example, 
it holds for the logistic loss and exponential loss which are commonly used for classification problems. 
Other examples include the check loss used for quantile regressions, the hinge loss used for margin-based classification problems, and the Huber loss adopted in robust regressions. As a result of \cref{assu_lip_loss} together with the boundedness of $\cF_b$ in \eqref{def_local_ball}, the quantity $|L(y, (f\circ \wh g)(x))-  L(y,(f'\circ \wh g)(x))|$ is bounded uniformly over $f,f'\in \cF_b$, $x\in \cX$ and  $y\in \RR$. 

\begin{assumption} \label{assu_convex_and_smooth}
There exist two constants $0< C_L\le C_U <\i $ such that for all $f\in \cF_b $ and $\wh g $, 
\begin{align}\label{convex_smooth}
C_L ~  \EE\l[(f\circ \wh g)(X)-f^*(Z)\r]^2 \le
\cE(f\circ \wh g) 
\le C_U ~  \EE\l[ (f\circ \wh g)(X) - f^*(Z) \r]^2 . 
\end{align}
\end{assumption}
 
The first inequality in \eqref{convex_smooth}  is the strongly convex condition while the second one corresponds to the smoothness condition. When $Z$ is observable,  \cref{assu_convex_and_smooth} 
reduces to 
\[
\EE\l[ L(Y, f(Z)  ) - L(Y, f^*(Z))\r] 
\asymp \|f-f^*\|_\rho^2,   \qquad \forall~ f\in \cF_b,
\]
a condition, or its variants, frequently adopted in the existing literature for analyzing general loss functions  \citep{steinwart2008support, wei2017early,li2019towards,farrell2021deep}. 

It can be seen from \eqref{stru_assu} that \cref{assu_convex_and_smooth} holds for the squared loss function with  $C_L = C_U =1$. For more general loss, \cref{assu_convex_and_smooth}  essentially requires that the excess risk maintains a similar curvature as the $\cL^2$-norm around the minimizer $f^*$. Indeed, by following the same argument in  \cite{wei2017early}, it can be verified that   \cref{assu_convex_and_smooth}  holds for both the logistic loss and the exponential loss, with constants  $C_L$ and $C_U$ depending only on $\kappa, \|f^*\|_\i$ and the radius of $\cF_b$. 
For the check loss,
\cref{assu_convex_and_smooth} is satisfied when the conditional distribution of $Y$ given $Z$ is well-behaved, for instance, for any $z\in \cZ$ and $y\in \RR$, 
\begin{align} \label{condi_quantile}
    C_L |y| ~ \le ~ \l| F_{Y\mid Z=z} (f^*(z) +y)- F_{Y\mid Z=z} (f^*(z))\r| ~\le ~  C_U |y|,
\end{align}
where $F_{Y\mid Z=z} $ denotes the conditional distribution function of $Y$ given $Z=z$.   The fact that \eqref{condi_quantile} implies \cref{assu_convex_and_smooth}  can be easily verified by using Knight’s identity \citep{knight1998limiting} and adapting the argument in \cite{belloni2011}. 
In \cref{tab_loss} we summarize some common loss functions that satisfy Assumptions \ref{assu_lip_loss} and \ref{assu_convex_and_smooth}.


\begin{table}[ht]
    \centering
    \renewcommand{\arraystretch}{0.8} 
    \setlength{\tabcolsep}{7pt} 
    \caption{Losses that satisfy Assumptions \ref{assu_lip_loss} and \ref{assu_convex_and_smooth}}
    \label{tab_loss}
    \begin{tabular}{l|c|c|c}
    \toprule 
    Loss functions    & Exponential & Logistic & Check \\
    \midrule 
    \cref{assu_lip_loss}    & \checkmark  & \checkmark & \checkmark \\ 
    \cref{assu_convex_and_smooth}  & \checkmark  & \checkmark & under \eqref{condi_quantile} \\
    \bottomrule 
    \end{tabular}
\end{table}
 
Recall the kernel-related latent error 
from \eqref{def_Delta}
and  $\delta_n$ as the fixed point of $R(\delta)$ in \eqref{kernel_complexity}.  The following theorem provides non-asymptotic upper bounds of the
excess risk of $\wh f\circ \wh g$ with $\wh f$ given in \eqref{def_f_hat_lip}. 

\begin{theorem}\label{thm_risk_lip}
Grant  Assumptions  \ref{ass_f_H}--\ref{ass_mercer}, \ref{assu_lip_loss} and \ref{assu_convex_and_smooth}.   
For  any  $\eta\in (0,1)$, by choosing $\lambda$ in \eqref{def_f_hat_lip}  such that  
\begin{equation}\label{lb_lambda_lip} 
\lambda  =  C\l(      \delta_n\log(1/\eta)  +\EE \Delta_{\wh g}  +  \|\fh-f^*\|_\rho^2 + \frac{\log(1/\eta)}{n} \r),
\end{equation}
with probability at least $1-\eta$, one has
\begin{align}\label{bd_pred_rate_lip}
    \cE(\wh f\circ \wh g)  
      ~ \le ~  C'\l(  \delta_n\log(1/\eta)  +\EE \Delta_{\wh g}   +  \|\fh-f^*\|_\rho^2+ \frac{\log(1/\eta)}{n} \r) . 
\end{align}  
Here both positive constants $C$ and $C'$ depend only on $\kappa$, $\|\fh\|_K$,  $C_\ell,C_L$ and $C_U$.
\end{theorem}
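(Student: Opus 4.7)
The plan is to adapt the three-component strategy of \cref{thm_risk}, sketched in \cref{sec_theory_KRR_proof}, to the general loss setting. The central observation that simplifies matters is that \cref{assu_lip_loss} combined with the boundedness $\sup_{f\in\cF_b}\|f\|_\infty \le 4\kappa\|\fh\|_K$ yields
\[
    |\ell_{f\circ\wh g}(y,x)| ~ \le ~ C_\ell\,|(f\circ\wh g)(x) - (\fh\circ\wh g)(x)| ~ \le ~ 8 C_\ell \kappa \|\fh\|_K, \qquad \forall ~ f \in \cF_b,
\]
so the excess loss class $\{\ell_{f\circ\wh g}: f\in\cF_b\}$ is uniformly bounded. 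This obviates the delicate cross-term analysis of \cref{sketch_cross_term} that was needed to handle an unbounded response in the squared-loss proof, and allows a direct application of Talagrand-type concentration to $\ell_{f\circ\wh g}$.

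First, I would decompose the excess risk through $\fh$ as
\[
    \cE(\wh f\circ \wh g) ~ = ~ \EE\bigl[\ell_{\wh f\circ \wh g}(Y,X)\bigr] ~ + ~ \EE\bigl[L(Y,(\fh\circ\wh g)(X)) - L(Y,f^*(Z))\bigr],
\]
and use the upper inequality in \cref{assu_convex_and_smooth} applied to $\fh$ to dominate the second term by $C_U\,\EE[(\fh\circ\wh g)(X) - f^*(Z)]^2$, which is at most a constant multiple of $\EE\Delta_{\wh g} + \|\fh - f^*\|_\rho^2$ by the same reproducing-property argument that underlies \eqref{bd_irre_error}. Next, by optimality of $\wh f$ in \eqref{def_f_hat_lip} and a reduction argument analogous to that in \cref{sec_theory_KRR_proof} (using convexity of $L$ together with a Lipschitz-controlled empirical process to show $\wh f\in\cF_b$), it suffices to control the empirical process
\[
     \sup_{f\in\uline{\cF_b}} \bigl\{\EE[\ell_{f\circ \wh g}(Y,X)] - 2\EE_n[\ell_{f\circ \wh g}(Y,X)]\bigr\}
\]
over the local ball $\uline{\cF_b}$ in \eqref{def_local_ball}. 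I would invoke Talagrand's inequality (e.g.\ Theorem~2.1 of \cite{bartlett2005local}) on this bounded class, and dominate the associated local Rademacher complexity via the Ledoux--Talagrand contraction principle with Lipschitz constant $C_\ell$:
\[
 \localcomp \bigl\{\ell_{f\circ \wh g}:\ f\in \cF_b,\ \EE[((f-\fh)\circ\wh g)^2(X)] \le \delta \bigr\} ~ \lesssim ~ C_\ell~\psi_x(\delta).
\]
The entire rate analysis of \cref{sec_main_relate_complexity}, which relates $\psi_x$ to the population kernel complexity $R(\delta)$, then transfers verbatim and delivers the same critical radius $\delta_n$ as in \cref{thm_risk}.

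The hard part will be controlling the variance of $\ell_{f\circ \wh g}$, which requires a Bernstein-type inequality in the presence of both model misspecification and prediction error in $\wh g$. Lipschitzness gives $\EE[\ell_{f\circ\wh g}^2] \le C_\ell^2\, \EE[((f-\fh)\circ\wh g)^2(X)]$, and combining the triangle inequality with the lower inequality in \cref{assu_convex_and_smooth} further bounds this by
\[
    {2C_\ell^2\over C_L}\,\cE(f\circ \wh g) ~ + ~ 2C_\ell^2\, \EE\bigl[f^*(Z) - (\fh\circ\wh g)(X)\bigr]^2  ~ \lesssim ~ \EE[\ell_{f\circ \wh g}(Y,X)] + \EE\Delta_{\wh g} + \|\fh - f^*\|_\rho^2.
\]
This is a \emph{noisy} Bernstein condition $\EE[\ell^2]\lesssim \EE[\ell]+\EE\Delta_{\wh g}+\|\fh-f^*\|_\rho^2$, in contrast to the clean Bernstein relation $\EE[\ell^2]\lesssim \EE[\ell]$ that was available for squared loss. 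Propagating these extra additive residuals through the peeling / fixed-point argument requires care, but because they do not depend on $f$, they can be absorbed into the same additive structure as in \eqref{bd_pred_rate}, yielding \eqref{bd_pred_rate_lip}. The choice of $\lambda$ in \eqref{lb_lambda_lip} is calibrated precisely to dominate these residual terms on the event where the empirical process bound holds.
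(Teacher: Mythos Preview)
Your proposal is correct and follows essentially the same route as the paper's proof: the excess risk is decomposed through $\fh$ using the upper bound in \cref{assu_convex_and_smooth}, the reduction to $\cF_b$ is carried out by the convexity/contradiction argument of \cref{thm_risk}, and then—exactly as you observe—the uniform boundedness of $\ell_{f\circ\wh g}$ coming from \cref{assu_lip_loss} allows the local Rademacher complexity machinery (Bartlett et al.) to be applied directly to $\ell_{f\circ\wh g}$, bypassing the cross-term analysis; the variance is controlled via the Lipschitz bound combined with the lower inequality in \cref{assu_convex_and_smooth}, yielding precisely the ``noisy'' Bernstein relation you wrote, and Ledoux--Talagrand contraction with constant $C_\ell$ reduces the complexity to $\psi_x$, after which \cref{sec_main_relate_complexity} applies unchanged. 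One small clarification: the reduction argument does not establish $\wh f\in\cF_b$ outright but proceeds by contradiction on intermediate points along the segment to $\fh$, as in the proof of \cref{thm_risk}.
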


\begin{proof}
   Its proof can be found in \cref{app_sec_pf_thm_risk_lip}. 
\end{proof}

The risk bound in \eqref{bd_pred_rate_lip} of \cref{thm_risk_lip} consists of the same components as in \eqref{bd_pred_rate} of \cref{thm_risk}. Under the factor model \eqref{model_X}, it readily yields the risk bound when $\wh g$ is chosen as the linear predictor by using PCA.

\begin{remark}
    By inspecting the proof of \cref{thm_risk_lip}, it can be seen that under the classical setting where $Z$ is observable and $f^*\in \cH_K$,  the smoothness condition, the second inequality in \cref{assu_convex_and_smooth},  can be dropped. 
\end{remark}

\section{Simulations}\label{app_sec_sim}
In this section, we evaluate the predictive performance of KRR using predicted features. We consider models \eqref{model_X}
 and \eqref{model} 
where the latent vector $Z=(Z_1,Z_2,Z_3)^\top$, with $r = 3$, is generated with entries independently sampled from 
$\text{Unif}(0,1)$, the regression function $f^*$ is set as 
\[
f^*(Z) = 2 \sin  (3 \pi Z_1  )+ 3 | Z_1 - 0.5|- \exp  (Z_2^2 -Z_3^2  ),
\]
the regression error $ \epsilon \sim N(0,0.8^2)$,  the loading matrix $A$  
 is generated with rows independently sampled from 
$N(0, \diag(10 ,  5.5,  1 ))$ and $W$ is generated with entries independently sampled from 
$ N(0, 1.5^2)$.  
The training data $\cT = \{(Y_i, X_i,Z_i)\}_{i=1}^n$, the auxiliary data $ \bX' = (X'_1,\ldots, X'_n)^\T$ and the  test dataset $\cT^t=  \{(Y_i^t, X_i^{t}, Z_i^t)\}_{i=1}^m$ are generated i.i.d. according to models \eqref{model_X} and \eqref{model}. 

Under model \eqref{model_X},  as  discussed  in \cref{sec_PCA}, we predict $\bZ = (Z_1,\ldots, Z_n)^\T$ by  
\[
\wh\bZ' 
= (\wh B' X_1 ,\ldots, \wh B' X_n)^\T  
\]
with $\wh B'$ constructed  from  $\bX'$ via PCA as in \eqref{def_B_hat}. 
We also consider predicting $\bZ$ by  
\[
\wh\bZ= (\wh B X_1 ,\ldots, \wh B X_n)^\T 
\]
with $\wh B$
obtained by applying PCA on 
$\bX = (X_1,\ldots, X_n)^\T$.  
The following predictors are considered in comparison: (1) KRR-$\wh\bZ'$: KRR in \eqref{def_f_hat} with predicted inputs using $\wh\bZ'$;  (2) KRR-$\wh \bZ$:  KRR in \eqref{def_f_hat} with predicted inputs using $\wh\bZ$; (3) KRR-$\bZ$:  KRR in \eqref{def_f_hat} using the true $ \bZ$; (4) KRR-$\bX$: KRR from  \eqref{def_f_hat}  with $ X_i$  in place of   $\wh g(X_i)$; and  (5) LR-$\bZ$: Regressing  $\bY$
    onto  $\bZ$ by  OLS. 
Note that KRR-$\bX$ is included for comparing the proposed KRR-$\wh\bZ'$ with the classical KRR method. 
The linear predictor LR-$\bZ$ is considered to illustrate the benefit of using non-linear predictors.
KRR-$\bZ$  uses the true $\bZ$ to illustrate the additional prediction errors of KRR-$\wh\bZ'$  due to predicting $\bZ$. Finally, 
KRR-$\wh \bZ$ is included to examine the effect of using an auxiliary data set to compute $\wh B$.

For the kernel-based methods, we select the regularization parameter $\lambda$ in \eqref{def_f_hat}  via 3-fold cross-validation and choose the Gaussian kernel, $K(z,z') = \exp(-\|z-z'\|_2^2 / \sigma^2_n)$, with $\sigma_n$ set as the median of all pairwise distances among sample points, as suggested in \cite{mukherjee2010learning}.  
For any predictor $\wh  f :  \RR^p\to \RR$, we evaluate its predictive performance by
$m^{-1} \sum_{i=1}^m ( Y^t_i - \wh f(X^t_i))^2.$
For any predictor $\wh f:  \RR^r\to \RR$ using the true $\bZ$, we evaluate its performance by 
$m^{-1} \sum_{i=1}^m ( Y^t_i - \wh f(Z^t_i))^2$. 
These metrics are computed based on $100$ repetitions in each setting. 

We examine the effect of the following parameters on the predictive performance by varying them one at a time: (1) the sample size $n\in \{200, 600,  1000,  1400, 1800, 2200\}$; (2) the number of features  $p\in \{50, 100, 500, 1000, 1500, 3000\}$; (3) the  signal-to-noise ratio  (SNR) of predicting the latent factor $Z$ in the presence of $W$, defined as $\text{SNR}:=  \lambda_r(A\Sigma_Z A^\T) /\|\Sigma_W\|_\op$,  by multiplying $A$ by a scalar $\alpha \in \{ 0.1,0.2, 0.3, 0.5,
0.75,1,1.5\}$. We fix $n =800$, $p=2000$ and $\text{SNR} = 1$ whenever they are not varied, and set $m=n$ in all settings. 
The averaged prediction errors in each setting 
are reported in \cref{tab_all}. 

\begin{table}[ht]
\centering
\caption{The averaged performance of all the predictors with varying $n$, $p$, and $\alpha$.}
\label{tab_all}
\begin{tabular}{c}\vspace{1em}
  \resizebox{0.9\textwidth}{!}{%
    \begin{tabular}{lcccccc}
    \toprule
    Sample size $n$ & 200 & 600 & 1000 & 1400 & 1800 & 2200 \\
    \midrule
    KRR-$\wh\bZ'$ & 1.59 {\footnotesize (0.32)} & 0.90 {\footnotesize (0.06)} & 0.82 {\footnotesize (0.04)} & 0.79 {\footnotesize (0.04)} & 0.77 {\footnotesize (0.03)} & 0.75 {\footnotesize (0.02)} \\
    KRR-$\wh \bZ$ & 1.63 {\footnotesize (0.32)} & 0.90 {\footnotesize (0.07)} & 0.81 {\footnotesize (0.04)} & 0.79 {\footnotesize (0.03)} & 0.77 {\footnotesize (0.03)} & 0.75 {\footnotesize (0.02)} \\
    KRR-$\bZ$     & 1.08 {\footnotesize (0.19)} & 0.77 {\footnotesize (0.05)} & 0.72 {\footnotesize (0.03)} & 0.71 {\footnotesize (0.03)} & 0.70 {\footnotesize (0.03)} & 0.68 {\footnotesize (0.02)} \\
    KRR-$\bX$     & 1.99 {\footnotesize (0.19)} & 1.91 {\footnotesize (0.10)} & 1.89 {\footnotesize (0.08)} & 1.85 {\footnotesize (0.07)} & 1.83 {\footnotesize (0.06)} & 1.82 {\footnotesize (0.05)} \\
    LR-$\bZ$      & 3.61 {\footnotesize (0.29)} & 3.58 {\footnotesize (0.16)} & 3.62 {\footnotesize (0.13)} & 3.60 {\footnotesize (0.10)} & 3.61 {\footnotesize (0.11)} & 3.60 {\footnotesize (0.08)} \\
    \bottomrule
    \end{tabular}
}\\[1em]
\vspace{1em}
\resizebox{0.9\textwidth}{!}{%
    \begin{tabular}{lcccccc}
    \toprule
    Dimension $p$ & 50 & 100 & 500 & 1000 & 1500 & 3000 \\
    \midrule
    KRR-$\wh\bZ'$ & 1.85 {\footnotesize (0.15)} & 1.44 {\footnotesize (0.12)} & 0.95 {\footnotesize (0.06)} & 0.89 {\footnotesize (0.06)} & 0.87 {\footnotesize (0.05)} & 0.84 {\footnotesize (0.05)} \\
    KRR-$\wh \bZ$ & 1.86 {\footnotesize (0.16)} & 1.43 {\footnotesize (0.12)} & 0.94 {\footnotesize (0.06)} & 0.89 {\footnotesize (0.06)} & 0.87 {\footnotesize (0.05)} & 0.83 {\footnotesize (0.05)} \\
    KRR-$\bZ$     & 0.75 {\footnotesize (0.04)} & 0.74 {\footnotesize (0.04)} & 0.74 {\footnotesize (0.04)} & 0.75 {\footnotesize (0.03)} & 0.75 {\footnotesize (0.04)} & 0.74 {\footnotesize (0.04)} \\
    KRR-$\bX$     & 2.32 {\footnotesize (0.17)} & 2.14 {\footnotesize (0.13)} & 1.95 {\footnotesize (0.10)} & 1.94 {\footnotesize (0.12)} & 1.90 {\footnotesize (0.10)} & 1.83 {\footnotesize (0.09)} \\
    LR-$\bZ$      & 3.60 {\footnotesize (0.15)} & 3.60 {\footnotesize (0.14)} & 3.59 {\footnotesize (0.14)} & 3.62 {\footnotesize (0.12)} & 3.60 {\footnotesize (0.15)} & 3.60 {\footnotesize (0.14)} \\
    \bottomrule
    \end{tabular}
  }
\\[1em]

  \resizebox{\textwidth}{!}{%
    \begin{tabular}{p{2cm}ccccccc}
    \toprule
    SNR  $\alpha$ & 0.1 & 0.2 & 0.3 & 0.5 & 0.75 & 1 & 1.5 \\
    \midrule
    KRR-$\wh\bZ'$ & 2.59 {\footnotesize (0.13)} & 1.46 {\footnotesize (0.10)} & 1.16 {\footnotesize (0.07)} & 0.94 {\footnotesize (0.06)} & 0.87 {\footnotesize (0.06)} & 0.85 {\footnotesize (0.05)} & 0.82 {\footnotesize (0.05)} \\
    KRR-$\wh \bZ$ & 2.83 {\footnotesize (0.15)} & 1.56 {\footnotesize (0.13)} & 1.14 {\footnotesize (0.06)} & 0.93 {\footnotesize (0.06)} & 0.86 {\footnotesize (0.05)} & 0.84 {\footnotesize (0.05)} & 0.82 {\footnotesize (0.05)} \\
    KRR-$\bZ$     & 0.74 {\footnotesize (0.04)} & 0.75 {\footnotesize (0.04)} & 0.74 {\footnotesize (0.04)} & 0.74 {\footnotesize (0.04)} & 0.74 {\footnotesize (0.04)} & 0.74 {\footnotesize (0.04)} & 0.74 {\footnotesize (0.04)} \\
    KRR-$\bX$     & 3.79 {\footnotesize (0.16)} & 3.74 {\footnotesize (0.16)} & 3.47 {\footnotesize (0.14)} & 2.53 {\footnotesize (0.11)} & 2.11 {\footnotesize (0.10)} & 1.88 {\footnotesize (0.08)} & 1.39 {\footnotesize (0.07)} \\
    LR-$\bZ$      & 3.62 {\footnotesize (0.14)} & 3.59 {\footnotesize (0.15)} & 3.61 {\footnotesize (0.14)} & 3.59 {\footnotesize (0.14)} & 3.58 {\footnotesize (0.13)} & 3.61 {\footnotesize (0.15)} & 3.61 {\footnotesize (0.15)} \\
    \bottomrule
    \end{tabular}
  }
\end{tabular}
\end{table}

As shown in the first two panels of 
\cref{tab_all}, the proposed KRR-$\wh\bZ'$ outperforms KRR-$\bX$ and LR-$\bZ$. Comparing to the oracle approach KRR-$\bZ$, KRR-$\wh\bZ'$ has increasingly closer performance as $n$ or $p$ grows. This is in line with our risk bound in \cref{thm_PCA} where the extra prediction error of  KRR-$\wh\bZ'$ relative to
KRR-$\bZ$ is  quantified by $\cO_\PP(1/p+1/n)$,  which vanishes with increasing $n$ and $p$.
Comparing to KRR-$\wh \bZ$ that does not use auxiliary data, the benefit of using auxiliary data in KRR-$\wh \bZ'$ is evident for small sample sizes.
 As $n$ increases, the prediction error of KRR-$\bX$ does not decrease as fast as those of KRR-$\wh\bZ'$ and KRR-$\bZ$, indicating a slower rate of convergence of the former. This is due to that KRR-$\bX$ does not exploit the low-dimensional structure.  
As $p$ increases, we also note that the performance of KRR-$\bX$ slightly increases. One explanation could be that more useful information for predicting $Y$ can be found from  $\bX$  as $p$ increases.  Since KRR-$\bZ$ and  LR-$\bZ$ use $\bZ$ instead of $\bX$, their prediction errors remain the same as $p$ increases. 

Regarding the effect of SNR,  a higher SNR implies that $\bX$ contains more information about $\bZ$, which should lead to better prediction of $\bZ$ and consequently of $Y$. 
 As seen in the third panel of \cref{tab_all},  the predictive performance of KRR-$\wh\bZ'$, KRR-$\wh\bZ$, and KRR-$\bX$ indeed improves as the SNR increases, whereas the prediction errors of KRR-$\bZ$ and LS-$\bZ$ remain unchanged. For large values of SNR, KRR-$\wh\bZ'$ performs very well, approaching to  KRR-$\bZ$, whereas  KRR-$\bX$ still performs poorly, due to the fact that it does not fully exploit the low-dimensional structure. We also observe from the comparison between KRR-$\wh\bZ'$ and KRR-$\wh \bZ$ that the benefit of using auxiliary data becomes more visible for small SNRs. 

 Finally, comparing to LS-$\bZ$ in all settings, KRR-$\wh\bZ'$ has substantial predictive advantage, implying the benefit of using KRR to capture the non-linear relationship between the response and the latent factor.

\section{Real data analysis}\label{sec_real_data}
In this section, we present real data analysis on two datasets based on  pretrained LLM embeddings, one tailored to a regression problem and the other to a classification problem.

\subsection{Analysis on Letterboxd Film Dataset}\label{sec_real_data_regress}
In this section, we investigate the predictive performance of KRR using pre-trained embedding vectors from movie reviews in the Letterboxd Film Dataset \citep{letterboxd_film_dataset_2025}.
The dataset contains information on 847,209 films from the Letterboxd platform, each with multiple textual reviews and an average user rating on a 1–5 scale. Our goal is to train a predictive model that can accurately predict the rating score from the reviews of a given film.

Unlike the classical regression setting, this task involves textual inputs that must be converted to numerical embedding vectors before applying KRR. We follow the procedure in Example 3 of the Introduction, first applying a pretrained language model to transform the textual review content into embedding vectors in $\RR^d$. To this end, we choose the General Text Embeddings (GTE) model  \cite{li2023towards}, which is trained based on the BERT framework \cite{devlin2019bert}. By leveraging multi-stage contrastive learning,  the GTE model efficiently captures sentence-level similarity in order to generate text embedding representations, rendering it broadly applicable to various natural language tasks. 
The GTE model provides three different sizes of models: \texttt{thenlper/gte-small}, \texttt{thenlper/gte-base}, and \texttt{thenlper/gte-large}, producing embedding vectors with $ d =384, 768$,  and $1024$, respectively.
In addition to generating text embeddings, we also investigate whether further exploiting the low dimensional structure of the embedding vectors by applying PCA to reduce their dimensions   offers additional benefits.
Combining these two steps yields the final 
predicted  inputs (see, Figure \ref{Illustration_transformer}, for an illustration), which are used in the downstream regression task, with the rating score of each movie review as the response. 

\begin{figure}[ht]
    \begin{center}
    \scalebox{0.8}{
\begin{tikzpicture}[>=Latex, node distance=3cm] 

\node[ellipse callout, callout relative pointer={(0.2,-0.5)}, 
      draw, text width=3.5cm, align=center] (text) 
      {Great movie, but\\ the story felt flat.};

\node[draw, rounded corners, thick, text width=1.5cm, align=center, right=of text] (vector) 
      {-0.02489\\ -0.05562\\ $\vdots$ \\ 0.02846};

\node[draw, rounded corners, thick, text width=1.5cm, align=center, right=of vector] (pc) 
      {0.8621\\ -0.0542\\ 0.0841};

\draw[-{Latex[length=3.5mm,width=2.5mm]}, line width=1.3pt, draw=black,
      shorten >=5mm, shorten <=5mm] 
    (text.east) -- (vector.west)
    node[midway, above=6mm, fill=blue!30, text=white, 
         inner sep=3pt, rounded corners, align=center] {Embedding};

\draw[-{Latex[length=3.5mm,width=2.5mm]}, line width=1.3pt, draw=black,
      shorten >=5mm, shorten <=5mm] 
    (vector.east) -- (pc.west)
    node[midway, above=6mm, fill=blue!30, text=white, 
         inner sep=3pt, rounded corners, align=center] {PCA};
\end{tikzpicture}}
\end{center}
    \caption{An illustration of the process generating predicted feature inputs with $r=3$}
    \label{Illustration_transformer}
\end{figure}

We consider three methods to predict the rating scores using the predicted inputs: (1) KRR with a Gaussian kernel, (2) KRR with a linear kernel, and (3) ordinary least squares (OLS).
We study the effect of the embedding dimension $d$  by selecting different GTE models. We also vary the retained low-dimension  $r$ in  \eqref{crit_Z_hat}, ranging from 1 to $d$.   We randomly select from the whole dataset  13864 sub-samples and for 
each setting, we randomly select $2000$ samples  for training and another $2000$ samples for testing. The remaining samples  are used in PCA to construct $\wh B$ as in  \eqref{def_B_hat}.
 By repeating 50 times in each setting, we report the averaged mean squared errors (MSE)  on the test data in
Figure \ref{Results_1}. 

\begin{figure}[ht]
    \centering
\includegraphics[width=.325\textwidth]{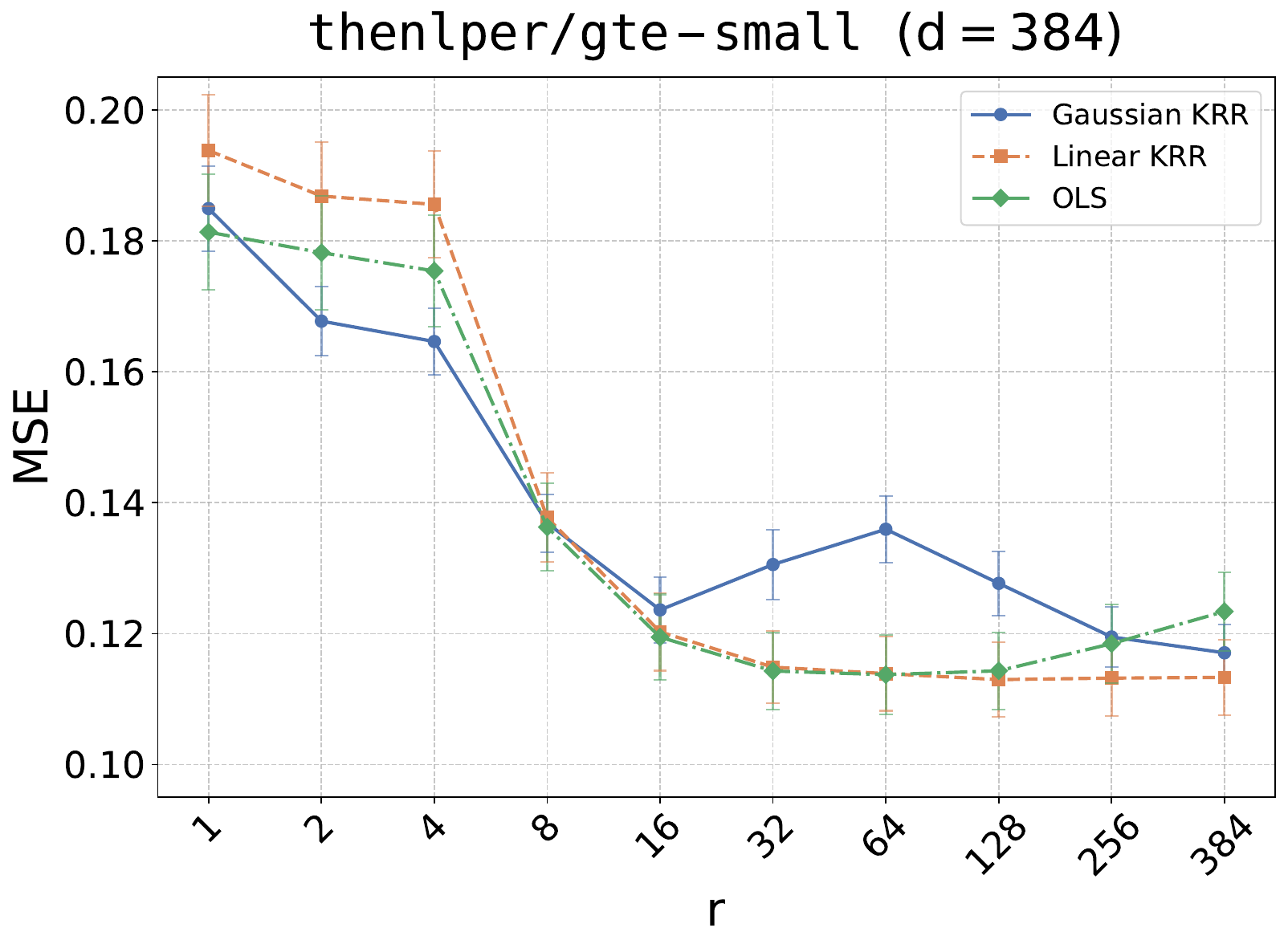}
\includegraphics[width=.325\textwidth]{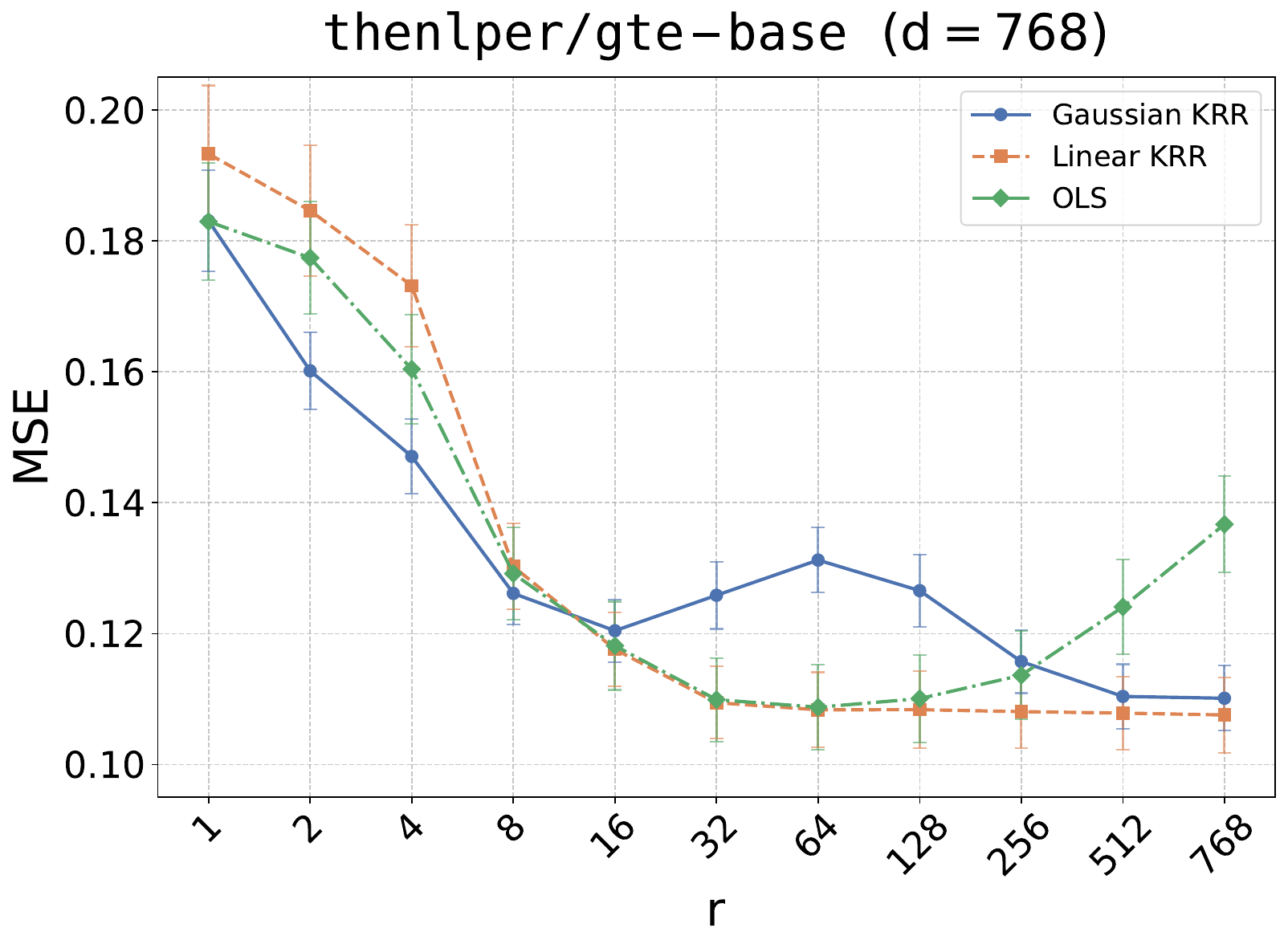}
\includegraphics[width=.325\textwidth]{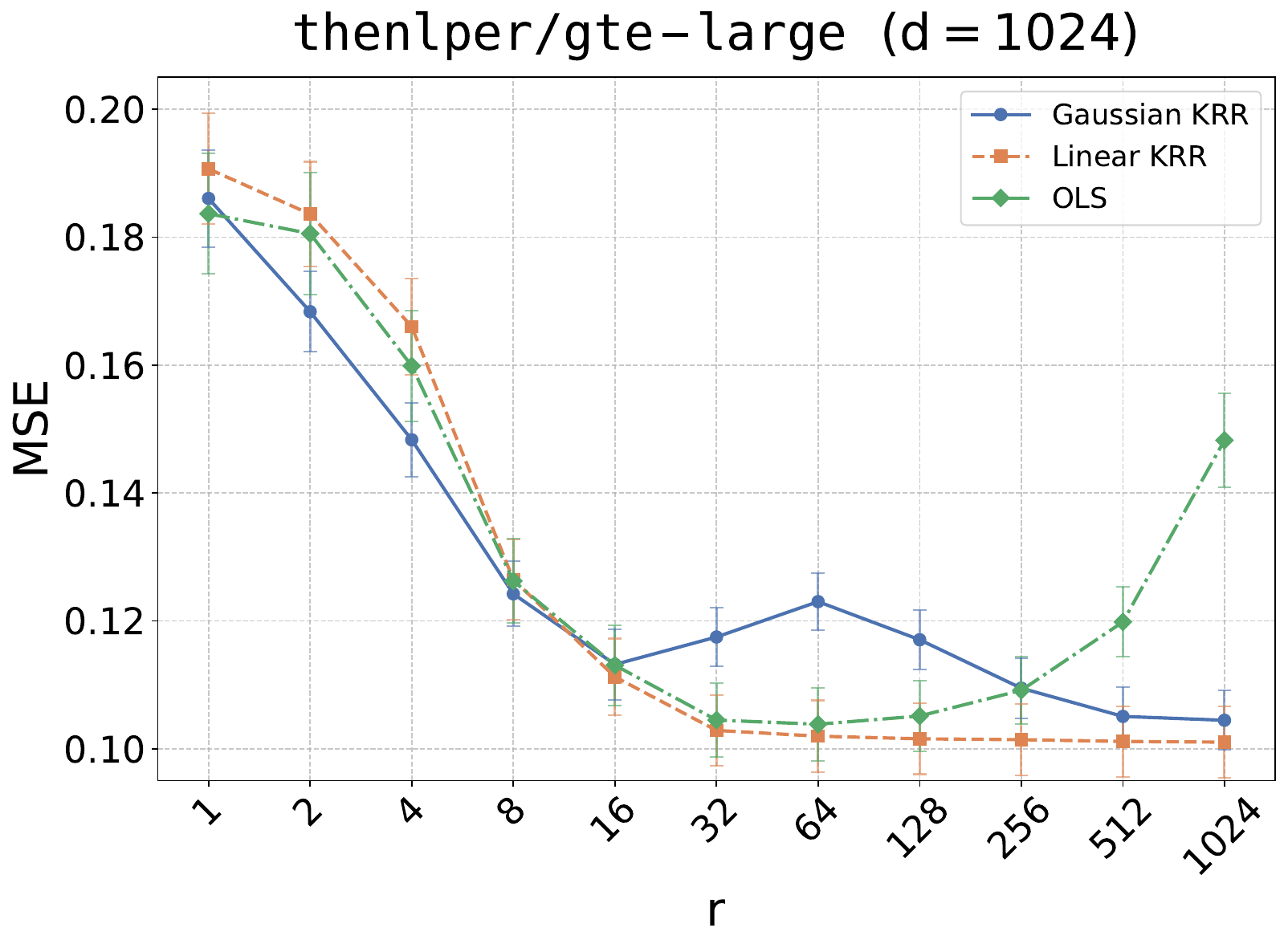}
    \caption{Averaged MSE of different methods
    with varying   $r$ and  GTE models}
    \label{Results_1}
\end{figure}

There are several interesting observations that can be made. First,
we see that for all methods, increasing the dimension of embedding vectors leads to better predictive performance. This is in line with our intuition that pre-trained embedding vectors with larger dimension  tend to encode more useful information for prediction. Second, for a given $d$, linear KRR with $r=d$ has the smallest MSE, suggesting a linear relationship between the embedding vectors and the ratings. It also indicates that such a linear hyperplane in the embedding space cannot be fully explained by its leading principal components. The other linear predictor, OLS, achieves its best performance at an intermediate $r$, striking a balance between variance and bias, as shown in Corollary 5 of \cite{bing2021prediction}. 
In contrast, linear KRR exhibits a clear advantage for relatively large $r$ because the added regularization helps prevent overfitting, in line with \cref{thm_risk} and \cref{cor_K_linear}. 
Third, the Gaussian KRR exhibits interesting predictive performance as $r$ varies. For relatively small $r$, KRR with Gaussian kernel in general outperforms the other methods. This is expected as linear methods are insufficient to characterize the dependence of responses on only a few 
features, leading to under-fitting, whereas KRR with Gaussian kernel is more flexible in capturing the non-linear relationship. 
However, unlike KRR with a linear kernel, whose performance consistently improves as $r$ increases, KRR with a Gaussian kernel shows a slight decline starting at $r=32$. One explanation is that as $r$ grows beyond this point, the model begins to incorporate principal components less relevant to the signal (also evident from the plateau of the MSE for the linear kernel), leading to larger errors in predicting the features. These errors are amplified by the nonlinear Gaussian kernel, whereas the linear kernel is less sensitive to them. As our theory suggests, the impact of feature-prediction errors depends on the kernel, though a more refined analysis is needed to fully characterize this dependence, which we leave to future work. Finally, when $r$ approaches $d$, the last principal components contribute additional signal, and the Gaussian kernel achieves performance comparable to linear KRR. Our overall recommendation is to use linear KRR when the goal is to achieve the best predictive performance with the full embedding, and to use Gaussian KRR when one also seeks to obtain a small set of extracted features for interpretability. 

\subsection{Analysis on Rotten Tomatoes Movie Review Dataset}\label{app_sec_real_data}
 In this section, we validate the kernel learning method with predicted feature inputs  on 
Rotten Tomatoes Movie Review Dataset \citep{Pang+Lee:05a}, which contains 5331 positive and 5331 negative reviews. This benchmark dataset has been widely employed in sentiment categorization research.  For this binary
classification task, the loss function in  \eqref{def_f_hat_lip} is specified 
as the hinge loss, corresponding to the kernel
support machine vector (KSVM) method. 

Similar to the treatment in the text regression analysis in \cref{sec_real_data_regress}, 
we first use pretrained GTE models to generate text embedding vectors, and then apply PCA as in \eqref{crit_Z_hat} to obtain the final predicted feature inputs.
For comparison, we consider three classifiers for sentiment categorization using predicted inputs: (1) KSVM with a Gaussian kernel; (2) KSVM with a linear kernel; (3) logistic regression.
The selection of the 
bandwidth for Gaussian kernel and the regularization parameter $\lambda$ in \eqref{def_f_hat_lip} follows 
the same procedure as in \cref{sec_real_data_regress},
We vary 
the dimension of the text embedding vectors
by selecting different GTE models from \{\texttt{thenlper/gte-small}, \texttt{thenlper/gte-base},  \texttt{thenlper/gte-large}\},
and the retained low-dimension  $r$ in PCA from 1 to the full embedding dimension $d$.  
For each setting, we randomly select  from the overall 
dataset 1000 samples  for training and another 1000 samples for testing, and the remaining data is used to perform PCA. By repeating 50 times in each setting, the averaged accuracy rates are reported in Figure \ref{Results_2}.

\begin{figure}[H]
    \centering
\includegraphics[width=.325\textwidth]{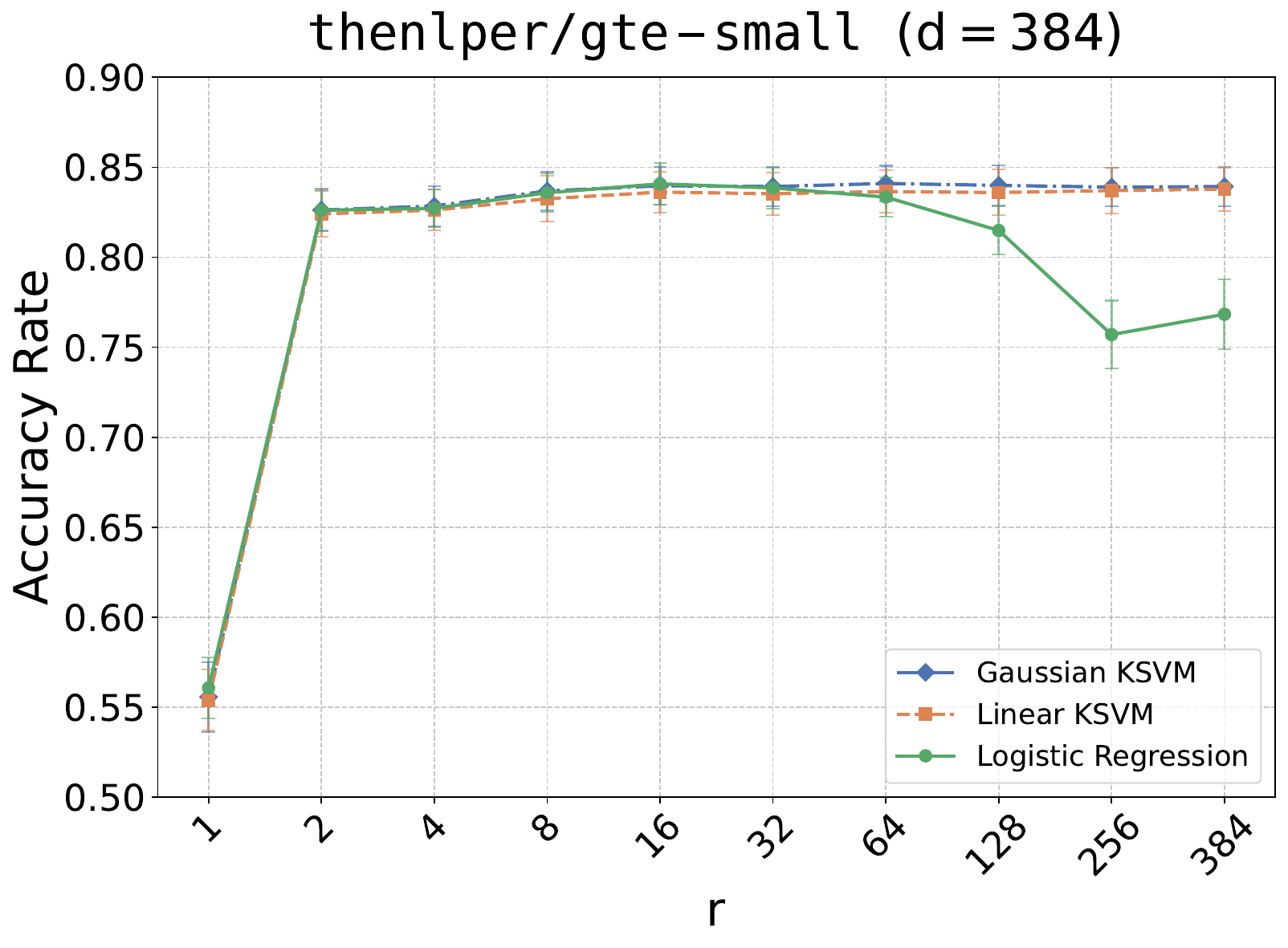}
\includegraphics[width=.325\textwidth]{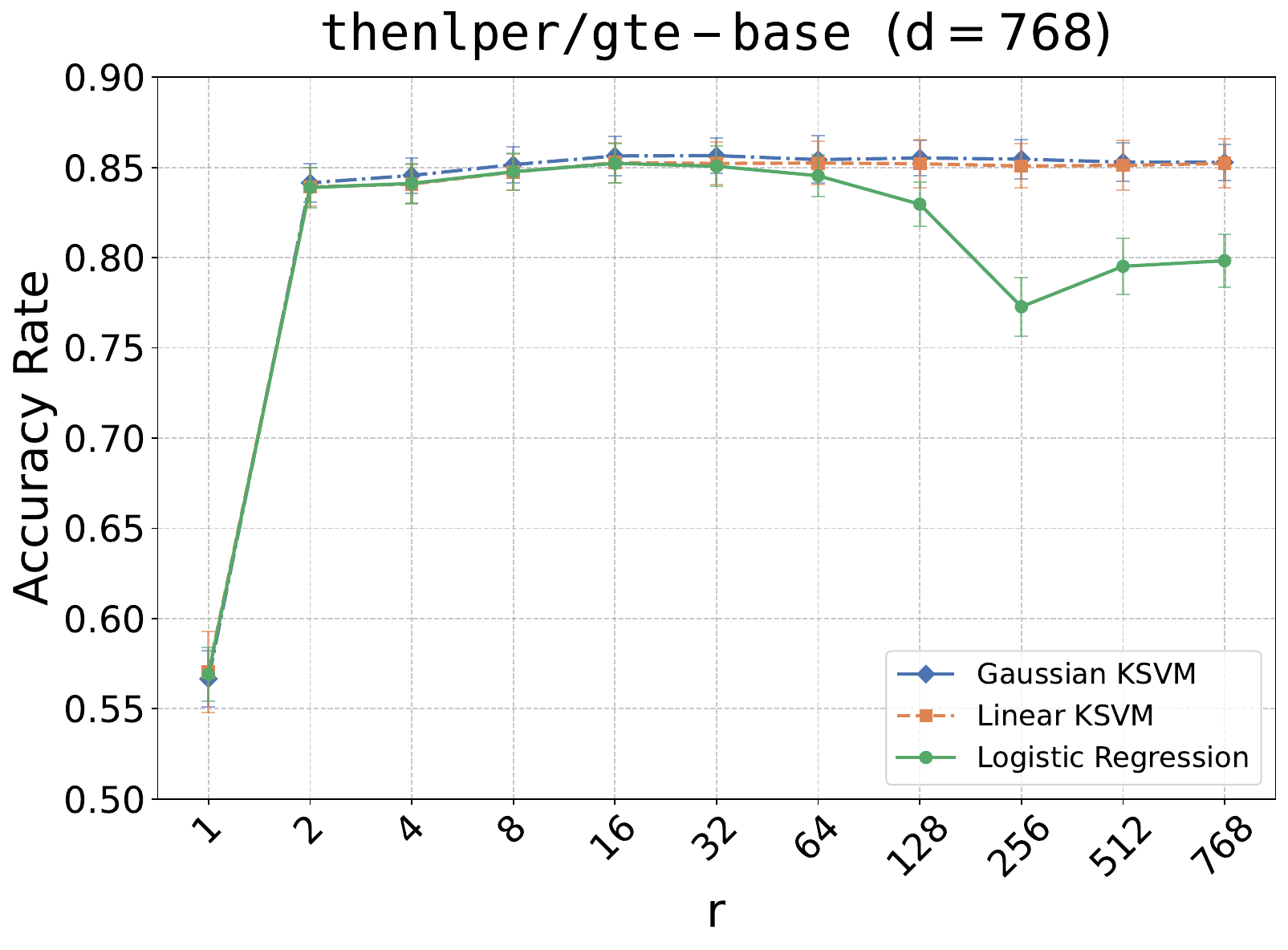}
\includegraphics[width=.325\textwidth]{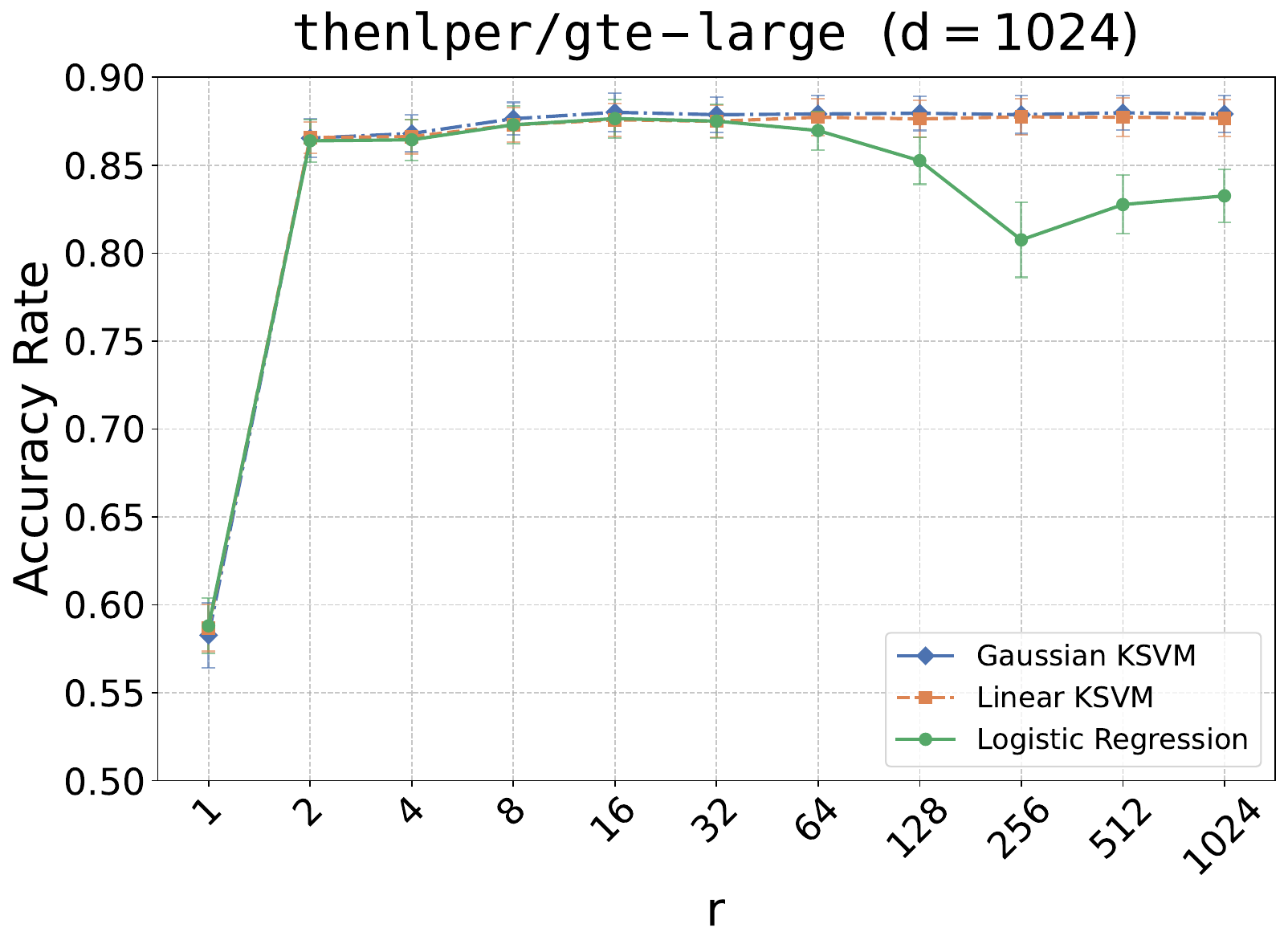}
    \caption{Averaged accuracy rates of different methods
    with varying   $r$ and  GTE models}
    \label{Results_2}
\end{figure}

 Some findings from Figure \ref{Results_2}  are in order.   First, increasing the dimension of embedding vectors generally leads to an increase in classification accuracy. This is expected as the embedding vectors with larger dimension tend to encode more useful information for classification. 
 Second,  the performance between 
 KSVM classifiers with Gaussian and linear kernels are nearly identical across all settings. Moreover, 
their accuracy rate increases substantially as $r$ changes from $1$ to $2$, after which it rapidly stabilizes to reach their best performance.
 This indicates that the relationship between 
 embedding vectors and the labels is linear and can be fully explained by the first few leading principal components. 
 Third, for large $r$, 
 the logistic classifier suffers from overfitting, leading to unsatisfactory performance in classification. This is 
 consistent with the behavior of OLS  observed from text regression analysis in \cref{sec_real_data_regress}.
Overall, our experiments suggest that, to avoid tuning the optimal $r$,   KSVM  with the full embedding---whether using a linear or Gaussian kernel---is the preferred choice for the text classification task, although using $r=2$ yields only slightly worse accuracy while remaining interpretable.



{\small 
\setlength{\bibsep}{0.85pt}{
    \bibliographystyle{plainnat} 
    \bibliography{ref}  
}
}



\appendix

\newpage

\section*{Appendix}

The main proofs are collected in \cref{app_sec_proofs} and presented on a section-by-section basis. Auxiliary lemmas are stated in \cref{app_sec_auxi}.

\section{Main Proofs}\label{app_sec_proofs}


\noindent
{\bf Notation.}
Throughout this appendix,  
given i.i.d. copies $U_1,\ldots,U_n$ of any random element $U$, for any measurable function $f$, we denote the empirical mean by 
\begin{align}\label{emp_norm_pred_input}
 \EE_n[f(U)]:
 =\frac{1}{n}\sum_{i=1}^nf(U_i).  
\end{align}
We use $\tr(\cdot)$ to denote the trace of any trace operator.  
In addition to denote the operator norm for real-valued matrices, we use $\|\cdot\|_{\op}$ to denote the operator norm for operators from $\cH_K$ to $\cH_K$.   
For any Hilbert–Schmidt  operator $A: \cH_K\to\cH_K$, we use $\|A\|_\hs$ to denote the  Hilbert–Schmidt norm, given by $\|A\|_\hs^2 = \tr(A^\T A)$. The notation $I$ is used as the identity operator, whose domain and range 
may vary depending on the context. 
Unless otherwise
specified,  we will use $\EE$ to either denote the expectation conditioning on the observations $\cD := \{(Y_i,X_i)\}_{i=1}^n$ 
and $\wh g$, 
or the expectation taking over all involved random quantities when the context is clear. 
Recall that the $\ell^2$ space is
\[
\ell^2(\mathbb{N}):= \l\{ x=(x_1,x_2,...)^\T \in \RR^\i:~ \sum_{i=1}^\i x_i^2 <\i \r\}  ,
\]
equipped with the inner product  $\langle u,v\rangle_2= \sum_{i=1}^\i u_iv_i$. 
For any two elements  $a,b\in \ell^2(\NN)$, we write $ab^\T$  as  an operator from $\ell^2(\NN)$ to $\ell^2(\NN)$,  defined as
$$
 ( ab^\T) v:  =    a  \langle b, v\rangle_2, \qquad \text{for any $v\in \ell^2(\NN)$}. 
$$

Throughout, we fix the generic predictor $\wh g: \cX \to \cZ$  that is constructed independently of $\cD$.  All our statements, unless otherwise specified, should be understood to hold almost surely, conditional on $\wh g$. 
For any $f\in \cL^2(\rho)$, define the function $h_f := h_{f\circ \wh  g}: \cZ \times \cX \to \RR$ as 
\begin{equation}\label{def_h}
    h_f\left(z, x\right)=\bigl (f^*(z)-(f\circ \wh g)(x)\bigr )^2- \bigl (f^*(z)-(\fh \circ \wh g)(x) \bigr )^2.
\end{equation}
Further define $\ell_f:= \ell_{f\circ \wh g}: \RR\times \cX \to \RR$ as
\begin{equation}\label{def_ell}
    \ell_f\left(y, x\right)=\bigl (y-(f\circ \wh g)(x)\bigr )^2-\bigl (y-(\fh \circ \wh g)(x)\bigr )^2.
\end{equation}
From \eqref{main_eq1}, 
we know that for any $f\in \cL^2(\rho)$, 
\begin{align}\label{stru_assu}
  \EE\left[\bigl(Y- (f\circ \wh g)(X)\bigr)^2-\bigl(Y- f^*(Z)\bigr)^2\right]=\EE\left[(f\circ \wh g)(X) -f^*(Z)\right]^2
\end{align}
so that  
\begin{align}\label{link_population}\nonumber
&\EE[\ell_f\left(Y, X\right)]\\\nonumber
&=\EE\left[(Y- (f\circ \wh g)(X))^2-(Y- (\fh \circ \wh g)(X))^2\right]\\\nonumber
&= \EE\left[(Y- (f\circ \wh g)(X))^2-(Y- f^*(Z))^2\right] +\EE\left[(Y- f^*(Z))^2-(Y- (\fh \circ \wh g)(X))^2\right]\\\nonumber
&=\EE\left[\bigl( f^*(Z)-(f\circ \wh g)(X) \bigr)^2-\bigl(f^*(Z)-(\fh \circ \wh g)(X) \bigr)^2\right]\\
&= \EE  [h_f(Z,X)].
\end{align}
Additionally, 
the empirical counterpart of the estimation error can be decomposed as
\begin{align}\label{link_empirical}\nonumber
\EE_n[\ell_f\left(Y, X\right)] & = \EE_n\left[(Y- (f\circ \wh g)(X))^2-(Y- (\fh \circ \wh g)(X))^2\right]\nonumber\\
&= \EE_n[h_f(Z,X)] + \frac{2}{n}\sum_{i=1}^n\epsilon_i \Bigl((\fh \circ \wh g)(X_i)-(f\circ \wh g)(X_i)\Bigr)\nonumber\\
&= \EE_n[h_f(Z,X)] + 2 \EE_n[\epsilon  (\fh \circ \wh g)(X)-\epsilon  (f\circ \wh g)(X)],
\end{align} 
where $\epsilon_i$'s are the copies of random noise term defined in \eqref{model}.


\subsection{Proof of \texorpdfstring{\eqref{bd_irre_error}}{\texttwoinferior} and the excess risk decomposition in \texorpdfstring{\eqref{eq_risk_decomp}}{\texttwoinferior}}\label{app_sec_proof_risk_decomp}

\begin{proof}
    Pick any deterministic $f:\cZ\to \RR$. By adding and subtracting terms, we have 
\begin{align*}
\cE \b (\wh f \circ \wh g \b )  =\EE \bigl [  Y - (\wh f\circ \wh g)(X) \bigr ]^2 -\EE \left [Y - (f\circ \wh g)(X)\right ]^2  +\EE \left [ Y - (f\circ \wh g)(X)\right ]^2  -\sigma^2.
\end{align*}
Note that
\begin{align}\label{main_eq1} 
\EE \left [  Y - (f\circ \wh g)(X) \right ]^2 - \sigma^2 
&=\EE \left [ \left  (Y-f^*(Z)+f^*(Z)-(f\circ \wh g)(X) \right )^2 \right ] - \sigma^2\nonumber
\\
&= \EE \left [f^*(Z)-(f\circ \wh g)(X) \right ]^2  +2\EE \left [\epsilon \left (f^*(Z)-(f\circ \wh g)(X) \right ) \right ]\nonumber
\\
&=\EE \left [f^*(Z)- (f\circ \wh g)(X) \right ]^2 ,
\end{align}
where the penultimate step uses $Y-f^*(Z)=\epsilon$ and $\epsilon$ is independent of $Z$, $X$ and $\wh g$.
\end{proof}

\begin{proof}
  We have that for any $\theta \ge 1$, the elementary inequality $(a+b)^2 \le x a^2 + b^2/x$ for $x>0$, so that
\begin{align*}
     \EE \left [  f^*(Z)- (\fh \circ \wh g)(X)   \right ]^2 & ~  \le  ~  (1+\theta) ~ \EE\left[ \fh(Z) - (\fh\circ \wh g)(X)   \right]^2   +   {1+\theta \over \theta}\|f_\cH - f^*\|_\rho^2\\
    & ~ =~  (1+\theta) ~ \EE \left \langle \fh, K_Z-K_{\wh g(X)} \right\rangle_K^2  +   {1+\theta \over \theta}\|f_\cH - f^*\|_\rho^2 \\
    &~ \le~  (1+\theta) \|\fh \|_K^2 ~ \EE\Delta_{\wh g} +  {1+\theta \over \theta}\|f_\cH - f^*\|_\rho^2.  
\end{align*}
The second step uses the reproducing property.
\end{proof}

\subsection{Proof of \texorpdfstring{\cref{thm_risk}}{\texttwoinferior}: bounding the excess risk} \label{app_sec_estimation_error_bound}

Before proving \cref{thm_risk}, we first give a useful lemma that quantifies the relative loss function in \eqref{def_ell} along the line segment between any  $f\in \cH_K$ and $\fh$. 
\begin{lemma}\label{used_lem_sec_A}
For any $f\in \cH_K$ and $ \alpha\in [0,1]$, 
define $\wt f= \alpha f+(1- \alpha) \fh$. Then we have
\begin{align*}
   \EE_n[\ell_{\wt f}(Y, X)]+ \lambda\| \wt f\|^2_K-\lambda \|f_\cH\|_K^2  
    ~ \le ~ \alpha\left( \EE_n[\ell_f(Y, X)] +\lambda \|f\|^2_K-\lambda \|f_\cH\|_K^2 
  \right).
\end{align*}
\end{lemma}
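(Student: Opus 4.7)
The inequality is, at heart, a convexity statement about the penalized empirical risk. I would introduce the functional
\[
G(f) \;:=\; \EE_n\bigl[(Y-(f\circ \wh g)(X))^2\bigr] + \lambda\|f\|_K^2, \qquad f\in\cH_K,
\]
and observe that $G$ is convex: the map $f\mapsto f\circ\wh g(X_i)$ is linear in $f$, so $f\mapsto (Y_i - (f\circ\wh g)(X_i))^2$ is convex (a composition of a convex quadratic with an affine map), and $\lambda\|\cdot\|_K^2$ is convex as the square of a Hilbert-space norm. Hence $G$ is a sum of convex functions on $\cH_K$.

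Next, applying the definition of convexity to $\wt f = \alpha f + (1-\alpha)\fh$ with $\alpha\in[0,1]$ yields
\[
G(\wt f) \;\le\; \alpha\, G(f) + (1-\alpha)\, G(\fh).
\]
Subtracting $G(\fh)$ from both sides gives $G(\wt f) - G(\fh) \le \alpha\bigl(G(f)-G(\fh)\bigr)$. It then remains to rewrite each side using the definition of $\ell_f$ in \eqref{def_ell}. Direct expansion shows
\[
G(\wt f) - G(\fh) \;=\; \EE_n[\ell_{\wt f}(Y,X)] + \lambda\|\wt f\|_K^2 - \lambda\|\fh\|_K^2,
\]
and analogously $G(f) - G(\fh) = \EE_n[\ell_f(Y,X)] + \lambda\|f\|_K^2 - \lambda\|\fh\|_K^2$. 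Substituting these identities into the convexity bound produces exactly the claimed inequality.

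\textbf{Anticipated obstacles.} There really is no technical obstacle: the statement is a repackaging of convexity of the Tikhonov-regularized empirical risk, and the only thing to verify carefully is that the $\lambda\|\fh\|_K^2$ terms cancel appropriately when one passes from $G$ back to $\ell$. No properties of $\wh g$, the kernel, or the distribution of $(Y,X)$ are used beyond what is needed to make $G(f)$ well-defined, which is why the lemma will be available as a purely deterministic tool in the main proof.
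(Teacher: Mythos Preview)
Your proposal is correct and essentially identical to the paper's proof: both exploit convexity of the penalized empirical risk, with the paper treating the loss term and the regularizer $\lambda\|\cdot\|_K^2$ separately before combining, while you package them into the single convex functional $G$. The arguments are equivalent and equally elementary.
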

\begin{proof}
    The proof of \cref{used_lem_sec_A} is provided after  the main proof of \cref{thm_risk}.
\end{proof}

Recall  $\Delta_{\wh g}$ from \eqref{def_Delta} and $\delta_n$
is defined as $R(\delta_n) = \delta_n$. Define 
\begin{align}\label{def_delta_n_hatg}
\delta_{n,\wh g}:=(C_0^2 \vee 1) ( \gamma_\epsilon^2 \vee 1)  \l(  \delta_n  \log(1/\eta)+ \EE\Delta_{\wh g} +  \frac{(\kappa^2\vee1)}{n}  \log\frac{(\kappa^2\vee1)}{\eta} \r)  , 
\end{align}
with $\gamma_\epsilon$  introduced in \cref{ass_reg_error} and  $C_0>0$ being some sufficiently large constant.   Further define the scalars
\begin{align}\label{def_V_K}
 V_K ~ :=&~  \max\{ \kappa^2 \|\fh\|^2_K,  1\}~  ( \|\fh\|^2_K \vee 1),\\\label{def_Lambda} 
 \Lambda ~~:=& ~ \max\left\{\kappa^2\|\fh \|_K^2+\|f^*\|_\i^2,1\right\}  V_K  ~   \delta_{n,\wh g}. 
\end{align}
Note that both $\delta_{n,\wh g}$  and $\Lambda$ can be simplified to 
    \begin{align}\label{lambda_condi}
        C \l( \delta_n  \log(1/\eta)  +  \EE\Delta_{\wh g} + \frac{\log(1/\eta)}{n}   \r),
    \end{align}
for some $C = C(\kappa, \gamma_\epsilon, \|f^*\|_\i, \|\fh\|_K)$, 
    as stated in \cref{thm_risk}. 
    Below we prove  \cref{thm_risk}  in terms of $\delta_{n,\wh g}$  and $\Lambda$.

\begin{proof}[Proof of \cref{thm_risk}]
By the optimality of $\wh f$ and the feasibility of $f_{\cH}$ in \eqref{def_f_hat}, we have
\begin{align}\label{eq_primary_target}
  \EE_n  [\ell_{\wh f}(Y, X)  ]+\lambda \|\wh f\|^2_K-\lambda \|\fh \|^2_K\le 0.
\end{align}
In the case when $\|\fh\| _ K =0 $,  choosing  $\lambda\to \i$ together with \eqref{eq_primary_target} leads to $\|\wh f\|_K=0$. 
By the reproducing kernel property in $\cH_K$ and \cref{ass_bd_K}, we have that for any 
$z\in \cZ$, 
\[
\fh (z) = \langle \fh, K_z \rangle_ K \le \|K_z\|_K \|\fh\|_K= \sqrt{K(z,z)} \|\fh\|_K \le  \kappa \|\fh\|_K=0
\]
so that $\fh \equiv 0$. The same argument yields  $\wh f \equiv 0$.
Consequently, we have
$
\EE[\ell_{\wh f}(Y, X)] = 0
$
when $\|\fh\|_K  = 0$. In the following we focus on the case $\|\fh\| _ K >0$ with the choice of $\lambda$ as 
\begin{equation}\label{rate_lbd_proof}
      \lambda\|\fh\|^2_K ~ >~   C  \Lambda  + 4 \|\fh-f^*\|_\rho^2. 
\end{equation}

Recall $h_f$ from \eqref{def_h} and $\ell_f$ from \eqref{def_ell}. Define the local RKHS-ball as 
\begin{equation}\label{def_Fb}
    \cF_b:=\left\{f\in\cH_K: 
 \|f-f_{\cH}\|_K\le 3\|f_{\cH}\|_K\right\}
\end{equation} 
and recall from \eqref{def_Fb_subset} that
\begin{equation}\label{def_Fb_subset}
    \uline{\cF_b} = \left\{f\in \cF_b: \EE[\ell_f(Y,X)] \le  6\lambda \|\fh\|_K^2 \right\}.
\end{equation}
We first prove a uniform result over $\uline{\cF_b} $. 
Consider  the events 
\begin{align*}
    &\cE(h) := 
   \bigcap_{f\in \cF_b} \Bigl\{
        \EE[h_f(Z,X)] \le  2\EE_n[h_f(Z,X)] + C\Lambda + 4 \|\fh-f^*\|_\rho^2 
    \Bigr\},\\
    &\cE(\epsilon) := \bigcap_{f\in \uline{\cF_b} }\left\{ \B|4\EE_n[\epsilon  (\fh \circ \wh g)(X)-\epsilon  (f\circ \wh g)(X)] \B | \le      \lambda \|\fh\|_K^2
    \right\}.
\end{align*}
By  \cref{cor_bd_h_f} and \cref{bd_cross_term},
 the event $\cE(h) \cap \cE(\epsilon)$ holds with probability at least $1-\eta$. In the rest of the proof we work on the event  $\cE(h) \cap \cE(\epsilon)$.
 For any $f\in \uline{\cF_b}$, we have that 
\begin{align}\label{lb_En_ell}\nonumber
     \EE[\ell_f(Y,X)] 
    & = \EE[h_f(Z,X)] &&\text{by \eqref{link_population}}\\\nonumber
        &\le 2\EE_n[h_f(Z,X)] + C \Lambda + 4 \|\fh-f^*\|_\rho^2 &&\text{by }\cE(h)\\\nonumber
                    &=  2\EE_n[\ell_f(Y,X)]  - 4\EE_n[\epsilon  (\fh \circ \wh g)(X)-\epsilon  (f\circ \wh g)(X)] \\\nonumber
                    &\qquad + C\Lambda + 4 \|\fh-f^*\|_\rho^2 &&\text{by \eqref{link_empirical}}\\ 
        &\le 2\EE_n[\ell_f(Y,X)]  + 2\lambda \|\fh\|_K^2 &&\text{by $\cE(\epsilon) \& \eqref{rate_lbd_proof}$}.
\end{align}
Note that the conclusion follows trivially if $\wh f\in\uline{\cF_b}$. It remains to prove by contradiction when $\wh f\notin \uline{\cF_b}$. The latter consists of two cases:
\begin{enumerate}[noitemsep]
    \item[(1)] $\wh f \in \cF_b \setminus \uline{\cF_b}$; 
    \item[(2)] $\wh f \notin \cF_b$.
\end{enumerate}
{\bf Case (1).} For any $ \alpha\in [0,1]$, let
\[
    f_\alpha := \alpha \wh f + (1-\alpha) \fh.
\]
Since $\EE[\ell_{\fh}(Y,X)] = 0$ and $\EE[\ell_{\wh f}(Y,X)] > 6\lambda \|\fh\|_K^2 $ (as $\wh f\in \cF_b\setminus \uline{\cF_b}$), the fact that
\[
    \EE[\ell_{f_\alpha}(Y,X)] = \EE[(Y-(f_\alpha \circ \wh g)(X))^2-(Y-(\fh\circ \wh g)(X))^2]
\]
is continuous in $\alpha$ implies the existence of $\wh \alpha \in (0,1)$ such that 
\begin{equation}\label{eq_exp_f_hat_alpha}
    \EE[\ell_{f_{\wh \alpha}}(Y,X)] = 6\lambda \|\fh\|_K^2.
\end{equation}
By the convexity of $\cF_b$,\footnote{The RKHS $\cH_K$ is the completion of the linear span of the functions $K(v,\cdot)$ for $v\in \cZ$, hence must be convex.} we have  $f_{\wh \alpha} \in \cF_b$ hence $f_{\wh \alpha}  \in \uline{\cF_b}$. Applying \cref{used_lem_sec_A} with $\alpha = \wh \alpha$, $\wt f = f_{\wh \alpha}$ and $f= \wh f$ yields
\[
 \EE_n[\ell_{f_{\wh \alpha}}(Y, X)]+ \lambda\|f_{\wh \alpha}\|^2_K-\lambda \|f_\cH\|_K^2  
    ~ \le~  \wh \alpha\left \{  \EE_n[\ell_{\wh f}(Y, X)] +\lambda \|\wh f\|^2_K-\lambda \|f_\cH\|_K^2 
  \right\}.
\]
Since $\wh \alpha>0 $,  if we could show that
\begin{align} \label{contra_target}
    \EE_n[\ell_{f_{\wh \alpha}}(Y, X)] +\lambda \|f_{\wh \alpha} \|^2_K-\lambda \|\fh \|^2_K>0, 
\end{align}
then \eqref{contra_target} also holds  for $\wh f$ in lieu of $f_{\wh \alpha}$, which contradicts \eqref{eq_primary_target}. To prove \eqref{contra_target}, recalling that $f_{\wh \alpha} \in \uline{\cF_b}$,
invoking the uniform bound in \eqref{lb_En_ell} with $f=f_{\wh \alpha}$ gives
\begin{align*}
  -2\EE_n[\ell_{f_{\wh \alpha}}(Y,X)] +2\lambda \|\fh\|^2_K-2\lambda \|f_{\wh \alpha} \|^2_K  
 &\le  -   
 \EE[\ell_{f_{\wh \alpha}}(Y,X)] +4\lambda \|\fh\|^2_K-2\lambda \|f_{\wh \alpha}\|^2_K \\
  &=     - 2\lambda \|\fh\|_K^2    -2\lambda \|f_{\wh \alpha}\|^2_K  &&\text{by \eqref{eq_exp_f_hat_alpha}}\\
  &< 0. 
\end{align*}

\noindent{\bf Case (2).} Note that $\wh f \notin \cF_b$ implies $\|\wh f - \fh\|_K>  3\|\fh\|_K$. 
By letting
\[
\wt  \alpha = \frac{3\|\fh\|_K }{\|\wh f-\fh\|_K} \in (0, 1), 
\]
the function
\[
 f_{\wt \alpha}: =\wt  \alpha \wh f+(1- \wt \alpha)\fh  = \fh+\frac{3\|\fh\|_K }{ \|\wh f-\fh\|_K}(\wh f-\fh)
\]
also belongs to $\cH_K$, due to the convexity of $\cH_K$. Moreover, we have $\| f_{\wt \alpha} - \fh\|_K= 3\|\fh\|_K$ so that $ f_{\wt \alpha} \in \cF_b$. Similar as case (1), it suffices to prove 
\begin{align}\label{contra_target_prime}
    \EE_n[\ell_{ f_{\wt \alpha}}(Y, X)] +\lambda \| f_{\wt \alpha}\|^2_K-\lambda \|\fh \|^2_K>0,
\end{align}
which, by applying \cref{used_lem_sec_A} with $\alpha =\wt \alpha$, $\wt f=  f_{\wt \alpha}$ and $f = \wh f$,  would imply that \eqref{contra_target_prime} also holds for $\wh f$ in lieu of $ f_{\wt \alpha}$, thereby establishing contradiction with \eqref{eq_primary_target}. 

If $f_{\wt \alpha} \notin \uline{\cF_b}$, then repeating the same arguments of proving case (1) proves \eqref{contra_target_prime}. 

It remains to prove \eqref{contra_target_prime} for $f_{\wt \alpha} \in \uline{\cF_b}$. Observe that
\begin{align}\label{eq_E_bd_wt_f} \nonumber
       -\EE[\ell_{f_{\wt \alpha}}(Y,X)]  &= -\EE[h_{f_{\wt \alpha}}(Z,X)]&&\text{by \eqref{link_population}}\\\nonumber
&= \EE[(\fh \circ \wh g)(X)-f^*(Z)]^2 -\EE[(f_{\wt \alpha}\circ \wh g)(X)-f^*(Z)]^2 \\ 
&\le 2 \|\fh\|_K^2 ~ \EE\Delta_{\wh g} +{ 2\|\fh-f^*\|_\rho^2} &&\text{by \eqref{bd_irre_error} with $\theta =1$}.  
\end{align}
Meanwhile, since $f_{\wt \alpha} \in \uline{\cF_b}$, 
invoking  \eqref{lb_En_ell} with $f=f_{\wt \alpha}$ gives
\begin{align*}
  &-2\EE_n[\ell_{f_{\wt \alpha}}(Y,X)] +2\lambda \|\fh\|^2_K-2\lambda \|f_{\wt \alpha}\|^2_K \\
 &\le  -   
 \EE[\ell_{f_{\wt \alpha}}(Y,X)] +4\lambda \|\fh\|^2_K-2\lambda \|f_{\wt \alpha}\|^2_K \\
  &\le  2 \|\fh\|_K^2~  \EE\Delta_{\wh g} + 2\|\fh-f^*\|_\rho^2 + 4\lambda \|\fh\|^2_K -2\lambda \|f_{\wt \alpha}\|^2_K &&\text{by \eqref{eq_E_bd_wt_f}}\\
&\le   5
\lambda \|\fh\|^2_K-2\lambda \|f_{\wt \alpha}\|^2_K &&\text{by \eqref{rate_lbd_proof}}\\
&<  0,
\end{align*}
where the last step follows from $\|f_{\wt \alpha} \|_K\ge \|f_{\wt \alpha} -\fh\|_K-\|\fh\|_K=2\|\fh\|_K>0$.
This proves \eqref{contra_target_prime}, thereby completing the proof.
\end{proof} 

 \begin{proof}[Proof of \cref{used_lem_sec_A}]
For any $f_1, f_2 \in \cL^2(\rho)$ and $\alpha \in [0,1]$, {by  the convexity of quadratic loss function}, we have
\begin{equation*} 
\begin{aligned}
&\bigl(Y-((\alpha f_1+(1-\alpha)f_2) \circ \wh g)
(X)\bigr)^2-\bigl(Y- (f_{\cH}\circ \wh g)(X)\bigr)^2 \\
&\le  \alpha \bigl(Y-(f_1\circ \wh g)(X)\bigr)^2 +(1-\alpha) \bigl(Y-(f_2\circ \wh g)(X)\bigr)^2 -\bigl(Y- (f_\cH \circ \wh g)(X)\bigr)^2\\
&=\alpha\left[ \bigl(Y-(f_1\circ \wh g)(X)\bigr)^2  -\bigl(Y- (f_\cH \circ \wh g)(X)\bigr)^2 \right] \\
&\qquad + (1-\alpha)\left[ \bigl(Y-(f_2\circ \wh g)(X)\bigr)^2  -\bigl(Y- (f_\cH \circ \wh g)(X)\bigr)^2 \right]\\
&= \alpha \ell_{ f_1}(Y,X)+( 1-\alpha ) \ell_{ f_2}(Y,X). 
\end{aligned}    
\end{equation*}
By applying the above inequality 
with $f_1=  f$ and $ f_2=\fh$ and taking $\EE_n$ on both sides, we have 
\begin{align} \label{boundary_verify_1}
\EE_n[\ell_{\wt  f}(Y, X)]
\le\alpha ~ \EE_n[\ell_{f}(Y, X)].
\end{align}
On the other hand, note that
\begin{equation} \label{boundary_verify_2}
\begin{aligned}
\|\wt f\|^2_K & =\bigl\| \alpha   f+(1- \alpha)\fh\bigr\|_K^2
 \\
&\le\bigl (\alpha\|  f\|_K+(1- \alpha)\|\fh\|_K\bigr)^2&&\text{by the triangle inequality,}
 \\
&\le \alpha \| f\|_K^2 +(1- \alpha)\|\fh\|^2_K &&\text{by the convexity of the quadratic function}.
\end{aligned}    
\end{equation}
Combining \eqref{boundary_verify_1} and \eqref{boundary_verify_2} 
completes the proof. 
\end{proof}

\subsubsection{Uniform upper bounds of \texorpdfstring{$\EE[h_f(Z,X)]$}{\texttwoinferior} over \texorpdfstring{$f\in \cF_b$}{\texttwoinferior} }\label{app_sec_Rade}

In the section, we apply \cref{supp_lem_3}  to provide uniform control of  $\EE[h_f(Z,X)]$ over $f\in \cF_b$ by using the technique of local Rademacher complexity, which was developed by \cite{bartlett2005local}.

For a generic function space $\mathcal{F}$ from $\RR^r$ to $\RR$ and  $n$ fixed points $\{v_1,..., v_n\}$,
the empirical Rademacher complexity is defined as
\begin{align}\label{def_em_Rademacher}
\emlocalcomp(\mathcal{F}):=\EE_{\varepsilon}
\Big[\sup _{f \in \mathcal{F}} \frac{1}{n}\sum_{i=1}^n \varepsilon_i f\left({v}_i\right)\Big], 
\end{align}
where the expectation $\EE_{\varepsilon}$ is taken over $n$ i.i.d. Rademacher variables $\varepsilon_1, \ldots, \varepsilon_n$ with $\PP(\varepsilon_1 = 1) =\PP(\varepsilon_1 = -1) = 1/2$.
Suppose that $\{v_1,\ldots, v_n\}$ are independently generated according to some distribution $\varrho$. The Rademacher complexity is defined as $$
\localcomp(\mathcal{F}):=\EE_\varrho\left[\emlocalcomp(\mathcal{F})\right],
$$
where $\EE_\varrho$ is taken with respect to $\{v_1,..., v_n\}$. Let $T(\cdot):
\mathcal{F} \rightarrow \RR^{+}$ be  some functional. Our results will be based on  the {\em local Rademacher complexity}, which typically takes the form 
\begin{align}\label{def_Rademacher}
\localcomp(\{f\in \mathcal{F}: T(f)\le \delta\}),\quad \text{for all} \ \delta \ge 0.
\end{align}

The sub-root function is an essential ingredient for establishing fast rates in statistical learning problems.  Precisely, when deriving the bound on the error of a learning algorithm, the key step is to select an appropriate sub-root function, and its corresponding fixed point will appear in the prediction error bound.  In Appendix \ref{app_sec_subroot}  we review the definition and some useful properties of sub-root functions.

For any $f\in  \cL^2(\rho)$, 
define the function 
$\xi_{f}:= \xi_{f\circ \wh g}:  \cX \to \RR$ as
\begin{align}\label{def_g_f}
\xi_{f}(x) = (f \circ \wh g)(x)-(\fh \circ \wh g)(x)
\end{align}
and the function class $\Xi_b$ as
\begin{align}\label{def_class_g_b}
\Xi_b : = \b \{ \xi_{f}: ~  f \in \cF_b\b\}. 
\end{align}
Consider the function
\begin{align}\label{def_psi_g_b_popu}
  \psi_x (\delta):  = \localcomp \left(\big\{\xi_f \in \Xi_b: ~  \EE[\xi^2_f(X)]\le \delta \big\} \right),\qquad  \text{for all}\ \delta \ge 0.
\end{align}
It is verified in Lemma \ref{subroot_1} that $\psi_x (\delta)$ is a sub-root function. Let $\delta_x$ be its fixed point, that is, 
$$
     \psi_x(\delta_x) = \delta_x.
$$ 

The following lemma  bounds from above $\EE[h_f(Z,X)]$ by its empirical counterpart 
$\EE_n[h_f(Z,X)]$, the fixed point $\delta_x$, the kernel-related latent error $\EE \Delta_{\wh g}$ and the approximation error $\|\fh - f^*\|_\rho^2$, uniformly over $f\in \cF_b$.

\begin{lemma}\label{lem_bd_h_f}
{Grant Assumptions  \ref{ass_f_H} and \ref{ass_bd_K}.}
 Fix any $\eta\in(0,1)$. With probability at least $1-\eta$, for any $f\in \cF_b$, one has 
\begin{align*}
&\EE[h_f(Z,X)] ~ \le  ~ 2\EE_n[h_f(Z,X)]+
c_1\max\left\{\kappa^2\|\fh \|_K^2+\|f^*\|_\i^2,1\right\} ~  \delta_x   \\  
& \hspace{1 in}+ c_2 \kappa \|\fh\|_K\left(\kappa \|\fh\|_K + \|f^*\|_\i\right)  \frac{\log (1/\eta)}{n}+ 
  4\|\fh \|_K^2  \EE\Delta_{\wh g}  + 4\|\fh - f^*\|_\rho^2.
\end{align*} 
Here $c_1$ and $c_2$ are some absolute positive constants.
\end{lemma}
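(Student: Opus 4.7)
The plan is to apply the local Rademacher complexity inequality of \cite{bartlett2005local} (invoked here as Lemma~\ref{supp_lem_3}) to the class $\{h_f : f \in \cF_b\}$, with $\psi_x$ supplying the envelope for the relevant local Rademacher complexity. The natural starting point is the algebraic identity $h_f(Z,X) = \xi_f(X)^2 - 2\xi_f(X)\bigl[f^*(Z) - (\fh\circ\wh g)(X)\bigr]$, where $\xi_f$ is the function in \eqref{def_g_f}. Combined with the reproducing property (which yields $\|f\|_\infty \le \kappa\|f\|_K$) and the definition of $\cF_b$ in \eqref{def_Fb}, this produces a uniform envelope $|h_f| \lesssim \kappa\|\fh\|_K(\kappa\|\fh\|_K + \|f^*\|_\infty)$, matching the $\log(1/\eta)/n$ prefactor in the claim, and the variance bound $\Var[h_f] \le \EE[h_f^2] \lesssim (\kappa^2\|\fh\|_K^2 + \|f^*\|_\infty^2)\,\EE[\xi_f^2]$.

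To turn the variance bound into an envelope for the local Rademacher complexity of $\{h_f\}$, I would invoke the Ledoux--Talagrand contraction inequality: since $\xi \mapsto \xi^2 - 2\xi B$ is Lipschitz with constant $O(\kappa\|\fh\|_K + \|f^*\|_\infty)$ on the interval where $|\xi_f|$ lives for $f \in \cF_b$ (with $B$ denoting the bounded sample value $f^*(Z_i) - (\fh \circ \wh g)(X_i)$), one obtains $\localcomp\{h_f : f\in \cF_b, \EE[\xi_f^2]\le r\} \lesssim (\kappa\|\fh\|_K + \|f^*\|_\infty)\,\psi_x(r)$. By the standard rescaling calculus for sub-root functions recalled in Appendix~\ref{app_sec_subroot}, the fixed point of this envelope is of order $\max\{\kappa^2\|\fh\|_K^2+\|f^*\|_\infty^2,1\}\,\delta_x$, which is precisely the coefficient of $\delta_x$ appearing in the lemma.

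The main technical obstacle is the bridging step that relates the ``variance proxy'' $\EE[\xi_f^2]$ to the quantity $\EE[h_f]$ itself, which is what Lemma~\ref{supp_lem_3} requires. Because $\wh g(X)$ is not equal to $Z$, the usual $\cL^2(\rho)$-orthogonality of $f^*-\fh$ against $\cH_K$ does not directly kill the cross term. I would resolve this by splitting $f^*(Z) - (\fh\circ\wh g)(X) = [f^*(Z)-\fh(Z)] + [\fh(Z)-(\fh\circ\wh g)(X)]$, applying Cauchy--Schwarz to each piece, and controlling the second piece via the reproducing property exactly as in \eqref{bd_irre_error} by $\|\fh\|_K\sqrt{\EE\Delta_{\wh g}}$. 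A short AM--GM step then delivers $\EE[\xi_f^2] \le 2\,\EE[h_f] + C\bigl(\|\fh-f^*\|_\rho^2 + \|\fh\|_K^2\,\EE\Delta_{\wh g}\bigr)$, which is precisely what produces the last two additive summands in the lemma.

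Plugging the uniform envelope $M$, the contracted sub-root complexity bound, and the bridging inequality into Lemma~\ref{supp_lem_3} yields the stated uniform inequality over $f\in \cF_b$ with probability at least $1-\eta$: the $c_1$-term absorbs the rescaled fixed point of $\psi_x$, the $c_2$-term arises from the $M\log(1/\eta)/n$ concentration remainder, and the $\|\fh-f^*\|_\rho^2$ and $\|\fh\|_K^2\EE\Delta_{\wh g}$ contributions are propagated unchanged from the bridging step.
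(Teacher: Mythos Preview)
Your proposal is correct and follows essentially the same route as the paper: apply Lemma~\ref{supp_lem_3} to the class $\{h_f : f\in\cF_b\}$ using (i) the uniform envelope $|h_f|\lesssim \kappa\|\fh\|_K(\kappa\|\fh\|_K+\|f^*\|_\infty)$ from the reproducing property, (ii) the Ledoux--Talagrand contraction to pass from the local Rademacher complexity of $\{h_f\}$ to $\psi_x$, and (iii) the bridging inequality $\EE[\xi_f^2]\le 2\EE[h_f]+C(\|\fh\|_K^2\EE\Delta_{\wh g}+\|\fh-f^*\|_\rho^2)$, obtained exactly as you describe via the split $f^*(Z)-(\fh\circ\wh g)(X)=[f^*(Z)-\fh(Z)]+[\fh(Z)-(\fh\circ\wh g)(X)]$ and \eqref{bd_irre_error}. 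The only cosmetic difference is that the paper writes the variance step as $\Var(h_f)\le V\,\EE|h_f|\le V(\EE[h_f]+2\EE[(f^*-\fh\circ\wh g)^2])$ rather than going through $\EE[\xi_f^2]$ first, but the two computations are equivalent and lead to the same choice of $B$ and $L$ in Lemma~\ref{supp_lem_3}.
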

\begin{proof}
Recall $h_f$ from \eqref{def_h} with its corresponding function class is 
\[
    \cF_h : = \Big\{h_f: f\in \cF_b\Big\}.
\]
Define the scalar
\[
    V := 3\kappa \|\fh\|_K\left(5\kappa \|\fh\|_K + 2\|f^*\|_\i\right).
\]
Further define the functional $T:\cF_h \to \RR^+$ as  
\begin{align}\label{functional_Tf}
T(h_f):= V  \left(\EE\left[h_f(Z, X) \right]+   4\|\fh\|_K^2  \EE\Delta_{\wh g} + 4 \|\fh - f^*\|_\rho^2   \right)
\end{align} 
as well as the local Rademacher complexity of $\cF_h$ as  
\begin{align*}
    \localcomp\Big(\big\{h_f \in \cF_h: T(h_f)\le \delta\big\}\Big). 
\end{align*}
We aim to apply the first statement in Lemma \ref{supp_lem_3} with $\cF = \cF_h$, $T = T$,  $L =   4\|\fh\|_K^2  \EE\Delta_{\wh g} + 4 \|\fh - f^*\|_\rho^2  $ and  
\[
   B = -a = b =  V.
\]
The remaining proof is divided into four steps.\\

\noindent{\it Step 1: Verification of the boundedness of $\cF_h$.}    By the reproducing property,
for every $z\in \cZ$ and $f\in \cH_K$, 
we have
\[
|f(z)| = |\langle f, K_z\rangle_K |\le \|K_z\|_K \| f\|_K \le \kappa  \| f\|_K,
\]
where $K_z=K(\cdot,z)\in \cH_K$ and 
the last inequality holds due to $\|K_z\|_K^2= K(z,z)\le \kappa^2$ from \cref{ass_bd_K}. Moreover, for any  $f\in \cF_b$, 
\[
\|f\|_K\le \|\fh\|_K+\|f-\fh\|_K\le 4\|\fh\|_K. 
\]
Then, the function space $\cF_b$ can be  uniformly bounded as
\begin{align}\label{bd_F_b}
    \|f\|_\infty\le  \kappa\|f\|_K\le 4\kappa\|\fh\|_K, \quad\text{for any}\ f\in \cF_b. 
\end{align}
For every $f\in \cF_b$, we also have
\begin{align}\label{bd_F_b_diff}
\|f-\fh \|_\infty \le \kappa\|f-\fh\|_K\le 3\kappa \|\fh\|_K. 
\end{align}
Combining all these bounds, 
for every $f \in \cF_b$ and $(z,x)\in \cZ \times \cX$, we have 
\begin{align}\label{la_eq22}\nonumber
|h_f(z, x)|
&=\left|(f^*(z)-(f\circ \wh g)(x))^2-(f^*(z)-(\fh\circ \wh g)(x))^2\right|\\\nonumber
 &\le \Bigl|(\fh\circ \wh g)(x)- (f\circ \wh g)(x)\Bigr| \Bigl(|(f\circ \wh g)(x)|+|(\fh\circ \wh g)(x)|+ 2|f^*(z)|\Bigr)
\\
 &\le 3\kappa\|\fh\|_K(5\kappa \|\fh\|_K+2\|f^*\|_\i).
 \end{align}  
This shows the function class $\cF_h$ is uniformly bounded within $[-V, V]$.\\

\noindent{\it Step 2: Bounding the variance of $h_f(Z,X)$.} To apply Lemma \ref{supp_lem_3}, we need
to bound from above the variance of $h_f(Z, X)$  by $T(h_f)$. To this end,  note that 
\begin{align*}
\Var\left(h_f(Z,X)\right)
&= \Var\left((f^*(Z)-(f\circ \wh g)(X))^2-(f^*(Z)-(\fh \circ \wh g)(X))^2\right)\\
&\le  \EE\left[(f^*(Z)-(f\circ \wh g)(X))^2-(f^*(Z)-(\fh \circ \wh g)(X) )^2\right]^2\\
&\le V ~  \EE \left|(f^*(Z)-(f\circ \wh g)(X))^2-(f^*(Z)-(\fh \circ \wh g)(X))^2\right|  && \text{by \eqref{la_eq22}}.
\end{align*}
Since
\begin{align*} 
&\EE\left|(f^*(Z)-(f\circ \wh g)(X))^2-(f^*(Z)-(\fh \circ \wh g)(X))^2\right|
\\
&\le \EE[(f^*(Z)-(f\circ \wh g)(X))^2-(f^*(Z)-(\fh \circ \wh g)(X))^2 ]+2\EE[f^*(Z)-(\fh \circ \wh g)(X)]^2 \\
&=\EE\left[ h_f(Z, X)\right] + 2\EE[f^*(Z)-(\fh \circ \wh g)(X)]^2 &&\text{by \eqref{def_h}}\\
&\le \EE\left[ h_f(Z, X)\right]+4 \|\fh \|_K^2  \EE\Delta_{\wh g}+ 4\EE[\fh(Z)-f^*(Z)]^2
\end{align*}
where the last step uses \eqref{bd_irre_error} with $\theta =1$, 
 we have
\begin{align*}
\Var\left(h_f(Z,X)\right)  \le  V \left( \EE\left[ h_f(Z,  X)\right]+ 4 \|\fh \|_K^2  \EE\Delta_{\wh g}  + 4 \|\fh - f^*\|_\rho^2 \right) \overset{\eqref{functional_Tf}}{=}T(h_f).
\end{align*}
This further implies that the variance condition in Lemma \ref{supp_lem_3} holds with $B=  V$ and $L=4 \|\fh \|_K^2  \EE\Delta_{\wh g} +4  \|\fh - f^*\|_\rho^2 $ for the functional $T$ in \eqref{functional_Tf}.\\

\noindent{\it Step 3: Choosing a suitable sub-root function.} 
To apply Lemma \ref{supp_lem_3}, we need to find a suitable sub-root function $\psi(\delta)$ with its fixed point $\delta_\star$ such that for all $\delta \ge \delta_\star$, 
\begin{align}\label{bd_subroot}
  \psi(\delta) \ge   V ~ \localcomp\left(\big\{h_f \in \cF_h: T(h_f)\le \delta\big\}\right). 
\end{align} 
To this end, we notice that for any $f\in \cF_b$,
\begin{align*}
{V\over 2} \EE[ \xi_f^2(X)] &={V\over 2} ~ \EE\left[  (f\circ\wh g) (X)-(\fh \circ\wh g) (X)\right]^2  \\
&\le V  \EE[(f\circ \wh g)(X)-f^*(Z)] ^2+ V  \EE[f^*(Z)- (\fh \circ \wh g)(X)]^2    \\
&=  V \EE[h_f(Z,X) ] + 2V  \EE[f^*(Z)- (\fh \circ \wh g)(X)]^2   \\
&\le V \left(\EE[h_f(Z,X) ]  + 4\|\fh \|_K^2  \EE\Delta_{\wh g} +4 \|\fh-f^*\|_\rho^2 \right)  &&\text{by \eqref{bd_irre_error} with $\theta =1$}\\
&\overset{\eqref{functional_Tf}}{=}  T(h_f). 
\end{align*}
It follows that for any $\delta \ge 0$,
\begin{align}\label{lem4_eq0}\nonumber
\localcomp\left(\big\{h_f \in \cF_h: T(h_f)\le \delta\big\}\right)&\le  \localcomp\left(\big\{ h_f\in \cF_h:  V  \EE[\xi^2_f(X)] \le 2 \delta\big\}\right)\\
&= \EE
\left[\sup _{f\in \cF_b, ~ V \EE[\xi^2_f(X)] \le 2\delta} \frac{1}{n} \sum_{i=1}^n \varepsilon_i h_f(Z_i, X_i)\right]. 
\end{align}
Observe that, for any $z \in \cZ$, $x \in \cX$ and $f_1,f_2\in \cF_b$, 
\begin{align}\label{la_eq23}
&\left|h_{f_1}(z, x)- h_{f_2}(z,x)\right| \nonumber\\
&=\left| (f^*(z)-(f_1\circ \wh g)(x))^2-(f^*(z)-(f_2\circ \wh g)(x))^2\right|\nonumber \\
&\le \left| (f_1\circ \wh g)(x) + (f_2\circ \wh g)(x) - 2f^*(z) \right|    \left|(f_1\circ \wh g)(x)-(f_2\circ \wh g)(x)\right|
\nonumber \\
&\le (8\kappa\|\fh \|_K+2\|f^*\|_\infty)  \left|(f_1\circ \wh g)(x)-(\fh \circ \wh g)(x)- (f_2\circ \wh g)(x) + (\fh \circ \wh g)(x) \right|\nonumber\\
&= (8\kappa\|\fh \|_K+2\|f^*\|_\infty)  \left|\xi_{f_1} (x)- \xi_{f_2} (x) \right|.
\end{align}
This implies that $h_f(Z_i, X_i)$ is $(8\kappa\|\fh \|_K+2\|f^*\|_\infty)$-Lipschitz in $\xi_f(X_i)$ for all $ i\in [n]$. 
We next apply Ledoux–Talagrand contraction inequality in Lemma \ref{supp_lem_5} with $d = n$, 
\begin{align*}
    &\theta_j = \xi_f(X_j )=(f\circ \wh g)(X_j)- (\fh \circ \wh g)(X_j),\\
    &\psi_j(\theta_j) = h_f(Z_j, X_j) = \left(f^*(Z_j) - (\fh \circ \wh g)(X_j) -\theta_j \right)^2-\left(f^*(Z_j)-(\fh \circ \wh g)(X_j)\right)^2
\end{align*}   
 for all $j\in [n]$ as well as $\cT \subset \RR^n $ chosen as

$$
    \left\{\left(\xi_f (X_1) , \ldots,  \xi_f (X_n)\right)^\T: f\in \cF_b, ~ V\EE[\xi_f^2(X)]\le 2\delta \right\}.
$$
Note that 
\eqref{la_eq23} ensures that $\psi_j(\theta_j)$ is $(8\kappa\|\fh \|_K+2\|f^*\|_\infty)$-Lipschitiz.  
Invoking \cref{supp_lem_5}  yields
\begin{align}\label{lem4_eq1}
&\EE
\Big[\sup _{f\in \cF_b,  V \EE[\xi^2_f(X)]\le 2\delta} \frac{1}{n} \sum_{i=1}^n \varepsilon_i h_f(Z_i, X_i)\Big]\nonumber\\ 
&\le (8\kappa\|\fh \|_K+2\|f^*\|_\infty) ~  \EE
\l[\sup _{f\in \cF_b,   V \EE[\xi^2_f(X)]\le 2\delta} \frac{1}{n} \sum_{i=1}^n \varepsilon_i  \xi_f(X_i) \r]
\end{align}
so that, together with \eqref{lem4_eq0}, 
\begin{align}\label{combined_bd}\nonumber
&\localcomp\left(\big\{h_f \in \cF_h: T(h_f)\le \delta\big\}\right)\\
&\le (8\kappa\|\fh \|_K+2\|f^*\|_\infty) ~ \localcomp\left(\left\{ \xi_f :  f\in \cF_b, V \EE[\xi^2_f(X)]\le 2\delta\right\}\right).
\end{align}
To find a sub-root function that meets the requirement \eqref{bd_subroot}, recall that the function $\psi_x(\cdot)$ in \eqref{def_psi_g_b_popu} is sub-root. It follows from \eqref{combined_bd}  that  
\begin{align}\label{def_subroot_td}
V ~ \localcomp\left(\big\{h_f \in \cF_h: T(h_f)\le \delta\big)\right\} \le (8\kappa\|\fh \|_K+2\|f^*\|_\infty) V ~ \psi_x\Big(\frac{2 \delta}{V}\Big) = : \psi(\delta)
\end{align}
for any $\delta \ge 0$.
According to Lemma \ref{lem_subroot_comp},  the function $\psi(\delta)$  is also sub-root in $\delta$.\\

\noindent{\it Step 4: Bounding the fixed point of the sub-root function in \eqref{def_subroot_td}.} 
    By using \cref{lem_subroot_comp} again,  
the fixed point of \eqref{def_subroot_td}  defined as 
\[
\delta_\star  = \psi(\delta_\star)  =(8\kappa\|\fh \|_K+2\|f^*\|_\infty) V ~  \psi_x\Big(\frac{2\delta_\star}{V}\Big)  
\]
satisfies
\[
\delta_\star \le \max\left\{(16\kappa\|\fh \|_K+4\|f^*\|_\infty)^2,1\right\} \frac{V}{2} \delta_x, 
\]
where 
$ \delta_x$ is the fixed point of $\psi_x(\cdot)$ in \eqref{def_psi_g_b_popu}. 

Finally, the proof is completed by invoking Lemma \ref{supp_lem_3} with $-a= b= B=V$, $L=4\|\fh \|_K^2  \EE\Delta_{\wh g}  + 4\|\fh - f^*\|_\rho^2$ and $\theta = 2$.
\end{proof}


\subsubsection{Bounding from above the fixed point of the local Rademacher complexity \texorpdfstring{$\psi_x(\delta)$}{\texttwoinferior} by that of the kernel complexity \texorpdfstring{$R(\delta)$}{\texttwoinferior}}\label{app_sec_lrc_kc}
This section is devoted to deriving an upper bound of $\delta_x$ which is the fixed point of $ \psi_x(\cdot)$ in \eqref{def_psi_g_b_popu} via a series of useful lemmas in  \cref{pf_lem_bd_td_delta},  followed by a corollary of \cref{lem_bd_h_f}. 
Recall  that  $\Delta_{\wh g}$ from \eqref{def_Delta}. Further recall $\delta_{n,\wh g}$ and $V_K$ from \eqref{def_delta_n_hatg} and \eqref{def_V_K}, respectively.

\begin{lemma}\label{lem_bd_td_delta}
Grant Assumptions \ref{ass_f_H}, \ref{ass_bd_K} and  \ref{ass_mercer}.
For any $\eta\in (0,1)$,  with probability at least $1-\eta$, one has
\[
    \delta_x\le C V_K~    \delta_{n, \wh g}. 
\]
\end{lemma}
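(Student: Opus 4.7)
The plan is to execute the chain of inequalities outlined in \cref{sec_main_relate_complexity}, which connects the population local Rademacher complexity $\psi_x(\delta)$ to the kernel complexity $R(\delta)$, and then to solve the resulting fixed-point inequality using the sub-root structure of $R(\cdot)$. Since all the hard concentration arguments are packaged into the auxiliary lemmas referenced in \cref{sec_main_relate_complexity}, the proof itself is an assembly argument.

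Concretely, I would proceed in four steps. First, I invoke \cref{supp_lem_4} to relate $\psi_x(\delta)$ to its empirical counterpart $\wh\psi_x(\delta)$ up to an additive $O(\log(1/\eta)/n)$ term. Second, I invoke \cref{lem_bd_em_local_rade_gb}, which adapts the argument of \citet[Lemma 6.6]{bartlett2005local} to the predicted-input setting, to bound $\wh\psi_x(\delta) \lesssim \wh R_x(\delta)$ using the eigenvalues of $\bK_x$. Third, I apply the kernel-matrix spectral comparison bounds in \cref{lem_bd_wh_R,lem_bd_bar_Delta} (whose proofs control the perturbation $\bK_x - \bK$ in terms of $\EE\Delta_{\wh g}$) to obtain $\wh R_x(\delta) \lesssim \wh R(\delta) + \sqrt{\EE\Delta_{\wh g}/n} + \log(1/\eta)/n$. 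Fourth, I apply \cref{lem_bd_wt_R} to replace $\wh R(\delta)$ with the population kernel complexity $R(\delta)$, incurring an additional $\sqrt{\log(1/\eta)}/n$ term. Chaining these four inequalities yields, on an event of probability at least $1-\eta$,
\[
\psi_x(\delta) ~\le~ C\,\Bigl( R(\delta) + \sqrt{\tfrac{\EE\Delta_{\wh g}}{n}} + \tfrac{\sqrt{\log(1/\eta)}}{n} \Bigr), \qquad \forall \delta \ge 0,
\]
for a constant $C$ that absorbs the RKHS boundedness factors (this is where $V_K$ enters, via the radius of $\cF_b$).

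The next step is to evaluate this bound at the fixed point, $\delta_x = \psi_x(\delta_x)$. Here the sub-root structure of $R$ is crucial: since $R(\delta)/\sqrt{\delta}$ is non-increasing and $R(\delta_n) = \delta_n$, one has $R(\delta) \le \delta_n + \sqrt{\delta_n \delta}$ for every $\delta \ge 0$ (split into $\delta \le \delta_n$ and $\delta > \delta_n$, using monotonicity and the sub-root property respectively). Substituting gives a quadratic inequality in $\sqrt{\delta_x}$:
\[
\delta_x ~\le~ C\,\delta_n + C\sqrt{\delta_n\,\delta_x} + C\,\sqrt{\tfrac{\EE\Delta_{\wh g}}{n}} + C\,\tfrac{\sqrt{\log(1/\eta)}}{n},
\]
which I would solve in the standard way (absorb $C\sqrt{\delta_n \delta_x} \le \tfrac12 \delta_x + \tfrac12 C^2 \delta_n$) to conclude
\[
\delta_x ~\lesssim~ \delta_n + \sqrt{\tfrac{\EE\Delta_{\wh g}}{n}} + \tfrac{\sqrt{\log(1/\eta)}}{n}.
\]

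Finally, I would convert the square-root terms into the form of $\delta_{n,\wh g}$ in \eqref{def_delta_n_hatg} using elementary AM-GM: $\sqrt{\EE\Delta_{\wh g}/n} \le \tfrac12(\EE\Delta_{\wh g} + 1/n)$, and $\sqrt{\log(1/\eta)}/n \le \tfrac12(\log(1/\eta)/n + 1/n)$. Carefully tracking the RKHS-dependent constants coming from the preceding steps (in particular the radius factor $\|\fh\|_K$ that propagates through $\wh\psi_x$) yields the claimed multiplicative factor $V_K$ and the precise form $\delta_x \le C\,V_K\,\delta_{n,\wh g}$.

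The main substantive obstacle is not in the present proof but in the auxiliary ingredients: relating $\wh R_x$ to $\wh R$ requires a sharp spectral comparison of the two $n\times n$ kernel matrices built from $\wh g(X_i)$ and $Z_i$ without any a priori assumption on their proximity, which is the technical core of \cref{lem_bd_bar_Delta}. Granted those ingredients, the present lemma reduces to an essentially mechanical fixed-point computation, and the only delicate point to track carefully is the probability accounting, since the events of \cref{supp_lem_4}, \cref{lem_bd_em_local_rade_gb}, \cref{lem_bd_wh_R,lem_bd_bar_Delta} and \cref{lem_bd_wt_R} must hold simultaneously (union bound, at the price of replacing $\eta$ by $\eta/5$, which is absorbed into the constant).
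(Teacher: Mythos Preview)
Your overall architecture is correct—chain the four auxiliary lemmas to bound $\psi_x(\delta)$ by $R(\delta)$ plus error terms, then solve a fixed-point inequality—but there is a genuine gap in your claim that the chained bound holds ``$\forall\,\delta\ge 0$.'' Each of \cref{supp_lem_4}, \cref{cor_lem7_and_lem8}, and \cref{lem_bd_wt_R} carries a side condition on $\delta$: the latter two require $\delta$ to exceed explicit thresholds of order $\EE\Delta_{\wh g}+\kappa^2\log(1/\eta)/n$ and $R(\delta)\le\delta$, while \cref{supp_lem_4} imposes the \emph{implicit} constraint
\[
\delta ~\ge~ c_1\kappa\|\fh\|_K\,\psi_x(\delta) + c_2\kappa^2\|\fh\|_K^2\,\tfrac{\log(2/\eta)}{n}.
\]
At $\delta=\delta_x$ one has $\psi_x(\delta_x)=\delta_x$, so this becomes $(1-c_1\kappa\|\fh\|_K)\,\delta_x \ge c_2\kappa^2\|\fh\|_K^2\log(2/\eta)/n$, which is vacuously false whenever $c_1\kappa\|\fh\|_K\ge 1$. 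Hence you cannot simply evaluate the chain at $\delta_x$.

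The paper repairs this by introducing an auxiliary sub-root function $\psi_1(\delta)=c_1\max\{\kappa\|\fh\|_K,1\}\,\psi_x(\delta)+c_2\kappa^2\|\fh\|_K^2\log(2/\eta)/n$, whose fixed point $\delta_1$ automatically satisfies the implicit constraint from \cref{supp_lem_4} (by construction) and dominates $\delta_x$ (by \cref{lem_subroot_monotone}, since $\psi_x\le\psi_1$). A case split then finishes: either $\delta_1\le\delta_{n,\wh g}$ and we are done, or $\delta_1>\delta_{n,\wh g}$, in which case $\delta_1$ also clears the explicit lower-bound thresholds (since $\delta_{n,\wh g}$ was engineered to satisfy them), the chain applies at $\delta_1$, and a sub-root comparison with a second auxiliary function $\psi_2$ built from $R(\cdot)$ closes the argument. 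Your quadratic/AM-GM manipulation in the second half would work once the chain is legitimately available, but you need this auxiliary-function detour to get there; the ``essentially mechanical'' description understates this point.
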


\begin{proof}
Recalling $\emlocalcomp$ from \eqref{def_em_Rademacher}, we first link $ \psi_x (\delta) $ to its following data-dependent   version 
\begin{align}\label{def_psi_g_b_empi}
   \wh \psi_x (\delta)  = \emlocalcomp \left(\big\{\xi_f \in \Xi_b: ~  \EE_n[\xi^2_f(X)]\le \delta \big\}\right). 
\end{align}
By  \eqref{bd_F_b_diff}, the function class  $\Xi_b$ is uniformly bounded as $ \|\xi_f\|_\i \le  3\kappa \|f_\cH\|_K$ for any $f\in \cF_b$.
Then by   applying \cref{supp_lem_4} to $\Xi_b$ with $G =3\kappa \|f_\cH\|_K$, 
 for any $\eta\in (0,1)$, it holds with probability at least $1-\eta$ that
\begin{align} \label{first_bound_app}
     \psi_x (\delta) \le  2\sqrt{2} ~ \wh \psi_x ( \delta)   +   6 \kappa \|\fh \|_K  \frac{\log (2/\eta)}{n}
\end{align}
whenever
\begin{align} \label{condi_delta_1}
    \delta \ge c_1 \kappa \| \fh \|_K ~    \psi_x (\delta)+ c_2   \kappa^2  \| \fh \|_K^2  \frac{\log (2/\eta) }{n}. 
\end{align}
where $c_1$ and $c_2$ are two absolute constants.

Second, recall $\wh R_x (\delta)$ from 
\eqref{def_em_kercomp} and $\wh  R(\delta)$ from 
\eqref{def_em_kercomp_true_input}. Applying  \cref{lem_bd_em_local_rade_gb} results in
\begin{align} \label{second_bound_app}
    \wh  \psi_x (\delta) \le  \sqrt{10}  ( \|\fh\|_K  \vee 1) ~ \wh R_x (\delta),   \qquad \forall~  \delta >0,
\end{align}
which holds almost surely on $X_1, \ldots , X_n$. Subsequently, by applying \cref{cor_lem7_and_lem8},  for any $\eta\in (0,1)$, whenever 
\begin{align}\label{condi_delta_2}
    30 
  \EE\Delta_{\wh g}+  \frac{560 \kappa^2 \log (2/\eta) }{3n} \le  \delta, 
\end{align}
with probability at least $1-\eta$, one has
\begin{align} \label{third_bound_app}
\wh R_x (\delta)\le  C \l( \wh  R(\delta) +  \sqrt{\frac{\EE\Delta_{\wh g} }{n}}  +  \frac{\kappa \sqrt{\log(2/\eta)}}{n} \r). 
\end{align}

To proceed,  by applying  \cref{lem_bd_wt_R}, 
for any $\eta\in (0,1)$, 
it holds with probability at least $1-\eta$ that 
\begin{align} \label{fourth_bound_app}
   \wh  R(\delta) \le  C \l(R(\delta)  + \frac{ \kappa \sqrt{\log(2/\eta)}}{n} \r), 
\end{align}
provided that
\begin{align}\label{condi_delta_3}
   n\delta \ge 32 \kappa^2  \log\l( \frac{256 (\kappa^2 \vee 1) }{\eta} \r)
\end{align}
and 
\begin{align}\label{condi_delta_4}
    R(\delta) \le \delta. 
\end{align}

Collecting the bounds \eqref{first_bound_app}, \eqref{second_bound_app}, \eqref{third_bound_app} and \eqref{fourth_bound_app} yields  
\begin{align}\label{bd_tilde_psi}
  \psi_x (\delta) \le & 
C  ( \|\fh\|_K   \vee 1) \l(  R(\delta) + \sqrt{\frac{ \EE\Delta_{\wh g}}{n} }+  \frac{\kappa  \sqrt{\log (2/\eta)}}{n}   \r)
\end{align}
provided that $\delta$ satisfies \eqref{condi_delta_1}, \eqref{condi_delta_2},  \eqref{condi_delta_3} and \eqref{condi_delta_4}.

For any  $\delta\ge 0$, define
\begin{equation*}
    \begin{split}
        & \psi_1(\delta) : =  c_1 \max\{ \kappa\|\fh \|_K, 1\} ~    \psi_x (\delta)+ c_2\kappa^2\|\fh\|_K^2\frac{\log(2/\eta)}{n}, \\
       &  \psi_2(\delta) : = C   \max\{ \kappa\|\fh \|_K ,1\}   ( \|\fh\|_K   \vee 1) \l( R(\delta)+\sqrt{\frac{\EE\Delta_{\wh g} }{n}}
       +  \frac{\kappa {\log(2/\eta)}}{n}  \r) .   
    \end{split}
\end{equation*}
 It is easy to see that both  $\psi_1$ and $\psi_2 $ 
 are sub-root. Write the fixed point of $\psi_1$ as ${\delta}_1$.
 Since $ \psi_x(\delta) \le \psi_1(\delta)$  for any $\delta\ge 0$, invoking \cref{lem_subroot_monotone} ensures
\begin{align}\label{bd_delta_star}
    \delta_x \le \delta_1. 
\end{align}
So it suffices to derive an upper bound for $\delta_1$.

To this end, note that $\delta_{n,\wh g}$ in \eqref{def_delta_n_hatg} satisfies the conditions \eqref{condi_delta_2} and \eqref{condi_delta_3} for sufficiently large $C_0$ in \eqref{def_delta_n_hatg}. By noting that $\delta_{n,\wh g}\ge \delta_n$, applying \cref{lem_subroot_monotone} yields $R(\delta_{n,\wh g})  \le  \delta_{n,\wh g}$ so that $\delta_{n,\wh g}$ also satisfies the condition \eqref{condi_delta_4}. 
By definition of fixed point,  $\delta_1$ satisfies the condition \eqref{condi_delta_1}.
If $ \delta_1  \le   \delta_{n, \wh g} $,
the proof is completed.
Otherwise, $\delta_1$ satisfies \eqref{condi_delta_1}, \eqref{condi_delta_2}, \eqref{condi_delta_3} and \eqref{condi_delta_4}
so  that the inequality \eqref{bd_tilde_psi} holds for $\delta_1$. 
By adding and multiplying terms on both sides of \eqref{bd_tilde_psi},  we find that 
\begin{align}\label{link_psi_1_psi_2}
   \delta_1  =  \psi_1(\delta_1)  
    \le  \psi_2(\delta_1)   . 
\end{align}
Furthermore, the fact that  $R(\delta_{n,\wh g})  \le  \delta_{n,\wh g}$, together with the definition of $\delta_{n,\wh g}$ in \eqref{def_delta_n_hatg}, 
leads to
\begin{equation}\label{bd_psi_2}
\begin{aligned}
  \psi_2(\delta_{n, \wh g} )\le C  \max\{ \kappa\|\fh \|_K ,1\}    ( \|\fh\|_K   \vee 1)  \delta_{n, \wh g}. 
\end{aligned}
\end{equation}
Since $\psi_2( \delta)/\sqrt{\delta}$ is non-increasing in $\delta$, we find that
\begin{align*}
\sqrt{\delta_1}  \overset{\eqref{link_psi_1_psi_2}} {\le}   \frac{\psi_2(\delta_1)}{\sqrt{\delta_1}}   \le \frac{\psi_2({\delta_{n,\wh g}})}{\sqrt{\delta_{n,\wh g}}} \overset{\eqref{bd_psi_2}}{\le}
C  \max\{ \kappa\|\fh \|_K ,1\} ( \|\fh\|_K   \vee 1)   \sqrt{\delta_{n, \wh g}}
\end{align*}
so that
\[
\delta_1  \le C  V_K ~  \delta_{n, \wh g} , 
\]
which concludes the bound in Lemma \ref{lem_bd_td_delta}.
\end{proof}

Combining Lemmas \ref{lem_bd_h_f} and \ref{lem_bd_td_delta} yields the following corollary.

\begin{corollary}\label{cor_bd_h_f}
  Grant Assumptions \ref{ass_f_H},   \ref{ass_bd_K} and \ref{ass_mercer}.   Fix any $\eta\in(0,1)$. With probability at least $1-\eta$, for any $f\in \cF_b$, one has 
    \begin{align*}
        \EE[h_f(Z,X)] ~ \le  ~ 2\EE_n[h_f(Z,X)]+
   &     C \max\left\{\kappa^2\|\fh \|_K^2 +\|f^*\|_\i^2,1\right\} V_K  ~   \delta_{n,\wh g}   + 4\|\fh - f^*\|_\rho^2  . 
    \end{align*}
\end{corollary}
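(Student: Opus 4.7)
The proof is essentially a bookkeeping exercise that stitches together the two preceding lemmas via a union bound. I will begin by invoking \cref{lem_bd_h_f} with confidence parameter $\eta/2$ and \cref{lem_bd_td_delta} also with $\eta/2$; a union bound then guarantees that, on an event of probability at least $1-\eta$, both conclusions hold simultaneously. On this event, every bound below applies to all $f\in\cF_b$.

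Next, I would substitute the estimate $\delta_x \le C V_K \, \delta_{n,\wh g}$ from \cref{lem_bd_td_delta} directly into the right-hand side of the inequality provided by \cref{lem_bd_h_f}. This yields
\begin{align*}
\EE[h_f(Z,X)] &\le 2\EE_n[h_f(Z,X)] + c_1 \max\{\kappa^2\|\fh\|_K^2 + \|f^*\|_\i^2,\,1\}\, C V_K\, \delta_{n,\wh g} \\
&\quad + c_2 \kappa \|\fh\|_K\bigl(\kappa\|\fh\|_K + \|f^*\|_\i\bigr)\frac{\log(1/\eta)}{n} + 4\|\fh\|_K^2\,\EE\Delta_{\wh g} + 4\|\fh - f^*\|_\rho^2.
\end{align*}
The remaining task is to absorb the two ``leftover'' terms—namely $c_2\kappa\|\fh\|_K(\kappa\|\fh\|_K+\|f^*\|_\i)\log(1/\eta)/n$ and $4\|\fh\|_K^2\,\EE\Delta_{\wh g}$—into the single term $C \max\{\kappa^2\|\fh\|_K^2+\|f^*\|_\i^2,1\}\, V_K\,\delta_{n,\wh g}$.

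This absorption follows immediately from the definitions. Recall that
\[
\delta_{n,\wh g} = (C_0^2\vee 1)(\gamma_\epsilon^2\vee 1)\Bigl(\delta_n\log(1/\eta) + \EE\Delta_{\wh g} + \tfrac{\kappa^2\vee 1}{n}\log\tfrac{\kappa^2\vee 1}{\eta}\Bigr),
\]
so $\delta_{n,\wh g}$ already contains both $\EE\Delta_{\wh g}$ and a $\log(1/\eta)/n$ contribution up to universal constants. Moreover, $V_K = \max\{\kappa^2\|\fh\|_K^2,1\}(\|\fh\|_K^2\vee 1)\ge 1$ and the prefactor $\max\{\kappa^2\|\fh\|_K^2+\|f^*\|_\i^2,1\}$ dominates each of $\kappa^2\|\fh\|_K^2$, $\kappa\|\fh\|_K\|f^*\|_\i$, and $\|\fh\|_K^2$. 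Hence for an appropriately enlarged absolute constant $C$ (absorbing $c_1,c_2$ and the factors $C_0,\gamma_\epsilon$), both leftover terms are bounded above by $C\max\{\kappa^2\|\fh\|_K^2+\|f^*\|_\i^2,1\} V_K\,\delta_{n,\wh g}$, yielding the claimed inequality.

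There is no substantive analytical obstacle here: the two nontrivial probabilistic statements have already been established as \cref{lem_bd_h_f} and \cref{lem_bd_td_delta}. The only care required is in matching constants and verifying that the prefactor $\max\{\kappa^2\|\fh\|_K^2+\|f^*\|_\i^2,1\}\cdot V_K$ is large enough to swallow the auxiliary constants from both lemmas, which the definition of $V_K$ makes automatic.
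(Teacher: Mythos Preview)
Your proposal is correct and follows essentially the same approach as the paper: the paper's proof also simply combines \cref{lem_bd_h_f} and \cref{lem_bd_td_delta}, then observes that the two leftover terms $\kappa\|\fh\|_K(\kappa\|\fh\|_K+\|f^*\|_\i)\log(1/\eta)/n$ and $\|\fh\|_K^2\,\EE\Delta_{\wh g}$ are dominated by $C\max\{\kappa^2\|\fh\|_K^2+\|f^*\|_\i^2,1\}V_K\,\delta_{n,\wh g}$. Your write-up is in fact slightly more explicit about the union bound and the absorption step than the paper's.
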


\begin{proof}
 
By noting that 
\begin{align*}
  \kappa \|\fh\|_K\left(\kappa \|\fh\|_K + \|f^*\|_\i\right)  \frac{\log (1/\eta)}{n}+ 
\|\fh \|_K^2  \EE\Delta_{\wh g}~ \le ~ C  \max\left\{\kappa^2\|\fh \|_K^2 +\|f^*\|_\i^2,1\right\} V_K~ \delta_{n,\wh g}, 
\end{align*}
we complete the proof by applying
Lemmas \ref{lem_bd_h_f}  and  \ref{lem_bd_td_delta}. 
\end{proof}

\subsubsection{Local uniform upper bounds of the cross-term}\label{sec_cross_term}
In this section we establish the uniform bound of the cross-term  $\EE_n [ \epsilon  (\fh \circ \wh g)(X)- \epsilon   (f\circ \wh g)(X) ]$  over a local ball.  Recall the definitions of the function $\xi_{f}$ from \eqref{def_g_f} 
and the function class $\Xi_{b}$ from \eqref{def_class_g_b}.


To facilitate the proof in this section, 
introduce
the empirical covariance   operator  $\wh T_{x,K}$ with respect to  the predicted inputs $\wh Z_i = \wh g(X_i)$ for $i \in [n]$,  defined as 
\begin{align} \label{def_wh_T_x_K}
    \wh T_{x,K}:  \cH_K\to \cH_K, \qquad   \wh T_{x,K} f: = \frac{1}{n}\sum_{i=1}^n f(\widehat{Z}_i)K\l(\widehat{Z}_i, \cdot\r). 
\end{align}
The operator $\wh T_{x,K}$ has the following useful properties. 
First, 
by  the reproducing property in 
$\cH_K$,  for any $f,g\in \cH_K$,  we have
\begin{align*}
    \langle f, \wh T_{x,K} ~ g\rangle_K= \frac{1}{n} \sum_{i=1}^n f(\wh Z_i ) g(\wh Z_i ) 
\end{align*}
so that 
\begin{align} \label{eq_repro}
\EE_n \l[(f\circ \wh g)(X)\r]^2 = \frac{1}{n}\sum_{i=1}^n f^2(\wh Z_i )= \langle f,  \wh {T}_{x,K}f \rangle_K, \qquad
 \text{for any $f\in \cH_K$.}
\end{align}
Second, by using \cref{lem_wh_T_x},  the largest $n$ eigenvalues of  $\wh T_{x,K}$ coincide with those of $\bK_x$, while the remaining eigenvalues are all zero.
Write   $\{\wh \mu_{x,j}\}_{j=1}^\infty $ as the eigenvalues  of $\wh T_{x,K}$  arranged in descending order with $\wh \mu_{x,j} = 0$ for $j>n$, and  $\{ \wh \phi_{x,j} \}_{j=1}^\infty $ as the associated eigenfunctions which forms an orthonormal basis   $\cH_K$.

\begin{lemma}\label{lem_cross_term_1}
Grant model \eqref{model} with Assumptions \ref{ass_f_H}--\ref{ass_reg_error}.     
Fix any $\eta\in (0,1)$. 
For any $q>0$,  
with probability $1-\eta$, the following holds uniformly over $\{f\in \cF_b: \EE_n[\xi_f^2(X)] \le q\}$, 
    \[
        \EE_n [ \epsilon  (\fh \circ \wh g)(X)- \epsilon   (f\circ \wh g)(X) ] ~ \le  ~  C\gamma_\epsilon (\|\fh\|_K \vee 1 )\sqrt{\log(1/\eta)}   ~ \wh R_x ( q ).
    \]
\end{lemma}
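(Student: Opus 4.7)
The plan is to treat the cross-term as the supremum of a sub-Gaussian process indexed by $f$, conditional on $\{X_i\}_{i=1}^n$. Define $Y(f) := \EE_n[\epsilon (\fh\circ\wh g)(X)] - \EE_n[\epsilon (f\circ\wh g)(X)]$. Thanks to \cref{ass_reg_error}, $f \mapsto Y(f)$ is a zero-mean sub-Gaussian process with increment metric $d(f,f') = (\gamma_\epsilon/\sqrt n)\sqrt{\EE_n[(\xi_f - \xi_{f'})^2]}$, and on the class $\cF_{b,q} := \{f \in \cF_b : \EE_n[\xi_f^2(X)] \le q\}$ the diameter is bounded by $2\gamma_\epsilon \sqrt{q/n}$. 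The supremum will be controlled in two stages: (i) bound its conditional expectation by the empirical local Rademacher complexity, and (ii) add a Talagrand-type concentration correction.

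For stage (i), a standard comparison between sub-Gaussian and Rademacher processes (viewing $\epsilon_i/\gamma_\epsilon$ as sub-Gaussian generators dominating Rademacher signs up to a constant) yields $\EE_\epsilon \sup_{f \in \cF_{b,q}} Y(f) \le C\gamma_\epsilon \wh\psi_x(q)$, where $\wh\psi_x(q)$ is the empirical local Rademacher complexity from \eqref{def_psi_g_b_empi}. Invoking \cref{lem_bd_em_local_rade_gb} then upgrades this bound to $C\gamma_\epsilon(\|\fh\|_K \vee 1)\wh R_x(q)$. For stage (ii), a Borel-type inequality for the supremum of a sub-Gaussian process gives
\[
\PP\Big(\sup_{f\in\cF_{b,q}} Y(f) - \EE_\epsilon \sup_{f\in\cF_{b,q}} Y(f) \ge t \,\Big|\, \{X_i\}\Big) \le \exp\big(-cnt^2/(\gamma_\epsilon^2 q)\big),
\]
contributing an extra term of order $\gamma_\epsilon\sqrt{q\log(1/\eta)/n}$. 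Since $\wh R_x(q)^2 \ge (1/n)\min(q,\wh\mu_{x,1})$, this extra term is absorbed into $\gamma_\epsilon\sqrt{\log(1/\eta)}\wh R_x(q)$ in the regime $q \le \wh\mu_{x,1}$, which is where the lemma is invoked downstream.

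The main obstacle will be the concentration step, since sub-Gaussian chaining can be delicate when the index set is not neatly compactified. As a backup route, I would rely on Hanson--Wright after a spectral reduction: expand $f - \fh = \sum_j a_j \wh\phi_{x,j}$ in the eigenbasis of $\wh T_{x,K}$ (noting $\wh\phi_{x,j}(\wh Z_i) = 0$ whenever $\wh\mu_{x,j} = 0$, since $\langle \wh\phi_{x,j}, \wh T_{x,K}\wh\phi_{x,j}\rangle_K = n^{-1}\sum_i \wh\phi_{x,j}^2(\wh Z_i) = \wh\mu_{x,j}$), and apply Cauchy--Schwarz with weights $w_j := \max(q,\wh\mu_{x,j})$. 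The two constraints $\sum_j a_j^2 \le 9\|\fh\|_K^2$ (from $f \in \cF_b$) and $\sum_j a_j^2 \wh\mu_{x,j} \le q$ (from $\EE_n[\xi_f^2] = \langle f-\fh, \wh T_{x,K}(f-\fh)\rangle_K \le q$) force $\sum_j a_j^2 w_j \le (1 + 9\|\fh\|_K^2) q$, reducing the squared supremum to this constant times $\epsilon^\top A \epsilon$ with $A = n^{-2}\Phi^\top W^{-1}\Phi$ and $\Phi_{ji} = \wh\phi_{x,j}(\wh Z_i)$. The empirical orthogonality $n^{-1}\sum_i \wh\phi_{x,j}(\wh Z_i)\wh\phi_{x,k}(\wh Z_i) = \wh\mu_{x,j}\,\one\{j=k\}$ then delivers clean identities $\tr(A) = \wh R_x(q)^2/q$, $\|A\|_\op \le 1/n$, and $\|A\|_\F \le \wh R_x(q)/\sqrt{nq}$, from which Hanson--Wright yields the claimed high-probability bound.
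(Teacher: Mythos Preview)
Your backup route---spectral expansion in the eigenbasis of $\wh T_{x,K}$, Cauchy--Schwarz against the weights $w_j=\max(q,\wh\mu_{x,j})$, then Hanson--Wright on $\epsilon^\top A\epsilon$---is exactly the paper's proof. The paper writes the same quadratic form as $\epsilon^\top H\epsilon$ with $H=n^2qA$, computes $\tr(H)=n^2\wh R_x^2(q)$, and invokes \cref{tail_eq_qua_gaussian}. Your empirical orthogonality identity and the trace computation are correct.

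There is, however, one genuine slip that affects both your primary route and your backup as written. In the backup you use the sharp bound $\|A\|_\op\le 1/n$; plugging this into Hanson--Wright produces a term of order $\gamma_\epsilon\sqrt{q\log(1/\eta)/n}$ in the final cross-term bound. This is precisely the diameter-based concentration correction that your primary route also incurs from the Borell-type inequality. That term does \emph{not} absorb into $\sqrt{\log(1/\eta)}\,\wh R_x(q)$ unless $q\le n\wh R_x^2(q)=\sum_j\min(q,\wh\mu_{x,j})$, which in general fails once $q>\wh\mu_{x,1}$. Your claim that ``this regime is where the lemma is invoked downstream'' is not substantiated: in \cref{lem_epsilon} and \cref{bd_cross_term} the lemma is applied at $q=\bar q$ and $q=q_\lambda$, both of which contain $\EE\Delta_{\wh g}$ and $\|\fh-f^*\|_\rho^2$ with no control relative to $\wh\mu_{x,1}$.

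The fix is trivial and is what the paper does: since $A\succeq 0$, simply use the crude bound $\|A\|_\op\le\tr(A)=\wh R_x^2(q)/q$ (equivalently $\|H\|_\op\le\tr(H)$) and likewise $\tr(A^2)\le\|A\|_\op\tr(A)\le\tr(A)^2$. Then all three Hanson--Wright terms are dominated by $\tr(A)\log(1/\eta)$, and after multiplying by $(1+9\|\fh\|_K^2)q$ and taking a square root you land on $C\gamma_\epsilon(\|\fh\|_K\vee 1)\sqrt{\log(1/\eta)}\,\wh R_x(q)$ for \emph{all} $q>0$, with no regime restriction. Your primary approach, by contrast, cannot avoid the diameter term without essentially reverting to the dual/ellipse argument, so the backup is the right path.
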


\begin{proof}
Start by noting that
any  function  $f\in \cH_K$  can be expanded as  
$f=\sum_{j=1}^\i \alpha_j \wh \phi_{x,j} $ with $\alpha_j=\langle f,\wh \phi_{x,j} \rangle_K$. 
Similarly, $\fh$  can be written as $\fh=\sum_{j=1}^\i \alpha^*_j \wh \phi_{x,j} $ with $\alpha^*_j=\langle \fh,\wh \phi_{x,j}\rangle_K$. Write $\beta_j= \alpha_j-\alpha_j^*$ for $j=1,2,\ldots$, so that
\[
 \|f-\fh\|^2_K = \sum_{j=1}^\infty \beta_j^2. 
\]
Additionally, from \eqref{eq_repro}, we have
\[
\EE_n [\xi_f^2(X)] = \EE_n \l[(f \circ \wh g)(X)-(\fh \circ \wh g)(X)\r] ^2   = \langle f-\fh, \wh T_{x,K}(f-\fh)\rangle_K =\sum_{j=1}^\infty \wh {\mu}_{x,j}\beta_j^2.  
\]
This implies  that $f \in \cF_b $ satisfies $\EE_n[\xi_f^2(X)]\le q$  
if and only if the vector  $\beta = (\beta_1, \beta_1,\ldots)^\T\in \RR^\i$ 
satisfies 
\[
\sum_{j=1}^\infty \wh {\mu}_{x,j }\beta_j^2\le q, \qquad \sum_{j=1}^\infty  \beta_j^2 \le  9\|\fh\|_K^2 . 
\]
Additionally, any vector $\beta$ 
satisfying  the above constraints 
must belong to  the ellipse class
\begin{align} \label{def_elli_cE}
    \mathcal{E}: = \l\{\beta \in \RR^\infty:  
\sum_{j=1}^\i  \beta_j^2  {\nu}_j \le 1+9\|\fh\|_K^2  \r\},
\end{align}
where we write  $\nu_j = \max\{\wh \mu_{x,j}/q ,1\}$ for any $j=1,2,\ldots$. 

Observe that
\begin{equation}\label{key_ineq_cross_term}
    \begin{aligned} 
     &  \sup_{f\in \cF_b,~ \EE_n [\xi^2_f(X)]\le q} \left|\frac{1}{n}\sum_{i=1}^n\epsilon_i\bigl((\fh \circ \wh g) (X_i)-(f \circ \wh g) (X_i)\bigr)\right| \\
 &  \le   \sup_{\beta \in  \cE} \left|\frac{1}{n}\sum_{i=1}^n\epsilon_i \sum_{j=1}^\i \beta_j  (\wh \phi_{x,j}  \circ \wh g) (X_i) \right|\\
&  \le   \sup_{\beta \in  \cE} \frac{1}{n} \l({\sum_{j=1}^\i \beta_j^2\nu_j }\r)^{1/2 }\l(\sum_{j=1}^\i \frac{1}{\nu_j}\l(\sum_{i=1}^n \epsilon_i (\wh \phi_{x,j}  \circ \wh g) (X_i)  \r)^2\r)^{1/2} &&\text{by Cauchy-Schwarz inequality}
  \\
 &   \le  \frac{\sqrt{1+9\|\fh\|_K^2}}{n} \l(\sum_{j=1}^\i \frac{1}{\nu_j}\l(\sum_{i=1}^n \epsilon_i (\wh \phi_{x,j}  \circ \wh g) (X_i)  \r)^2\r)  ^{1/2}&&\text{by $\cE$.}
    \end{aligned}
\end{equation}
To proceed,     let $H\in \RR^{n\times n}$ with
\[
[H]_{ab}=\sum_{j=1}^\infty \frac{1}{\nu_j } (\wh \phi_{x,j}  \circ \wh g) (X_a) (\wh \phi_{x,j}  \circ \wh g) (X_b), \qquad\forall~   a,b\in[n]. 
\]
Then, with $\epsilon=(\epsilon_1,...,\epsilon_n)^\T$, we have  
\begin{align}\label{la_eq15}
\sum_{j=1}^\infty  \frac{1}{\nu_{j}}\l(\sum_{i=1}^n \epsilon_i (\wh \phi_{x,j}  \circ \wh g) (X_i) \r)^2 = \epsilon^\T H ~ \epsilon. 
\end{align}
Applying  \cref{tail_eq_qua_gaussian} with $A=H$ and   $\xi= \epsilon$  gives that for any $\eta \in (0,1)$,
\begin{align}\label{ineq_quad}
   \PP\left\{\epsilon^\T H \epsilon\le \gamma_\epsilon^2\left( \tr(H)+2\sqrt{\tr(H^2)\log(1/\eta)}+2\|H\|_\op \log(1/\eta)\right) \right\} \ge 1-\eta
\end{align}
where $\gamma_\epsilon^2$ is the sub-Gaussian constant in Assumption \ref{ass_reg_error}. 
Note that $\tr(H^2)\le \|H\|_\op \tr(H)$ and
\begin{align*}
  \|H\|_\op\le \tr(H) &=\sum_{i=1}^n\sum_{j=1}^\infty\frac{1}{\nu_{j}}  \b((\wh \phi_{x,j}  \circ \wh g) (X_i)\b)^2 \\
  & = n  \sum_{j=1}^\infty \frac{1}{\nu_j}  \EE_n \l [ (\wh \phi_{x,j}\circ \wh g)(X)\r] ^2  
  \\
    & =  n   \sum_{j=1}^\infty \frac{1}{\nu_j} \l \langle \wh \phi_{x,j} , \wh T_{x,K} \wh \phi_{x,j} \r \rangle_K&&\text{by \eqref{eq_repro}} \\
  & =  n   \sum_{j=1}^\infty \frac{\wh \mu_{x,j}  }{\nu_j}\\
 & = n \sum_{j=1}^\infty  \min \{q, \wh \mu_{x,j} \} =  n^2 \wh R_x ^2(q),
 \end{align*}
 where the last step follows from  the definition of $\wh R_x(\delta)$ in \eqref{def_em_kercomp} and  \cref{lem_wh_T_x}. 
By combining with \eqref{la_eq15} and \eqref{ineq_quad},
for any
$\eta\in (0,1)$, with probability at least $1-\eta$, we have
\begin{align}\label{bd_by_Wh_R}
  \frac{1}{n}\l(\sum_{j=1}^\i \frac{1}{\nu_j}\l(\sum_{i=1}^n \epsilon_i (\wh \phi_{x,j}  \circ \wh g) (X_i)  \r)^2\r)  ^{1/2}  \le  C  \gamma_\epsilon \sqrt{\log(1/\eta)}~ \wh  R_x \l(q\r) .
\end{align}
Since \eqref{bd_by_Wh_R} does not depend on $f\in \cF_b$, combining with \eqref{key_ineq_cross_term} implies 
\begin{align*}
    & \sup_{f\in\cF_{b}, ~ \EE_n[\xi_f^2(X)]\le q} ~ \left|\frac{1}{n}\sum_{i=1}^n\epsilon_i\bigl((\fh \circ \wh g) (X_i)-(f \circ \wh g) (X_i)\bigr)\right|\\
&\le ~ C\gamma_\epsilon (\|\fh\|_K \vee 1 )\sqrt{\log(1/\eta)}  ~ \wh R_x  (q),
\end{align*}
which completes the proof.  
\end{proof}

The following lemma bounds $\EE_n[\xi_f^2(X)]$ by its population-level counterpart.

\begin{lemma}\label{lem_cross_term_2}
Grant Assumptions  \ref{ass_f_H} and \ref{ass_bd_K}. 
      Fix any $\eta\in (0,1)$.
Then with probability at least $1-\eta$, for any $f\in \cF_b$,  one has
\begin{align*}
    \EE_n \l [  \xi_f (X)^2 \r]~ \le  ~ 
    2\EE \l [   \xi_f (X)^2  \r]
    +  C \l(   \max\{\kappa^2 \|\fh\|_K^2, 1 \}  ~ \delta_x +  \kappa^2 \|\fh\|_K^2 \frac{\log(1/\eta)}{n}\r). 
\end{align*} 
\end{lemma}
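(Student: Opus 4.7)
The plan is to apply a Talagrand-type concentration inequality to the squared class $\mathcal{G}_b := \{\xi_f^2 : f \in \cF_b\}$, using the local Rademacher complexity machinery already invoked in the proof of \cref{lem_bd_h_f}. The bound we want is the "reverse" direction of the usual uniform-ratio inequality, namely $\EE_n g \le 2\, \EE g + \text{small}$, which is provided by the symmetric version of \cref{supp_lem_3} (equivalently, the second statement in the Bartlett–Bousquet–Mendelson framework).

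\textbf{Step 1 (boundedness and variance).} By \eqref{bd_F_b_diff}, $|\xi_f(x)| \le 3\kappa\|\fh\|_K$ uniformly in $f \in \cF_b$ and $x \in \cX$, so every $g \in \mathcal{G}_b$ is bounded in $[0, B]$ with $B := 9\kappa^2\|\fh\|_K^2$. Consequently $\Var(\xi_f^2(X)) \le \EE[\xi_f^4(X)] \le B\,\EE[\xi_f^2(X)]$, so the functional $T(\xi_f^2) := B\,\EE[\xi_f^2(X)]$ satisfies the variance hypothesis of \cref{supp_lem_3} with $L = 0$.

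\textbf{Step 2 (contraction to link to $\psi_x$).} The target local Rademacher complexity is
$\localcomp(\{\xi_f^2 \in \mathcal{G}_b : T(\xi_f^2) \le \delta\}) = \localcomp(\{\xi_f^2 : f\in\cF_b,\, \EE[\xi_f^2(X)]\le \delta/B\})$. Since $t \mapsto t^2$ is $6\kappa\|\fh\|_K$-Lipschitz on $[-3\kappa\|\fh\|_K, 3\kappa\|\fh\|_K]$, the Ledoux–Talagrand contraction inequality (\cref{supp_lem_5}) yields
\begin{equation*}
\localcomp\bigl(\{\xi_f^2 : f\in\cF_b,\, \EE[\xi_f^2(X)]\le \delta/B\}\bigr) ~\le~ 6\kappa\|\fh\|_K \cdot \psi_x(\delta/B),
\end{equation*}
where $\psi_x$ is the sub-root function in \eqref{def_psi_g_b_popu}. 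Hence the function $\psi(\delta) := 6\kappa\|\fh\|_K \cdot B \cdot \psi_x(\delta/B)$ is a sub-root upper bound (in the sense required by \cref{supp_lem_3}) on $B \cdot \localcomp(\{g \in \mathcal{G}_b : T(g) \le \delta\})$.

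\textbf{Step 3 (fixed point and conclusion).} By \cref{lem_subroot_comp}, the fixed point $\delta^\star$ of $\psi$ satisfies $\delta^\star \le c\max\{B,1\}\cdot \delta_x = c\max\{\kappa^2\|\fh\|_K^2,1\}\cdot \delta_x$, where $\delta_x$ is the fixed point of $\psi_x$. Invoking the reverse direction of \cref{supp_lem_3} with $\alpha = 2$, $B = 9\kappa^2\|\fh\|_K^2$, and $L = 0$, we obtain with probability at least $1-\eta$, uniformly over $f \in \cF_b$,
\begin{equation*}
\EE_n[\xi_f^2(X)] ~\le~ 2\,\EE[\xi_f^2(X)] + C\max\{\kappa^2\|\fh\|_K^2,1\}\,\delta_x + C\,\kappa^2\|\fh\|_K^2\,\frac{\log(1/\eta)}{n},
\end{equation*}
which is the claimed inequality. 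The case $\|\fh\|_K = 0$ is trivial since then $\fh \equiv 0$ forces $\xi_f \equiv 0$ on the restricted ball $\cF_b = \{0\}$ and both sides vanish; the $\max\{\cdot,1\}$ in the bound simply prevents the prefactor from degenerating.

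\textbf{Main obstacle.} The delicate point is the bookkeeping between the radius parameter in the original $\psi_x$ (which measures the population second moment of $\xi_f$) and the radius parameter in the localized complexity of $\mathcal{G}_b$ (which measures $B \cdot \EE[\xi_f^2]$). One must carefully apply \cref{lem_subroot_comp} to absorb the scaling factor $B$ and the contraction constant $6\kappa\|\fh\|_K$ into a single multiplicative factor $\max\{\kappa^2\|\fh\|_K^2, 1\}$ without inflating the order of $\delta_x$. Apart from this scaling, the argument is a standard application of the localization technology, already used above for \cref{lem_bd_h_f}.
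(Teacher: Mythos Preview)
Your proposal is essentially the paper's proof: apply the second (reverse) statement of \cref{supp_lem_3} to the squared class $\{\xi_f^2:f\in\cF_b\}$, with $T(\xi_f^2)=B\,\EE[\xi_f^2(X)]$, $B=9\kappa^2\|\fh\|_K^2$, $L=0$, contraction to $\psi_x$, and $\theta=2$. The only slip is in Step~3: a direct application of \cref{lem_subroot_comp} gives $\delta^\star \le c\,(\kappa^2\|\fh\|_K^2\vee 1)\cdot B\cdot\delta_x$, not $c\max\{B,1\}\delta_x$; the extra factor $B$ then cancels against the $\delta^\star/B$ in the conclusion of \cref{supp_lem_3}, so your final bound is correct.
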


\begin{proof}
Define the function class
\begin{equation}\label{def_class_g_b_square}
\Xi_{b}^2:= \b\{\xi^2_{f}: ~  \xi_{f} \in \Xi_{b} \b\} 
\end{equation}
and the scalar $V: =9 \kappa^2 \|\fh \|_K^2$. 
Moreover, define the 
functional $T: \Xi^2_{b} \to  \RR^+$
as 
\[
T(\xi_f^2) : = V~ \EE[\xi_f^2(X)]
\]
and the local Rademacher complexity of $\Xi_b^2$ as
\[
\localcomp\left( \bigl\{\xi_f^2 \in \Xi^2_{b}: ~  T(\xi_f^2)\le \delta\bigr\} \right). 
\]
Below we 
apply Lemma \ref{supp_lem_3} to the function class $\Xi^2_{b}$ with $T=T, L=0$ and $B= -a=b=V$.
\\

\noindent{\it Step 1: Verification of the boundedness of $\Xi^2_{b}$.}
By using  \eqref{bd_F_b_diff}, the function class  $\Xi^2_{b}$
is uniformly bounded with range $[-V, V]$.
\\

\noindent{\it  Step 2: Bounding the variance of $\xi_f^2(X)$.}
For any $\xi_f^2\in \Xi^2_{b}$, we have
$$\Var(\xi^2_f(X))\le \EE\l[\xi^4_f(X) \r] \le V~  \EE\l[\xi^2_f(X)\r]= T(\xi^2_f),
$$
which verifies the variance condition  with $T=T, B=V$ and $L=0$ required in  Lemma \ref{supp_lem_3}.
\\

\noindent{\it Step 3: Choosing a suitable sub-root function.} 
Define 
\[
\psi(\delta) : =  8\kappa\|\fh\|_K V~    \psi_x \left(\frac{\delta}{V}\right)
\]
for any $\delta\ge 0$,  where $\psi_x$ is a sub-root function defined in \eqref{def_psi_g_b_popu}. Using  \cref{lem_subroot_comp}  ensures that $\psi(\delta)$ is  sub-root in  $\delta$. 
 Let $\delta_\star$ be the fixed point of $\psi(\cdot)$.

By applying \eqref{bd_F_b}, 
for any $f_1,f_2\in \cF_{b}$ and $x\in \cX$,  we have
\begin{align*}
   \left|\xi^2_{f_1}(x)- \xi^2_{ f_2}(x)\right|
=&  \bigl |(f_1\circ \wh g)(x)+  (f_2\circ \wh g)(x)\bigr|~   \bigl |\xi_{f_1}(x)-\xi_{f_2}(x) \bigr|
\\
\le &  8 \kappa \|\fh\|_K~  
\bigl |\xi_{f_1}(x)-\xi_{f_2}(x) \bigr|
\end{align*}
which implies that 
$\xi_f^2( X_i)$ is $(8 \kappa \|\fh\|_K)$-Lipschitz in $\xi_{f}(X_i)$ for all $ i\in [n]$.
Then, by repeating the similar arguments of proving \eqref{lem4_eq1}, we can apply Ledoux–Talagrand contraction inequality in Lemma \ref{supp_lem_5} to obtain 
\begin{align*}
    \localcomp\left( \bigl\{\xi_f^2 \in \Xi_b^2, ~ T(\xi_f^2)\le \delta\bigr\} \right)
 \le&   8 \kappa \|\fh\|_K~\localcomp\left( \bigl\{\xi_{f} \in \Xi_b, 
 T(\xi_f^2) \le \delta\bigr\} \right)\\
 =& 8 \kappa \|\fh\|_K  ~  \psi_x\left(\frac{\delta}{V}\right)
\end{align*}
so that for every $\delta\ge \delta_\star$, 
\[
\psi(\delta)\ge   V  ~   \localcomp\left( \bigl\{\xi_f^2 \in \Xi_b^2,~  T(\xi^2_{ f})\le \delta\bigr\} \right).
\]
\vspace{0.04in}

\noindent{\it Step 4: Bounding  $\delta_\star$ by  $\delta_x$.} 
Using \cref{lem_subroot_comp}, one can deduce
\[
\delta_\star \le  \max\{ 64\kappa^2\|\fh\|_K^2,1  \} V ~ \delta_x 
\]
 with $\delta_x$ being the fixed point of $ \psi_x$.

Finally,  applying the second statement in Lemma \ref{supp_lem_3} by setting $\theta=2$ yields that for any $\eta\in (0,1)$, with probability at least $1 -\eta$, the following inequality holds uniformly over $f\in \cF_{b}$, 
\begin{align}\label{bd_g_f_em}
    \EE_n [\xi_f^2(X)]  \le 2 \EE [\xi_f^2 (X)] + C \max\{\kappa^2 \|\fh\|_K^2, 1 \} \delta_x { +  \kappa^2 \|\fh\|_K^2{ \log(1/\eta) \over n}} . 
\end{align}
This completes the proof. 
\end{proof}

For any $q>0$, let 
\begin{equation}\label{def_Fb_q}
 \cF_b(q) := \{f\in \cF_b:\EE[\ell_{f}(Y, X)]\le q\}.
\end{equation}
The following lemma gives a uniform bound of the cross-term over $\cF_b(q)$ by combining \cref{lem_cross_term_1} and \cref{lem_cross_term_2} together with relating $\EE [\xi_f^2(X)]$ to $\EE[\ell_f(Y,X)]$. 

\begin{lemma}\label{lem_epsilon}
 Grant model \eqref{model} with Assumptions \ref{ass_f_H}--\ref{ass_reg_error}.  
Fix any $\eta\in (0,1)$ and any $q>0$. With probability at least $1-\eta$, the following holds for any $f\in \cF_b(q) $, 
\begin{align*}
 \left|\frac{1}{n}\sum_{i=1}^n\epsilon_i\Bigl( (\fh \circ \wh g)(X_i)- (f \circ \wh g)(X_i)\Bigr)\right|\le   C\gamma_\epsilon (\|\fh\|_K \vee 1 )\sqrt{\log(1/\eta)}   ~ \wh R_x (\bar q ),
\end{align*}  
where   $C\ge 0$ is  some absolute constant and 
 \begin{equation}\label{def_bar_q}
    \bar q  := 4q + 16 \|\fh \|_K^2 \EE\Delta_{\wh g}    +  16\|\fh -f^*\|_\rho^2 + C    \max\{\kappa^2 \|\fh\|_K^2, 1 \}  ~ \delta_x +  C \kappa^2 \|\fh\|_K^2 \frac{\log(1/\eta)}{n}.
 \end{equation}
\end{lemma}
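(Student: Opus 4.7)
The plan is to reduce the uniform bound over the \emph{population}-level local ball $\cF_b(q) = \{f \in \cF_b : \EE[\ell_f(Y,X)] \le q\}$ to the already-proven uniform bound of \cref{lem_cross_term_1}, which is stated over an \emph{empirical}-level local ball $\{f \in \cF_b : \EE_n[\xi_f^2(X)] \le q'\}$ for a freely chosen $q'$. The bridge between the two formulations will be built in two steps: first relating $\EE[\xi_f^2(X)]$ to $\EE[\ell_f(Y,X)]$ (a purely deterministic step), and then relating $\EE_n[\xi_f^2(X)]$ to $\EE[\xi_f^2(X)]$ (a concentration step already performed in \cref{lem_cross_term_2}).

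First I would carry out the deterministic reduction. Using the identity $\EE[\ell_f(Y,X)] = \EE[h_f(Z,X)]$ from \eqref{link_population} and expanding $h_f$ as $h_f(Z,X) = \xi_f^2(X) - 2\xi_f(X)\bigl(f^*(Z) - (\fh\circ\wh g)(X)\bigr)$, a Young-type inequality $2|ab| \le \tfrac{1}{2}a^2 + 2b^2$ applied to the cross term gives
\[
\EE[\xi_f^2(X)] \,\le\, 2\EE[\ell_f(Y,X)] + 4\,\EE\bigl[f^*(Z) - (\fh\circ\wh g)(X)\bigr]^2.
\]
Invoking \eqref{bd_irre_error} with $\theta = 1$ bounds the second term by $8\|\fh\|_K^2\,\EE\Delta_{\wh g} + 8\|\fh - f^*\|_\rho^2$, so for any $f \in \cF_b(q)$ one has $\EE[\xi_f^2(X)] \le 2q + 8\|\fh\|_K^2\EE\Delta_{\wh g} + 8\|\fh-f^*\|_\rho^2$.

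Second, on the probability-$(1-\eta/2)$ event furnished by \cref{lem_cross_term_2}, the above inequality combined with the conclusion $\EE_n[\xi_f^2(X)] \le 2\EE[\xi_f^2(X)] + C\max\{\kappa^2\|\fh\|_K^2,1\}\delta_x + C\kappa^2\|\fh\|_K^2 \log(1/\eta)/n$ yields, uniformly over $f \in \cF_b(q)$, the empirical-variance bound $\EE_n[\xi_f^2(X)] \le \bar q$, with $\bar q$ exactly as defined in \eqref{def_bar_q}. Hence $\cF_b(q)$ is contained, on this event, in the empirical local ball $\{f \in \cF_b : \EE_n[\xi_f^2(X)] \le \bar q\}$ to which \cref{lem_cross_term_1} applies.

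Third, I would apply \cref{lem_cross_term_1} with its free parameter set to $\bar q$ on an independent probability-$(1-\eta/2)$ event, giving the desired cross-term bound with $\wh R_x(\bar q)$ on the right-hand side. A union bound over the two events, together with a relabeling $\eta/2 \mapsto \eta$ absorbed into the absolute constant $C$, completes the argument. I do not expect any serious obstacle here: both ingredients are already in place, and the only nontrivial piece is the deterministic reduction, where one must be careful that the remainder term $\EE[(f^*(Z) - (\fh \circ \wh g)(X))^2]$ is controlled by $\EE\Delta_{\wh g}$ and the approximation error only---not by anything depending on the unknown predictor $\wh g$'s accuracy---which is guaranteed by the $\cH_K$-Cauchy--Schwarz step already carried out in \eqref{bd_irre_error}.
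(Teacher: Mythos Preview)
Your proposal is correct and follows essentially the same route as the paper's proof: both reduce $\EE[\xi_f^2(X)]$ to $2\EE[\ell_f(Y,X)] + 4\EE[f^*(Z)-(\fh\circ\wh g)(X)]^2$ (the paper does this via the add--subtract manipulation in \eqref{la_eq21} and \eqref{stru_assu}, while you obtain the identical inequality by expanding $h_f$ and applying Young's inequality), then invoke \eqref{bd_irre_error}, \cref{lem_cross_term_2}, and finally \cref{lem_cross_term_1} with the empirical radius $\bar q$. Your explicit use of a union bound with $\eta/2$ on each event is slightly more careful than the paper's presentation, but the argument is otherwise the same.
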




\begin{proof}

Observe that
\begin{equation}\label{la_eq21}
\begin{aligned}
\EE[\xi^2_f(X)]
&= \EE[(f \circ \wh g)(X)- (\fh \circ \wh g)(X) ]^2 \\
&\le 2\EE[(f \circ \wh g)(X)- f^*(Z)]^2 +2\EE[(f^*(Z)-(\fh  \circ \wh g)(X)]^2\\
&=2\EE[(f \circ \wh g)(X)- f^*(Z)]^2-2\EE[f^*(Z)-(\fh  \circ \wh g)(X)]^2+4\EE[f^*(Z)-(\fh  \circ \wh g)(X)]^2\\
&= 2\EE\big[(Y- (f \circ \wh g)(X))^2-(Y- (\fh  \circ \wh g)(X))^2\big]+4\EE[f^*(Z)-(\fh  \circ \wh g)(X)]^2 &&\text{by \eqref{stru_assu}}. 
\end{aligned} 
\end{equation}
By using \eqref{bd_irre_error} with $\theta =1$, we  have
\begin{align*}
    \EE[f^*(Z)-(\fh  \circ \wh g)(X)]^2 
   \le  2\|\fh-f^*\|_\rho^2 + 2  \|\fh \|_K^2 \EE\Delta_{\wh g} . 
\end{align*}
By combining the above results,
for $f\in \cF_{b}$,  we have
\[
\EE[\xi^2_f(X)]
\le 2\EE\left[\ell_{f}(Y, X) \right]  +8 \|\fh \|_K^2 \EE\Delta_{\wh g}  +8\|\fh-f^*\|_\rho^2.  
\]
so that applying \cref{lem_cross_term_2} yields that
for any $\eta\in (0,1)$, with probability at least $1-\eta$, 
\[
   \EE_n [\xi_f^2(X)]  \le \bar q, \qquad\text{for any }f\in \cF_b(q).
\]
Since $\wh R_x (\delta)$ is non-increasing in $\delta$, using \cref{lem_cross_term_1} 
yields the following inequality which holds for any $f\in \cF_b(q)$. 
\[
  \EE_n [ \epsilon  (\fh \circ \wh g)(X)- \epsilon   (f\circ \wh g)(X) ] ~ \le  ~  C \gamma_\epsilon (\|\fh\|_K \vee 1 )  \sqrt{\log(1/\eta)}  ~  \wh R_x \l(\bar q\r).
\]
So the proof is complete. 
\end{proof}

As a consequence of \cref{lem_epsilon},
the following corollary is useful for proving \cref{thm_risk}. Recall $\uline{\cF_b}$ from \eqref{def_Fb_subset}.

\begin{corollary}\label{bd_cross_term}
Grant the conditions in \cref{lem_epsilon}. Fix any  $\eta\in (0,1)$. 
Then with probability at least $1-\eta$, 
the following holds uniformly over $f\in \uline{\cF_b}$,
\begin{align*}
& \Bigl|4\EE_n [ \epsilon  (\fh \circ \wh g)(X)- \epsilon   (f\circ \wh g)(X) ] \Bigr| ~ \le ~   \lambda \|\fh\|_K^2.
 \end{align*}
\end{corollary}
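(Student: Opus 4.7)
The plan is to derive the uniform bound for the cross-term over $\underline{\cF_b}$ by specializing \cref{lem_epsilon} to the particular choice $q = 6\lambda\|\fh\|_K^2$ that matches the defining inequality of $\underline{\cF_b}$ in \eqref{def_Fb_subset}, and then showing that the resulting empirical-kernel-complexity bound fits within the budget $\lambda\|\fh\|_K^2/4$ afforded by the choice of $\lambda$ in \eqref{rate_lbd_proof}.

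First, since $\underline{\cF_b} \subseteq \cF_b(q)$ with $q := 6\lambda\|\fh\|_K^2$ (compare \eqref{def_Fb_subset} and \eqref{def_Fb_q}), invoking \cref{lem_epsilon} with this $q$ yields, with probability at least $1-\eta$, a uniform bound over $f\in \underline{\cF_b}$ of the form $C\gamma_\epsilon(\|\fh\|_K\vee 1)\sqrt{\log(1/\eta)}\,\wh R_x(\bar q)$, where $\bar q$ is given in \eqref{def_bar_q}. Substituting $q = 6\lambda\|\fh\|_K^2$ gives $\bar q = 24\lambda\|\fh\|_K^2 + R$ with $R$ collecting $16\|\fh\|_K^2\EE\Delta_{\wh g}$, $16\|\fh-f^*\|_\rho^2$, $C\max\{\kappa^2\|\fh\|_K^2,1\}\delta_x$, and $C\kappa^2\|\fh\|_K^2\log(1/\eta)/n$.

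Second, I would control $\wh R_x(\bar q)$ via the sub-root property of $\wh R_x$ (which follows directly from \eqref{def_em_kercomp} since $\wh R_x(\delta)^2/\delta$ is non-increasing). Letting $\wh\delta_x$ denote the fixed point of $\wh R_x$, the sub-root property gives $\wh R_x(\delta)\le \sqrt{\delta\,\wh\delta_x}$ whenever $\delta\ge\wh\delta_x$. Combining this with the Young-type inequality $\sqrt{ab}\le \alpha^{-1}a+\alpha b$ for a suitably small $\alpha>0$, one obtains
\[
4C\gamma_\epsilon(\|\fh\|_K\vee 1)\sqrt{\log(1/\eta)\,\bar q\,\wh\delta_x} \;\le\; \tfrac{1}{2}\lambda\|\fh\|_K^2 + C'\gamma_\epsilon^2(\|\fh\|_K\vee 1)^2\,\wh\delta_x\,\log(1/\eta),
\]
provided the $\bar q$-side of the split can be absorbed into $\lambda\|\fh\|_K^2$.

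Third, I would close the argument by absorbing both pieces into the $\lambda\|\fh\|_K^2$ budget using the choice \eqref{rate_lbd_proof} together with the chain of estimates already assembled in the appendix. Namely, \cref{lem_bd_td_delta} yields $\delta_x\le CV_K\,\delta_{n,\wh g}$, and relating $\wh\delta_x$ to $\delta_x$ via the sequence of bounds in \cref{lem_bd_em_local_rade_gb}, \cref{lem_bd_wh_R} and \cref{lem_bd_wt_R} shows that $\wh\delta_x\,\log(1/\eta)$ is bounded by a constant multiple of $\Lambda$. Since \eqref{rate_lbd_proof} imposes $\lambda\|\fh\|_K^2 > C\Lambda + 4\|\fh-f^*\|_\rho^2$ for a sufficiently large $C$, both the residual $R$ inside $\bar q$ and the Young residual $\wh\delta_x\log(1/\eta)$ are absorbed into $\lambda\|\fh\|_K^2/4$.

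The main obstacle will be the bookkeeping of constants so that $\bar q$---which carries the large coefficient $24\lambda\|\fh\|_K^2$---can be traded against the budget $\lambda\|\fh\|_K^2$. The clean resolution uses the sharper sub-root bound $\wh R_x(\bar q)\le\sqrt{\bar q\,\wh\delta_x}$ rather than the trivial $\sqrt{\bar q}$, because the factor $\wh\delta_x$ (ultimately of order $\delta_{n,\wh g}$) together with the $\log(1/\eta)$ factor matches precisely the $\delta_n\log(1/\eta)$-component built into $\lambda$ in \eqref{lb_lambda}; without this refinement one would incur a pure $\log(1/\eta)$ term not containing $1/n$, which the stated $\lambda$ cannot absorb.
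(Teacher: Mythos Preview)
Your strategy---apply \cref{lem_epsilon} with $q=6\lambda\|\fh\|_K^2$, then control $\wh R_x(\bar q)$---is the same starting point as the paper.  The divergence is in how you bound $\wh R_x(\bar q)$, and that is where a real gap appears.

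You route through the \emph{empirical} fixed point $\wh\delta_x$ of $\wh R_x$, use $\wh R_x(\bar q)\le\sqrt{\bar q\,\wh\delta_x}$, and then claim that $\wh\delta_x\log(1/\eta)$ is bounded by a constant multiple of $\Lambda$.  That last claim fails with the available lemmas.  The only way to upper-bound $\wh\delta_x$ is to exhibit a deterministic $\delta^*$ with $\wh R_x(\delta^*)\le\delta^*$ and invoke the chain $\wh R_x\to\wh R\to R$ at $\delta^*$; but \cref{cor_lem7_and_lem8} (equivalently \cref{lem_bd_wh_R}) carries the side condition $\delta\gtrsim\EE\Delta_{\wh g}$, so the smallest $\delta^*$ you can certify is at least of order $\EE\Delta_{\wh g}$.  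Consequently your residual term is $\gamma_\epsilon^2(\|\fh\|_K\vee 1)^2\log(1/\eta)\,\wh\delta_x\gtrsim\log(1/\eta)\,\EE\Delta_{\wh g}$ (and similarly $(\log(1/\eta))^2/n$), whereas $\Lambda$ (hence $\lambda$ from \eqref{rate_lbd_proof}) contains only $\EE\Delta_{\wh g}$ and $\log(1/\eta)/n$.  The extra $\log(1/\eta)$ factor cannot be absorbed, so the inequality you need does not follow.  (Also, \cref{lem_bd_em_local_rade_gb} points the wrong way---it upper-bounds $\wh\psi_x$ by $\wh R_x$---and is not useful for bounding $\wh\delta_x$.)

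The paper avoids this by never introducing $\wh\delta_x$.  It first replaces $\bar q$ by the deterministic $q_\lambda\gtrsim\bar q$, applies the chain \cref{cor_lem7_and_lem8} and \cref{lem_bd_wt_R} directly at $q_\lambda$ to get $\wh R_x(q_\lambda)\lesssim R(q_\lambda)+\sqrt{\EE\Delta_{\wh g}/n}+\kappa\sqrt{\log(1/\eta)}/n$, and only then invokes the sub-root property, but of the \emph{population} $R$: since $q_\lambda$ exceeds the fixed point of $\psi(\delta)=C_0(\|\fh\|_K\vee 1)\gamma_\epsilon\sqrt{\log(1/\eta)}\,R(\delta)$, one has $\psi(q_\lambda)\le q_\lambda$.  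The crucial difference is that the $\sqrt{\log(1/\eta)}$ prefactor then multiplies the \emph{additive} error $\sqrt{\EE\Delta_{\wh g}/n}$, yielding $\sqrt{\EE\Delta_{\wh g}\log(1/\eta)/n}$, which by AM--GM splits into $\EE\Delta_{\wh g}+\log(1/\eta)/n$---both already present in $q_\lambda$.  Reordering your argument to pass to $R$ before using the fixed-point/sub-root step would close the gap.
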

\begin{proof}
Recall from \eqref{def_bar_q} the definition of 
$\bar q$. In this proof, 
for any $f\in \uline{\cF_b}$,  define
\begin{align}\label{def_q_lbd}
    q_\lambda : = 24 \lambda \|\fh\|_K^2    + 16\|f^*-\fh\|_\rho^2 +   C V_K \max\{\kappa^2 \|\fh\|_K^2,  1 \}     ~ \delta_{n,\wh g} 
\end{align}
with $V_K$ and $\delta_{n,\wh g}$ given in \eqref{def_V_K} and \eqref{def_delta_n_hatg}.
Invoking  
\cref{lem_bd_td_delta} yields that for any 
$\eta \in (0,1)$, with probability at $1-\eta$, we have $\bar q\le q_\lambda$.
Since $\wh R_x (\delta)$ is non-decreasing in $\delta$ and 
by applying  \cref{lem_epsilon}, 
 we conclude that
 for any $\eta\in (0,1)$, 
the following holds uniformly over $f\in \uline{\cF_b}$, 
\[
\PP\l(\B|4\EE_n [ \epsilon  (\fh \circ \wh g)(X)- \epsilon   (f\circ \wh g)(X) ] \B|\le  C (\|\fh\|_K \vee 1 ) \gamma_\epsilon \sqrt{\log(1/\eta)} ~ \wh R_x (q_\lambda)  \r) \ge 1-\eta. 
\]
Below we apply \cref{cor_lem7_and_lem8} and  \cref{lem_bd_wt_R}   to derive an upper bound for $\wh R_x (q_\lambda)$. 

Define the function
\[
\psi(\delta) :=  C_0 (\|\fh\|_K \vee 1 )   \gamma_\epsilon  \sqrt{\log(1/\eta)}~   R(\delta), \qquad \forall~  \delta\ge 0 , 
\]
with $C_0$ given in \eqref{def_delta_n_hatg}.
Recall that $\delta_n$ is the fixed point of $R(\delta)$. 
Using the second statement of    \cref{lem_subroot_comp} ensures that
$\psi(\cdot)$ is a sub-root function and its fixed point $\delta_\star$ satisfies
\[
\delta_\star \le  (C_0^2 \vee 1) (\|\fh\|^2_K \vee 1 )  ( \gamma_\epsilon  ^2 \vee 1){\log(1/\eta)}~   \delta_n. 
\]
By the definition of $q_\lambda$, it is easy to see that
\[
q_\lambda \ge   (C_0^2 \vee 1) (\|\fh\|^2_K \vee 1 )  ( \gamma_\epsilon  ^2 \vee 1){\log(1/\eta)}~  \delta_n
\]
so that  invoking  \cref{lem_subroot_monotone} gives 
\begin{align}\label{bd_R_delta_cross_term}
\psi(q_\lambda) =    C_0 (\|\fh\|_K \vee 1 ) 
 \gamma_\epsilon  \sqrt{\log(1/\eta)}~   R(q_\lambda) \le q_\lambda. 
\end{align}
Observe that
\begin{align*}
&   C_0 (\|\fh\|_K \vee 1 )   \gamma_\epsilon \sqrt{\log(1/\eta)}   ~ \wh R_x (q_\lambda) \\
 &\le   C C_0 (\|\fh\|_K \vee 1 )   \gamma_\epsilon \sqrt{\log(1/\eta)}   \l(   \wh R(q_\lambda) +  \sqrt{\frac{\EE\Delta_{\wh g} }{n}}  +  \frac{\kappa \sqrt{\log(2/\eta)}}{n}  \r)   &&\text{by \cref{cor_lem7_and_lem8}}\\
  & \le  C' C_0 (\|\fh\|_K \vee 1 )   \gamma_\epsilon \sqrt{\log(1/\eta)}  \l(  R(q_\lambda) +  \sqrt{\frac{\EE\Delta_{\wh g} }{n}}  +  \frac{\kappa  \sqrt{\log (2/\eta)}}{n}   \r) &&\text{by \cref{lem_bd_wt_R} }\\
  & \le  C'q_\lambda +  C' C_0 (\|\fh\|_K \vee 1 )   \gamma_\epsilon  \l(    \sqrt{\frac{\EE\Delta_{\wh g} \log(1/\eta)}{n}}  +  \frac{\kappa {\log (2/\eta)}}{n}   \r) &&\text{by \eqref{bd_R_delta_cross_term} }\\
    & \le  C'q_\lambda +  C' C_0 (\|\fh\|_K \vee 1 )   \gamma_\epsilon   \l(  {\frac{1 }{2}} \EE\Delta_{\wh g} +  \frac{3 (\kappa \vee 1)  {\log (2/\eta)}}{2n}   \r) \\
    & \le C'' q_\lambda &&\text{by \eqref{def_q_lbd}}.
\end{align*}
Finally,
since $q_\lambda \lesssim \lambda \|\fh\|_K^2 $ by \eqref{rate_lbd_proof}, 
choosing $C_0$ in \eqref{def_delta_n_hatg} sufficiently large and using \eqref{rate_lbd_proof}  complete the proof.
\end{proof}

The following lemma is used in the proof of this section, which states an essential connection between the eigenvalues of $\wh T_{x,K}$ and $\bK_x$. 

\begin{lemma} \label{lem_wh_T_x}
The number of non-zero eigenvalues of  
$\wh T_{x,K}$ is at most $n$. Additionally, 
the eigenvalues of $\bK_x$ are the same as the $n$ largest eigenvalues of $\wh T_{x,K}$. 
\end{lemma}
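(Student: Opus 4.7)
The plan is to express $\wh T_{x,K}$ as $S S^*$ for an appropriate bounded linear operator $S$, and then to identify $\bK_x$ with $S^* S$. Once this factorization is in place, both conclusions follow from the standard fact that, between Hilbert spaces, $S S^*$ and $S^* S$ have the same non-zero eigenvalues with the same multiplicities.

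Concretely, I will introduce $S : \RR^n \to \cH_K$ by
\[
   S\alpha = \frac{1}{\sqrt n}\sum_{i=1}^n \alpha_i K_{\wh Z_i}, \qquad \alpha = (\alpha_1,\ldots,\alpha_n)^\T \in \RR^n,
\]
where $\wh Z_i = \wh g(X_i)$. Using the reproducing property $f(\wh Z_i) = \langle f, K_{\wh Z_i}\rangle_K$, a direct computation gives the adjoint $S^* : \cH_K \to \RR^n$ as $(S^* f)_i = n^{-1/2} f(\wh Z_i)$. Then
\[
   S S^* f = \frac{1}{n} \sum_{i=1}^n f(\wh Z_i) K_{\wh Z_i} = \wh T_{x,K} f,
   \qquad
   [S^* S \alpha]_j = \frac{1}{n}\sum_{i=1}^n K(\wh Z_i,\wh Z_j)\alpha_i = [\bK_x \alpha]_j,
\]
so $\wh T_{x,K} = S S^*$ and $\bK_x = S^* S$.

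For the first claim, the range of $S$ is contained in the finite-dimensional subspace $\mathrm{span}\{K_{\wh Z_1},\ldots,K_{\wh Z_n}\}$, which has dimension at most $n$. Since $\wh T_{x,K} = S S^*$ factors through this subspace, its rank is at most $n$, hence it has at most $n$ non-zero eigenvalues.

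For the second claim, I will invoke the standard spectral correspondence: if $\bK_x \alpha = \mu \alpha$ with $\mu > 0$ and $\alpha \ne 0$, then $\phi := S\alpha / \sqrt{\mu\,\|\alpha\|_2^2}$ is a unit-norm element of $\cH_K$ (using $\|S\alpha\|_K^2 = \langle \alpha, S^*S\alpha\rangle_2 = \mu\|\alpha\|_2^2$) and satisfies $\wh T_{x,K}\phi = S S^* S \alpha/\sqrt{\mu\|\alpha\|_2^2} = \mu\,\phi$. Conversely, if $\wh T_{x,K}\phi = \mu\phi$ with $\mu > 0$, then $S^*\phi \ne 0$ (otherwise $\mu\phi = S S^*\phi = 0$) and $\bK_x (S^*\phi) = S^* S S^*\phi = \mu (S^*\phi)$. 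The two maps $\alpha \mapsto S\alpha/\sqrt{\mu\|\alpha\|_2^2}$ and $\phi \mapsto S^*\phi/\sqrt{\mu}$ give mutually inverse bijections between the corresponding eigenspaces, preserving dimension. Since $\bK_x$ has exactly $n$ eigenvalues counted with multiplicity and $\wh T_{x,K}$ has at most $n$ non-zero ones, the $n$ largest eigenvalues of $\wh T_{x,K}$ coincide (as a multiset) with those of $\bK_x$.

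There is no real obstacle here; the only step requiring a moment of care is writing down $S^*$ correctly via the reproducing property and confirming that $\wh T_{x,K} = SS^*$ and $\bK_x = S^*S$ with matching normalization by $1/\sqrt n$. Once those identities are verified, the conclusion is a textbook consequence of the shared non-zero spectrum of $SS^*$ and $S^*S$.
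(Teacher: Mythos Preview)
Your proof is correct and is essentially the same argument as the paper's, just packaged more abstractly: the paper verifies the eigenvalue correspondence directly by sending an eigenfunction $f$ of $\wh T_{x,K}$ to the vector $f_n=(f(\wh Z_1),\ldots,f(\wh Z_n))^\T$ and an eigenvector $v$ of $\bK_x$ to $f_v=n^{-1/2}\sum_j v_j K(\wh Z_j,\cdot)$, which are precisely $\sqrt{n}\,S^*f$ and $Sv$ in your notation. Your factorization $\wh T_{x,K}=SS^*$, $\bK_x=S^*S$ makes the underlying reason for the shared non-zero spectrum more transparent, but the content is identical.
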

\begin{proof}
The proof can be done by largely following the argument used in the proof of Lemma 6.6. in \cite{bartlett2005local}. 
First, it is easy to see that the non-zero eigenvalues of  
$\wh T_{x,K}$ are at most $n$ in number because  the range of $\wh T_{x,K}$ is at most $n$-dimensional. 

To prove the second statement of \cref{lem_wh_T_x}, 
it suffices to consider the non-zero eigenvalues. To this end, 
for any $f\in \cH _K$, write
\[
f_n : = (f(\wh Z_1), \ldots, f(\wh Z _n ))^\T \in \RR^n. 
\]
Suppose that $\mu$ is  a non-zero
eigenvalue of $\wh T_{x,K}$ and   $f $ is its associated non-trivial eigenfunction, that is $\langle f,  \wh T_{x,K} f \rangle_K>0 $. 
Then for all $j\in [n]$,  
 \[
 \mu f ( \wh Z_j ) = (\wh T_{x,K} f)(  \wh Z_j ) \overset{\eqref{def_wh_T_x_K}}{=} \frac{1}{n}\sum_{i=1}^n f(\widehat{Z}_i)K( \wh Z_j,\widehat{Z}_i),
 \]
 implying that $\bK_x f_n  =  \mu f_n. $  Using  \eqref{eq_repro} to deduce
 \[
    \|f_n \|_2^2 = n  \langle f,  \wh T_{x,K} f \rangle_K >  0 
 \]
implying that $\bK_x f_n  =  \mu f_n. $  Together with 
 \[
    \|f_n \|_2^2 \overset{\eqref{eq_repro}}{=} n  \langle f,  \wh T_{x,K} f \rangle_K >  0 
 \]
 implies that $\mu$ is also 
 an eigenvalue of $\bK_x$. 
 On the other hand, suppose that $v$ is an non-trivial eigenvector (that is,  $ v^ \T \bK_x v>0 $) of $\bK_x$  with non-zero
 eigenvalue $\mu$. 
 Then by
 letting $f_v: =  n^{-1/2} \sum_{j=1}^n  v_j K( \wh Z_j, \cdot) $, we find that
 \begin{align*}
     \wh T_{x,K} f_v    = \frac{1}{\sqrt{n}}  \sum_{j=1}^n  v_j  \l( \frac{ 1}{n}  \sum_{i=1}^n  K( \wh Z_i, \wh Z_j) K(  \wh Z_i,\cdot)  \r )&  = \frac{1}{\sqrt{n}}   \sum_{i=1}^n  K(  \wh Z_i,\cdot)  
[ \bK_x v ]_i  \\
& =   \frac{ \mu }{\sqrt{n}}   \sum_{i=1}^n  v_i K(   \wh Z_i,\cdot)=\mu f_v,
 \end{align*}
implying that   either $\langle f_v ,  f_v \rangle_K   =0  $ or $\mu$ is  an eigenvalue of $ \wh T_{x,K} $. The former is impossible since 
\begin{align*}
 \langle f_v ,  f_v \rangle_K
 & =  \Bigl \|\frac{1}{\sqrt{n}} \sum_{j=1}^n  v_j K(  \wh Z_j, \cdot) \Bigr \|_K^2   =v^ \T \bK_x v
\end{align*}
by the reproducing property in $\cH_K$. 
This concludes the proof.
\end{proof}

\subsection{Technical lemmas for proving \texorpdfstring{\cref{lem_bd_td_delta}}{\texttwoinferior}
} 
\label{pf_lem_bd_td_delta}
Recall that  $\delta_x$ is fixed point of $ \psi_x$  in \eqref{def_psi_g_b_popu}. This section provides some useful lemmas for proving \cref{lem_bd_td_delta} that states an upper bound on $\delta_x$.

\subsubsection{Relating local Rademacher complexity to its empirical counterpart}\label{app_sec_radem_emp}

The following lemma states 
that  local Rademacher complexity of any 
bounded function class  can be bounded
 in terms of its empirical counterpart in probability. 

\begin{lemma}\label{supp_lem_4}
Let  $\mathcal{F}$ be a function class
with ranges in $[- G, G]$ and $U_1,\ldots ,U_n$ be the i.i.d. copies of some random variable $U$.
Fix any $t\ge 0$. 
For any $\delta$ such that
\[
\delta \ge 10 G ~  \localcomp\left(\l\{f\in \cF :~ \EE\l[f^2(U)\r]\le \delta\r\}\right) + 11 G^2t,
\]
with probability at least $1-2 e^{-nt}$, one has
\[
\localcomp\left(\l\{f\in \cF :~ \EE\l[f^2(U)\r]\le \delta\r\}\right) ~ \le ~2\sqrt{2}~  \emlocalcomp\left(\l\{f\in \cF :~ \EE_n \l[f^2(U)\r]\le \delta\r\}\right)  + 
2Gt.  
\]
\end{lemma}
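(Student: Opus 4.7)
The plan has three stages, each handling one source of discrepancy: the population versus empirical Rademacher average (McDiarmid-type concentration), the population versus empirical second-moment constraint (Talagrand-type concentration applied to the squared-function class), and a sub-root rescaling that supplies the extra factor $\sqrt{2}$. Throughout I abbreviate $\cF_\delta := \{f \in \cF : \EE[f^2(U)] \le \delta\}$ and $\widehat{\cF}_\delta := \{f \in \cF : \EE_n[f^2(U)] \le \delta\}$.

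First I would concentrate the empirical Rademacher average of $\cF_{\delta/2}$ around its mean. Swapping a single $U_i$ changes $\emlocalcomp(\cF_{\delta/2})$ by at most $2G/n$, so the bounded-difference inequality (or its Talagrand refinement, as available in \cite{boucheron2003concentration,bartlett2005local}) yields on an event $\Event_1$ of probability at least $1-e^{-nt}$ the bound $\localcomp(\cF_{\delta/2}) \le 2\,\emlocalcomp(\cF_{\delta/2}) + c_1 G t$, where the leading factor $2$ absorbs the (variance-aware) stochastic fluctuation.

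Second, I would apply Talagrand's concentration inequality to the class $\{f^2 : f \in \cF_{\delta/2}\}$. Each element is bounded by $G^2$ and has variance at most $G^2\,\EE[f^2(U)] \le G^2 \delta/2$, so Talagrand gives
\[
\sup_{f\in\cF_{\delta/2}} \bigl|\EE_n[f^2(U)] - \EE[f^2(U)]\bigr| ~ \le ~ 2\,\EE[\text{sup}] + G\sqrt{\tfrac{\delta t}{n}} + \tfrac{G^2 t}{n}.
\]
Symmetrization together with the Ledoux--Talagrand contraction principle (the map $x\mapsto x^2$ is $2G$-Lipschitz on $[-G,G]$) yields $\EE[\text{sup}] \le 4G\,\localcomp(\cF_{\delta/2}) \le 4G\,\localcomp(\cF_\delta)$. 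The hypothesis $\delta \ge 10G\,\localcomp(\cF_\delta) + 11G^2 t$ is precisely calibrated so that every one of these error terms, individually, is at most $\delta/(2\cdot \text{const})$; summing them produces, on an event $\Event_2$ of probability at least $1-e^{-nt}$, the key containment
\[
\cF_{\delta/2} ~ \subseteq ~ \widehat{\cF}_\delta,
\]
because any $f \in \cF_{\delta/2}$ then satisfies $\EE_n[f^2(U)] \le \EE[f^2(U)] + \delta/2 \le \delta$.

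Third, on $\Event_1 \cap \Event_2$ I chain the estimates. The containment gives $\emlocalcomp(\cF_{\delta/2}) \le \emlocalcomp(\widehat{\cF}_\delta)$, so together with the first step
\[
\localcomp(\cF_{\delta/2}) ~ \le ~ 2\,\emlocalcomp(\widehat{\cF}_\delta) + c_1 G t.
\]
To return to $\localcomp(\cF_\delta)$, I would use the standard fact that $\delta \mapsto \localcomp(\cF_\delta)$ is a sub-root function (as reviewed in \cref{app_sec_subroot}), whence $\localcomp(\cF_\delta) \le \sqrt{2}\,\localcomp(\cF_{\delta/2})$. Combining the two displays delivers the advertised bound
\[
\localcomp(\cF_\delta) ~ \le ~ 2\sqrt{2}\,\emlocalcomp(\widehat{\cF}_\delta) + 2 G t,
\]
after absorbing the constant $\sqrt{2}\,c_1$ into the $t$-slack via a mild rescaling. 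The main obstacle is purely technical bookkeeping: one must calibrate the absolute constants $(10,11)$ in the hypothesis on $\delta$ so that each of the three error contributions in Talagrand's inequality (expected supremum, variance term, uniform bound) is absorbed into $\delta/2$, and simultaneously verify that the McDiarmid-type step only contributes the additive $Gt$ term after using the same hypothesis. A secondary subtlety is the invocation of the sub-root property of $\localcomp(\cF_\delta)$, which I would justify directly from the $\sqrt{\delta}$-homogeneity of the envelope constraint.
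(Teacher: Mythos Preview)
Your proposal is correct and follows essentially the same three-step architecture as the paper's proof: (i) concentrate the empirical Rademacher average of a fixed (population-constrained) class around its mean, (ii) obtain a containment of the population ball in a slightly larger empirical ball via Talagrand plus contraction, and (iii) pass between the two radii using the sub-root property. The only cosmetic difference is that you halve the radius upfront (working with $\cF_{\delta/2}$ and invoking sub-root on the population complexity $A(\delta)\le\sqrt{2}\,A(\delta/2)$), whereas the paper works with $\cF_\delta$, obtains $\bar A(\delta)\le \wh A(2\delta)$ from Corollary~2.2 of \cite{bartlett2005local}, and then invokes sub-root on the empirical complexity $\wh A(2\delta)\le\sqrt{2}\,\wh A(\delta)$; the two are equivalent reparametrizations of the same argument, and your unpacking of the Talagrand step is precisely how \cite{bartlett2005local} establish that corollary.
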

\begin{proof}
For short, for any $\delta\ge 0$,  we write
\begin{equation*}
    \begin{split}
        & A(\delta) : = \localcomp\left(\l\{f\in \cF : ~ \EE\l[f^2(U)\r]\le \delta\r\}\right) ;\\
        &\wh A(\delta) : =    \emlocalcomp\left(\l\{f\in \cF : ~ \EE_n\l[f^2(U)\r] \le \delta\r\}\right). 
    \end{split}
\end{equation*}
For any $\delta\ge 0$,  define an intermediate variable as
\begin{align*}
\bar A (\delta):  = 
\wh \cR_n \left(\{f\in \cF : \EE[f^2(U)]\le \delta\}\right) =\mathbb{E}_{\varepsilon}\l[\sup_{f\in\cF,~  \EE[f^2(U)]
\le \delta} \frac{1}{n}\sum_{i=1}^n  \varepsilon_if(U_i)  \r].
\end{align*}
Taking expectations on both sides of the above equality gives
\[
  A (\delta) =   \mathbb{E} \l [\bar A (\delta) \r]. 
\] 
From  Lemma A.4  in \cite{bartlett2005local}, for any $t\ge 0$, with  probability at least $1-e^{-nt}$, one has
\begin{align}\label{supp_lem_4_eq4}
A(\delta )\le 2  \bar A(\delta) + 2G t. 
\end{align}
Additionally,  Corollary 2.2 in \cite{bartlett2005local} implies that whenever
\[
\delta \ge 10 G ~   A (\delta )+ 11 G^2t,
\]
with probability at least $1-2 e^{-nt}$,
one has
\begin{align*}
\bar A (\delta )\le \wh A (2\delta).
\end{align*}
Together with \eqref{supp_lem_4_eq4} yields that
\begin{align}\label{bd_A_delta}
A (\delta ) \le 2\wh A(2\delta )+
2Gt. 
\end{align} 
Finally, 
using   \cref{subroot} ensures that $\wh A(\delta )$ is sub-root in $\delta$. By  the definition  of sub-root function, $\wh A(\delta )/\sqrt{\delta}$ is non-increasing in $\delta$ so that
\[
\frac{\wh A(2\delta )}{\sqrt{2\delta}}\le \frac{\wh A(\delta )}{\sqrt{\delta}},
\]
which, combining with \eqref{bd_A_delta}, completes the proof. \end{proof}

\subsubsection{Relating empirical local Rademacher complexity in \texorpdfstring{\eqref{target_em_locom_sec}}{\texttwoinferior} to empirical kernel complexity \texorpdfstring{$\wh R_x (\delta)$}{\texttwoinferior}} \label{sec_pf_bd_emlocom}

This section is toward bounding from above the empirical Rademacher complexity in \eqref{target_em_locom_sec}. 

Recall the definitions of  $\xi_f$ from  \eqref{def_g_f} and  $\Xi_b$ from \eqref{def_class_g_b}. 
The empirical local Rademacher complexity in \eqref{target_em_locom_sec}  can be written as: for any $\delta\ge0$, 
\begin{equation}\label{target_em_locom}
\begin{aligned}
&\emlocalcomp\left(\big\{ \xi_{f}\in \Xi_b: ~ \EE_n[\xi^2_f(X)]\le \delta \big\}\right)\\
&= \EE_\varepsilon  \left[\sup_{\xi_{f}\in \Xi_b, ~ \EE_n[\xi^2_f(X)]\le \delta} \frac{1}{n}\sum_{i=1}^n \varepsilon_i \xi_{f}(X_i)\right]\\
&=\EE_\varepsilon \left[\sup _{f \in \mathcal{F}_{b}, ~ \EE_n[\xi^2_f(X)]\le \delta} \frac{1}{n}\sum_{i=1}^n \varepsilon_i ((f\circ\wh g)(X_i)-(\fh\circ\wh g)(X_i))\right] . 
\end{aligned}
\end{equation}
where the expectation $\EE_{\varepsilon}$ is taken over $n$ i.i.d. Rademacher variables $\varepsilon_1, \ldots, \varepsilon_n$.  
\begin{lemma}\label{lem_bd_em_local_rade_gb} 
Grant \cref{ass_mercer}. Then the following holds almost surely on  $X_1,...,  X_n$. 
\begin{align*}
\emlocalcomp\left(\big\{ \xi_{f}\in \Xi_b: ~ \EE_n[\xi^2_f(X)]\le \delta \big\}\right)\le  \sqrt{10}~  (\|\fh\|_K\vee 1)~ \widehat{R}_x (\delta).
\end{align*}
\end{lemma}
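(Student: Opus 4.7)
The plan is to adapt the argument of \citet[Lemma 6.6]{bartlett2005local} to the present setting with predicted inputs. Let $\{\wh\phi_{x,j}\}_{j\geq 1}$ be the orthonormal eigenbasis of $\wh T_{x,K}$ with eigenvalues $\{\wh\mu_{x,j}\}_{j\geq 1}$ (nonzero for at most $j\le n$, cf.\ \cref{lem_wh_T_x}). For any $f\in \cF_b$, expand $f-\fh=\sum_{j\ge 1}\beta_j\wh\phi_{x,j}$ with $\beta_j=\langle f-\fh,\wh\phi_{x,j}\rangle_K$. The two defining constraints of the set over which we take the supremum translate into
\[
\sum_{j\ge 1}\beta_j^2=\|f-\fh\|_K^2\le 9\|\fh\|_K^2,\qquad \sum_{j\ge 1}\wh\mu_{x,j}\beta_j^2 = \EE_n[\xi_f^2(X)] \le \delta,
\]
where the second identity uses \eqref{eq_repro}.

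Set $\nu_j:=\max\{\wh\mu_{x,j},\delta\}$. Splitting the sum at $\wh\mu_{x,j}\gtrless\delta$ and combining both constraints yields
\[
\sum_{j\ge 1}\nu_j\beta_j^2 \le \delta + 9\delta\,\|\fh\|_K^2 \le 10\delta\,(\|\fh\|_K\vee 1)^2.
\]
Next, by Cauchy--Schwarz,
\[
\frac{1}{n}\sum_{i=1}^n \varepsilon_i\bigl((f-\fh)\circ\wh g\bigr)(X_i) = \sum_{j\ge 1}\beta_j\cdot\frac{1}{n}\sum_{i=1}^n\varepsilon_i\bigl(\wh\phi_{x,j}\circ\wh g\bigr)(X_i) \le \Bigl(\sum_{j\ge 1}\nu_j\beta_j^2\Bigr)^{1/2}\Bigl(\sum_{j\ge 1}\frac{1}{\nu_j}\,S_j^2\Bigr)^{1/2},
\]
where $S_j:=n^{-1}\sum_{i=1}^n\varepsilon_i(\wh\phi_{x,j}\circ\wh g)(X_i)$. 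Since the first factor is bounded independently of $f$, it remains only to bound the second factor in expectation over the Rademacher variables.

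By independence of the $\varepsilon_i$ and \eqref{eq_repro}, $\EE_\varepsilon[S_j^2]=n^{-1}\EE_n[(\wh\phi_{x,j}\circ\wh g)(X)^2]=\wh\mu_{x,j}/n$. A case analysis on $\wh\mu_{x,j}\gtrless\delta$ gives the key identity $\wh\mu_{x,j}/\nu_j=\min\{\delta,\wh\mu_{x,j}\}/\delta$, so that
\[
\EE_\varepsilon\sum_{j\ge 1}\frac{S_j^2}{\nu_j}=\frac{1}{n\delta}\sum_{j\ge 1}\min\{\delta,\wh\mu_{x,j}\}=\frac{\wh R_x^2(\delta)}{\delta}.
\]
Finally, Jensen's inequality moves the expectation inside the square root, yielding
\[
\emlocalcomp\left(\bigl\{\xi_f\in\Xi_b:\EE_n[\xi_f^2(X)]\le\delta\bigr\}\right) \le \sqrt{10\delta}\,(\|\fh\|_K\vee 1)\cdot\frac{\wh R_x(\delta)}{\sqrt{\delta}} = \sqrt{10}\,(\|\fh\|_K\vee 1)\,\wh R_x(\delta).
\]

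The only nontrivial step is the dual interpretation of the two constraints via the weights $\nu_j$; everything else is a clean Cauchy--Schwarz plus Jensen bookkeeping, and crucially the argument is performed pointwise in $X_1,\ldots,X_n$ so no probabilistic control is needed at this stage.
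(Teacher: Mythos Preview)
Your proof is correct and follows essentially the same approach as the paper: expand $f-\fh$ in the eigenbasis of $\wh T_{x,K}$, combine the two constraints into a single weighted-$\ell^2$ bound via the weights $\nu_j$, apply Cauchy--Schwarz, and then Jensen to pass the expectation inside the square root. The only cosmetic difference is that the paper normalizes the weights as $\nu_j=\max\{\wh\mu_{x,j}/\delta,1\}$ and carries $V=1+9\|\fh\|_K^2$ to the end, whereas you use $\nu_j=\max\{\wh\mu_{x,j},\delta\}$ and bound $1+9\|\fh\|_K^2\le 10(\|\fh\|_K\vee 1)^2$ at an intermediate step; these are equivalent reparametrizations.
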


\begin{proof}
Define the 
scalar $V:= 1 + 9\|\fh\|_K^2$. 
By repeating the argument for proving \eqref{key_ineq_cross_term}, we have
\begin{equation} \label{bd_em_loc_1}
    \begin{aligned}
        &\EE_\varepsilon \left[\sup _{f \in \mathcal{F}_{b}, ~ \EE_n[\xi^2_f(X)]\le \delta} \frac{1}{n}\sum_{i=1}^n \varepsilon_i ((f\circ\wh g)(X_i)-(\fh\circ\wh g)(X_i))\right] \\
   &\le  \EE_\varepsilon \left[\sup_{\beta \in  \cE} \frac{\sqrt{V }}{n} \l({\sum_{j=1}^\i \frac{1}{\nu_j}\l(\sum_{i=1}^n \varepsilon_i (\wh \phi_{x,j}  \circ \wh g) (X_i)  \r)^2} \r)^{1/2}\right]. 
    \end{aligned}
\end{equation}
where $ \cE$ is defined in \eqref{def_elli_cE} and $\nu_j = \max\{\wh \mu_{x,j}/ \delta,1 \}$ for $j=1,2,\ldots$. Applying Jensen's inequality gives
\begin{align} \label{bd_em_loc_2}\nonumber
    &  \EE_\varepsilon \left[\sup_{\beta \in  \cE} \frac{\sqrt{V }}{n} \l(\sum_{j=1}^\i \frac{1}{\nu_j}\l(\sum_{i=1}^n \varepsilon_i (\wh \phi_{x,j}  \circ \wh g) (X_i)  \r)^2\r)^{1/2} \right]
     \\
     & \le \sup_{\beta \in  \cE} \frac{\sqrt{V }}{n} \l(\sum_{j=1}^\i \frac{1}{\nu_j}  \EE_\varepsilon \left[\l(\sum_{i=1}^n \varepsilon_i (\wh \phi_{x,j}  \circ \wh g) (X_i)  \r)^2 \right]\r)^{1/2}. 
\end{align}
Note that $\varepsilon_1,\ldots,\varepsilon_n$  are Rademacher variables so  that for any $j$, 
\[
 \EE_\varepsilon\l[\l(\sum_{i=1}^n \varepsilon_i (\wh \phi_{x,j}\circ \wh g)(X_i)\r)^2 \r] = \sum_{i=1}^n (\wh \phi^2_{x,j}\circ\wh g)(X_i). 
\]
Then we have
\begin{equation}\label{bd_em_loc_3}
   \begin{aligned}
       \sum_{j=1}^\i  \frac{1}{\nu_j}  \EE_\varepsilon \l[
\l(\sum_{i=1}^n\varepsilon_i (\wh \phi_{x,j}\circ \wh g)(X_i) \r)^2 \r] = & \sum_{j=1}^\i \frac{1}{\nu_j}\l(\sum_{i=1}^n (\wh \phi^2_{x,j}\circ\wh g)(X_i) \r)\\
=  &n \sum_{j=1}^\i \frac{1}{\nu_j} \l \langle \wh T_{x,K} \wh \phi_{x,j}, \wh \phi_{x,j}\r\rangle_K&&\text{by  \eqref{eq_repro}}\\
=& n\sum_{j=1}^\i \frac{\wh \mu_{x,j}}{\nu_j} = n \sum_{j=1}^\infty \min \big \{\delta, \wh \mu_{x,j}\big\}. 
   \end{aligned}
\end{equation}
Collecting \eqref{bd_em_loc_1}, \eqref{bd_em_loc_2}, \eqref{bd_em_loc_3} yields
\[
\EE_\varepsilon \left[\sup _{f \in \mathcal{F}_{b}, ~ \EE_n[\xi^2_f(X)]\le \delta} \frac{1}{n}\sum_{i=1}^n \varepsilon_i ((f\circ\wh g)(X_i)-(\fh\circ\wh g)(X_i))\right] \le \sqrt{\frac{V}{n} \sum_{j=1}^\infty \min \big \{\delta, \wh \mu_{x,j}\big\}}. 
\]
By the definition of $\wh R_x(\delta)$ in \eqref{def_em_kercomp}  and using \cref{lem_wh_T_x} to deduce
\[
\sqrt{\frac{1}{n} \sum_{j=1}^\infty \min \big \{\delta, \wh \mu_{x,j}\big\}} =\wh R_x(\delta), 
\]
 we complete the proof. 
\end{proof}


\subsubsection{Relating \texorpdfstring{$\wh  R_x(\delta)$}{\texttwoinferior} to  \texorpdfstring{$\wh R(\delta)$}{\texttwoinferior}} \label{sec_bd_wh_R_delta}
Recall  $\wh Z_i = \wh g(X_i)$ for any $i\in [n]$. By  Mercer's expansion of $K$ in \eqref{eq_eigen_decomp}, for each $j \in [n]$,  we have 
\begin{align}\label{trace_eq}
\b[\bK_x\b]_{jj}  = \frac{1}{n} \Phi(\wh Z_j)^\T 
\Phi(\wh Z_j) \quad\text{and}\quad \b[\bK\b]_{jj}  = \frac{1}{n} \Phi( Z_j)^\T  \Phi( Z_j), 
\end{align}
where we write
\[
\Phi(z)= (\sqrt{\mu_1} \phi_1(z), \sqrt{\mu_2}\phi_2(z), \ldots)^\T \qquad \forall ~ z\in \cZ. 
\]
By \cref{ass_bd_K} and  \eqref{eq_eigen_decomp}, it is easy to check that for any
$z\in \cZ$, $\Phi(z)\in \ell^2(\NN) $. 
Further define for any $\delta> 0$, 
\begin{align}\label{def_D_delta}\nonumber
    & \wh D _x (\delta)  : = 
    \tr\l((\bK_x+\delta \bI_n)^{-1}\bK_x\r)= \sum_{j=1}^n \frac{\wh \mu_{x,j} }{ \wh \mu_{x,j} +\delta},\\
    &\wh D(\delta)  := \tr\l((\bK+\delta \bI_n)^{-1}\bK\r)= \sum_{j=1}^n \frac{\wh \mu_j }{ \wh \mu_j +\delta} . 
\end{align}
Observe that for any positive values $a$ and $b$, 
\begin{align}\label{sandwich}
      \frac{a \wedge b}{2} =\frac{1}{2\max\{1/a,1/b\}}\le  \frac{ab}{a+b} \le   \frac{1}{\max\{1/a,1/b\}} = a \wedge b.  
\end{align}
As a result, for any $\delta>0$,  we have
\begin{align}\label{re_R_D_wh}
\frac{1}{2} \wh D_x(\delta) \le   \frac{n }{ \delta} \wh R_x^2 (\delta) \le  \wh D_x(\delta)  \qquad\text{and}\qquad \frac{1}{2} \wh D(\delta) \le   \frac{n}{ \delta}  \wh R^2 (\delta)\le  \wh D (\delta) . 
\end{align}
For each $i\in [n]$,  write
\begin{align}\label{def_Delta_i}
    \Delta_{i,\wh g}:  = \| K(Z_i,\cdot )- K(\wh g(X_i),\cdot)\|_K^2.
\end{align}
Note that  $ \Delta_{1,\wh g}, \ldots, \Delta_{n,\wh g}$ are i.i.d. copies of $\Delta_{\wh g}$ in \eqref{def_Delta}. 
Let $\bar \Delta_{\wh g}$ 
be the average of them, that is
$$
\bar \Delta_{\wh g}=\frac{1}{n}\sum_{i=1}^n \Delta_{i,\wh g}.
$$

\begin{lemma}\label{lem_bd_wh_R}
Grant Assumptions \ref{ass_bd_K} and  \ref{ass_mercer}. 
On the event $\{20  \bar \Delta_{ \wh g} \le \delta \}$,  we have 
\[
 \wh R_x (\delta) ~ \le  ~ C   \wh R(\delta) + C \sqrt{\frac{\bar\Delta_{\wh g}}{n}},
\]
where $C>0$  is some absolute constant. 
\end{lemma}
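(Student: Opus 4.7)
The plan is to reduce the claim, via the sandwich relations in \eqref{re_R_D_wh}, to an upper bound on the effective dimension $\wh D_x(\delta)$ in terms of $\wh D(\delta)$. Using the Mercer feature map from \eqref{trace_eq}, the non-zero spectra of $\bK_x$ and $\bK$ coincide with those of the finite-rank operators
\[
T_x = \frac{1}{n}\sum_{i=1}^n \Phi(\wh Z_i)\Phi(\wh Z_i)^\T, \qquad T = \frac{1}{n}\sum_{i=1}^n \Phi(Z_i)\Phi(Z_i)^\T
\]
on $\ell^2(\NN)$, so that $\wh D_x(\delta) = \tr(T_x(T_x+\delta I)^{-1})$ and analogously for $\wh D(\delta)$. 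The resolvent identity then yields
\[
\wh D_x(\delta) - \wh D(\delta) = \delta\,\tr\bigl[(T+\delta I)^{-1}(T_x-T)(T_x+\delta I)^{-1}\bigr].
\]

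Setting $v_i := \Phi(Z_i)$ and $e_i := \Phi(\wh Z_i)-\Phi(Z_i)$, the Mercer isometry gives $\|e_i\|_2^2 = \|K_{\wh Z_i}-K_{Z_i}\|_K^2 = \Delta_{i,\wh g}$, so I decompose the perturbation as
\[
T_x - T = \underbrace{\frac{1}{n}\sum_{i=1}^n e_ie_i^\T}_{E_1 \text{ (PSD)}} + \underbrace{\frac{1}{n}\sum_{i=1}^n v_ie_i^\T}_{E_2} + E_2^\T,
\]
with $\tr(E_1) = \bar\Delta_{\wh g}$. The $E_1$-contribution to $\wh D_x(\delta)-\wh D(\delta)$ is controlled through $|\tr(E_1 M)|\le \tr(E_1)\,\|M\|_{\op}$ with $M=(T_x+\delta I)^{-1}(T+\delta I)^{-1}$, producing at most $\bar\Delta_{\wh g}/\delta$. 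For the non-PSD piece $E_2$, cyclic invariance gives $\tr\bigl[(T+\delta I)^{-1} v_ie_i^\T(T_x+\delta I)^{-1}\bigr] = e_i^\T (T_x+\delta I)^{-1}(T+\delta I)^{-1} v_i$, and applying Cauchy-Schwarz once within each summand and once across $i$ yields
\[
\delta\,\bigl|\tr\bigl[(T+\delta I)^{-1}E_2(T_x+\delta I)^{-1}\bigr]\bigr| \le \delta\sqrt{\bar\Delta_{\wh g}\,\tr\bigl(T(T+\delta I)^{-1}(T_x+\delta I)^{-2}(T+\delta I)^{-1}\bigr)}.
\]
Invoking the operator inequality $(T_x+\delta I)^{-2}\le \delta^{-2} I$ together with $\sum_j \wh\mu_j/(\wh\mu_j+\delta)^2 \le \wh D(\delta)/\delta$ then bounds the inner trace by $\wh D(\delta)/\delta^3$, so the $E_2$-contribution (and identically the $E_2^\T$-contribution) is at most $\sqrt{\bar\Delta_{\wh g}\,\wh D(\delta)/\delta}$.

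Translating back through $\wh R_x^2(\delta) \le (\delta/n)\wh D_x(\delta)$ and $(\delta/n)\wh D(\delta)\le 2\wh R^2(\delta)$ from \eqref{re_R_D_wh}, and applying Young's inequality to the cross term $\sqrt{(\bar\Delta_{\wh g}/n)\,\wh R^2(\delta)} \le \bar\Delta_{\wh g}/n + \wh R^2(\delta)$, I expect to reach $\wh R_x^2(\delta) \le C_1\wh R^2(\delta) + C_2\bar\Delta_{\wh g}/n$, and taking square roots via $\sqrt{a+b}\le \sqrt a+\sqrt b$ yields the stated bound. The main technical obstacle is the non-PSD cross piece $E_2$: because $T$ and $T_x$ do not commute, the Cauchy-Schwarz step must be structured so that one resolvent pairs with $v_i$ (producing $\sum_i v_i^\T M v_i = n\tr(T M)$ and permitting spectral calculus on $T$) while the other $(T_x+\delta I)^{-1}$ is handled only through its operator-norm bound. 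The hypothesis $20\bar\Delta_{\wh g}\le \delta$ enters at the final stage to guarantee that the $\bar\Delta_{\wh g}/\delta$-type terms arising from $E_1$ are of the right order to be absorbed into the other pieces, which is what lets the bound be closed with an absolute constant $C$.
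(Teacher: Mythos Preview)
Your overall strategy---reduce to effective dimensions via \eqref{re_R_D_wh}, apply the resolvent identity, and decompose $T_x-T$ into the PSD piece $E_1$ and the cross pieces $E_2,E_2^\T$---is the same as the paper's. Your treatment of the $E_1$ term and of the $E_2$ term is fine. The gap is in the $E_2^\T$ term, which you dismiss as ``identically the $E_2^\T$-contribution''.

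The two cross terms are not symmetric. For $E_2$, the scalar summand is $e_i^\T(T_x+\delta I)^{-1}(T+\delta I)^{-1}v_i$, and your Cauchy--Schwarz split isolates $\|e_i\|$ and leaves $\sum_i\|(T_x+\delta I)^{-1}(T+\delta I)^{-1}v_i\|^2 = n\,\tr\bigl(T(T+\delta I)^{-1}(T_x+\delta I)^{-2}(T+\delta I)^{-1}\bigr)$; here the two outer factors are $(T+\delta I)^{-1}$, which commute with $T$, and the inner $(T_x+\delta I)^{-2}$ is crushed by $\delta^{-2}I$, leaving $\tr(T(T+\delta I)^{-2})\le \wh D(\delta)/\delta$. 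For $E_2^\T$ the summand is $v_i^\T(T_x+\delta I)^{-1}(T+\delta I)^{-1}e_i$, and the same split yields $n\,\tr\bigl(T(T_x+\delta I)^{-1}(T+\delta I)^{-2}(T_x+\delta I)^{-1}\bigr)$ with the resolvents in the \emph{opposite} order. After bounding $(T+\delta I)^{-2}\le\delta^{-2}I$ you are left with $\tr\bigl(T(T_x+\delta I)^{-2}\bigr)$, which does \emph{not} reduce to $\wh D(\delta)/\delta$ because $T$ and $(T_x+\delta I)^{-1}$ do not commute. Your own diagnosis of the ``main technical obstacle''---that one resolvent must pair with $v_i$ to enable spectral calculus on $T$---is exactly right, but in the $E_2^\T$ term it is $(T_x+\delta I)^{-1}$ that is forced to pair with $v_i$, and the obstacle bites.

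This is precisely why the paper takes a different route: it writes $(\Sigma_x+\delta I)^{-1}=(\Sigma+\delta I)^{-1/2}(I-A(\delta))^{-1}(\Sigma+\delta I)^{-1/2}$ with $A(\delta)=(\Sigma+\delta I)^{-1/2}(\Sigma-\Sigma_x)(\Sigma+\delta I)^{-1/2}$, so that after substitution every resolvent is a function of $\Sigma$ alone and the $E_2$/$E_2^\T$ asymmetry disappears. The hypothesis $20\bar\Delta_{\wh g}\le\delta$ is used not at the end to ``absorb'' terms as you suggest, but at the outset to guarantee $\|A(\delta)\|_{\op}\le 1/2$, without which the Neumann expansion is invalid. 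Your direct asymmetric argument can in fact be repaired---substitute $v_i=\Phi(\wh Z_i)-e_i$ inside the $E_2^\T$ term, which converts it into a piece controlled by $\wh D_x(\delta)$ (since $\sum_i\Phi(\wh Z_i)\Phi(\wh Z_i)^\T=nT_x$) plus another $E_1$-type piece, and then close via a self-bounding inequality in $\wh D_x(\delta)$---but that step is missing from your proposal.
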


\begin{remark}
The result stated in \cref{lem_bd_wh_R}
is conditioning on both $X_1,...,  X_n$ and $Z_1,...,  Z_n$,  and all statements in this section should be understood to hold almost surely on them. 
\end{remark}

\begin{proof}
In view of \eqref{re_R_D_wh}, we turn to focus on bounding the difference between  $\wh D_x(\delta) $ and $\wh D(\delta)$.  To this end, define 
the operators
\begin{equation}\label{def_sigma}
    \begin{split}
        & \Sigma_x : \ell^2(\mathbb{N}) \to \ell^2(\mathbb{N}), \qquad    \Sigma_x= \frac{1}{n} \sum_{i=1}^n   \Phi(\wh Z_i)   \Phi(\wh Z_i)^\T;\\
        & \Sigma : \ell^2(\mathbb{N}) \to \ell^2(\mathbb{N}), \qquad   ~~  \Sigma= \frac{1}{n} \sum_{i=1}^n      \Phi( Z_i)   \Phi( Z_i)^\T. \\
    \end{split}
\end{equation}
For short, for any $\delta>  0$, 
we write
\begin{equation*}
    \begin{split}
        & A(\delta) = (\Sigma+\delta I)^{-\frac{1}{2}} ( \Sigma- \Sigma_x)(\Sigma+\delta I)^{-\frac{1}{2}};\\
        &B(\delta) = (\Sigma+\delta I)^{-1} ( \Sigma- \Sigma_x)(\Sigma+\delta I)^{-1}. \\
    \end{split}
\end{equation*}
It follows from  \eqref{trace_eq}  that
\begin{align*}
| \wh D_x(\delta)- \wh D(\delta)| =  \l| \tr \B( \b( \Sigma_x  +\delta I \b)^{-1}  \Sigma_x-\b (\Sigma  +\delta I \b )^{-1} \Sigma \B ) \r|. 
\end{align*}
Since
\[
\wh D_x(\delta) = \tr\b(( \Sigma_x + \delta I)^{-1}  \Sigma_x  \b)=\tr\b(I -\delta( \Sigma_x +\delta I)^{-1}\b),
\] 
and  similarly,  $\wh D(\delta)= \tr\b(I -\delta(\Sigma  +\delta I)^{-1}\b)$, we
have
\begin{align*}
 \b | \wh D_x(\delta)- \wh D(\delta)\b| 
= \delta \left| \tr\b((\Sigma+\delta I)^{-1}-(\Sigma_x+\delta I)^{-1}\b) \right| . 
\end{align*}
Further using the equality  $A^{-1}-B^{-1}= A^{-1}(B-A)B^{-1}$ for any invertible operators $A$ and $B$ gives
\begin{align}\label{diff_D_eq_1}
 \b | \wh D_x(\delta)- \wh D(\delta)\b| 
= \delta\left| \tr\b( (\Sigma+\delta I)^{-1}(\Sigma_x-\Sigma)(\Sigma_x+\delta I)^{-1}\b )\right|. 
\end{align}
On the event  
$\{\|A(\delta)\|_\op < 1\}$,
we find that
\begin{align*}
(\Sigma_x+ \delta{I})^{-1}&=  (\Sigma+\delta {I}+\Sigma_x-\Sigma)^{-1}\\
&=\B( (\Sigma+\delta{I})^{\frac{1}{2}}({I}- (\Sigma+\delta {I})^{-\frac{1}{2}}(\Sigma-\Sigma_x)(\Sigma+\delta {I})^{-\frac{1}{2}})(\Sigma+\delta{I})^{\frac{1}{2}}\B)^{-1}\\
&=\B((\Sigma+\delta{I})^{\frac{1}{2}}({I}-A(\delta))(\Sigma+\delta{I})^{\frac{1}{2}}\B)^{-1}\\
&=(\Sigma+\delta {I})^{-\frac{1}{2}}({I}-A(\delta))^{-1}(\Sigma+\delta {I})^{-\frac{1}{2}} 
\end{align*}
so that together with \eqref{diff_D_eq_1} gives
\begin{equation}\label{bd_wh_R_x_delta_1}
    \begin{aligned}
        & \b | \wh D_x(\delta)- \wh D(\delta)\b| \\
&=\delta\left|\tr\B( (\Sigma+\delta{I})^{-1} (\Sigma_x-\Sigma) (\Sigma+\delta {I})^{-\frac{1}{2}}({I}-A(\delta))^{-1}(\Sigma+\delta {I})^{-\frac{1}{2}}\B )\right|\\
&= \delta\left| \tr\B( (\Sigma+\delta {I})^{-\frac{1}{2}}A(\delta) ({I}-A(\delta))^{-1}(\Sigma+\delta{I})^{-\frac{1}{2}}\B )\right|
\\
&= \delta\left|  \tr\B( (\Sigma+\delta {I})^{-\frac{1}{2}}A(\delta) ({I}-A(\delta))^{-1}({I}-A(\delta)+A(\delta)) (\Sigma+\delta{I})^{-\frac{1}{2}} \B)\right|\\
&\le   \delta\left| \tr\b(B(\delta)\b)\right|+ \delta\left|\tr\B( (\Sigma+\delta {I})^{-\frac{1}{2}}A(\delta)({I}-A(\delta))^{-1}A(\delta)(\Sigma+\delta{I})^{-\frac{1}{2}} \B)\right|.
    \end{aligned}
\end{equation}
By the definition of the  trace of an operator, we have
\begin{equation}\label{bd_wh_R_x_delta_2}
    \begin{aligned}
    &\delta \tr\B((\Sigma+\delta {I})^{-\frac{1}{2}}A(\delta)({I}-A(\delta))^{-1}A(\delta)(\Sigma+\delta{I})^{-\frac{1}{2}} \B)  \\
&=\delta\B\| (\Sigma+\delta {I})^{-\frac{1}{2}}A(\delta)({I}-A(\delta))^{-\frac{1}{2}} \B\|_\hs^2\\
&\overset{(\romannumeral1)}{\le}  \delta \| (\Sigma+\delta{I})^{-1}\|_\op \|A(\delta)\|^2_\hs  \|({I}-A(\delta))^{-1} \|_\op\\
&\overset{(\romannumeral2)}{\le}   \frac{\|A(\delta)\|^2_\hs }{1- \|A(\delta) \|_\op}, 
\end{aligned}
\end{equation}
where (i) follows from the fact that $\|AB\|_\hs \le \|A\|_\op \|B\|_\hs$, 
(ii) holds since
\[
\|({I}-A(\delta))^{-1}\|_\op = \frac{1}{ \lambda_{\min}({I}-A(\delta))}=\frac{1}{ 1-\|A(\delta)\|_\op}.  
\]
By combining \eqref{bd_wh_R_x_delta_1} and \eqref{bd_wh_R_x_delta_2}, on the event  
$\{\|A(\delta)\|_\op < 1\}$, we  have the upper bound on $| \wh D_x(\delta)- \wh D (\delta)|$  of form 
\begin{align}\label{bd_D_diff}
 \b | \wh D_x(\delta)- \wh D(\delta)\b| 
\le \delta\big|\tr(B(\delta))\big|+  \frac{\|A(\delta)\|^2_\hs}{1- \|A(\delta) \|_\op}.
\end{align}
It remains to bound from above $\|A(\delta)\|_\op$, $ \delta  |\tr(B(\delta))|$ and $\|A(\delta)\|_\hs$ separately.\\

To this end,  start with the following decomposition using \eqref{def_sigma}.   
\begin{equation}\label{decom_sigma}
    \begin{aligned}
    \Sigma_x -\Sigma&= {1\over n}\sum_{i=1}^n \left\{ \big( \Phi(\wh Z_i) - \Phi(Z_i)\big)  \Phi(Z_i)^\T  +  \Phi(Z_i) \big(\Phi(\wh Z_i) - \Phi(Z_i)\big)^\T  \right.\\
		&\hspace{3cm} \left. +  \big(\Phi(\wh Z_i) - \Phi(Z_i)\big) \big(\Phi(\wh Z_i) - \Phi(Z_i)\big)^\T  \right\}.      
    \end{aligned}
\end{equation}
Observe that for each $i\in [n]$, 
\begin{align*}
    &\b\| \Phi(\wh Z_i) - \Phi(Z_i)\b\|_2^2=\sum_{j=1}^\i  \mu_j \b( \phi_j (\wh Z_i) - \phi_j (Z_i)\b)^2\\
&=\sum_{j=1}^\i  \mu_j\phi_j ( Z_i) \phi_j ( Z_i) -2 \sum_{j=1}^\i  \mu_j\phi_j ( Z_i) \phi_j (\wh Z_i) +\sum_{j=1}^\i  \mu_j\phi_j (\wh Z_i) \phi_j (\wh Z_i)  \\
&=K(Z_i, Z_i) -2 K(Z_i,\wh Z_i) +K(\wh Z_i,\wh Z_i) &&\text{by \eqref{eq_eigen_decomp}. }
\end{align*}
Combining with reproducing property and \eqref{def_Delta_i} to deduce
\begin{align*}
   & K(Z_i, Z_i) -2 K(Z_i,\wh Z_i) +K(\wh Z_i,\wh Z_i) \\
&=\langle K(Z_i, \cdot), K(Z_i, \cdot)\rangle_K  -2 \langle K(Z_i, \cdot), K(\wh Z_i, \cdot)\rangle_K +\langle K(\wh Z_i, \cdot), K(\wh Z_i, \cdot)\rangle_K \\
& =\b \| K(Z_i, \cdot)- K( \wh Z_i, \cdot)\b\|_K^2 =  \Delta_{i,\wh g}, 
\end{align*}
we have
\begin{equation}\label{lip_K}
\b\| \Phi(\wh Z_i) - \Phi(Z_i)\b\|_2^2=  \Delta_{i,\wh g}.
\end{equation}
\vspace{0.04in}

\noindent{\bf Bounding $\|A(\delta)\|_\op$.}   For any $u\in \ell^2(\NN)$, 
by writing
\begin{align*}
T_1(\delta) &:=   {1\over n} \sum_{i=1}^n     \big( \Sigma + \delta {I} \big)^{-\frac{1}{2}}\Phi(Z_i)  \big(\Phi(\wh Z_i) - \Phi(Z_i)\big)^\T  \big(\Sigma + \delta {I}\big)^{-\frac{1}{2}}, \quad  \rI_a :=   u^\T T_1(\delta)  u ; \\
T_1'(\delta)  &:=  {1\over n} 
\sum_{i=1}^n  \big( \Sigma + \delta {I} \big)^{-\frac{1}{2}}  \big( \Phi(\wh Z_i) - \Phi(Z_i)\big)   \Phi(Z_i)^\T \big(\Sigma + \delta {I}\big)^{-\frac{1}{2}}, \quad  \rI'_a :=  u^\T T'_1(\delta)  u ; \\
T_2(\delta) &:= 
{1\over n}\sum_{i=1}^n  \big( \Sigma + \delta I\big)^{-\frac{1}{2}}  \big(\Phi(\wh Z_i) - \Phi(Z_i)\big) \big(\Phi(\wh Z_i) - \Phi(Z_i)\big)^\T \big( \Sigma + \delta\big)^{-\frac{1}{2}},  \quad  \rII_a :=  u^\T T_2(\delta)  u ,
\end{align*}
it follows from \eqref{decom_sigma} and the triangle inequality that
\begin{align} \label{eq_decom_A_delta}
\|A(\delta) \|_\op \le   \|T_1(\delta)\|_\op+ \|T_1'(\delta)\|_\op+\|T_2(\delta)\|_\op. 
\end{align}
For $\rI_a$, by using  Cauchy-Schwarz inequality,  observe that
\begin{equation*}
\begin{aligned}
\rI_a \le&  \frac{1}{n}
\left( \sum_{i=1}^n  \B(  u^\T   \big( \Sigma + \delta {I} \big)^{-\frac{1}{2}}\Phi(Z_i) \B)^2\right)^{\frac{1}{2}}
\left(  \sum_{i=1}^n  \B( \big(\Phi(\wh Z_i) - \Phi(Z_i)\big)^\T  \big(\Sigma + \delta {I}\big)^{-\frac{1}{2}} u \B)^2 \right)^{\frac{1}{2}}\\
\le&  \frac{1}{n}
\left( \sum_{i=1}^n  \B(  u^\T   \big( \Sigma + \delta {I} \big)^{-\frac{1}{2}}\Phi(Z_i) \B)^2\right)^{\frac{1}{2}}
\left(  \sum_{i=1}^n  \b\| \Phi(\wh Z_i) - \Phi(Z_i) \b\|_2^2~   \b\|  \big(\Sigma + \delta {I}\big)^{-\frac{1}{2}} u \b\|_2^2 \right)^{\frac{1}{2}}\\
\le&  \frac{1}{n}
\left( \sum_{i=1}^n  \B(  u^\T   \big( \Sigma + \delta {I} \big)^{-\frac{1}{2}}\Phi(Z_i) \B)^2\right)^{\frac{1}{2}}
\left( \frac{1}{\delta}  \sum_{i=1}^n  \Delta_{i,\wh g}  \|u\|_2^2 \right)^{\frac{1}{2}} && \text{by \eqref{lip_K}}\\
= & \sqrt{\frac{\bar \Delta_{\wh g}}{\delta}}  \|u\|_2  
\left(\frac{1}{n}  \sum_{i=1}^n   u^\T   \big( \Sigma + \delta {I} \big)^{-\frac{1}{2}}\Phi(Z_i)\Phi(Z_i)^\T \big( \Sigma + \delta {I} \big)^{-\frac{1}{2}} u \right)^{\frac{1}{2}}\\
= &  \sqrt{\frac{\bar \Delta_{\wh g}}{\delta}}\|u\|_2
\left(   u^\T   \big( \Sigma + \delta {I} \big)^{-\frac{1}{2}}\Sigma \big( \Sigma + \delta {I} \big)^{-\frac{1}{2}} u \right)^{\frac{1}{2}} &&\text{by \eqref{def_sigma}}\\
\le & \sqrt{\frac{\bar \Delta_{\wh g}}{\delta}}  \|u\|_2^2,
\end{aligned}
\end{equation*}
implying
\begin{align}\label{bd_op_T_1}
    \|T_1(\delta)\|_\op\le \sqrt{ \bar \Delta_{\wh g}/\delta}. 
\end{align}
Repeating similar arguments gives
\begin{align}\label{bd_op_T_prime_1}
\|T_1'(\delta)\|_\op \le \sqrt{ \bar \Delta_{\wh g}/\delta}. 
\end{align}
For $\rII_a$, by using  Cauchy-Schwarz inequality and \eqref{lip_K},  we have
\begin{equation*}
\begin{aligned}
\rII_a\le&  \frac{1}{n}
 \sum_{i=1}^n  \B( \big(\Phi(\wh Z_i) - \Phi(Z_i)\big)^\T  \big(\Sigma + \delta {I}\big)^{-\frac{1}{2}} u \B)^2
\le  \frac{1}{n\delta} 
\sum_{i=1}^n  \Delta_{i,\wh g}  \|u\|_2^2  
=\frac{1}{\delta}\bar \Delta_{\wh g}  \|u\|_2^2, 
\end{aligned}
\end{equation*}
implying
\begin{align}\label{bd_op_T_2}
    \|T_2(\delta)\|_\op\le \bar \Delta_{\wh g}/\delta. 
\end{align}
Combining \eqref{eq_decom_A_delta}, \eqref{bd_op_T_1}, \eqref{bd_op_T_prime_1} and \eqref{bd_op_T_2} yields
\begin{align*}
    \|A(\delta)\|_\op\le 2 \sqrt{\bar \Delta_{\wh g}/\delta} + \bar \Delta_{\wh g}/\delta. 
\end{align*}
Under the condition that $20    {\bar \Delta_{\wh g}} \le  \delta$, one can deduce 
    $\|A(\delta)\|_\op\le 1/2. $\\ 


\noindent{\bf Bounding $\delta |\tr(B(\delta))|$.}   
By \eqref{decom_sigma} and writing
\begin{align*}
\rI_b &:=  \B| {1\over n} \sum_{i=1}^n\tr\B(  \big( \Sigma + \delta {I} \big)^{-1}  \Phi(Z_i)  \big(\Phi(\wh Z_i) - \Phi(Z_i)\big)^\T  \big(\Sigma + \delta {I}\big)^{-1} \B)\B|\\
\rI'_b  &:= \B|  {1\over n} 
\sum_{i=1}^n \tr\B(\big( \Sigma + \delta {I} \big)^{-1}   \big( \Phi(\wh Z_i) - \Phi(Z_i)\big)   \Phi(Z_i)^\T    \big(\Sigma + \delta {I}\big)^{-1}\B)\B|\\
\rII _b &:= 
{1\over n}\sum_{i=1}^n  \tr\B(  \big( \Sigma + \delta I\big)^{-1}  \big(\Phi(\wh Z_i) - \Phi(Z_i)\big) \big(\Phi(\wh Z_i) - \Phi(Z_i)\big)^\T \big( \Sigma + \delta I \big)^{-1}  \B),
\end{align*}
 we have
\[
\delta |\tr(B(\delta)) | ~ \le~  \delta~ \left (	\rI _b+\rI'_b  +\rII _b\right).
\]
For any elements $a_1,\ldots,a_n \in \ell^2(\NN)$ and $b_1,\ldots,b_n\in \ell^2(\NN)$, applying Cauchy-Schwarz inequality twice yields
\begin{align}\label{cs_ineq}
   \sum_{i=1}^n \tr(a_ib^\T_i)\le  \sum_{i=1}^n \|a_i\|_2~ \| b_i\|_2\le  \l(\sum_{i=1}^n \|a_i\|^2_2\r)^{\frac{1}{2}}\l(\sum_{i=1}^n \|b_i\|^2_2\r)^{\frac{1}{2}}.
 \end{align}
 Regarding $\rI_b$,  observe that
\begin{align*}
    \rI_b 
\le & \frac{1}{n}
\left( \sum_{i=1}^n  \B\| \big( \Sigma + \delta {I} \big)^{-1}    \big( \Phi(\wh Z_i) - \Phi(Z_i)\big) \B\|_2^2 \right)^{\frac{1}{2}}
\left(  \sum_{i=1}^n  \B\| \big(\Sigma + \delta {I}\big)^{-1} \Phi(Z_i)^\T \B\|_2^2 \right)^{\frac{1}{2}} 
&&\text{by   \eqref{cs_ineq}}\\
\le & \frac{1}{n \delta}
\left( \sum_{i=1}^n 
\left\|   \big( \Phi(\wh Z_i) - \Phi(Z_i)\big)  \right\|_2^2 \right)^{\frac{1}{2}}
\left(  \sum_{i=1}^n  \left\| \big(\Sigma + \delta {I}\big)^{-1}   \Phi(Z_i)^\T \right\|_2^2 \right)^{\frac{1}{2}}\\
\le &   \sqrt{\frac{\bar \Delta_{\wh g}}{\delta^2}} 
~ \left( \frac{1}{n} \sum_{i=1}^n  \left\| \big(\Sigma + \delta {I}\big)^{-1}   \Phi(Z_i)^\T \right\|_2^2 \right)^{\frac{1}{2}} &&\text{by  \eqref{lip_K}}\\
=    &  \sqrt{\frac{\bar \Delta_{\wh g}}{\delta^2 }} 
~ \left( \frac{1}{n} \sum_{i=1}^n  \tr\left( \big(\Sigma + \delta {I}\big)^{-1}     \Phi(Z_i)   \Phi(Z_i)^\T   \big(\Sigma + \delta {I}\big)^{-1}  \right) \right)^{\frac{1}{2}}\\
\le &   \sqrt{\frac{\bar \Delta_{\wh g}}{\delta^2 }} 
~ \sqrt{\frac{1 }{\delta}} \left(  \frac{1}{n} \sum_{i=1}^n  \tr\left( \big(\Sigma + \delta {I}\big)^{-\frac{1}{2}}     \Phi(Z_i)   \Phi(Z_i)^\T   \big(\Sigma + \delta {I}\big)^{-\frac{1}{2}}  \right) \right)^{\frac{1}{2}}\\
=  &  \sqrt{\frac{\bar \Delta_{\wh g}}{\delta^2 }} 
~\sqrt{\frac{\wh D(\delta) }{\delta}} &&\text{by \eqref{def_sigma}}. 
\end{align*}
Repeating similar arguments gives
\[
\rI'_b \le   \sqrt{\frac{\bar \Delta_{\wh g}}{\delta^2 }} 
~\sqrt{\frac{\wh D(\delta) }{\delta}}. 
\]
For $\rII_b$, we note that
\begin{equation}\label{bd_tr_T_2}
 \begin{aligned}
    \tr\b( T_2 (\delta) ) & = {1\over n}\sum_{i=1}^n  \tr\left(  \big( \Sigma + \delta I\big)^{-\frac{1}{2}}  \big(\Phi(\wh Z_i) - \Phi(Z_i)\big) \big(\Phi(\wh Z_i) - \Phi(Z_i)\big) ^\T  \big( \Sigma + \delta I\big)^{-\frac{1}{2}}  \right) \\
& \le \frac{1}{n} \sum_{i=1}^n \Big\| \big( \Sigma + \delta I\big)^{-\frac{1}{2}}  \big(\Phi(\wh Z_i) - \Phi(Z_i)\big) \Big\|_2^2  &&\text{by  \eqref{cs_ineq}}\\
& \le  \frac{\bar \Delta_{\wh g}}{\delta} &&\text{by  \eqref{lip_K}. }
\end{aligned}   
\end{equation}
Invoking Von Neumann's trace inequality gives
\begin{align*}
\rII_b = \tr\B( \big(\Sigma + \delta {I}\big)^{-\frac{1}{2}} T_2 (\delta) \big(\Sigma + \delta {I}\big)^{-\frac{1}{2}}  \B)\le  \frac{\bar \Delta_{\wh g}}{\delta^2}.
\end{align*}
Summing  the bounds of  $\rI_b,\rI'_b,\rII_b$ gives
\[
\delta |\tr(B(\delta)) | \le  2 \sqrt{ \frac{\bar \Delta_{\wh g} \wh D(\delta) }{\delta}}+  \frac{\bar \Delta_{\wh g}}{\delta}.
\]

 \noindent{{\bf Bounding} $\|A(\delta)\|^2_\hs$.}   
Start by noting that
\[
\|A(\delta)\|_\hs^2 =\tr\B( (\Sigma+\delta I)^{-\frac{1}{2}} ( \Sigma- \Sigma_x)(\Sigma+\delta I)^{-1} ( \Sigma- \Sigma_x)(\Sigma+\delta I)^{-\frac{1}{2}}
\B). 
\]
Further by applying \eqref{decom_sigma}, we have
\begin{equation}\label{decom_A_delta_hs}
\begin{aligned}
    \|A(\delta)\|_\hs^2\le &  \l|\tr\b(   T_1(\delta) T_1(\delta) \b)\r| +  \l|\tr\b(   T_1'(\delta) T_1'(\delta) \b)\r|  +   \l|\tr\b(   T_2(\delta) T_2(\delta) \b)\r|+\\
    &2~\l|\tr\b(   T_1(\delta) T_1'(\delta) \b)\r| +  2~\l|\tr\b(   T_1(\delta) T_2(\delta) \b)\r|  +   2~ \l|\tr\b(   T_1'(\delta) T_2(\delta) \b)\r|.  
    \end{aligned}
\end{equation}
For $\l|\tr\b(   T_1(\delta) T_1(\delta) \b)\r|$, it follows from \eqref{cs_ineq} that
\begin{align*}
&\l|\tr\b(   T_1(\delta) T_1(\delta) \b)\r|\\
&\le  \frac{1}{n}
\left( \sum_{i=1}^n  \|T_1(\delta)\|^2_\op  \B\| \big( \Sigma + \delta {I} \big)^{-\frac{1}{2}}    \big( \Phi(\wh Z_i) - \Phi(Z_i)\big) \B\|_2^2 \right)^{\frac{1}{2}}
\left(  \sum_{i=1}^n  \B\| \big(\Sigma + \delta {I}\big)^{-\frac{1}{2}} \Phi(Z_i) \B\|_2^2 \right)^{\frac{1}{2}} \\
&\le  \sqrt{\frac{\bar \Delta_{\wh g}}{\delta}} 
\left( \frac{1}{n}\sum_{i=1}^n  \B\| \big( \Sigma + \delta {I} \big)^{-\frac{1}{2}}    \big( \Phi(\wh Z_i) - \Phi(Z_i)\big) \B\|_2^2 \right)^{\frac{1}{2}}
\left(  \frac{1}{n}\sum_{i=1}^n  \B\| \big(\Sigma + \delta {I}\big)^{-\frac{1}{2}} \Phi(Z_i) \B\|_2^2 \right)^{\frac{1}{2}}  &&\text{by  \eqref{bd_op_T_1}} \\
&\le    \frac{\bar \Delta_{\wh g}}{\delta} 
\left(  \frac{1}{n}\sum_{i=1}^n  \B\| \big(\Sigma + \delta {I}\big)^{-\frac{1}{2}} \Phi(Z_i) \B\|_2^2 \right)^{\frac{1}{2}} &&\text{by  \eqref{lip_K}}\\
 &  \le  \frac{\bar \Delta_{\wh g} \wh D(\delta)}{\delta} 
 &&\text{by \eqref{def_sigma}}. 
\end{align*}
For $\l|\tr\b(   T_1'(\delta) T_1'(\delta) \b)\r|$ and $\l|\tr\b(   T_1(\delta) T_1'(\delta) \b)\r|$, repeating the similar arguments, and together with \eqref{bd_op_T_prime_1},  leads to
\begin{align*}
\l|\tr\b(   T_1'(\delta) T_1'(\delta) \b)\r|
 \le  \frac{\bar \Delta_{\wh g} }{\delta}\wh D(\delta)
 \end{align*}
 and 
 \[
 \l|\tr\b(   T_1(\delta) T_1'(\delta) \b)\r|\le \frac{\bar \Delta_{\wh g} }{\delta}\wh D(\delta). 
 \]
 For $\l|\tr\b(   T_2(\delta) T_2(\delta) \b)\r|$, we have
\begin{align*}
\l|\tr\b(   T_2(\delta) T_2(\delta) \b)\r| &\le \|T_2 (\delta)\|_\op  \tr\b( T_2 (\delta) \b) &&\text{by Von Neumann's trace inequality}\\
   & \le  \frac{\bar \Delta_{\wh g}}{\delta} \tr\b( T_2 (\delta) \b) &&\text{by \eqref{bd_op_T_2}}\\
      & \le  \l(\frac{\bar \Delta_{\wh g}}{\delta}\r)^2 &&\text{by \eqref{bd_tr_T_2}}.
\end{align*}
For $\l|\tr\b(   T_1(\delta) T_2(\delta) \b)\r|$, by  \eqref{cs_ineq}, \eqref{def_sigma}, \eqref{lip_K} and \eqref{bd_op_T_2},  
 observe that
\begin{align*}
&\l|\tr\b(  T_1(\delta) T_2(\delta) \b)\r|\\
&\le  \frac{1}{n}
\left( \sum_{i=1}^n  \|T_2(\delta)\|^2_\op  \B\| \big( \Sigma + \delta {I} \big)^{-\frac{1}{2}}  \big( \Phi(\wh Z_i) - \Phi(Z_i)\big) \B\|_2^2 \right)^{\frac{1}{2}}
\left(  \sum_{i=1}^n  \B\| \big(\Sigma + \delta {I}\big)^{-\frac{1}{2}} \Phi(Z_i)  \B\|_2^2 \right)^{\frac{1}{2}} \\
 &  \le \l( \frac{\bar \Delta_{\wh g} }{\delta}\r)^{\frac{3}{2}}\wh D(\delta) . 
\end{align*}
Repeating similar arguments gives
\begin{align*}
\l|\tr\b(  T'_1(\delta) T_2(\delta) \b)\r|  \le \l( \frac{\bar \Delta_{\wh g} }{\delta}\r)^{\frac{3}{2}} \wh D(\delta).
\end{align*}
Summing  the bounds for each term in the right-hand side of \eqref{decom_A_delta_hs} yields
\[
    \|A(\delta)\|_\hs^2\le 4~  \frac{\bar \Delta_{\wh g} \wh D(\delta)}{\delta} +4~  \l( \frac{\bar \Delta_{\wh g} }{\delta}\r)^{\frac{3}{2}} \wh D(\delta)+  \l(\frac{\bar \Delta_{\wh g}}{\delta}\r)^2. 
\]
\vspace{0.15in}

Finally, from \eqref{bd_D_diff}, 
summarizing the bounds of  $\|A(\delta)\|_\op$,  $\delta |\tr(B(\delta))|$ and $\|A(\delta)\|_\hs$  concludes that 
\begin{align} \label{bd_final_D_diff}
 \b | \wh D_x(\delta)- \wh D(\delta)\b|  \le 
 2~ \sqrt{ \frac{\bar \Delta_{\wh g} \wh D(\delta) }{\delta}}+  \frac{\bar \Delta_{\wh g}}{\delta}+ 8~  \frac{\bar \Delta_{\wh g} \wh D(\delta)}{\delta} +8~  \l( \frac{\bar \Delta_{\wh g} }{\delta}\r)^{\frac{3}{2}} \wh D(\delta)+ 2~ \l(\frac{\bar \Delta_{\wh g}}{\delta}\r)^2  
\end{align}
so that  combining with the elementary inequality $2 \sqrt{ab} \le a+b $ for any non-negative numbers $a,b$  gives
\begin{align*}
 \wh D_x(\delta)  \le  2 \wh D(\delta)+   \frac{2\bar \Delta_{\wh g}}{\delta}+ 8~  \frac{\bar \Delta_{\wh g} \wh D(\delta)}{\delta} +8~  \l( \frac{\bar \Delta_{\wh g} }{\delta}\r)^{\frac{3}{2}} \wh D(\delta)+ 2~ \l(\frac{\bar \Delta_{\wh g}}{\delta}\r)^2 .    \\
\end{align*}
So on  the event $\{ 20  \bar  \Delta_{\wh g} \le \delta\}$,  it is  easy to deduce
\begin{align*}
    \wh D_x(\delta)  \le 3 ~  \wh D(\delta)+   \frac{3\bar \Delta_{\wh g}}{\delta} . 
\end{align*}
Together with \eqref{re_R_D_wh} yields
\begin{align*}
    \wh R_x (\delta) \le \sqrt{ \frac{\delta \wh D_x(\delta)}{n}} \le & \sqrt{ \frac{ 3 \delta \wh D(\delta)}{n} + \frac{ 3 \bar\Delta_{\wh g}}{n}} \le  \sqrt{ 6 \wh R^2 (\delta) + \frac{ 3  \bar\Delta_{\wh g}}{n}} \le  C \l( \wh R(\delta) +  \sqrt{\frac{\bar\Delta_{\wh g}}{n}}  \r), 
\end{align*}
hence we complete the proof. 
\end{proof}


 The following lemma bounds from above $\bar \Delta_{\wh g} =(1/n)\sum_{i=1}^n \Delta_{i, \wh g} $.

\begin{lemma}\label{lem_bd_bar_Delta}
Grant \cref{ass_bd_K}. Fix any $\eta\in (0,1)$. Then with probability at least $1-\eta$, one has
\[
 \bar \Delta_{\wh g} 
  \le \frac{3}{2} 
  \EE\Delta_{\wh g}+ \frac{28 \kappa^2 \log ( 2/\eta) }{3 n}. 
\]
\end{lemma}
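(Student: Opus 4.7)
The plan is to view $\bar\Delta_{\wh g}$ as an empirical mean of i.i.d. bounded nonnegative random variables and apply Bernstein's inequality, with the variance controlled by the mean. Conditioning on $\wh g$ throughout makes $\Delta_{1,\wh g},\ldots,\Delta_{n,\wh g}$ i.i.d. copies of $\Delta_{\wh g}$ (since $\wh g$ is constructed independently of $\cD$, by \cref{rem_g_hat}).

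First, I would establish uniform boundedness of each summand. By the reproducing property and \cref{ass_bd_K}, for any $z\in\cZ$ we have $\|K_z\|_K^2 = K(z,z) \le \kappa^2$, so the triangle inequality gives
\[
\Delta_{i,\wh g} = \bigl\| K_{Z_i} - K_{\wh g(X_i)} \bigr\|_K^2 \le \bigl(\|K_{Z_i}\|_K + \|K_{\wh g(X_i)}\|_K\bigr)^2 \le 4\kappa^2.
\]
Consequently, since $\Delta_{i,\wh g}\ge 0$, the variance admits the mean-based bound
\[
\Var(\Delta_{i,\wh g}) \le \EE\bigl[\Delta_{i,\wh g}^2\bigr] \le 4\kappa^2 \, \EE\Delta_{\wh g}.
\]

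Next, I would apply a standard Bernstein inequality to $\Delta_{1,\wh g},\ldots,\Delta_{n,\wh g}$ with range parameter $M = 4\kappa^2$ and variance proxy $\sigma^2 = 4\kappa^2\EE\Delta_{\wh g}$, yielding, with probability at least $1-\eta$,
\[
\bar\Delta_{\wh g} - \EE\Delta_{\wh g} \le \sqrt{\frac{2\sigma^2 \log(2/\eta)}{n}} + \frac{2M \log(2/\eta)}{3n} \le \sqrt{\frac{8\kappa^2 \EE\Delta_{\wh g} \log(2/\eta)}{n}} + \frac{8\kappa^2 \log(2/\eta)}{3n}.
\]

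Finally, I would dispatch the square-root term by AM-GM: using $\sqrt{ab}\le \tfrac{1}{2}a + \tfrac{1}{2}b$ with $a = \EE\Delta_{\wh g}$ and $b = 8\kappa^2\log(2/\eta)/n$ (or a comparable splitting to recover the stated constant), we obtain
\[
\sqrt{\frac{8\kappa^2 \EE\Delta_{\wh g} \log(2/\eta)}{n}} \le \frac{1}{2}\EE\Delta_{\wh g} + \frac{c\,\kappa^2\log(2/\eta)}{n},
\]
and combining with the residual Bernstein term yields the claimed bound
\[
\bar\Delta_{\wh g} \le \tfrac{3}{2}\EE\Delta_{\wh g} + \tfrac{28\kappa^2 \log(2/\eta)}{3n}.
\]
There is no real obstacle here: the argument reduces to Bernstein plus AM-GM once boundedness and the mean-variance inequality are in place. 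The only subtlety worth flagging is the conditioning on $\wh g$, which is what preserves independence of the $\Delta_{i,\wh g}$'s and legitimizes treating them as i.i.d. copies of $\Delta_{\wh g}$.
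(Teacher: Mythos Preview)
Your proposal is correct and follows essentially the same route as the paper: bound $\Delta_{i,\wh g}\le 4\kappa^2$ via the reproducing property, control the variance by $4\kappa^2\,\EE\Delta_{\wh g}$, apply Bernstein, and absorb the square-root term via $2\sqrt{ab}\le a+b$. The only cosmetic difference is that the paper expands $\Delta_{i,\wh g}$ as $K(Z_i,Z_i)-2K(Z_i,\wh g(X_i))+K(\wh g(X_i),\wh g(X_i))$ and uses a slightly looser bound $H=8\kappa^2$ for the centered variable in its Bernstein lemma (\cref{bern_rv}), which is what produces the constant $28/3$; your tighter $M=4\kappa^2$ would in fact yield $20/3$, so the stated inequality holds a fortiori.
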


\begin{proof}
For each $i\in [n]$,   observe that
\begin{align*}
    \Delta_{i,\wh g} = &  \| K(Z_i, \cdot)- K( \wh g(X_i), \cdot)\|_K^2\\
    = & \langle K(Z_i, \cdot), K(Z_i, \cdot)\rangle_K  -2 \langle K(Z_i, \cdot), K(\wh g(X_i), \cdot)\rangle_K +\langle K(\wh g(X_i), \cdot), K(\wh g(X_i), \cdot)\rangle_K \\
     = & K(Z_i, Z_i) -2 K(Z_i,\wh g(X_i)) +K(\wh g(X_i),\wh g(X_i)), 
\end{align*}
where the last step holds by reproducing property in $\cH_K$.
Then by \cref{ass_bd_K}, one has the following upper bound uniformly over $i\in [n]$. 
\[
  \Delta_{i,\wh g} \le 4\kappa^2. 
\]
It follows that 
\[
\max_{i\in [n]}~  \EE[\Delta^2_{i,\wh g} ]\le  4\kappa^2 ~ \EE\Delta_{\wh g}.
\]
Invoking Bernstein's inequality in \cref{bern_rv} implies that for any $\eta\in (0,1)$, it holds with probability at least $1-\eta$ that
\[
\left|\frac{1}{n} \sum_{i=1}^n  \big( \Delta_{i,\wh g} - \EE[ \Delta_{i,\wh g}] \big) \right| \leq \frac{16 \kappa^2 \log ( 2/\eta)}{3 n}+\sqrt{\frac{8\kappa^2 \EE\Delta_{\wh g}  \log ( 2/\eta)}{n}}.
\]
Rearranging the terms and using the elementary inequality 
$2\sqrt{ab} \le a+b$ gives
\begin{align*}
  \bar \Delta_{\wh g} ~ \le ~  & \EE\Delta_{\wh g}+ \frac{16 \kappa^2 \log ( 2/\eta)}{3 n}+\sqrt{\frac{8\kappa^2 \EE\Delta_{\wh g} \ \log ( 2/\eta)}{n}}\\
  ~ \le ~ & \frac{3}{2} 
  \EE\Delta_{\wh g}+ \frac{28 \kappa^2  \log ( 2/\eta)}{3 n}. 
\end{align*}
This completes the proof.
\end{proof}

Combining 
Lemmas \ref{lem_bd_wh_R} and \ref{lem_bd_bar_Delta} gives the following corollary.
\begin{corollary}\label{cor_lem7_and_lem8}
Grant Assumptions \ref{ass_bd_K} and  \ref{ass_mercer}.  Fix any $\eta\in (0,1)$. For any $\delta$ such that
\[
 \l( 30 
  \EE\Delta_{\wh g}+  \frac{560 \kappa^2  \log ( 2/\eta)}{3n} \r)\le  \delta,  
\]
with probability at least $1-\eta$, we have
\[
 \wh R_x (\delta)\le  C \l( \wh R(\delta) +  \sqrt{\frac{\EE\Delta_{\wh g} }{n}}  +  \frac{\kappa \sqrt{\log(2/\eta)}}{n} \r),
\]
where $C>0$  is some absolute constant. 
\end{corollary}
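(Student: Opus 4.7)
The plan is to combine Lemmas \ref{lem_bd_wh_R} and \ref{lem_bd_bar_Delta} in a straightforward way: the former is a deterministic statement conditional on the event $\{20\bar\Delta_{\wh g}\le \delta\}$, while the latter provides a high-probability bound controlling $\bar\Delta_{\wh g}$ in terms of its expectation $\EE \Delta_{\wh g}$. The lower bound on $\delta$ in the statement has been chosen precisely so that, on the good event supplied by Lemma \ref{lem_bd_bar_Delta}, the hypothesis of Lemma \ref{lem_bd_wh_R} is automatically met.

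First, I would apply Lemma \ref{lem_bd_bar_Delta} to produce an event $\Event$ of probability at least $1-\eta$ on which
\[
\bar\Delta_{\wh g} ~\le~ \frac{3}{2}\EE\Delta_{\wh g} + \frac{28\kappa^2 \log(2/\eta)}{3n}.
\]
Multiplying by $20$ and invoking the hypothesis $30\EE\Delta_{\wh g} + 560\kappa^2\log(2/\eta)/(3n) \le \delta$ shows that, on $\Event$, one has $20\bar\Delta_{\wh g}\le \delta$. Hence Lemma \ref{lem_bd_wh_R} applies on $\Event$ and yields
\[
\wh R_x(\delta) ~\le~ C\,\wh R(\delta) + C\sqrt{\frac{\bar\Delta_{\wh g}}{n}}.
\]

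Next I would replace $\bar\Delta_{\wh g}$ on the right-hand side by its high-probability upper bound. Using $\sqrt{a+b}\le \sqrt{a}+\sqrt{b}$ for non-negative $a,b$ gives
\[
\sqrt{\frac{\bar\Delta_{\wh g}}{n}} ~\le~ \sqrt{\frac{3\EE\Delta_{\wh g}}{2n}} + \sqrt{\frac{28\kappa^2 \log(2/\eta)}{3n^2}} ~\lesssim~ \sqrt{\frac{\EE\Delta_{\wh g}}{n}} + \frac{\kappa\sqrt{\log(2/\eta)}}{n}.
\]
Substituting this back into the previous inequality produces the claimed bound, still on the event $\Event$ of probability at least $1-\eta$. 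No step here is genuinely hard: the only content is bookkeeping of the constants $20, 30, 28/3, 560/3$ to ensure they interlock so that the condition $20\bar\Delta_{\wh g}\le \delta$ is guaranteed. The proof should fit in a few lines.
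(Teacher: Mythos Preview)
Your proposal is correct and matches the paper's approach exactly: the corollary is stated there as an immediate combination of Lemmas \ref{lem_bd_wh_R} and \ref{lem_bd_bar_Delta}, with no additional proof given, and your bookkeeping of the constants $20\cdot\tfrac{3}{2}=30$ and $20\cdot\tfrac{28}{3}=\tfrac{560}{3}$ is precisely what makes the hypothesis interlock with the event from Lemma \ref{lem_bd_bar_Delta}.
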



\subsubsection{Relating   \texorpdfstring{$\wh R(\delta)$}{\texttwoinferior} to  \texorpdfstring{$R(\delta)$}{\texttwoinferior}} \label{sec_bd_wt_R}
Recall  the definitions of   $R(\delta) $ from \eqref{kernel_complexity} and $\wh R(\delta)$ from \eqref{def_em_kercomp_true_input}. 
In this part, we aim to derive an upper bound for   $\wh R(\delta)$ in terms of   $R(\delta)$. 

Introduce the  empirical covariance operator $\wh T_K$ with respect to $Z_1,\ldots,Z_n$, defined as
\[
\wh {T}_{K}: 
\cH_K \to \cH_K, \qquad 
\wh T_{K}f: = \frac{1}{n}\sum_{i=1}^n f({Z}_i )K( {Z}_i,\cdot). 
\]
By repeating the same  argument of proving \eqref{eq_repro}, we have 
\begin{align*}
    \EE_n[f(Z)] = \langle f,  \wh {T}_Kf \rangle_K,\qquad  \forall ~ f\in \cH_K. 
\end{align*}
Furthermore, by repeating the same  argument as that in the proof of \cref{lem_wh_T_x}, we claim that 
the eigenvalues of $\bK$ are the same as the $n$ largest eigenvalues of $\wh T_K$, and the remaining eigenvalues of $\wh T_K$ are all zero.
We are therefore allowed to write the associated eigenvalues with $\wh {T}_K$ as  $\{\wh {\mu}_j\}_{j=1}^\infty$, arranged in descending order. 

Note that $\wh  {T}_K$ depends on $Z_1,\ldots ,Z_n $, and 
its population counterpart  is known as the covariance  operator $T_K$, defined as
\begin{align*}
 {T}_{K}: 
\cH_K\to \cH_K, \qquad T_K f: =\int_{\mathcal{Z}} K_z f(z)\d\rho(z). 
\end{align*}
The operator $T_K$
is defined in the same way as the integral operator $L_K$ in \eqref{def_L_K} 
except that their domains and ranges are distinct.
It is worth mentioning that  $L_K$ and $T_K$ share the same eigenvalues $\{\mu_j\}_{j\ge 1}$, as discussed in
\cite{caponnetto2007optimal}. 
Recall the definition of  $\wh D(\delta)$ from
\eqref{def_D_delta}. 
Since  $\wh \mu_j=0$ for all $j> n$, for any $\delta>0$, we have
\begin{align*}
\wh D(\delta) = \sum_{j=1}^n \frac{\wh \mu_j }{ \wh \mu_j +\delta} = \sum_{j=1}^\i \frac{\wh \mu_j }{ \wh \mu_j +\delta} = \tr( ( \wh T_K+\delta I)^{-1} \wh T_K). 
\end{align*}
In the proof of this section, define for any $\delta>0$, 
\begin{align*}
    D(\delta):  = \tr( ( T_K+\delta I)^{-1}  T_K) = \sum_{j=1}^\i \frac{\mu_j}{\mu_j+\delta}. 
\end{align*}
Then by \eqref{sandwich},  we have
\begin{align}\label{re_R_D}
 \frac{1}{2}  D(\delta)\le     \frac{n R^2 (\delta)}{ \delta} \le D(\delta) . 
\end{align}

\begin{lemma}\label{lem_bd_wt_R}
Grant Assumptions \ref{ass_bd_K} and  \ref{ass_mercer}.
Fix  any  $\eta\in(0,1)$.   
For  any   $\delta>0 $  such that  $R(\delta)\le \delta$
and
\begin{align} \label{lb_n_delta_1}
n\delta \ge 32 \kappa^2  \log\l( \frac{256 (\kappa^2 \vee 1) }{\eta} \r) , 
\end{align}
 with  probability at least $1- \eta$, one has 
\begin{align*}
 \wh R(\delta) \le  C \l(R(\delta)  + \frac{ \kappa \sqrt{\log(2/\eta)}}{n} \r),  
\end{align*}
where $C>0$  is some absolute constant. 
\end{lemma}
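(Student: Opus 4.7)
The plan is to mirror the resolvent-expansion strategy already used to prove \cref{lem_bd_wh_R}, but with the roles of $(\Sigma_x,\Sigma)$ played by $(\wh T_K, T_K)$. First, by \eqref{re_R_D} it suffices to compare $\wh D(\delta)$ with $D(\delta)$; once we show $\wh D(\delta) \le C\, D(\delta) + C'\kappa^2\log(2/\eta)/(n\delta)$, a direct translation through $\wh R(\delta) \le \sqrt{\delta\wh D(\delta)/n}$ and $R(\delta) \ge \sqrt{\delta D(\delta)/(2n)}$ gives the claimed bound.

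Next, by the identity $A^{-1}-B^{-1} = A^{-1}(B-A)B^{-1}$ we write
\begin{align*}
    \wh D(\delta) - D(\delta)
    = \delta\, \tr\!\bigl((T_K+\delta I)^{-1}(\wh T_K - T_K)(\wh T_K+\delta I)^{-1}\bigr).
\end{align*}
Introducing the self-adjoint operator
\begin{align*}
    A(\delta) := (T_K+\delta I)^{-1/2}(\wh T_K - T_K)(T_K+\delta I)^{-1/2},
\end{align*}
the same resolvent expansion as in the proof of \cref{lem_bd_wh_R} yields, on $\{\|A(\delta)\|_{\op}\le 1/2\}$,
\begin{align*}
    |\wh D(\delta) - D(\delta)|
    \;\le\; \delta\,|\tr(B(\delta))| + \frac{\|A(\delta)\|_\hs^2}{1-\|A(\delta)\|_{\op}},
\end{align*}
where $B(\delta) = (T_K+\delta I)^{-1}(\wh T_K - T_K)(T_K+\delta I)^{-1}$. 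Note that $\delta\,\tr(B(\delta))$ has mean zero and $\|B(\delta)\|_{\op}\le \|A(\delta)\|_{\op}/\delta$, and $\|A(\delta)\|_\hs^2 \le \|A(\delta)\|_{\op}\cdot\tr(A(\delta)^2\, \text{lift})$—in particular it is controlled once we have sharp tail bounds on $\|A(\delta)\|_{\op}$ and on $\delta\,|\tr B(\delta)|$.

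The main step, which I expect to be the technical core, is the concentration of the centered sum
$\wh T_K - T_K = (1/n)\sum_{i=1}^n \xi_i$ with $\xi_i := K_{Z_i}\otimes K_{Z_i} - T_K$, in the preconditioned operator norm. Since for every $z\in\cZ$,
\begin{align*}
    \bigl\|(T_K+\delta I)^{-1/2}K_z\bigr\|_K^2
    \;\le\; \frac{\|K_z\|_K^2}{\delta} \;\le\; \frac{\kappa^2}{\delta}
\end{align*}
by \cref{ass_bd_K}, each term satisfies
$\|(T_K+\delta I)^{-1/2}\xi_i(T_K+\delta I)^{-1/2}\|_{\op}\le 2\kappa^2/\delta$, while the "variance" obeys
$\|\EE[(T_K+\delta I)^{-1/2}\xi_i^2(T_K+\delta I)^{-1/2}]\|_{\op}\le (\kappa^2/\delta)\, \|T_K(T_K+\delta I)^{-1}\|_{\op}\le \kappa^2/\delta$. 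Applying a Hilbert-space Bernstein inequality (e.g., Minsker's version of the non-commutative Bernstein bound) produces, on an event of probability at least $1-\eta/2$,
\begin{align*}
    \|A(\delta)\|_{\op}
    \;\lesssim\; \sqrt{\frac{\kappa^2 \log(2/\eta)}{n\delta}} + \frac{\kappa^2\log(2/\eta)}{n\delta}.
\end{align*}
The standing lower bound \eqref{lb_n_delta_1} on $n\delta$, together with a careful choice of the constant $256(\kappa^2\vee 1)$ appearing there, is exactly what forces $\|A(\delta)\|_{\op}\le 1/2$ on this event. A completely analogous Bernstein bound, applied to the scalar random variables $\delta\,\langle (T_K+\delta I)^{-1}K_{Z_i}, K_{Z_i}\rangle_K$ whose sum equals $\delta\,\tr\bigl((T_K+\delta I)^{-1}\wh T_K\bigr) = \delta\,\tr B(\delta) + \delta\,\tr((T_K+\delta I)^{-1}T_K)$, and using $\tr((T_K+\delta I)^{-1}T_K)=D(\delta)$ together with $\delta\,\langle(T_K+\delta I)^{-1}K_z,K_z\rangle_K\le\kappa^2$, yields
\begin{align*}
    \delta\,|\tr B(\delta)|
    \;\lesssim\; \sqrt{\frac{\kappa^2 D(\delta)\log(2/\eta)}{n}} + \frac{\kappa^2\log(2/\eta)}{n};
\end{align*}
and by the same token $\|A(\delta)\|_\hs^2 \le \|A(\delta)\|_{\op}\cdot\tr A(\delta)^2$ is controlled via the HS-norm Bernstein tail, giving a bound of order $D(\delta)/n$ up to logarithmic factors.

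Combining these estimates, substituting into the resolvent bound, and using $D(\delta)\ge 1/(1+\delta/\mu_1)\ge c$ (so that lower-order terms absorb), we obtain
\begin{align*}
    \wh D(\delta) \;\le\; C\, D(\delta) + C\,\frac{\kappa^2\log(2/\eta)}{n\delta}
\end{align*}
with probability at least $1-\eta$. Translating back through \eqref{re_R_D} and the hypothesis $R(\delta)\le\delta$ (which bounds $\delta D(\delta)/n = R^2(\delta)\cdot 2 \le 2\delta^2$ so square roots split cleanly), we conclude $\wh R(\delta) \le C'\bigl(R(\delta) + \kappa\sqrt{\log(2/\eta)}/n\bigr)$. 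The hard part here is precisely the operator Bernstein step and the verification that the constant in \eqref{lb_n_delta_1} is chosen large enough to keep $\|A(\delta)\|_{\op}\le 1/2$; all subsequent manipulations are arithmetic rearrangements.
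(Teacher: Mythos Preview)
Your resolvent-expansion plan is the right skeleton, but there is a genuine gap in the operator-Bernstein step. Minsker's inequality (\cref{lemd3}) does not give a bound with logarithmic factor $\log(2/\eta)$; it gives $\beta = \log\bigl(2\tr(S)/(\|S\|_{\op}\eta)\bigr)$, and here $S=(\kappa^2/\delta)(T_K+\delta I)^{-1/2}T_K(T_K+\delta I)^{-1/2}$ has $\tr(S)/\|S\|_{\op}=D(\delta)(\mu_1+\delta)/\mu_1$. When $\mu_1\ge\delta$ this ratio is at most $2D(\delta)\le 4nR^2(\delta)/\delta\le 4n\delta$ (using $R(\delta)\le\delta$), so $\beta\le\log(8n\delta/\eta)$ and the standing lower bound \eqref{lb_n_delta_1} is exactly what forces $\|A(\delta)\|_{\op}\le 1/2$. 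But when $\mu_1<\delta$, the factor $\delta/\mu_1$ is uncontrolled---nothing in the assumptions bounds $\mu_1$ away from zero---so $\beta$ can be arbitrarily large and you cannot secure $\|A(\delta)\|_{\op}\le 1/2$ from \eqref{lb_n_delta_1}. Your claim $D(\delta)\ge 1/(1+\delta/\mu_1)\ge c$ fails for the same reason.

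The paper closes this gap by a case split. When $\mu_1<\delta$ every eigenvalue sits below $\delta$, so $nR^2(\delta)=\sum_j\mu_j=\tr(T_K)$; the resolvent machinery is abandoned in favour of the crude bound $\wh D(\delta)\le\tr(\wh T_K)/\delta$ and a scalar Bernstein on $\tr(\wh T_K)$ (\cref{lem_bd_wt_T_K_minus_T_K}). Only in the regime $\mu_1\ge\delta$ does the argument run as you outline, with the correct intrinsic-dimension logarithm. A smaller slip: the sum of $\delta\,\langle(T_K+\delta I)^{-1}K_{Z_i},K_{Z_i}\rangle_K$ equals $\delta\,\tr\bigl((T_K+\delta I)^{-1}\wh T_K\bigr)$, which is not $\delta\,\tr B(\delta)+\delta D(\delta)$ since $B(\delta)$ carries $(T_K+\delta I)^{-1}$ on \emph{both} sides; the paper instead applies scalar Bernstein directly to $\xi(z)=\delta\,\tr\bigl((T_K+\delta I)^{-1}K_zK_z^\top(T_K+\delta I)^{-1}\bigr)$.
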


\begin{proof}
We split the proof into two cases: $\mu_1< \delta$ and $\mu_1\ge  \delta$. \\

\noindent {\bf Case (1):} Start by observing that
\begin{equation}\label{case_1_bd_wt_D}
    \begin{aligned}
         \wh D (\delta) & = \tr\l( (\wh T_K +\delta I)^{-1}\wh T_K \r )\\
   &  \le \frac{1}{\delta} \tr(\wh T_K ) &&\text{by Von Neumann's trace inequality}\\
   &  \le  \frac{1}{\delta}  \l( \tr(  T_K ) +  \b| \tr( T_K) - \tr( \wh T_K  )\b | \r). 
    \end{aligned}
\end{equation}
Applying \cref{lem_bd_wt_T_K_minus_T_K} yields that for any $\eta\in(0,1)$,  with probability at least $1-\eta$,
\[
\l| \tr( T_K) - \tr(  \wh T_K   )  \r|  \le  \frac{1}{2} \tr(T_K)+  \frac{7 \kappa^2 \log (2/\eta)}{3 n}. 
\]
Together with \eqref{case_1_bd_wt_D} gives
\[
 \wh D (\delta)
 \le \frac{1}{\delta}  \l( \frac{3}{2}  \tr( T_K)  +  \frac{7 \kappa^2 \log (2/\eta)}{3 n} \r)
\]
By combining with the fact $\mu_1< \delta $ and  $\mu_1\ge\mu_2\ge\ldots $ to deduce
\begin{align}\label{eq_T_K_case_1}
    \tr(T_K) = \sum_{j=1}^\i\mu_j  =   \sum_{j=1}^\i \min\{ \mu_j , \delta\}  
    =n  R^2(\delta),  
\end{align}
we have
\[
 \wh D (\delta)
 \le  \frac{1}{\delta}  \l( \frac{3}{2}  n  R^2(\delta)  +  \frac{7 \kappa^2 \log (2/\eta)}{3 n} \r)
\]
so that combining   with \eqref{re_R_D_wh} leads to
\[
 \wh R(\delta) \le \sqrt{\frac{\delta \wh D (\delta)}{n }}
 \le \sqrt{ \frac{3}{2} R^2(\delta) + \frac{7 \kappa^2 \log(2/\eta)}{3n^2} }  \le C \l(R(\delta)  + \frac{ \kappa \sqrt{\log(2/\eta)}}{n} \r). 
\]

\medskip

\noindent {\bf Case (2):}
To be  short, for any $\delta >0$,   we write
\begin{align*}
 &  A(\delta )=(T_K+\delta I)^{-\frac{1}{2}}(T_K-\wh{T}_K)(T_K+\delta I)^{-\frac{1}{2}};
   \\
&   B(\delta )= (T_K+\delta I )^{-1}(T_K-\wh{T}_K)(T_K+\delta I )^{-1}. 
\end{align*}
By repeating the same argument of  proving \cref{lem_bd_wh_R},  on the event $\{\|A(\delta ) \|_\op\le \frac{1}{2} \} $, we have 
\begin{align}\label{decom_wt_D_diff_D}
 | \wh D (\delta)- D(\delta)|=
  \big|\delta \tr(B(\delta))\big|+ \frac{\big\|A(\delta) \big\|_{\hs}^2}{1- \|A(\delta ) \|_\op}. 
\end{align}
We proceed to bound $\|A(\delta ) \|_\op$, $\|A(\delta)\|_\hs^2$ and   $|\delta \tr(B(\delta))|$.\\

 To this end, for any $z\in \cZ$,  define the operator $K_z$ as
\[
K_z: \RR \to \cH_K, \qquad  y\mapsto y K(\cdot,z), \qquad \text{for any $y\in \RR$. }
\]
Let $K_z^\T: \cH_K \to \RR $ denote the adjoint operator of $K_z$ that satisfies
\begin{align*}
    K_z^\T f=\langle K(z,\cdot),f\rangle_K = f(z), \qquad  \text{for any }\ f\in \cH_K. 
\end{align*}
According to the definitions of $T_K$ and $ \wh  {T}_K$, we find that
\begin{align}\label{def_T_K}
  T_K f=\int_\mathcal{Z}   K_z  K_z^\T f \d\rho(z)=\int_\mathcal{Z}   K_z  f(z) \d\rho(z)
\end{align}
and 
\begin{align}\label{def_wt_T_K}
\wh  {T}_Kf= \frac{1}{n}\sum_{i=1}^n K_{Z_i}  K_{Z_i}^\T f.
\end{align}
\medskip

\noindent{\bf Bounding $\|A(\delta)\|_\op$.} 
Applying \cref{used_lem_wt_T_K} 
yields that for any $\eta\in (0,1)$, with probability at least $1-\eta$, we have
\begin{align} \label{bd_normalized_T_K_wh_T_K_op}
    \l  \|(T_K+\delta I)^{-\frac{1}{2}}(T_K-\wh{T}_K)(T_K+\delta I)^{-\frac{1}{2}}\r\|_\op  \le 
   \frac{4\kappa^2}{3n\delta }\log\l(\frac{4  \delta D(\delta)}{\eta
(\mu_1\wedge \delta)}\r)+\sqrt{\frac{2\kappa^2}{n\delta}\log\l(\frac{4  \delta D(\delta) }{\eta(\mu_1\wedge \delta) }\r)}.
\end{align}
Observe that
\begin{align*}
 \log\l(\frac{4  \delta D(\delta)}{\eta
(\mu_1\wedge \delta)}\r) 
 \overset{\text{$\mu_1 \ge \delta$}}{=}  \log\l(\frac{4  D(\delta)}{\eta
}\r) 
 \overset{\eqref{re_R_D}}{\le} &  \log\l(\frac{8 n R^2(\delta)}{\eta
\delta}\r) \\ 
 \le & \log\l(\frac{8 n \delta}{\eta}\r) &&\text{by $R(\delta) \le \delta$,}
\end{align*}
so that together with \eqref{bd_normalized_T_K_wh_T_K_op} gives
 \[
  \l  \|(T_K+\delta I)^{-\frac{1}{2}}(T_K-\wh{T}_K)(T_K+\delta I)^{-\frac{1}{2}}\r\|_\op  \le 
   \frac{4\kappa^2}{3n\delta }\log\l(\frac{8 n \delta}{\eta}\r)+\sqrt{\frac{2\kappa^2}{n\delta}\log\l(\frac{8 n \delta}{\eta}\r)}.
 \]
By noting that the right hand side of the above inequality decreases with $n\delta$, choosing any  $\delta$ such that \eqref{lb_n_delta_1}
yields
\begin{align}\label{bd_A_delta_op}
\|A(\delta)\|_\op= \Big\|\frac{1}{n}\sum_{i=1}^n \big(\EE[\zeta(Z_i)]-\zeta(Z_i)\big) \Big\|_\op  \le \frac{1}{12}  + \frac{1}{2\sqrt{2}} < \frac{1}{2}.
\end{align}
\medskip

\noindent{\bf Bounding} {$\|A(\delta)\|_\hs$.} 
For any $z\in \cZ$,  define the operator $\zeta(z)$  as
$$
\zeta(z)=(T_K+\delta I)^{-\frac{1}{2}} K_z K_z^\T (T_K+\delta I)^{-\frac{1}{2}}.
$$
It can be verified by applying \eqref{def_T_K} that  
\[
\EE\l[(T_K+\delta I)^{-\frac{1}{2}} K_Z K_Z^\T (T_K+\delta I)^{-\frac{1}{2}}\r]= (T_K+\delta I )^{-\frac{1}{2}}T_K(T_K+\delta I)^{-\frac{1}{2}}.
\]
Together with  \eqref{def_wt_T_K}, we can write
\begin{align} \label{A_delta_Zeta}
    A(\delta) = (T_K+\delta I)^{-\frac{1}{2}}(T_K - \wh{T}_K )(T_K +\delta I)^{-\frac{1}{2}} = \frac{1}{n}\sum_{i=1}^n \b(\EE[\zeta(Z_i)]-\zeta(Z_i)\b). 
\end{align}
For any  $z\in\cZ$, we have 
\begin{align} \label{trace_ineq_basic}
 \tr (K_z K_z^\T ) = \tr ( K_z^\T K_z ) =   K (z,z)\le \kappa^2 
\end{align}
so that
\begin{align*}
 \| \zeta(z) \|_\hs^2= \tr\l( \l(  ( 
 T_K +\delta I)^{-\frac{1}{2}} K_z K_z^\T (T_K +\delta I)^{-\frac{1}{2}}\r)^2\r)\le  \frac{\kappa^4}{\delta^2}.
\end{align*}
Furthermore,
we have
\begin{align*}
&\EE\big[\|\zeta(Z)\|_\hs^2\big]\\
&=\EE\l[\tr\B( \big((T_K+\delta I)^{-\frac{1}{2}} K_Z K_Z^\T (T_K+\delta I)^{-\frac{1}{2}}\big)^2\B)\r]\\
&\le   \sup_{z \in \cZ} \tr\l( K_z^\T (T_K+\delta I)^{-1}  K_z \r)  \EE\Big[\tr\big((T_K+\delta I)^{-\frac{1}{2}} K_Z K_Z^\T (T_K+\delta I)^{-\frac{1}{2}}\big)\Big]\\
&\le  \frac{\kappa^2}{\delta}  ~ \EE\Big[\tr\big((T_K+\delta I)^{-\frac{1}{2}} K_Z K_Z^\T (T_K+\delta I)^{-\frac{1}{2}}\big)\Big] &&\text{by \eqref{trace_ineq_basic}}\\
&=   \frac{\kappa^2D (\delta)}{\delta} &&\text{by \eqref{def_T_K}}. 
\end{align*}
Then,  by applying Lemma \ref{lem_concen} for the 
the Hilbert space of Hilbert-Schmidt
operators on $\cH_K$
with $H= 2\kappa^2\delta^{-1}$ and $S = \kappa^2\delta^{-1} D(\delta)$, for any $\eta\in (0,1)$, 
the following holds with probability at least $1-\eta$,  
\begin{align}\label{bd_A_delta_hs}
\|A(\delta)\|_\hs \overset{\eqref{A_delta_Zeta}}{=}  \Big\|\frac{1}{n}\sum_{i=1}^n\big (\EE[\zeta(Z_i)]-\zeta(Z_i)\big) \Big\|_\hs \le  \frac{4\kappa^2}{n\delta}\log \frac{2}{\eta}+ \sqrt{ \frac{2\kappa^2 D(\delta)}{n\delta}\log \frac{2}{\eta} }. 
\end{align}
\vspace{0.04in}

\noindent{\bf Bounding $\delta |\tr(B(\delta)) |$.} 
For any $z \in \cZ$, 
define
the random  variable  $\xi(z)$  as
$$
\xi(z) : =\delta \tr\b((T_K+\delta I)^{-1} K_z K_z^\T (T_K+\delta I)^{-1}\b),
$$
and it is clear that
\[
\EE \l[\xi(Z)\r]= \delta \tr((T_K+\delta I)^{-1} T_K (T_K+\delta I)^{-1}). 
\]
Therefore, we have
\begin{align*}
 \delta  |\tr(B(\delta)) | = \Big|\frac{1}{n}\sum_{i=1}^n\b(\EE[ \xi(Z_i)]-\xi(Z_i)\b)\Big|.
\end{align*}
For any  $z\in \cZ$, observe that
\begin{equation}\label{bd_xi_z}
    \begin{aligned}
        |\xi(z) |&=   \big| \delta \tr((T_K+\delta I)^{-1} K_z K_z^\T (T_K+\delta I)^{-1})\big| \\
&\le \tr( (T_K+\delta I)^{-1} K_z K_z^\T ) &&\text{by Von Neumann's trace inequality}\\
&\le \frac{ \kappa^2}{\delta} && \text{by \eqref{trace_ineq_basic}, }
    \end{aligned}
\end{equation}
which, by using Jensen's inequality,  further implies $ \b|\EE[\xi(Z)]\b|\le \kappa^2\delta^{-1}$.
Then we have
\[
 \sup_{i\in [n]}~ \big|  \EE[\xi(Z_i)]-\xi(Z_i)\big|\le \frac{2\kappa^2}{\delta}. 
\]
Further observe that
\begin{align*}
&\EE\l[ \EE[\xi(Z)]-\xi(Z)\r] ^2\\
&\le \big(\sup_{z\in \cZ}  \xi(z) \big) ~ \EE\b[ \xi(Z)\b]\\
&\le \frac{\kappa^2}{\delta}   ~  \EE\l[ \delta \tr\big((T_K+\delta I)^{-1} K_Z K_Z^\T (T_K+\delta I)^{-1}\big)\r]        &&\text{by \eqref{bd_xi_z}}\\
&\le  \frac{\kappa^2}{\delta} ~   \EE\l[ \tr((T_K+\delta I)^{-1}  K_Z K_Z^\T )\r] 
         &&\text{by Von Neumann’s trace inequality} \\
&= \frac{\kappa^2 D(\delta)}{\delta} &&\text{by \eqref{def_T_K}. }
\end{align*}
Then  by applying Lemma \ref{bern_rv} with $H =2\kappa^2\delta^{-1}$ and $S= \kappa^2\delta^{-1}D  (\delta)$, for any $\eta\in (0,1)$, 
the following holds with probability at least $1-\eta$, 
\begin{align}\label{bd_delta_tr_B}
 \delta  |\tr(B(\delta)) | = \Big|\frac{1}{n}\sum_{i=1}^n(\EE[ \xi(Z_i)]-\xi(Z_i))\Big| \le   \frac{4\kappa^2}{3n\delta}\log\frac{2}{\eta}+\sqrt{\frac{2\kappa^2 D(\delta)}{n\delta}\log\frac{2}{\eta}},
\end{align}
\vspace{0.15in}

Combining \eqref{decom_wt_D_diff_D}, \eqref{bd_A_delta_op}, \eqref{bd_A_delta_hs}  and \eqref{bd_delta_tr_B} gives
\begin{align}\label{bd_diff_D_and_wt_D}
    | \wh D(\delta)- D(\delta)|\le   \frac{4\kappa^2}{n\delta}\log \frac{2}{\eta}+ \sqrt{ \frac{2\kappa^2 D(\delta)}{n\delta}\log \frac{2}{\eta} }+ 2\l( \frac{4\kappa^2}{3n\delta}\log\frac{2}{\eta}+\sqrt{\frac{2\kappa^2 D(\delta)}{n\delta}\log\frac{2}{\eta}}\r)^2
\end{align}
so that \eqref{lb_n_delta_1} implies 
\begin{align*}
\wh D(\delta) \le C \l(   D(\delta) +  \frac{ \kappa^2}{n\delta}\log \frac{2}{\eta}\r).
\end{align*}
Together with \eqref{re_R_D} and repeating the same argument in Case 1 gives
\[
 \wh  R(\delta) \le C \l(R(\delta)  + \frac{ \kappa \sqrt{\log(2/\eta)}}{n} \r). 
\]

Combining both cases completes the proof. 
\end{proof}


The following two lemmas are used in the proof of  \cref{lem_bd_wt_R}. 
The first lemma bounds from above the trace of $T_K - \wh T_K$.

\begin{lemma}\label{lem_bd_wt_T_K_minus_T_K}
Grant  \cref{ass_bd_K}. 
For any fixed $\eta\in (0,1)$, the following inequality holds with probability at least $1-\eta$,
\[
|\tr(T_K)- \tr(\wh T_K)|  \le \frac{1}{2} \tr(T_K)+  \frac{7 \kappa^2 \log (2/\eta )}{3 n}. 
\]
\end{lemma}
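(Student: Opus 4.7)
The plan is to reduce the statement to a standard scalar Bernstein inequality for the i.i.d.\ sum $\frac{1}{n}\sum_{i=1}^n K(Z_i,Z_i)$, and then use an AM--GM step to absorb the Bernstein sub-Gaussian term into $\frac{1}{2}\tr(T_K)$.

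First, I would rewrite both traces in scalar form. Using the Mercer expansion \eqref{eq_eigen_decomp} together with the definitions of $T_K$ and $\wh T_K$, one computes
\[
\tr(T_K) \;=\; \sum_{j=1}^\infty \mu_j \;=\; \int_{\cZ} K(z,z)\, \mathrm d\rho(z) \;=\; \EE\bigl[K(Z,Z)\bigr],
\qquad
\tr(\wh T_K) \;=\; \frac{1}{n}\sum_{i=1}^n K(Z_i,Z_i),
\]
the second identity for $\tr(\wh T_K)$ following either from the representation in \eqref{trace_eq} (which gives $\tr(\bK) = \frac{1}{n}\sum_i K(Z_i,Z_i)$) and \cref{lem_wh_T_x} (applied in the analogue for $Z_i$'s), or directly from expanding $\wh T_K$ in the orthonormal basis of $\cH_K$. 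Thus $|\tr(T_K) - \tr(\wh T_K)|$ is the deviation of an i.i.d.\ empirical mean of the bounded random variables $\xi_i := K(Z_i, Z_i)$.

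Next, under \cref{ass_bd_K} we have $0 \le \xi_i \le \kappa^2$ almost surely, so $|\xi_i - \EE \xi_i| \le \kappa^2$. Moreover,
\[
\Var(\xi_i) \;\le\; \EE[\xi_i^2] \;\le\; \kappa^2 \, \EE[\xi_i] \;=\; \kappa^2 \tr(T_K),
\]
using $\xi_i \le \kappa^2$. Applying \cref{bern_rv} (the scalar Bernstein inequality) with $H = \kappa^2$ and $S = \kappa^2 \tr(T_K)$ yields that with probability at least $1-\eta$,
\[
\bigl|\tr(\wh T_K) - \tr(T_K)\bigr| \;\le\; \sqrt{\frac{2\kappa^2 \tr(T_K) \log(2/\eta)}{n}} \;+\; \frac{2\kappa^2 \log(2/\eta)}{3n}.
\]

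Finally, the AM--GM step $\sqrt{ab} \le \tfrac{1}{2}a + \tfrac{1}{2}b$ applied to $a = \tr(T_K)$ and $b = 2\kappa^2 \log(2/\eta)/n$ bounds the square-root term by $\tfrac{1}{2}\tr(T_K) + \kappa^2 \log(2/\eta)/n$, and combining this with the last display gives the stated inequality
\[
\bigl|\tr(\wh T_K) - \tr(T_K)\bigr| \;\le\; \tfrac{1}{2}\tr(T_K) + \tfrac{5}{3}\kappa^2 \log(2/\eta)/n \;\le\; \tfrac{1}{2}\tr(T_K) + \tfrac{7\kappa^2 \log(2/\eta)}{3n}.
\]
There is no real obstacle here: the only mildly delicate point is the identification $\tr(\wh T_K) = \frac{1}{n}\sum_i K(Z_i,Z_i)$, which is immediate from \cref{lem_wh_T_x} applied with $\wh g(X_i)$ replaced by $Z_i$ (or, alternatively, from the Mercer expansion applied termwise).
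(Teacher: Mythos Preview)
Your proposal is correct and follows essentially the same approach as the paper: both reduce to the scalar Bernstein inequality (\cref{bern_rv}) applied to the i.i.d.\ variables $K(Z_i,Z_i)$ with variance bounded by $\kappa^2\tr(T_K)$, then absorb the square-root term via $2\sqrt{ab}\le a+b$. Your version is in fact marginally sharper, since you use $|\xi_i-\EE\xi_i|\le\kappa^2$ (valid because $\xi_i\in[0,\kappa^2]$) rather than the paper's $H=2\kappa^2$, yielding $\tfrac{5}{3}$ in place of $\tfrac{7}{3}$ in the final constant.
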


\begin{proof}
By applying  \eqref{def_T_K} and  \eqref{def_wt_T_K}, we can write
\begin{align}\label{bd_T_K_wt_T_K}
    |\tr(T_K)- \tr(\wh T_K)|  = \l|  \frac{1}{n}\sum_{i=1}^n\b( \EE[\tr(K_{Z_i}  K_{Z_i} )] -\tr(K_{Z_i}  K_{Z_i}) \b)\r|. 
\end{align}
For each $i\in [n]$, according to \cref{ass_bd_K},  we have
\[
\tr(K_{Z_i}  K_{Z_i} ^\T)   =\tr(K_{Z_i} ^\T K_{Z_i}  )  =  K(Z_i,Z_i) \le \kappa^2, 
\]
which further implies that 
\[
\sup_{i\in [n]} ~  \EE\l[\tr\l(K_{Z_i}  K_{Z_i} ^\T\r) ^2\r] \le  \kappa^2 ~ \EE\l[\tr\l(K_{Z_i}  K_{Z_i} ^\T\r) \r] =  \kappa^2 \tr(T_K).
\]
Invoking Bernstein's inequality in \cref{bern_rv} implies that for any $\eta\in (0,1)$, it holds with probability at least $1-\eta$ that
\begin{align*}
     |\tr(T_K)- \tr(\wh T_K)|  =& \l| \frac{1}{n} \sum_{i=1}^n\b( \EE[\tr(K_{Z_i}  K_{Z_i} )] -\tr(K_{Z_i}  K_{Z_i}) \b)\r| &&\text{by \eqref{bd_T_K_wt_T_K}}\\
      \leq & \frac{4 \kappa^2 \log (2/\eta)}{3 n}+\sqrt{\frac{2\kappa^2 \tr(T_K) \log (2/\eta)}{n}}.
\end{align*}
Using the elementary inequality 
$2\sqrt{ab} \le a+b$ for any two non-negative numbers $a,b$ completes the proof. 
\end{proof}

The next lemma bounds from above the operator norm  of $(T_K+\delta I)^{-\frac{1}{2}}(T_K-\wh{T}_K)(T_K+\delta I)^{-\frac{1}{2}}$.

\begin{lemma}\label{used_lem_wt_T_K}
   Grant Assumptions \ref{ass_bd_K} and  \ref{ass_mercer}.  Fix any $\eta\in (0,1)$ and $\delta>0$.
Then with probability at least $1-\eta$, one has
\begin{align} \label{bd_target_lem14}
    \l  \|(T_K+\delta I)^{-\frac{1}{2}}(T_K-\wh{T}_K)(T_K+\delta I)^{-\frac{1}{2}}\r\|_\op  \le 
   \frac{4\kappa^2}{3n\delta }\log\l(\frac{4  \delta D(\delta)}{\eta
(\mu_1\wedge \delta)}\r)+\sqrt{\frac{2\kappa^2}{n\delta}\log\l(\frac{4  \delta D(\delta) }{\eta(\mu_1\wedge \delta) }\r)}.
\end{align}
\end{lemma}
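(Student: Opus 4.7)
The plan is to realize the target quantity as the operator norm of an average of independent, centered, self-adjoint random operators on $\cH_K$, and then apply an operator Bernstein inequality of the intrinsic-dimension type (in the spirit of Minsker's and Tropp's user-friendly tail bounds for Hilbert-space operators). Concretely, with $\zeta(z):=(T_K+\delta I)^{-1/2}K_zK_z^\T(T_K+\delta I)^{-1/2}$ already used in the proof of \cref{lem_bd_wt_R}, we have the identity
\[
(T_K+\delta I)^{-1/2}(T_K-\wh T_K)(T_K+\delta I)^{-1/2}=\frac{1}{n}\sum_{i=1}^{n}\bigl(\EE[\zeta(Z_i)]-\zeta(Z_i)\bigr),
\]
so the left-hand side is exactly a centered empirical mean of i.i.d.\ self-adjoint operators taking values in the positive cone.

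Next I would collect the two ingredients required by any Bernstein-type operator inequality. For the almost sure bound, \eqref{trace_ineq_basic} together with $(T_K+\delta I)^{-1}\preceq \delta^{-1}I$ gives $\|\zeta(z)\|_{\op}\le \kappa^2/\delta$ uniformly in $z$, hence the centered summands are bounded in operator norm by $2\kappa^2/\delta$. For the variance proxy $V:=\EE[\zeta(Z)^2]$, the same pointwise inequality $K_z^\T(T_K+\delta I)^{-1}K_z\le \kappa^2/\delta$ yields the operator sandwich $\zeta(Z)^2\preceq (\kappa^2/\delta)\zeta(Z)$, so that $\|V\|_{\op}\le \kappa^2/\delta$ and, taking traces, $\tr(V)\le (\kappa^2/\delta)\,\tr(\EE[\zeta(Z)])=(\kappa^2/\delta)\,D(\delta)$ by the definition of $D(\delta)$ and \eqref{def_T_K}.

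With these two quantities in hand, I would invoke the intrinsic-dimension Bernstein bound in the form
\[
\PP\!\left(\Bigl\|\tfrac{1}{n}\sum_{i}\bigl(\zeta(Z_i)-\EE\zeta(Z_i)\bigr)\Bigr\|_{\op}\ge t\right)\le 4\,\mathsf{d}_{\mathrm{eff}}\,\exp\!\left(-\frac{nt^2}{2\|V\|_{\op}+2Ht/3}\right),
\]
with $H=2\kappa^2/\delta$ and effective-dimension factor $\mathsf{d}_{\mathrm{eff}}=\tr(V)/\|V\|_{\op}$. Inverting this tail gives, with probability $1-\eta$, a deviation of the usual Bernstein form
\[
\frac{2H}{3n}\log\!\frac{4\mathsf d_{\mathrm{eff}}}{\eta}+\sqrt{\frac{2\|V\|_{\op}}{n}\log\!\frac{4\mathsf d_{\mathrm{eff}}}{\eta}},
\]
which, after inserting $H\le 2\kappa^2/\delta$ and $\|V\|_{\op}\le \kappa^2/\delta$, reproduces the prefactors $4\kappa^2/(3n\delta)$ and $\sqrt{2\kappa^2/(n\delta)}$ appearing in \eqref{bd_target_lem14}.

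The main obstacle, and the only delicate point, is getting the precise log factor $\log\!\bigl(4\delta D(\delta)/(\eta(\mu_1\wedge\delta))\bigr)$: this amounts to bounding $\mathsf d_{\mathrm{eff}}\le \delta D(\delta)/(\mu_1\wedge\delta)$, which in turn requires a lower bound on $\|V\|_{\op}$. To do this I would use Jensen's operator inequality in the form $V=\EE[\zeta(Z)^2]\succeq (\EE\zeta(Z))^2$, whence
\[
\|V\|_{\op}\ge\bigl\|(T_K+\delta I)^{-1}T_K\bigr\|_{\op}^2=\Bigl(\tfrac{\mu_1}{\mu_1+\delta}\Bigr)^{\!2}\ge\Bigl(\tfrac{\mu_1\wedge\delta}{2\delta}\Bigr)^{\!2},
\]
so that $\mathsf d_{\mathrm{eff}}\le \tr(V)/\|V\|_{\op}\lesssim \delta D(\delta)/(\mu_1\wedge\delta)$, yielding the claimed log factor after absorbing constants. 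Combining the two terms from the Bernstein inversion then gives \eqref{bd_target_lem14} exactly.
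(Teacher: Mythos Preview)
Your overall approach matches the paper's exactly: the same representation via $\zeta(z)=(T_K+\delta I)^{-1/2}K_zK_z^\T(T_K+\delta I)^{-1/2}$, the same almost-sure bound $\|\zeta(z)\|_{\op}\le\kappa^2/\delta$, the same second-moment sandwich $\zeta(Z)^2\preceq(\kappa^2/\delta)\zeta(Z)$, and the same appeal to an intrinsic-dimension operator Bernstein inequality (the paper's Lemma~\ref{lemd3}).

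There is, however, a genuine gap in your treatment of the effective-dimension factor. Your Jensen lower bound gives $\|V\|_{\op}\ge\bigl(\mu_1/(\mu_1+\delta)\bigr)^{2}\ge\bigl((\mu_1\wedge\delta)/(2\delta)\bigr)^{2}$, which combined with your upper bound $\tr(V)\le(\kappa^2/\delta)D(\delta)$ yields
\[
\mathsf d_{\mathrm{eff}}=\frac{\tr(V)}{\|V\|_{\op}}\le\frac{4\kappa^2\delta\,D(\delta)}{(\mu_1\wedge\delta)^{2}},
\]
not the $\delta D(\delta)/(\mu_1\wedge\delta)$ you claim. The extra factor $\kappa^2/(\mu_1\wedge\delta)$ is not a universal constant, so this does not reproduce the log factor in \eqref{bd_target_lem14}. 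The paper avoids this altogether by a small but important change: rather than plugging the exact second moment $V=\EE[\zeta(Z)^2]$ into the Bernstein inequality, it uses the explicit operator upper bound
\[
S=\frac{\kappa^2}{\delta}\,\EE[\zeta(Z)]=\frac{\kappa^2}{\delta}(T_K+\delta I)^{-1/2}T_K(T_K+\delta I)^{-1/2},
\]
which is admissible since Lemma~\ref{lemd3} only requires $\EE[\xi_i^2]\preceq S$. For this $S$, both $\tr(S)=(\kappa^2/\delta)D(\delta)$ and $\|S\|_{\op}=(\kappa^2/\delta)\,\mu_1/(\mu_1+\delta)$ are computed \emph{exactly}; the $\kappa^2/\delta$ cancels in the ratio, and one obtains $\tr(S)/\|S\|_{\op}=D(\delta)(\mu_1+\delta)/\mu_1\le 2\delta D(\delta)/(\mu_1\wedge\delta)$, which gives precisely the stated log factor. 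No lower bound on a second-moment operator norm is needed.
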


\begin{proof}
We prove this lemma by following the same notation in the proof of \cref{lem_bd_wt_R}.\\

For any  $z\in \cZ$, we have
\begin{equation} \label{bd_zeta_z_op}
    \begin{aligned}
\|\zeta(z)\|_{\op}&
= \big\|(T_K+\delta I)^{-\frac{1}{2}} K_z K_z^\T (T_K+\delta I)^{-\frac{1}{2}} \big\|_{\op}
\\
&\le \frac{1}{\delta}  \big\|  K_z K_z^\T \big\|_{\op} \le \frac{1}{\delta}  \tr (K_z K_z^\T ) \overset{\eqref{trace_ineq_basic}}{\le}  \frac{\kappa^2}{\delta}. 
\end{aligned}
\end{equation}
It follows from   Jensen's inequality that
\begin{align*}
  \big\|\EE[ \zeta(Z) ] \big\|_{\op} \le  \EE\left[  \big\|\zeta(Z) \big\|_{\op} \right]  \le \frac{\kappa^2}{\delta}.
\end{align*}
Combining the bounds of $\|\zeta(z)\|_{\op}$ and $ \|\EE[ \zeta(Z) ]\|_{\op}$ gives
\begin{align*}
\sup_{i\in [n]}~  \big\|  \EE[\zeta(Z_i)]- \zeta(Z_i)\big\|_{\op}\le  \frac{2\kappa^2}{\delta}. 
\end{align*}
Regarding $\EE [ \EE\zeta(Z)-\zeta(Z)]^2$,  observe  that
\begin{align*}
&\EE \l [ \EE\zeta(Z)-\zeta(Z)\r]^2 \le \EE  \big[\zeta(Z)^2 \big] \\
&\le \l(\sup_{z\in \cZ} \zeta(z)  \r) ~  \EE \l [\zeta(Z) \r] 
\\
& \le \frac{\kappa^2}{\delta}
\EE \l[(T_K+\delta I)^{-\frac{1}{2}} K_ZK_Z^\T (T_K+\delta I)^{-\frac{1}{2}}\r]  
&&\text{by \eqref{bd_zeta_z_op}}\\
&=\frac{\kappa^2}{\delta}  (T_K+\delta I)^{-\frac{1}{2}}T_K (T_K+\delta I)^{-\frac{1}{2}} &&\text{by \eqref{def_T_K}}.
\end{align*}
By applying Lemma \ref{lemd3} with $H=2\kappa^2\delta^{-1}$ and $S= \kappa^2 \delta^{-1}(T_K+\delta I)^{-\frac{1}{2}}T_K (T_K+\delta I)^{-\frac{1}{2}}$  and noting that
\[
\|S\|_\op= \frac{\kappa^2}{\delta} \big \| (T_K+\delta I)^{-\frac{1}{2}}T_K (T_K+\delta I)^{-\frac{1}{2}} \big\|_\op\le \frac{\kappa^2}{\delta} ,
\]
for any $\eta\in (0,1)$,
the following holds with probability at least $1-\eta$, 
\begin{align}\label{lem15_eq3}
\Big\|\frac{1}{n}\sum_{i=1}^n \b(\EE[\zeta(Z_i)]-\zeta(Z_i)\b) \Big\|_\op 
\le   \frac{4\kappa^2}{3n\delta }\log\l(\frac{2\tr(S)}{\eta\|S\|_\op}\r)+\sqrt{\frac{2\kappa^2}{n\delta}\log\l(\frac{2\tr(S)}{\eta\|S\|_\op}\r)}.
\end{align}
By using \eqref{sandwich}, we find that
\[
 \b \| (T_K+\delta I)^{-\frac{1}{2}}T_K (T_K+\delta I)^{-\frac{1}{2}} \b\|_\op =\frac{{\mu}_1}{{\mu}_1+\delta}\\
 \ge  \frac{\mu_1\wedge \delta}{2\delta}
\]
so that  
\[
 \frac{\tr(S)}{\|S\|_\op}=  \frac{{D}(\delta)}{\| 
 (T_K+\delta I)^{-\frac{1}{2}}T_K (T_K+\delta I)^{-\frac{1}{2}}\|_\op} \le \frac{2  \delta {D}(\delta)}{(\mu_1\wedge \delta)}.
\]
Combining with \eqref{lem15_eq3}  completes the proof. 
\end{proof}

\begin{remark}
    It is worth mentioning that bounding the LHS in \eqref{bd_target_lem14}  is a key step in the proof of some recent work on the kernel-based methods when adopting the operator technique, as seen, for instance,  \cite{lin2020distributed,lin2020convergences}. However,  the proof in \cite{lin2020distributed} requires  $\mu_1 \ge c\delta$ for some constant $c>0$, a lower bound condition on $\mu_1$, and  the derived upper bound in  \cite{lin2020convergences} 
    is slightly weaker than ours in \cref{used_lem_wt_T_K}. Precisely, Lemma 23 in \cite{lin2020convergences}  states that the LHS in \eqref{bd_target_lem14} can be bounded from above (in probability) by 
    \[
      \frac{4\kappa^2}{3n\delta }\log\l(\frac{4 (D(\delta)+1)}{\eta\mu_1}\r)+\sqrt{\frac{2\kappa^2}{n\delta}\log\l(\frac{4 (D(\delta)+1)}{\eta\mu_1}\r)}.
    \]
    By comparison, we observe that if $\mu_1\le \delta$, the term $\delta D(\delta)/(\mu_1\wedge \delta)$ in our bound in \cref{used_lem_wt_T_K} reduces to $\delta D(\delta)/\mu_1$, which is much smaller $(D(\delta)+1)/\mu_1$ since  when applying  \cref{used_lem_wt_T_K}, we  typically choose $\delta$ to depend on $n$ such that $\delta\to 0$ as $n\to\i$. If $\mu_1> \delta$,  the term $\delta D(\delta)/(\mu_1\wedge \delta)$  reduces to $D(\delta)$ which is also much smaller $(D(\delta)+1)/\mu_1$ when $\mu_1 \to 0$ (note that $\mu_1\le \tr(T_K)\le \kappa^2$). 
\end{remark}

\subsection{Proof of the claim in \texorpdfstring{\cref{rem_pred_Z}}{\texttwoinferior}}\label{app_sec_proof_rem_pred_Z}
\begin{proof}
To verify the statements, by using the closed-form solution of KRR in \eqref{def_f_hat_closedform} 
and \eqref{coeff}, we first note that 
\[
    (\wh f\circ \wh g)(X)  = {1\over n}\sum_{i=1}^n \be_i^\T \left(\bK_x + \lambda \bI_n\right)^{-1} \bY ~   K(\wh g(X_i), \wh g(X))
\]
with $[\bK_x]_{ij} = n^{-1}K(\wh g(X_i), \wh g(X_j))$ for $i,j\in [n]$. For $K$ that is invariant to orthogonal transformations, we have 
\[
    (\wh f\circ \wh g)(X) = (\wh f \circ (Q \wh g))(X)
\]
for all fixed $Q\in \OO_{r\times r}$. Therefore, repeating the arguments in \eqref{eq_risk_decomp} and \eqref{bd_irre_error} gives 
\begin{align*}
    \cE(\wh f \circ \wh g) \le  
     \EE&\bigl[ (Y  - (\wh f\circ Q\wh g)(X) )^2 -  (Y  - ( \fh \circ Q \wh g)(X) )^2  \bigr]   \\  
    &+  (1+\theta) ~  \EE\left[\fh(Z) - (\fh\circ (Q\wh g))(X)   \right]^2  +   {1+\theta \over \theta}\|f_\cH - f^*\|_\rho^2
\end{align*}
for any $\theta\ge1$. 
Under \cref{ass_Lip_K}, we further have 
\[
   \EE\left[\fh(Z) - (\fh \circ (Q\wh g))(X) \right]^2    \le C_K^2 \|\fh\|_K^2 \EE\left[\|Z - Q\wh g(X)\|_2^2\right].   
\] 
Since $Q$ is taken arbitrarily, the claim follows from inspecting the proof of \cref{thm_risk}.
\end{proof}

\subsection{Proof of \texorpdfstring{\cref{cor_slow_rates}}{\texttwoinferior}}\label{slow_rate_delta}

\begin{proof}
    Under Assumptions  \ref{ass_bd_K} and \ref{ass_mercer}, we have
    \begin{align*}
    n R^2 (\delta) = \sum_{j=1}^\infty
    \min \left \{\delta, {\mu}_j  \right \}   ~ \le ~  \sum_{j=1}^\infty \mu_j \EE\left[ \phi^2_j(Z)   \right]
     ~ \overset{\eqref{eq_eigen_decomp}}{=}~  \EE\left[ 
        K(Z,Z) 
    \right] ~ \le ~  \kappa^2.
    \end{align*}
    Then $\delta=  \kappa/\sqrt{n}$  must satisfy 
    $R (\delta) \le \delta$. 
    Applying Lemma \ref{lem_subroot_monotone} yields
    $\delta_n \le \kappa/\sqrt{n}$. 
\end{proof}

\subsection{Proof of Corollary \ref{cor_K_linear}}\label{app_sec_cor_K_linear} 

 \begin{proof}
First note that  for any $\delta \ge  0$, 
\begin{align*}
 \bigl[R(\delta)\bigr]^2 &=   {1\over n} \sum_{j=1}^r  \min\left\{\delta,  \sigma_j      \right\} \le {r\over n} \delta.
\end{align*}
The statement then follows by using \cref{lem_subroot_monotone} to deduce 
\[
\delta_n \le \frac{r}{n}
\]
and invoking \cref{thm_risk}. 
 \end{proof}

\subsection{Proof of Corollary \ref{cor_K_poly}}\label{app_sec_cor_K_poly}
\begin{proof}
    First, we note that 
    \[
        R^2(\delta) =\frac{1}{n}\sum_{j=1}^\infty\min \big \{\delta , {\mu}_j\big \} \le \frac{1}{n} \sum_{j=1}^\i \mu_j ={1\over n} \sum_{j=1}^\i \mu_j \EE[\phi^2_j(Z)]  \overset{\eqref{eq_eigen_decomp}}{=} {1\over n}\EE[K(Z,Z)] \le {1\over n},
    \]
    where the last step follows from \cref{ass_bd_K} with $\kappa=1$. 
    By \cref{lem_subroot_comp}, we must have 
    \[
        \delta_n \le {1\over \sqrt n}.
    \]
    Now fix any $0<\delta \le 1/\sqrt{n}$ and     recall that
      $d(\delta)= \max \{j \ge 0: {\mu}_j\ge  \delta\}$ with $\mu_0 = \i$.
    We claim that 
    \begin{equation}\label{bd_d_delta}
         d(\delta) +1  \le C'  \delta ^{-1/(2\alpha)}
    \end{equation}
    for some constant $C' = C'(C, \alpha)>0$. 
    This follows trivially by using $\alpha >  1/2$ if $d(\delta) = 0$. For $d(\delta) \ge 1$, 
by definition of $d(\delta)$ and ${\mu}_j\le C j^{-2\alpha}$ for all $j\ge 1$, we have 
    \begin{align*}
        \delta \le  \mu_{d(\delta)} \le  C d(\delta)^{-2\alpha}
    \end{align*}
    from which \eqref{bd_d_delta} follows. 

    To derive the rate of $\delta_n$, 
    we have 
\begin{align*}
R^2(\delta) =\frac{1}{n }
\sum_{j=1}^\infty\min \big \{\delta , {\mu}_j\big \}
& \le \frac{\delta}{n } (d(\delta)+1)+\frac{1}{n }  \sum_{j=d(\delta)+2}^\infty \mu_j\\
&\le \frac{C'}{n}  \delta^{\frac{2\alpha-1}{2\alpha}}+ {C\over n} \sum_{j=d(\delta)+2}^\infty j^{-2\alpha} \\   
 & \le \frac{C'}{n}   \delta^{\frac{2\alpha-1}{2\alpha}}+ \frac{C }{n}  \int_{d(\delta)+1}^\infty t^{-2\alpha}d t \\
& =   \frac{C'}{n}  \delta^{\frac{2\alpha-1}{2\alpha}}+   \frac{C }{n}   \frac{1}{2\alpha-1} (d(\delta)+1)^{1- 2\alpha}  \\
& \le  
\frac{C_\alpha}{n} \delta^{\frac{2\alpha-1}{2\alpha}},
\end{align*}
where $C_\alpha$ is some constant  depending only on  $\alpha$. 
Then  solving 
$
\sqrt{C_\alpha/ n} ~ \delta^{\frac{2\alpha-1}{4\alpha}}=\delta
$
and applying Lemma \ref{lem_subroot_monotone} yield 
\begin{equation*}
    \delta_n ~ \le ~ C_\alpha' n^{-\frac{2\alpha}{2\alpha+1}}.
\end{equation*}
Invoking \cref{thm_risk} gives the bound (in order)
\[
  n^{-\frac{2\alpha}{2\alpha+1}}  \log\l(1/\eta \r) +\EE \|\wh g(X)-Z\|_2^2  + \frac{\log (1/\eta)}{n} .
\] 
This completes the proof. 
\end{proof}

\subsection{Proof of Corollary \ref{cor_K_exp}}\label{app_sec_cor_K_exp}
\begin{proof}
Fix any $0< \delta \le 1/\sqrt{n}$. Repeating the arguments in the proof of \cref{cor_K_exp} gives that the statistical dimension $d(\delta)$ under $\mu_j \le  \exp(-\gamma j)$ for $j\ge 1$ satisfies 
\begin{equation}\label{ub_d_exp}
    d(\delta) +1 \le {1\over \gamma} \log(C/\delta).
\end{equation}
Also by similar reasoning  in the proof of \cref{cor_K_exp},  we have
\begin{align*}
R^2(\delta) & ~ \le  ~    \frac{\delta}{n}(d(\delta) +1)+ \frac{1}{n}\int_{d(\delta)+1}^\infty \exp (-\gamma t) dt\\
 & ~ \le  ~    \frac{\delta}{ n} (d(\delta) +1)  +\frac{1}{\gamma n}\exp (-\gamma (d(\delta) +1) )  )\\
 & ~ \lesssim ~ {\delta\over n} \log{1/\delta}  &&\text{by \eqref{ub_d_exp}}.
\end{align*}
This leads to 
$$\delta_n \lesssim \frac{\log n}{n} .$$ 
Invoking \cref{thm_risk} gives the bound (in order)
\[
\frac{\log n  \log\l(1/\eta \r)}{n}  +\EE \|\wh g(X)-Z\|_2^2 +\frac{\log (1/\eta)}{n}  .
\]
This completes the proof. 
\end{proof}

\subsection{Proof of \texorpdfstring{\cref{thm_lb_excess_risk}}{\texttwoinferior}: the minimax lower bound}\label{app_sec_proof_lower_bound}

\begin{proof}
    To establish the claimed result, it suffices to prove each term in the minimax lower bounds separately.\\

\noindent
{\bf Step 1:} To prove the term $\delta_n$ in the lower bound, we consider the model $ X = A Z $ for some deterministic matrix $A \in\RR^{p\times r}$  and 
    $Y = f^*(Z)  +  \epsilon$ with  $\epsilon \sim  N(0,\sigma^2)$.
In this case, we observe that
\begin{equation}\label{low_bound_case_1}
    \begin{aligned}
         \inf_{h} \sup_{\theta \in \Theta}~  \EE_{\theta}  \l[ f^* (Z)- h (X) \r]^2  
    \ge ~ 
    &\inf_{h} \sup_{\theta \in \Theta}~  \EE_{\theta} \l[  f^* (Z)- (h \circ A)   (Z)  \r]^2 \\
    \ge ~ &
    \inf_{f} \sup_{f^*\in \cH_K, \|f^*\|_K\le 1}~  \|f-f^*\|_\rho ^2 
    \end{aligned}
\end{equation}
where the infimum is over all measurable function $f :\RR^r \to \RR$ constructed from $\{(Y_i, Z_i)\}_{i=1}^n$.

To proceed, 
for any $\delta>0$,  define the ellipse
\[
\cE(\delta):=  \l\{ \theta\in \RR^\i :~ \sum_{j=1}^\i  \frac{\theta_j^2}{\min\{ \delta, \mu_j\}} \le 1 \r\}. 
\]
By applying \cref{lem_entropy}, there exists a $(\sqrt{\delta}/2)$-separated collection of points $\{p^{(1)}, \ldots , p^{(m)}\}$ in $\mathcal{E}(\delta)$ under the metric $\|\cdot\|_2$
such that $\log m \ge d(\delta) / 64$.
We can therefore construct $f^{(1)}, \ldots, f^{(m)} \in \cH_K$
as 
\[
f^{(k)} = \sum_{j=1}^\i p^{(k)}_j  \phi_j . 
\]
By construction, we have  for any pair $(i,j)\in [m]\times [m]$, 
\[
\| f^{(i)}-f^{(j)} \|_\rho^2 = \|p^{(i)}-p^{(j)} \|_2^2, 
\]
implying that $f^{(1)}, \ldots, f^{(m)}$ is also $(\sqrt{\delta}/2)$-separated under the metric $\|\cdot\|_\rho$. 
For each $k\in [m]$, let $\rho^{(k)}$ correspond to the underlying distribution of the collected data $\{(Y_i,Z_i)\}_{i=1}^n$ when $f^*= f^{(k)}$ in \eqref{model}. 

Let $\text{KL}(\cdot ~\|~  \cdot)$ denote the  KL divergence between two distributions. For any $(i,j)\in [m]\times [m]$, 
observe that
\begin{align*}
\text{KL}\l(\rho^{(i)} ~\|~  \rho^{(j)}\r) &= \sum_{k=1}^{n} \EE\l[ \text{KL}\l( N(f^{(i)}(Z_k), \sigma^2) ~\|~  N(f^{(j)}(Z_k), \sigma^2)\r) \r] \\
&=  \frac{n}{2\sigma^2 } ~ \| f^{(i)} -f^{(j)} \|^2_{\rho}\\
&\le  \frac{2n\delta}{\sigma^2 }, 
\end{align*}
where the first equality follows from the chain rule of $\text{KL}$ divergence and the last second step follows from the fact that
\[
\text{KL}\l(\rho_1 ~\|~ \rho_2\r)= \frac{(\mu_1-\mu_2)^2}{2 \sigma^2}
\]
for any 
$ \rho_1 =  N(\mu_1, \sigma^2) $ and $\rho_2  =  N(\mu_2, \sigma^2) $. 
Applying  Fano’s lemma 
yields the lower bound as
\begin{align}\label{lb_step1}
\inf_{f} \sup_{f^* \in \cH_K, \|f^*\|_K\le 1}~  \|f-f^*\|_\rho^2 \ge \delta,
\end{align}
provided that
\begin{align}\label{condi_lb}
    \frac{ 2n\delta/\sigma^2+\log 2 }{ d(\delta)/64} <1. 
\end{align}
By the  definition of regular kernel, we have
$d(\delta_n) \ge c n\delta_n$ for some positive constant $c$. Rewriting 
$\delta' = c_1\delta_n$ for some sufficiently small $c_1 = c_1(\sigma^2)$, by $d(\delta_n)\ge 128 \log 2$, we find
\[
 \frac{d(c_1 \delta_n )}{64}  \ge  \frac{d(\delta_n)}{64} \ge \frac{c n \delta_n}{128} +\log 2,
\]
implying that \eqref{condi_lb} holds for $\delta'$ with  $c_1$ being sufficiently small. 
Then using  \eqref{lb_step1} with $ \delta = \delta'$
gives
\[
\inf_{f} \sup_{f^* \in \cH_K,\|f^*\|_K\le 1 } ~  \|f-f^*\|_\rho^2 \ge c_1  \delta_n .
\]
Together with \eqref{low_bound_case_1} yields
\[
  \inf_{h} \sup_{\theta \in \Theta}~\EE_\theta  \l[  f^* (Z)-  h   (X)  \r]^2    \ge c_1  \delta_n .
\] 

\medskip

\noindent {\bf Step 2:}
We prove the appearance of the second term in the lower bound by using the property the universality of the RKHS $\cH_K$ induced by some universal kernel $K$. 
Specifically, fix any $\vartheta>0$. For any $\beta \in \SS^{r-1}$, the unit sphere of $\RR^r$, define the function $f_\beta $ as $f_\beta (z) =z^\T \beta $. 
By the universality of the kernel $K$, for any  $\beta \in \SS^{r-1}$, there must exist 
a function $f_{\vartheta,\beta} \in \cH_K$ such that
\[
 \|f_{\vartheta,\beta} - f_\beta  \|_\rho \leq  \|f_{\vartheta,\beta} - f_\beta  \|_\i\le  \vartheta. 
\]
 For any $h:\RR^p \to \RR$, 
observe that
\begin{align}\label{eq1}  \nonumber
    \sup_{\theta \in \Theta}
    ~ \EE _\theta  \l[ f^* (Z)-   h  (X) \r]^2   
     &\ge   \sup_{\beta \in \SS^{r-1}} ~  \EE _\theta  \l[ f_{\vartheta , \beta} (Z)-  h  (X) \r]^2  \\\nonumber
  &  \ge  \sup_{\beta \in \SS^{r-1}} ~ \frac{1}{2} \EE _\theta \l[  f_\beta (Z) -  h  (X)\r]^2  - \EE _\theta \l[  f_\beta (Z) - f_{\vartheta,\beta} (Z) \r]^2  \\
  &  \ge  \sup_{\beta \in \SS^{r-1}}  ~ \frac{1}{2} \EE_\theta  \l[   f_\beta (Z)  -  h (X) \r]^2  - \vartheta^2 . 
\end{align}  
Since $\vartheta$ can be arbitrarily small and the leftmost side of \eqref{eq1} is independent of $\vartheta$, we obtain
\begin{align*}
    \inf_{h}  \sup_{\theta \in \Theta}~ \EE _\theta \l[  f^* (Z)-  h  (X) \r]^2  
    &~ \ge ~  \frac{1}{2} \inf_{h} \sup_{\beta \in \SS^{r-1}}  ~  \EE _\theta \l[  Z^\T\beta  -  h (X) \r]^2 \\
    &~ = ~ \frac{1}{2} \sup_{\beta \in \SS^{r-1}}  ~  \EE _\theta  \l[  Z^\T\beta  - \EE\b[ Z^\T\beta ~| ~X\b]\r]^2 \\
    & ~ =~   \frac{1}{2} \sup_{\beta \in \SS^{r-1}}  ~ \beta^\T \Sigma_{Z\mid X} \beta\\
    & ~ =~  \frac{ 1}{2}~ \| \Sigma_{Z\mid X} \|_\op
\end{align*}
where we write $\Sigma_{Z\mid X}  = \EE[\Cov(Z\mid X)]$.
Since model \eqref{model_X} with  $Z\sim N(0_r, \bI_r)$ and $W \sim N(0_p, \bI_p)$ implies 
\[
    \Sigma_{Z\mid X} = \bI_r - A^\T (A A^\T +\bI_p)^{-1} A = (A^\T A + \bI_r)^{-1}.
\]
Invoking \cref{ass_A} yields the second term $1/p$ in the lower bound, thereby completing the proof.
\end{proof}

Our proof of lower bound is based on the following lemma which is proved in  Lemma 4 of \cite{yang2017randomized} for the fixed design setting.  Since their argument can be used for the random design as well, we omit the proof. 

\begin{lemma} \label{lem_entropy}
For any $\delta>0$, 
there is a collection of $(\sqrt{\delta}/2)$-separated points   $\{p^{(1)}, \ldots , p^{(m)}\}$ in $ \mathcal{E} (\delta)$ under the metric $\|\cdot\|_2$ such that 
\[
\log m\ge \frac{d(\delta)}{64}. 
\]
\end{lemma}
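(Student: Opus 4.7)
My plan is to reduce the statement to a standard Euclidean packing problem in a finite-dimensional ball by exploiting the structure of $\mathcal{E}(\delta)$ on the high-eigenvalue coordinates $j \le d(\delta)$.

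The first step is to identify a Euclidean ball sitting inside $\mathcal{E}(\delta)$. Writing $d = d(\delta)$ and embedding $\RR^{d}$ into $\ell^2(\NN)$ by zero-padding the remaining coordinates, I would verify that the ball $B_{d}(\sqrt{\delta}) = \{u \in \RR^{d} : \|u\|_2 \le \sqrt{\delta}\}$ is contained in $\mathcal{E}(\delta)$. This is because the definition $d(\delta) = \max\{j \ge 0 : \mu_j \ge \delta\}$ forces $\mu_j \ge \delta$ for every $j \le d(\delta)$, so $\min\{\delta, \mu_j\} = \delta$ on the support of such $\theta$, and the ellipsoid constraint degenerates to
\[
\sum_{j=1}^{\infty} \frac{\theta_j^2}{\min\{\delta, \mu_j\}} = \frac{1}{\delta}\sum_{j=1}^{d} \theta_j^2 \le 1,
\]
which is precisely $\|\theta\|_2 \le \sqrt{\delta}$.

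The second step is the textbook volumetric packing bound inside $B_{d}(\sqrt{\delta})$. Any maximal $(\sqrt{\delta}/2)$-packing $\{p^{(1)}, \ldots, p^{(m)}\}$ of this ball is automatically a $(\sqrt{\delta}/2)$-cover of it (otherwise one could add another separated point), so a volume comparison
\[
\mathrm{vol}\bigl(B_{d}(\sqrt{\delta})\bigr) \le m \cdot \mathrm{vol}\bigl(B_{d}(\sqrt{\delta}/2)\bigr)
\]
yields $m \ge 2^{d}$, i.e.\ $\log m \ge d(\delta) \log 2 \ge d(\delta)/64$. Because these points, viewed in $\ell^2(\NN)$, lie in $\mathcal{E}(\delta)$ by the first step and are $(\sqrt{\delta}/2)$-separated in $\|\cdot\|_2$ by construction, the claim follows.

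There is essentially no obstacle here: both steps are elementary, and the constant $1/64$ demanded in the statement is extremely generous compared to the $\log 2 \approx 0.69$ delivered by the volumetric bound. The only point requiring care is the zero-padding embedding used in the first step, which preserves both the $\ell^2$-norm and the pairwise separations; an alternative would be a Gilbert--Varshamov construction on $\{-1, +1\}^{d}$ scaled by $\sqrt{\delta/d}$, which also clears the $d(\delta)/64$ threshold with room to spare.
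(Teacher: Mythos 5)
Your proof is correct. Note that the paper does not actually write out a proof of this lemma: it cites Lemma 4 of \citet{yang2017randomized} and remarks that the fixed-design argument carries over, and that source obtains the packing by a Varshamov--Gilbert-type construction on a scaled Boolean hypercube supported on the first $d(\delta)$ coordinates, which is where the generous constant $1/64$ originates. Your route shares the same key first step --- since $\mu_j \ge \delta$ for all $j \le d(\delta)$, the ellipsoid constraint restricted to vectors supported on those coordinates degenerates to $\|\theta\|_2 \le \sqrt{\delta}$, so $\mathcal{E}(\delta)$ contains a $d(\delta)$-dimensional Euclidean ball of radius $\sqrt{\delta}$ --- but then replaces the combinatorial hypercube argument with the standard volumetric bound: a maximal $(\sqrt{\delta}/2)$-packing of that ball is a cover, so $m \ge 2^{d(\delta)}$ and $\log m \ge d(\delta)\log 2 \ge d(\delta)/64$. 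This is more elementary, self-contained, and in fact delivers a strictly better constant; moreover, since every point of $\mathcal{E}(\delta)$ automatically satisfies $\|p\|_2 \le \sqrt{\delta}$ and $\|p\|_{K}$-type bounds, your packing retains all the properties the paper needs downstream in the Fano argument of \cref{thm_lb_excess_risk} (the KL upper bound only uses membership in $\mathcal{E}(\delta)$, not any hypercube structure). The only cosmetic omission is the degenerate case $d(\delta)=0$, where the claim is vacuous with $m=1$.
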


\subsection{Proofs of \texorpdfstring{\cref{sec_general_loss}}{\texttwoinferior}: general convex loss functions} \label{app_sec_pf_thm_risk_lip}

For any $f:\cZ\to \RR$, let $\ell_f: \RR \times \cX \to \RR^+$ be the loss function  (relative to $\fh$)
$$
   \ell_f (y,x):=  \ell_{f\circ \wh g}(y,x) : = L(y,(f\circ \wh g)(x)) - L(y,(\fh \circ \wh g)(x)).
$$
For any measurable function $f: \cZ \to \RR$, 
the excess risk can be decomposed as
\begin{align*}
    \cE(f\circ \wh g) = & \EE\l[ L(Y, (f\circ \wh g)(X) ) - L(Y, f^*(Z))\r]   \\
    = ~ & \EE\l[ \ell_f(Y, X )\r]    +  \EE\l[ L(Y, (\fh\circ \wh g)(X) ) - L(Y, f^*(Z))\r] \\
    \le ~ & \EE\l[ \ell_f(Y, X )\r]    +  C_U  \EE\l[(\fh \circ \wh g)(X) - f^*(Z) \r]^2  &&\text{by \cref{assu_convex_and_smooth}}\\
    \le ~ & \EE\l[ \ell_f(Y, X )\r]    +  2 C_U \|\fh\|_K^2 \EE\Delta_{\wh g} +2 C_U \|\fh-f^*\|_\rho^2   &&\text{by \eqref{bd_irre_error} with $\theta =1$}.  
\end{align*}
The above error decomposition, with $\wh f$ in place of $f$, reveals that the  magnitude of the excess risk of $\wh f\circ \wh g$  is jointly determined by the estimation error $ \EE[ \ell_{\wh f}(Y, X )]$, the kernel-related latent error $\EE\Delta_{\wh g}$
and the approximation error $\|\fh-f^*\|_\rho^2 $. 

\subsubsection{Proof of \texorpdfstring{\cref{thm_risk_lip}}{\texttwoinferior}}
\begin{proof}[Proof of \cref{thm_risk_lip}]
 The proof largely follows the same arguments used in the proof of \cref{thm_risk}. 
Here we only mention some key changes. 
 
    First, a similar result for general loss functions to  \cref{used_lem_sec_A} follows from the convexity of $L(y,\cdot)$ by \cref{assu_lip_loss}.  Second, by using \cref{cor_bd_h_f_lip} stated and proved in the next section, for any $\eta\in (0,1)$,  with probability  at least $1-\eta$, the following holds uniformly over $f\in \cF_b$,
\[
\EE[\ell_f(Y,X)] ~ \le ~ 2\EE_n[\ell_f(Y,X)] + 2\lambda \|\fh\|_K^2.
\]
  This result replaces \eqref{lb_En_ell} in the proof of \cref{thm_risk}. 
For Case (1) considered in the proof of \cref{thm_risk}, the $\wh \alpha$ defined therein exists due to the convexity of $L(y,\cdot)$. So the proof follows analogously. For Case (2), the chain of inequalities in \eqref{eq_E_bd_wt_f} can be replaced by 
\begin{align*}
    & -\EE[\ell _{f_{\wt \alpha}}(Y,X)] \\
     &= - \EE\l[ L(Y,(f_{\wt \alpha} \circ \wh g)(X)) - L(Y,(\fh \circ \wh g)(X))\r] \\
      &= - \EE\l[ L(Y,(f_{\wt \alpha} \circ \wh g)(X)) - L(Y,f^*(Z))\r] + \EE\l[ L(Y,(\fh \circ \wh g)(X))-L(Y,f^*(Z) ) \r] \\
       &\le - C_L \EE\l[(f_{\wt \alpha} \circ \wh g)(X)-f^*(Z) \r]^2+   C_U \EE\l[(\fh \circ \wh g)(X)-f^*(Z) \r]^2 && \hspace{-0.58in}\text{by \cref{assu_convex_and_smooth}} \\
        &\le  C_U \EE\l[(\fh \circ \wh g)(X)-f^*(Z) \r]^2\\
&\le 2 C_U  \|\fh\|_K^2 ~ \EE\Delta_{\wh g} +{ 2C_U \|\fh-f^*\|_\rho^2} &&\text{by \eqref{bd_irre_error}. }
\end{align*}
Reasoning as in the proof of \cref{thm_risk} completes the proof. 
\end{proof}

\subsubsection{Uniform upper bounds of \texorpdfstring{$\EE[\ell_f(Y,X)]$}{\texttwoinferior} over \texorpdfstring{$f\in \cF_b$}{\texttwoinferior} }
The following lemma, which is an adaptation  of \cref{lem_bd_h_f} for general loss functions,   bounds from above $\EE[\ell_f(Y,X)]$ by its empirical counterpart 
$\EE_n[\ell_f(Y,X)]$, the fixed point $\delta_x$ of  $\psi_x(\cdot)$ in \eqref{def_psi_g_b_popu}, the kernel-related latent error $\EE \Delta_{\wh g}$  and the approximation error $\|\fh - f^*\|_\rho^2$, uniformly over $f\in \cF_b$. Recall  the definitions of $\xi_f$  from \eqref{def_g_f} and  $\Xi_f$  from \eqref{def_class_g_b}.

\begin{lemma}\label{lem_bd_h_f_lip}
Grant Assumptions  \ref{ass_f_H},  \ref{ass_bd_K}, \ref{assu_lip_loss} and \ref{assu_convex_and_smooth}. 
 Fix any $\eta\in(0,1)$. With probability at least $1-\eta$, for any $f\in \cF_b$, one has 
\begin{align*}
&\EE[ \ell_f (Y,X)] ~ \le  ~ 2\EE_n[\ell_f(Y,X)]+
c_1 C_L \max\left\{ C_\ell^2 C_L^{-2}, 1 \right\} ~  \delta_x   \\  
& \hspace{0.9 in}+ c_2 \l( C^2_\ell C_L^{-1} + C_\ell \kappa \|\fh\|_K\r)  \frac{\log (1/\eta)}{n}+ 
  4C_U \|\fh \|_K^2  \EE\Delta_{\wh g}  + 4 C_U \|\fh - f^*\|_\rho^2.
\end{align*} 
Here $c_1$ and $c_2$ are some absolute positive constants.
\end{lemma}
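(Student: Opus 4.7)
The plan is to mirror the four-step structure used in the proof of \cref{lem_bd_h_f}, replacing the ad hoc squared-loss algebra by the Lipschitz property in \cref{assu_lip_loss} and the two-sided bound in \cref{assu_convex_and_smooth}. Concretely, I will apply \cref{supp_lem_3} to the function class $\cL_b := \{\ell_f : f \in \cF_b\}$ with the functional
\[
T(\ell_f) := V' \left(\EE[\ell_f(Y,X)] + 4 C_U \|\fh\|_K^2\,\EE\Delta_{\wh g} + 4 C_U \|\fh - f^*\|_\rho^2\right),
\]
for a scalar $V'$ that simultaneously serves as the uniform bound on $|\ell_f|$ and in the variance inequality.

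First, boundedness of $\cL_b$ follows from \cref{assu_lip_loss} combined with \eqref{bd_F_b_diff}: for any $f \in \cF_b$ and $(y,x) \in \RR \times \cX$,
\[
|\ell_f(y,x)| \le C_\ell\, |\xi_f(x)| \le 3 C_\ell \kappa \|\fh\|_K,
\]
so the class is bounded in $[-V', V']$ with $V' := 3 C_\ell \kappa \|\fh\|_K$. Second, for the variance condition, I use Lipschitzness to deduce $\EE[\ell_f^2(Y,X)] \le C_\ell^2 \EE[\xi_f^2(X)]$, and then bound $\EE[\xi_f^2(X)]$ in terms of $\EE[\ell_f]$. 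Adding and subtracting $f^*(Z)$ yields
\[
\EE[\xi_f^2(X)] \le 2 \EE\left[(f\circ \wh g)(X) - f^*(Z)\right]^2 + 2 \EE\left[(\fh \circ \wh g)(X) - f^*(Z)\right]^2;
\]
the strong convexity in \cref{assu_convex_and_smooth} bounds the first term by $C_L^{-1} \cE(f\circ \wh g)$, which equals $C_L^{-1}(\EE[\ell_f] + \cE(\fh\circ \wh g))$; the smoothness in \cref{assu_convex_and_smooth} bounds $\cE(\fh \circ \wh g)$ by $C_U \EE[(\fh \circ \wh g)(X) - f^*(Z)]^2$; and finally \eqref{bd_irre_error} with $\theta = 1$ controls $\EE[(\fh \circ \wh g)(X) - f^*(Z)]^2$ by $2\|\fh\|_K^2 \EE\Delta_{\wh g} + 2 \|\fh - f^*\|_\rho^2$. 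Collecting terms gives $\Var(\ell_f) \le C\, T(\ell_f)$ for an appropriate constant $C$ absorbed into $V'$ by redefinition (e.g., inflating $V'$ to $3 C_\ell \kappa \|\fh\|_K \vee 2 C_\ell^2 C_L^{-1}$).

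Third, to construct a sub-root majorant of the associated local Rademacher complexity, I will invoke the Ledoux--Talagrand contraction inequality (\cref{supp_lem_5}) with the maps $\theta_i \mapsto L(Y_i, (\fh\circ \wh g)(X_i) + \theta_i) - L(Y_i, (\fh\circ \wh g)(X_i))$, which are $C_\ell$-Lipschitz in $\theta_i := \xi_f(X_i)$ by \cref{assu_lip_loss}. This reduces the local Rademacher complexity of $\cL_b$ (at level $\delta$, relative to $T$) to that of $\Xi_b$ up to the factor $C_\ell$, and hence to $\psi_x(\cdot)$ in \eqref{def_psi_g_b_popu} after using the same $\EE[\xi_f^2(X)] \lesssim C_L^{-1}\delta/V' + (\text{approx})$ relation derived above. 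The resulting sub-root function has fixed point controlled by a constant multiple of $C_L V' \delta_x$ by \cref{lem_subroot_comp}.

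Finally, plugging these ingredients into the second statement of \cref{supp_lem_3} with $\theta = 2$ and $L = 4 C_U \|\fh\|_K^2 \EE\Delta_{\wh g} + 4 C_U \|\fh - f^*\|_\rho^2$ produces the claimed bound, with the leading $\delta_x$ coefficient scaling like $C_L \max\{C_\ell^2 C_L^{-2}, 1\}$ and the $\log(1/\eta)/n$ coefficient governed by $V'$ and $V'^2/(C_L V')$, matching the stated form. The main obstacle will be the variance step: unlike the squared-loss case, where the identity $\Var(h_f) \le V \EE|h_f|$ followed directly from $|h_f| \le V$, here I must combine the Lipschitz property with both halves of \cref{assu_convex_and_smooth} to trade the unobservable $\EE[\xi_f^2(X)]$ against $\EE[\ell_f]$ while cleanly absorbing the $\EE\Delta_{\wh g}$ and $\|\fh - f^*\|_\rho^2$ pieces into the $L$-term so that \cref{supp_lem_3} applies.
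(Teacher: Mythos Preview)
Your proposal is correct and mirrors the paper's proof closely: the same four steps (boundedness via \cref{assu_lip_loss}, variance via Lipschitz plus both halves of \cref{assu_convex_and_smooth} and \eqref{bd_irre_error}, Ledoux--Talagrand contraction to $\psi_x$, and \cref{supp_lem_3} with $\theta=2$). Two cosmetic points: the paper keeps the range bound $-a=b=3C_\ell\kappa\|\fh\|_K$ separate from the variance constant $B=V=2C_\ell^2 C_L^{-1}$ rather than taking their maximum, and you want the \emph{first} statement of \cref{supp_lem_3} (which bounds $\EE f$ by $\EE_n f$), not the second.
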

\begin{proof}
Define the scalar $V := 2C^2_\ell C_L^{-1}$ and the  function class 
\[
    \cF_\ell : = \Big\{\ell_f: f\in \cF_b\Big\}.
\]
Further define the functional $T:\cF_\ell \to \RR^+$ as  
\begin{align}\label{functional_Tf_lip}
T(\ell_f):= V \l( \EE\left[\ell_f(Y, X) \right]+   4 C_U  \l( \|\fh \|_K^2  \EE\Delta_{\wh g}  +  \|\fh - f^*\|_\rho^2\r)\r)
\end{align} 
as well as the local Rademacher complexity of $\cF_\ell$ as  
\begin{align*}
    \localcomp\Big(\big\{\ell_f \in \cF_\ell: T(\ell_f)\le \delta\big\}\Big). 
\end{align*}
The remaining proof is completed by splitting into $4$ steps. \\

\noindent{\it Step 1: Verification of the boundedness of $\cF_\ell$.}   
Using  some results in  Step (1) of the proof of \cref{lem_bd_h_f} yields
for every $f \in \cF_b$ and $(y,x)\in \RR \times \cX$, we have 
\begin{align}\label{la_eq22_lip}\nonumber
|\ell_f (y, x)|
&=\left| L(y,(f\circ \wh g)(x)) - L(y,(\fh \circ \wh g)(x)) \right|\\\nonumber
 &\le C_\ell \Bigl|(f\circ \wh g)(x) - (\fh \circ \wh g)(x)\Bigr|  &&\text{by \cref{assu_lip_loss}}
\\
 &\le 3C_\ell \kappa\|\fh\|_K 
 \end{align}  
so that the function class $\cF_\ell$ is uniformly bounded within $[-3C_\ell \kappa\|\fh\|_K, 3C_\ell \kappa\|\fh\|_K]$.\\

\noindent{\it Step 2: Bounding the variance of $\ell_f (Y,X)$.} Note that 
\begin{equation} \label{eq_lip_1}
    \begin{aligned}
 &  \Var\left(\ell_f (Y,X) \right)   \\
&\le  C_\ell^2 ~  \EE\l[ (f\circ \wh g)(X) -(\fh \circ \wh g)(X) \r]^2 
&&\text{by \cref{assu_lip_loss}}\\
&\le 2  C_\ell^2  \l( \EE\l[ (f\circ \wh g)(X) -f^*(Z)  \r]^2 +  \EE\l[ f^*(Z) - (\fh \circ \wh g)(X) \r]^2  \r)
\\
&\le   2  C_\ell^2 \EE\l[ (f\circ \wh g)(X) -f^*(Z)  \r]^2 +  
4 C_\ell^2 \|\fh\|_K ^2 \EE\Delta_{\wh g} + 4 C_\ell^2 \|\fh -f^*\|_\rho^2 &&\text{by \eqref{bd_irre_error}}. 
\end{aligned}
\end{equation}
Applying \cref{assu_convex_and_smooth} leads to 
\begin{equation} \label{eq_lip_2}
    \begin{aligned}
  & \EE\l[ (f\circ \wh g)(X) -f^*(Z)  \r]^2 
\\
&\le C_L^{-1}  ~ \EE[L(Y,(f\circ \wh g)(X)) - L(Y, f^*(Z) )  ] \\
&= C_L^{-1} \l( \EE[\ell_f(Y,X)   ]  + \EE[ L(Y,(\fh \circ \wh g)(X)) - L(Y,f^*(Z))  ]\r)\\
&= C_L^{-1} \l( \EE[\ell_f(Y,X)  ]  +  C_U \EE\l[ f^*(Z) - (\fh \circ \wh g)(X) \r]^2  \r)\\
&= C_L^{-1} \l( \EE[\ell_f(Y,X)  ]  + 2 C_U \|\fh\|_K ^2 \EE\Delta_{\wh g} +2C_U \|\fh-f^*\|_\rho^2   \r)  &&\text{by \eqref{bd_irre_error}}. 
\end{aligned}
\end{equation}
Together with \eqref{eq_lip_1} yields
\begin{align*}
   \Var\left(\ell_f (Y,X) \right)   \le   V \left( \EE\left[ \ell_f (Y,X)\right] + 4 V  C_U   \l( \|\fh \|_K^2  \EE\Delta_{\wh g}  +  \|\fh - f^*\|_\rho^2\r)  \right) \overset{\eqref{functional_Tf_lip}}{=} T(\ell_f).
\end{align*}
\\

\noindent{\it Step 3: Choosing a suitable sub-root function.} 
By inspecting the argument of proving \eqref{eq_lip_1} and \eqref{eq_lip_2}, we have that for any $f\in \cF_b$,
\begin{align*}
{V C_L \over 2} \EE[ \xi_f^2(X)] =& {V C_L \over 2} \EE\l[ (f\circ \wh g)(X) -(\fh \circ \wh g)(X) \r]^2 
\\
\le& V    \left(\EE[\ell_f(Y,X) ]  + 4 C_U\|\fh \|_K^2  \EE\Delta_{\wh g} +4C_U \|\fh-f^*\|_\rho^2 \right) \overset{\eqref{functional_Tf}}{=}  T(\ell_f)  
\end{align*}
so that for any $\delta \ge 0$,
\begin{align}\label{lem4_eq0_lip}
\localcomp\left(\big\{\ell_f \in \cF_\ell: T(\ell_f)\le \delta\big\}\right) \le  \localcomp\left(\big\{ \ell_f\in \cF_\ell:  V C_L  \EE[\xi^2_f (X)] \le 2 \delta\big\}\right). 
\end{align}
By \cref{assu_lip_loss}, 
for any $f_1,f_2\in \cF_{b}$ and $(y,x)\in  \RR\times \cX$,  we have
\begin{align*}
   \left| \ell_{f_1}(y,x)- \ell_{f_1}(y,x)\right|
= ~ &  \left| L(y,(f_1 \circ \wh g)(x)) - L(y,(f_2 \circ \wh g)(x)) \right| 
\\
\le ~  &  C_\ell ~  
\bigl |\xi_{f_1}(x)-\xi_{f_2}(x) \bigr|
\end{align*}
which implies that 
$\ell_f(Y_i,X_i)$ is $(C_\ell)$-Lipschitz in $\xi_{f}(X_i)$ for all $ i\in [n]$.
Then, by repeating the arguments of proving \eqref{lem4_eq1},  and together with \eqref{lem4_eq0_lip}, we conclude
\begin{align*}
   \localcomp\left(\big\{\ell_f \in \cF_\ell: T(\ell_f)\le \delta\big\}\right) 
 \le ~ &    C_\ell ~ \localcomp\left(\big\{ \xi_f\in \Xi_b:  V C_L  \EE[\xi^2_f (X)] \le 2 \delta\big\}\right) \\
 = ~ &   C_\ell ~   \psi_x\left(\frac{2\delta}{ V C_L} \right)
\end{align*}
so that  by choosing $\psi(\delta)= V  C_\ell   ~   \psi_x\left(\frac{2\delta}{ V C_L} \right)$ (Lemma \ref{lem_subroot_comp} ensures it is sub-root in $\delta$), 
for every $\delta\ge \delta_\star$, 
\[
\psi(\delta) \ge   V  ~  \localcomp\left(\big\{\ell_f \in \cF_\ell: T(\ell_f)\le \delta\big\}\right) . 
\]
\\

\noindent{\it Step 4: Bounding the fixed point of the sub-root function $\psi(\cdot)$.} 
Write the fixed point of $\psi(\cdot)$ as  $\delta_\star$. 
    By using \cref{lem_subroot_comp}, $\delta_\star$  
 can be bounded as
\[
\delta_\star \le \max\left\{4 C_\ell^2C_L^{-2}, 1\right\} \frac{VC_L}{2} \delta_x.  
\] 

Finally, the proof is completed by invoking Lemma \ref{supp_lem_3} with $-a= b=3C_\ell \kappa\|\fh\|_K,  B=V$, $L= 4 C_U  \l( \|\fh \|_K^2  \EE\Delta_{\wh g}  +  \|\fh - f^*\|_\rho^2\r) $ and $\theta = 2$.
\end{proof}

Combining Lemmas  \ref{lem_bd_td_delta}  and \ref{lem_bd_h_f_lip} readily yields the following corollary where we bound from above $\delta_x$ by $\delta_{n,\wh g}$, with the latter given in \eqref{def_delta_n_hatg}.

\begin{corollary} \label{cor_bd_h_f_lip}
  Grant Assumptions \ref{ass_f_H}--\ref{ass_mercer},  \ref{assu_lip_loss} and \ref{assu_convex_and_smooth}.    Fix any $\eta\in(0,1)$. With probability at least $1-\eta$, for any $f\in \cF_b$, one has 
    \begin{align*}
&\EE[ \ell_f (Z,X)] ~ \le  ~ 2\EE_n[\ell_f(Y,X)]+
c_1 C_L \max\left\{ C_\ell^2 C_L^{-2} , 1\right\} ~  V_K\delta_{n,\wh g}  \\  
& \hspace{0.85 in}+ c_2 \l( C^2_\ell C_L^{-1} + C_\ell \kappa \|\fh\|_K\r)  \frac{\log (1/\eta)}{n}+ 
  4C_U \|\fh \|_K^2  \EE\Delta_{\wh g}  + 4 C_U \|\fh - f^*\|_\rho^2.
\end{align*} 
\end{corollary}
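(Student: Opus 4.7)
The plan is to combine \cref{lem_bd_h_f_lip} with \cref{lem_bd_td_delta} through a simple union bound, mirroring exactly the argument that gives \cref{cor_bd_h_f} from \cref{lem_bd_h_f} and \cref{lem_bd_td_delta} in the squared-loss case. First I would apply \cref{lem_bd_h_f_lip} at confidence level $1-\eta/2$ to obtain, with probability at least $1-\eta/2$, the uniform bound
\[
    \EE[\ell_f(Y,X)] \le 2\EE_n[\ell_f(Y,X)] + c_1 C_L\max\{C_\ell^2 C_L^{-2},1\}\,\delta_x + c_2(C_\ell^2 C_L^{-1} + C_\ell \kappa\|\fh\|_K)\tfrac{\log(2/\eta)}{n} + 4C_U\|\fh\|_K^2\EE\Delta_{\wh g} + 4C_U\|\fh-f^*\|_\rho^2
\]
over $f\in\cF_b$, where $\delta_x$ is the fixed point of the local Rademacher complexity $\psi_x$ defined in \eqref{def_psi_g_b_popu}. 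Note that the verification of the assumptions of \cref{lem_bd_h_f_lip} is already provided in its statement; in particular, its hypotheses (\cref{ass_f_H,ass_bd_K,assu_lip_loss,assu_convex_and_smooth}) are included in those of \cref{cor_bd_h_f_lip}.

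Second, I would apply \cref{lem_bd_td_delta} at confidence level $1-\eta/2$, which yields $\delta_x \le C\,V_K\,\delta_{n,\wh g}$ with probability at least $1-\eta/2$. Here the hypotheses of \cref{lem_bd_td_delta} (namely \cref{ass_f_H,ass_bd_K,ass_mercer}) are again contained in the assumption set of \cref{cor_bd_h_f_lip}, and importantly the statement of \cref{lem_bd_td_delta} is loss-agnostic: it concerns only the local Rademacher complexity of $\Xi_b$ and the kernel complexity $R(\delta)$, neither of which depends on the loss function $L$. Hence the bound on $\delta_x$ carries over verbatim from the squared-loss setting.

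Third, by the union bound both events hold simultaneously with probability at least $1-\eta$, and substituting the inequality $\delta_x \le C V_K\,\delta_{n,\wh g}$ into the bound from \cref{lem_bd_h_f_lip} produces
\[
    \EE[\ell_f(Y,X)] \le 2\EE_n[\ell_f(Y,X)] + c_1 C_L\max\{C_\ell^2 C_L^{-2},1\}\,V_K\,\delta_{n,\wh g} + c_2(C_\ell^2 C_L^{-1} + C_\ell \kappa\|\fh\|_K)\tfrac{\log(1/\eta)}{n} + 4C_U\|\fh\|_K^2\EE\Delta_{\wh g} + 4C_U\|\fh-f^*\|_\rho^2,
\]
after absorbing the factor of $2$ in $\log(2/\eta)$ into the constant $c_2$. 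This is exactly the claimed bound.

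There is essentially no obstacle here: the proof is purely a bookkeeping step, and the only nontrivial ingredients—the uniform Rademacher-based concentration bound and the control of $\delta_x$ via the kernel complexity—are already proved as separate lemmas. The one minor point worth checking is that in \cref{lem_bd_td_delta}, the event on which $\delta_x \le C V_K \delta_{n,\wh g}$ holds is measurable with respect to the training sample (and the independent $\wh g$), and hence can be intersected with the event from \cref{lem_bd_h_f_lip} without additional complication.
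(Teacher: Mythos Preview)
Your proposal is correct and follows essentially the same approach as the paper: the paper simply states that the corollary follows by combining \cref{lem_bd_h_f_lip} with \cref{lem_bd_td_delta} to replace $\delta_x$ by $V_K\,\delta_{n,\wh g}$, which is exactly the union-bound substitution you describe. Your observation that \cref{lem_bd_td_delta} is loss-agnostic (depending only on $\Xi_b$ and the kernel complexity) is the key point that makes the squared-loss argument transfer verbatim.
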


\section{Auxilliary Lemmas}\label{app_sec_auxi}
In this section, we summarize the supporting lemmas used in our proof. 

\subsection{Definition and properties of the sub-root functions}\label{app_sec_subroot}

\begin{definition}
    A function $\psi: [0,\i)\to  [0,\i)$ is a sub-root function if it is nonnegative, nondecreasing, and if $\delta\mapsto \psi(\delta)/\sqrt{\delta}$ is nonincreasing for $\delta>0$. 
\end{definition}
A useful property about the sub-root function  defined above 
is stated as follows. 

\begin{lemma}[Lemma 3.2 in \cite{bartlett2005local}]\label{lem_subroot_monotone}
If $\psi:[0, \infty) \to [0, \infty)$ is a nontrivial\footnote{Functions other than the constant function $\psi\equiv 0$. } 
sub-root function, 
then it is continuous on $[0, \infty)$ and the equation $\psi(\delta)=\delta$ has a unique positive solution $\delta_\star$. Moreover, for all $\delta>0, \delta\geq \psi(\delta)$ if and only if $\delta_\star \leq \delta$. Here, $\delta_\star$ is referred to as the fixed point. 
\end{lemma}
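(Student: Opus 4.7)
The plan is to establish the three assertions in order: continuity, existence and uniqueness of a positive fixed point, and the equivalence characterizing $\delta_\star$. Throughout, the key leverage point is that the sub-root property combines two monotonicities of opposite flavor: $\psi$ is nondecreasing while $\psi(\delta)/\sqrt{\delta}$ is nonincreasing. In particular, this forces $\psi(\delta)/\delta = \bigl(\psi(\delta)/\sqrt{\delta}\bigr)/\sqrt{\delta}$ to be strictly decreasing wherever $\psi$ is positive, which drives both uniqueness of the fixed point and the ``if and only if'' statement.

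For continuity at a point $\delta_0 > 0$, I would sandwich one-sided limits using the two monotonicities. The right limit: for $\delta > \delta_0$, the nonincreasing property gives $\psi(\delta) \le \psi(\delta_0)\sqrt{\delta/\delta_0}$, so letting $\delta \downarrow \delta_0$ yields $\limsup_{\delta\downarrow\delta_0}\psi(\delta) \le \psi(\delta_0)$, while nondecreasing monotonicity gives the reverse. The left limit is symmetric: for $\delta < \delta_0$ the nonincreasing property gives $\psi(\delta) \ge \psi(\delta_0)\sqrt{\delta/\delta_0}$. At $\delta_0=0$, nontriviality gives some $\delta_1$ with $c:=\psi(\delta_1)/\sqrt{\delta_1}>0$, and the sub-root property applied between $\delta\le \delta_1$ and $\delta_1$ forces $\psi(\delta) \le c\sqrt{\delta}\to 0$, so combined with nondecreasing monotonicity we get $\psi(0)=0$ and continuity at $0$.

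Existence of a positive fixed point follows from an intermediate value argument on $h(\delta) := \psi(\delta) - \delta$, which is continuous by the previous step. Using the same $c$ and $\delta_1$: for $\delta \le \min\{\delta_1, c^2/2\}$ the nonincreasing property gives $\psi(\delta)/\sqrt{\delta} \ge c$, hence $\psi(\delta) \ge c\sqrt{\delta} > \delta$; for $\delta \ge \delta_1$, $\psi(\delta)\le c\sqrt{\delta}$, which is eventually $< \delta$ as $\delta \to \infty$. Uniqueness follows because if $0<\delta_1<\delta_2$ were both fixed points, then the nonincreasing property would yield $\sqrt{\delta_1} = \psi(\delta_1)/\sqrt{\delta_1} \ge \psi(\delta_2)/\sqrt{\delta_2} = \sqrt{\delta_2}$, contradicting $\delta_1<\delta_2$.

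For the equivalence, I would prove both directions directly from the sub-root property. If $\delta \ge \delta_\star$, then $\psi(\delta)/\sqrt{\delta} \le \psi(\delta_\star)/\sqrt{\delta_\star} = \sqrt{\delta_\star}$, so $\psi(\delta) \le \sqrt{\delta_\star \delta} \le \delta$. Conversely, if $0<\delta<\delta_\star$, the same monotonicity in the reverse direction gives $\psi(\delta)/\sqrt{\delta} \ge \sqrt{\delta_\star} > \sqrt{\delta}$, so $\psi(\delta) > \delta$. The only mildly delicate point in the whole argument is the continuity at $0$ and the careful choice of $c$ and thresholds used to bracket $h$ with the correct signs; beyond that, each step is a short consequence of the definition, and I do not foresee any genuine obstacle.
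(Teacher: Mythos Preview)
The paper does not supply its own proof of this lemma; it simply cites Lemma~3.2 of Bartlett, Bousquet, and Mendelson (2005). Your direct argument from the definition is the standard one and correctly establishes the substantive claims: continuity on $(0,\infty)$, existence and uniqueness of the positive fixed point via the intermediate value theorem, and the equivalence $\delta\ge\psi(\delta)\Leftrightarrow\delta\ge\delta_\star$ are all handled cleanly.

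There is one genuine slip. In your continuity-at-$0$ step you claim that for $\delta\le\delta_1$ the sub-root property forces $\psi(\delta)\le c\sqrt{\delta}$. The inequality goes the other way: since $\psi(\delta)/\sqrt{\delta}$ is nonincreasing, for $\delta\le\delta_1$ one has $\psi(\delta)/\sqrt{\delta}\ge\psi(\delta_1)/\sqrt{\delta_1}=c$, i.e., $\psi(\delta)\ge c\sqrt{\delta}$. (The bound $\psi(\delta)\le c\sqrt{\delta}$ holds for $\delta\ge\delta_1$, which is exactly what you correctly use later for the large-$\delta$ bracket in the IVT argument.) In fact, under the definition as stated, continuity at $0$ can fail: take $\psi(0)=0$ and $\psi(\delta)=1$ for $\delta>0$, which is nonnegative, nondecreasing, and has $\psi(\delta)/\sqrt{\delta}=1/\sqrt{\delta}$ nonincreasing on $(0,\infty)$. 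This does not affect anything the paper actually uses --- only continuity on $(0,\infty)$ (for the IVT step) and the fixed-point characterization are invoked --- and your proof of those parts is sound.
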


The following lemma lists some useful properties of sub-root functions and corresponding fixed points, whose proof can be found in \cite{duan2021risk}. 
\begin{lemma}\label{lem_subroot_comp}

If  $\psi:[0, \infty) \rightarrow[0, \infty)$ is  nontrivial sub-root function and $ \delta_\star$ is its positive fixed point, then
\begin{itemize}
    \item[(i)] For any $c>0, \widetilde{\psi}(\delta):=c \psi\left(c^{-1} \delta\right)$ is sub-root and its positive fixed point $\wt\delta_{\star}$ satisfies $\wt\delta_{\star}=c \delta_\star$.
    \item[(ii)] For any $C>0, \widetilde{\psi}(\delta):=C \psi(\delta)$ is sub-root and its positive fixed point $\wt\delta_{\star}$ satisfies $\wt\delta_{\star} \leq(C^2 \vee 1) \delta_\star$.
\end{itemize} 
\end{lemma}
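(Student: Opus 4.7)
The plan is to establish both items directly from the definition of a sub-root function together with the uniqueness-and-monotonicity characterisation in Lemma \ref{lem_subroot_monotone}: the fixed point $\delta_\star$ of a non-trivial sub-root function $\psi$ is unique, and $\delta \ge \psi(\delta) \iff \delta \ge \delta_\star$. Neither part requires heavy machinery; the only non-trivial manipulation is a rescaling of the ratio $\psi(\delta)/\sqrt{\delta}$.

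For part (i), I would first verify that $\widetilde{\psi}(\delta) = c\psi(c^{-1}\delta)$ is itself sub-root. Non-negativity is immediate from $c>0$ and $\psi \ge 0$; non-decreasingness follows because $\delta \mapsto c^{-1}\delta$ is increasing, $\psi$ is non-decreasing, and multiplication by $c>0$ preserves order; the non-increasing ratio condition reduces to that of $\psi$ via the identity
\begin{equation*}
\frac{\widetilde{\psi}(\delta)}{\sqrt{\delta}} \;=\; \sqrt{c}\,\frac{\psi(c^{-1}\delta)}{\sqrt{c^{-1}\delta}},
\end{equation*}
since $\delta \mapsto c^{-1}\delta$ is increasing. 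To identify the fixed point I would simply substitute $\delta = c\delta_\star$: this gives $\widetilde{\psi}(c\delta_\star) = c\psi(\delta_\star) = c\delta_\star$, so $c\delta_\star$ is a positive fixed point of $\widetilde{\psi}$, and uniqueness forces $\wt\delta_\star = c\delta_\star$.

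For part (ii), the sub-root property of $\widetilde{\psi} = C\psi$ is immediate because multiplication by a positive constant preserves all three defining properties. The substantive content is the bound on $\wt\delta_\star$, which I would handle by splitting on $C \le 1$ versus $C > 1$. If $C \le 1$, then $\wt\delta_\star = C\psi(\wt\delta_\star) \le \psi(\wt\delta_\star)$, and Lemma \ref{lem_subroot_monotone} (with uniqueness covering the equality case) yields $\wt\delta_\star \le \delta_\star$. If $C > 1$, I would exploit the non-increasing ratio property at the candidate point $C^2\delta_\star \ge \delta_\star$:
\begin{equation*}
\frac{\psi(C^2\delta_\star)}{\sqrt{C^2\delta_\star}} \;\le\; \frac{\psi(\delta_\star)}{\sqrt{\delta_\star}} \;=\; \sqrt{\delta_\star},
\end{equation*}
which rearranges to $\psi(C^2\delta_\star) \le C\delta_\star$, hence $\widetilde{\psi}(C^2\delta_\star) = C\psi(C^2\delta_\star) \le C^2\delta_\star$. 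Applying Lemma \ref{lem_subroot_monotone} to $\widetilde{\psi}$ then delivers $\wt\delta_\star \le C^2\delta_\star$, closing the bound.

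There is no serious obstacle in either part; the only step requiring care is the $C > 1$ case of (ii), where one must apply the sub-root ratio property at the candidate bound $C^2\delta_\star$ rather than at $\delta_\star$ itself, so that pairing $\sqrt{\delta_\star}$ with $\sqrt{C^2\delta_\star}=C\sqrt{\delta_\star}$ produces exactly the factor $C^2$.
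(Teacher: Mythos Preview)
Your proof is correct. The paper itself does not supply a proof of this lemma; it merely states that ``the proof can be found in \cite{duan2021risk}''. Your argument is the standard elementary one and is complete: the rescaling identity for the ratio in part~(i) is exactly right, and in part~(ii) the split $C\le 1$ versus $C>1$ together with the sub-root ratio evaluated at $C^2\delta_\star$ is the natural way to extract the $C^2$ factor.
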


\subsection{Local Rademacher complexity}\label{app_sec_auxi1}

The following lemma indicates that  
one can take the local Rademacher complexity as a sub-root function.
\begin{lemma}\label{subroot}
If the class $\mathcal{F}$ is star-shaped,\footnote{A function class $\mathcal{F}$ is said star-shaped if for any $f\in \mathcal{F}$ and $\alpha\in [0,1]$, the scaled function $\alpha f$ also belongs to  $\mathcal{F}$.} and $T: \mathcal{F} \rightarrow \mathbb{R}^{+}$ is a (possibly random) function that satisfies $T(\alpha f) \leq \alpha^2 T(f)$ for any $f \in \mathcal{F}$ and any $\alpha \in[0,1]$, then the (random) function $\psi$ defined for $\delta\ge  0$ by
$$
\psi(\delta)= \emlocalcomp \left(\{f\in \mathcal{F}: T(f) \leq \delta\}\right)
$$
is sub-root.
Furthermore, 
the (random) function $\psi'$ defined for $\delta\ge  0$ by
$$
\psi'(\delta)= \localcomp  \left(\{f\in \mathcal{F}: T(f) \leq \delta\}\right)
$$
is also sub-root. 
\end{lemma}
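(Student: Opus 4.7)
The plan is to verify the three defining properties of a sub-root function in turn: nonnegativity, monotonicity in $\delta$, and nonincreasingness of $\delta \mapsto \psi(\delta)/\sqrt{\delta}$. Nonnegativity follows immediately because the star-shaped assumption applied with $\alpha=0$ forces $0 \in \cF$ (and $T(0)\le 0$), so the supremum defining $\psi(\delta)$ is at least the value attained at $f\equiv 0$, namely $0$. Monotonicity in $\delta$ is also immediate since the feasible set $\{f\in\cF : T(f)\le \delta\}$ only grows as $\delta$ increases.

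The substantive step is to show that $\psi(\delta)/\sqrt{\delta}$ is nonincreasing. The key trick is to exploit the star-shaped assumption on $\cF$ together with the $\alpha^2$-scaling of $T$. Given any $0 < \delta_1 \le \delta_2$, set $\alpha = \sqrt{\delta_1/\delta_2} \in (0,1]$. For every $f\in\cF$ with $T(f)\le \delta_2$, the function $g=\alpha f$ again lies in $\cF$ by star-shapedness, and satisfies
\[
T(g) = T(\alpha f) \le \alpha^2 T(f) \le \frac{\delta_1}{\delta_2}\cdot \delta_2 = \delta_1.
\]
Thus the map $f \mapsto \alpha f$ sends $\{f\in\cF: T(f)\le \delta_2\}$ into $\{g\in\cF: T(g)\le \delta_1\}$. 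Because $\frac{1}{n}\sum_i\varepsilon_i (\alpha f)(v_i) = \alpha \cdot \frac{1}{n}\sum_i\varepsilon_i f(v_i)$, taking suprema gives
\[
\alpha \sup_{f\in\cF,~T(f)\le\delta_2} \frac{1}{n}\sum_{i=1}^n \varepsilon_i f(v_i) \;\le\; \sup_{g\in\cF,~T(g)\le\delta_1} \frac{1}{n}\sum_{i=1}^n \varepsilon_i g(v_i).
\]
Taking $\EE_{\varepsilon}$ on both sides yields $\alpha \,\psi(\delta_2) \le \psi(\delta_1)$, i.e.\ $\psi(\delta_2)/\sqrt{\delta_2} \le \psi(\delta_1)/\sqrt{\delta_1}$, as required.

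For the population version $\psi'$, I would simply take an additional expectation $\EE_\varrho$ on both sides of the same inequality. Because $\EE_\varrho$ is linear and monotone, nonnegativity, monotonicity, and the $\sqrt{\delta}$-scaling inequality are all preserved, so $\psi'$ is sub-root as well. I do not anticipate a serious obstacle here: the only subtlety is ensuring the map $f\mapsto \alpha f$ genuinely lands in the smaller set, which is where the star-shaped hypothesis and the quadratic scaling of $T$ are both essential. If $T$ were merely required to satisfy $T(\alpha f)\le \alpha T(f)$ instead of $\alpha^2 T(f)$, the argument would collapse, so I would highlight this to motivate the precise form of the hypothesis.
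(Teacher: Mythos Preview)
Your proof is correct and is precisely the standard argument behind Lemma~3.4 of \cite{bartlett2005local}, which the paper simply cites without reproducing. You have filled in the details the paper omits, and your treatment of nonnegativity via $0\in\cF$, monotonicity via set inclusion, and the $\sqrt{\delta}$-scaling via the rescaling $f\mapsto\sqrt{\delta_1/\delta_2}\,f$ is exactly the route taken there.
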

\begin{proof}
See the proof of Lemma 3.4 in \cite{bartlett2005local}.
\end{proof}
Recall that
$$
\cF_b:=\left\{f\in\cH_K:  
\ \|f-\fh\|_K\le 3\|\fh\|_K\right\}. 
$$
\begin{lemma}[Corollary of Lemma \ref{subroot}]\label{subroot_1}
The function
 \[
\psi(\delta)=
\localcomp\Big(\big\{x \mapsto (f\circ \wh g)(x)-(f_\cH\circ \wh g)(x): f\in \cF_b, ~ \EE\l[(f\circ \wh g)(X)-(f_\cH\circ \wh g)(X)\r]^2\le \delta\big\}\Big)
 \]
 is sub-root. 
\end{lemma}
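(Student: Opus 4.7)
The plan is to reduce the claim to a direct application of Lemma~\ref{subroot}. To that end, I would set
\[
\mathcal{F} = \Xi_b = \bigl\{ \xi_f : f \in \cF_b \bigr\},
\]
with $\xi_f(x) = (f\circ\wh g)(x) - (\fh\circ\wh g)(x)$ as in \eqref{def_g_f}, and choose the functional $T:\Xi_b \to \RR^+$ defined by $T(\xi_f) = \EE[\xi_f^2(X)]$. With these identifications, the function $\psi$ in the statement is exactly $\localcomp(\{\xi_f \in \Xi_b : T(\xi_f) \le \delta\})$, so Lemma~\ref{subroot} will deliver the conclusion once its two hypotheses — star-shapedness of $\Xi_b$ and the quadratic scaling inequality for $T$ — are verified.

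The verification of star-shapedness is the only step that requires a small argument, and it is where one uses the specific definition of $\cF_b$ in \eqref{def_Fb}. Fix $\xi_f \in \Xi_b$ and $\alpha \in [0,1]$; set $g := \alpha f + (1-\alpha)\fh \in \cH_K$ (using convexity of $\cH_K$). Then
\[
\alpha \xi_f = \alpha\bigl((f-\fh)\circ\wh g\bigr) = (g - \fh)\circ\wh g = \xi_g,
\]
and $\|g - \fh\|_K = \alpha \|f - \fh\|_K \le 3\alpha\|\fh\|_K \le 3\|\fh\|_K$, so $g\in\cF_b$ and hence $\alpha \xi_f = \xi_g \in \Xi_b$. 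The scaling property of $T$ is immediate: $T(\alpha \xi_f) = \EE[(\alpha \xi_f(X))^2] = \alpha^2 T(\xi_f)$, so in particular $T(\alpha \xi_f) \le \alpha^2 T(\xi_f)$.

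With both hypotheses in hand, Lemma~\ref{subroot} (second statement) gives that $\psi(\delta) = \localcomp(\{\xi_f \in \Xi_b : T(\xi_f) \le \delta\})$ is sub-root, which is exactly the conclusion. There is no real obstacle here — the only subtlety worth flagging is that star-shapedness refers to scaling elements of $\Xi_b$ rather than $\cF_b$, and one must check that $\alpha \xi_f$ is realized as $\xi_g$ for some $g \in \cF_b$ (rather than simply $\alpha f \in \cF_b$, which need not hold since $\cF_b$ is centered at $\fh$, not at the origin). The short reparametrization $g = \alpha f + (1-\alpha)\fh$ handles this cleanly.
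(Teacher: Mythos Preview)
Your proposal is correct and follows essentially the same approach as the paper: both reduce to Lemma~\ref{subroot} by verifying star-shapedness and the quadratic scaling $T(\alpha\xi_f)=\alpha^2 T(\xi_f)$. The only cosmetic difference is that the paper first translates the class to $\{(f\circ\wh g): \|f\|_K\le 3\|\fh\|_K\}$, which is trivially star-shaped as a ball at the origin, whereas you verify star-shapedness of $\Xi_b$ directly via the reparametrization $g=\alpha f+(1-\alpha)\fh$ --- these are the same computation, since $g-\fh=\alpha(f-\fh)$.
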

\begin{proof}
By translating the function class, it suffices to show that 
\[
\psi(\delta)=\localcomp\Big(\big\{x\mapsto (f\circ \wh g)(x): f\in \cH_K, \|f\|_K\le 3\|\fh \|_K, \EE\l[(f\circ \wh g)(X)\r]^2 \le \delta\big\}\Big)
 \]
 is sub-root.  To  show this, we  take $T(f)=\EE\l[(f\circ \wh g)(X)\r]^2 $. We find  \[
 T(\alpha f)=\EE\l[( \alpha f \circ \wh g)(X)\r]^2= \alpha^2\EE\l[(f\circ \wh g)(X)\r]^2 = \alpha^2 T(f).
 \]
 Moreover, it is clear that the function class $\{x\mapsto (f\circ \wh g)(x): f\in \cH_K, \|f\|_K\le 3\|\fh\|_K\}$ is star-shaped. Invoking Lemma \ref{subroot}
completes the proof.
\end{proof}




The following lemma is a variant of Theorem 3.3 in \cite{bartlett2005local}, which is useful for proving uniform concentration and can be found in \cite{duan2021risk}. 

\begin{lemma}[Corollary of Theorem 3.3 in \cite{bartlett2005local}]\label{supp_lem_3}
Let $\mathcal{F}$ be a class of functions with ranges in $[a, b]$ and $U_1,...,U_n$ be the i.i.d. copies of some random variable $U$.
Assume that there are some functional $T: \mathcal{F} \rightarrow \mathbb{R}^{+}$ and some constants $B$ and $L$ such that for every $f \in \mathcal{F}$, 
\begin{align*}
\operatorname{Var}[f(U)] \leq T(f) \leq B(\EE f(U)+L). &&\text{(Variance condition)}
\end{align*}
Let $\psi$ be a sub-root function and let $\delta_\star$ be the fixed point of $\psi$. Assume that $\psi$ satisfies, for any $\delta \geq \delta_\star, \psi(\delta) \geq B \localcomp(\{f \in \mathcal{F}: T(f) \leq \delta\})$. Then,  for any $\theta>1$ and $\eta\in (0,1)$, with probability at least $1-\eta$,
$$
\EE f(U) \leq \frac{\theta}{\theta-1} \EE_n f(U)
+\frac{c_1 \theta}{B} \delta_\star+\big(c_2(b-a)+c_3 B \theta\big) \frac{\log (1 / \eta)}{n}+\frac{L}{\theta-1}, \quad \text { for any } f \in \mathcal{F} .
$$
Also, with probability at least $1-\eta$,
$$
\EE_n f(U) \leq \frac{\theta+1}{\theta} \EE f(U)+\frac{c_1 \theta}{B} \delta_\star+\big(c_2(b-a)+c_3 B \theta\big) \frac{\log (1 / \eta)}{n}+\frac{L}{\theta}, \quad \text { for any } f \in \mathcal{F}.
$$
Here, $c_1, c_2, c_3>0$ are some universal constants.  
\end{lemma}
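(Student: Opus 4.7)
The plan is to adapt the proof of Theorem~3.3 in \citet{bartlett2005local}, which establishes the analogous oracle inequality under the stricter variance condition $\Var[f(U)] \le T(f) \le B\,\EE f(U)$. Our variant weakens the upper bound to $T(f) \le B(\EE f(U) + L)$, and this additive slack $L$ will propagate linearly into the final inequality as the $L/(\theta-1)$ term. Essentially, one can reduce to the BBM setting by replacing the target quantity $\EE f$ everywhere with the shifted quantity $\EE f + L$, since variance and Rademacher complexity are invariant under deterministic shifts in the output.

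First, I would apply Talagrand's (Bousquet's version of) concentration inequality for empirical processes to each fixed ``shell'' $\cF_r := \{f \in \cF : T(f) \le r\}$, using $\Var[f(U)] \le r$ and range $[a,b]$, to obtain that with probability at least $1-\eta$,
\begin{equation*}
\sup_{f \in \cF_r}\bigl(\EE f - \EE_n f\bigr) \le 2\,\EE\sup_{f \in \cF_r}(\EE f - \EE_n f) + \sqrt{\tfrac{2 r \log(1/\eta)}{n}} + \tfrac{c(b-a)\log(1/\eta)}{n}.
\end{equation*}
By the standard symmetrization inequality the expected supremum is at most $2\,\localcomp(\cF_r)$, which by hypothesis is bounded by $2\psi(r)/B$ whenever $r \ge \delta_\star$. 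Since $\psi$ is sub-root, $\psi(r)/\sqrt{r}$ is non-increasing, so $\psi(r) \le \sqrt{r\delta_\star}$ for all $r \ge \delta_\star$, yielding a fixed-radius bound of the form $\sup_{f\in\cF_r}(\EE f - \EE_n f) \le \tfrac{4}{B}\sqrt{r\delta_\star} + \sqrt{2 r \log(1/\eta)/n} + c(b-a)\log(1/\eta)/n$.

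The key technical step is a \emph{peeling argument} to lift this fixed-$r$ inequality to a uniform bound over all of $\cF$. I would stratify $\cF$ into geometric shells indexed by $k\ge 0$ with radii $r_k := B(L + \theta^{k}\delta_\star/B)$, so that a function $f$ lying in the $k$-th shell satisfies both $T(f) \le r_k$ and $B(\EE f + L) \ge r_{k-1}$ by the variance hypothesis. Applying the fixed-radius bound with $r = r_k$ at confidence $\eta_k := \eta/(k(k+1))$ and taking a union bound over $k$ (which contributes only a doubly-logarithmic correction absorbed into the constants), and then using AM--GM in the form $2\sqrt{r_k\delta_\star}/B \le \tfrac{1}{\theta}r_k/B + \theta\delta_\star/B$ together with $r_k/B \le \theta(\EE f + L)$ on the $k$-th shell, yields for every $f\in\cF$,
\begin{equation*}
\EE f - \EE_n f \le \tfrac{1}{\theta}\bigl(\EE f + L\bigr) + \tfrac{c_1\theta}{B}\delta_\star + \bigl(c_2(b-a) + c_3 B\theta\bigr)\tfrac{\log(1/\eta)}{n}.
\end{equation*}
Rearranging gives the first claim with prefactor $\theta/(\theta-1)$ and additive term $L/(\theta-1)$.

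The hard part will be the careful bookkeeping in the peeling step: choosing the geometric ratio so that the slack $r_k/(\theta B)$ on each shell is exactly absorbed into $\EE f + L$ (which is what produces the $\theta/(\theta-1)$ prefactor), and tracking the offset $L$ so it contributes only additively rather than being amplified. The second statement, bounding $\EE_n f$ above by $\EE f$, follows by the symmetric application of Talagrand's inequality to $\sup_f(\EE_n f - \EE f)$ together with the identical peeling; the asymmetry between $\theta/(\theta-1)$ versus $(\theta+1)/\theta$ and $L/(\theta-1)$ versus $L/\theta$ arises purely from solving the corresponding linear inequality for $\EE f$ in one direction and for $\EE_n f$ in the other.
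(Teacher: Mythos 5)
Your opening observation is the right one, and it is in fact all the paper relies on: the lemma is not proved in the paper but invoked as a direct corollary of Theorem~3.3 of \cite{bartlett2005local} (as recorded in \cite{duan2021risk}), and the cleanest complete argument is exactly your shift reduction. Apply that theorem to the translated class $\{f+L: f\in\cF\}$ with the functional $\widetilde T(f+L):=T(f)$: the range width is still $b-a$, variances are unchanged, the Rademacher complexities of the localized sets are unchanged (the constant $L$ contributes $L\cdot\frac1n\sum_i\varepsilon_i$, which has zero expectation over the signs), and the variance condition becomes $\Var[f+L]\le \widetilde T(f+L)\le B\,\EE[f+L]$, so the same $\psi$, $B$, $\delta_\star$ work. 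Substituting $\EE[f+L]=\EE f+L$ and $\EE_n[f+L]=\EE_n f+L$ into the two conclusions of that theorem and rearranging produces precisely the $L/(\theta-1)$ and $L/\theta$ terms. Had you stopped there, your proof would be complete and would coincide with the paper's route.

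The from-scratch re-derivation you then sketch, however, has a genuine gap in the peeling step. Splitting the confidence budget as $\eta_k=\eta/(k(k+1))$ and applying the fixed-radius concentration bound on each shell produces, on shell $k$, an additive term of order $B\theta\log\bigl(k(k+1)\bigr)/n$ (both from the $(b-a)\log(1/\eta_k)/n$ term and from the AM--GM treatment of $\sqrt{r_k\log(1/\eta_k)/n}$). This term is \emph{not} ``absorbed into the constants'': it is not dominated by $c_3B\theta\log(1/\eta)/n$ because $\log(1/\eta)$ may be arbitrarily small, not by $c_1\theta\delta_\star/B$ because nothing in the hypotheses prevents $\delta_\star\ll B^2/n$, and not by $\tfrac1\theta(\EE f+L)$ on the low shells, where one only knows $B(\EE f+L)\ge r_{k-1}$ with $r_{k-1}$ of order $BL+\delta_\star$. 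So the union-bound peeling proves a statement with an extra iterated-logarithmic additive term, not the clean bound with universal $c_1,c_2,c_3$ asserted in the lemma. The reason Theorem~3.3 of \cite{bartlett2005local} achieves the clean form is that its proof avoids peeling entirely: Talagrand's (Bousquet's) inequality is applied \emph{once} to the rescaled class $\bigl\{\,r f/\max\{r,T(f)\}: f\in\cF\bigr\}$ at a single suitably chosen radius $r$, and the relative bound for $T(f)>r$ is recovered by scaling back, using the sub-root property to control $\localcomp$ at that radius. To repair your argument, either replace the peeling with this rescaling device, or simply rely on the shift reduction plus the cited theorem, as the paper does.
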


The following lemma is contained in \cite{ledoux2013probability}, which allows us to utilize the symmetrization technique for the family of Lipschitz functions. 
\begin{lemma}[Ledoux–Talagrand contraction inequality] \label{supp_lem_5}
For any set $\mathcal{T}\subseteq \RR^d$,  let $\{\psi_j: \RR \rightarrow\RR,j=1,...,d\}$ be any family of $B$-Lipschitz functions.
Then, we have
\begin{align*}
\EE\left( \sup_{\theta\in \mathcal{T}}\sum_{j=1}^d\varepsilon_j\psi_j(\theta_j)\right)
\le  B \EE\left( \sup_{\theta\in \mathcal{T}}\sum_{j=1}^d \varepsilon_j\theta_j\right).
\end{align*}
\end{lemma}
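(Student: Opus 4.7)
The plan is to give the classical peeling proof of the Ledoux--Talagrand contraction inequality, in which the functions $\psi_j$ are stripped off one coordinate at a time. A first step is to reduce to $B=1$: replacing each $\psi_j$ by $\psi_j/B$ divides both sides of the target inequality by $B$, so it suffices to prove the $B=1$ case. Throughout, write $\theta = (\theta_1, \ldots, \theta_d)$ for a generic element of $\mathcal{T}$, and for each $m \in \{0, 1, \ldots, d\}$ define the interpolating random functional
$$f_m(\theta) = \sum_{k=1}^{m} \varepsilon_k \theta_k + \sum_{k=m+1}^{d} \varepsilon_k \psi_k(\theta_k),$$
so that $f_0(\theta) = \sum_k \varepsilon_k \psi_k(\theta_k)$ (the LHS integrand) and $f_d(\theta) = \sum_k \varepsilon_k \theta_k$ (the RHS integrand). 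The lemma will follow once I establish the one-step monotonicity $\EE \sup_\theta f_{m-1}(\theta) \le \EE \sup_\theta f_m(\theta)$ for each $m = 1, \ldots, d$ and telescope.

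To prove this monotonicity, I would condition on all $\{\varepsilon_k\}_{k \ne m}$ and set $g(\theta) = \sum_{k<m} \varepsilon_k \theta_k + \sum_{k>m} \varepsilon_k \psi_k(\theta_k)$. Expanding the conditional expectation over the single Rademacher $\varepsilon_m$ (which takes values $\pm 1$ with equal probability) gives
$$2\,\EE_{\varepsilon_m}\!\bigl[\sup_\theta f_{m-1}(\theta)\bigr] \,=\, \sup_{\theta}\bigl[\psi_m(\theta_m) + g(\theta)\bigr] + \sup_{\theta'}\bigl[-\psi_m(\theta'_m) + g(\theta')\bigr],$$
with the identical identity for $f_m$ upon replacing $\psi_m$ by the identity map on $\RR$. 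So the problem reduces to the one-coordinate contraction
$$\sup_\theta[\psi_m(\theta_m) + g(\theta)] + \sup_{\theta'}[-\psi_m(\theta'_m) + g(\theta')] \,\le\, \sup_\theta[\theta_m + g(\theta)] + \sup_{\theta'}[-\theta'_m + g(\theta')].$$

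The heart of the proof is this one-coordinate step, which I would handle by a two-case argument. Let $(\hat\theta, \hat\theta')$ be (possibly near-) maximizers of the left-hand side, so that the left-hand side equals $\psi_m(\hat\theta_m) - \psi_m(\hat\theta'_m) + g(\hat\theta) + g(\hat\theta')$. Since $\psi_m$ is $1$-Lipschitz, $\psi_m(\hat\theta_m) - \psi_m(\hat\theta'_m) \le |\hat\theta_m - \hat\theta'_m|$. If $\hat\theta_m \ge \hat\theta'_m$, the absolute value equals $\hat\theta_m - \hat\theta'_m$, and the bound $\hat\theta_m - \hat\theta'_m + g(\hat\theta) + g(\hat\theta')$ is dominated by the right-hand side upon taking $\theta = \hat\theta$ in its first supremum and $\theta' = \hat\theta'$ in its second; if instead $\hat\theta_m < \hat\theta'_m$, the absolute value equals $\hat\theta'_m - \hat\theta_m$, and the bound $\hat\theta'_m - \hat\theta_m + g(\hat\theta) + g(\hat\theta')$ is dominated by the right-hand side upon taking the \emph{swapped} assignments $\theta = \hat\theta'$ and $\theta' = \hat\theta$. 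In both cases the one-coordinate contraction holds. Taking expectation over the remaining Rademacher variables recovers the monotonicity, and iterating from $m=1$ up to $m=d$ completes the proof.

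The main obstacle is technical rather than conceptual: when $\mathcal{T}$ is not compact, the suprema need not be attained, so strictly speaking I would use $\delta$-near-maximizers of the left-hand side and pass to the limit $\delta \downarrow 0$ after the case analysis. Measurability of $\sup_\theta f_m(\theta)$ as a function of the Rademacher variables needs to be ensured as well, but this is routine once one reduces to a countable dense subset of $\mathcal{T}$, which leaves the inequality unchanged. Finally, the ``swap'' used in the second case is legitimate precisely because the two suprema on the right-hand side range independently over $\mathcal{T}$, so reassigning $\hat\theta$ and $\hat\theta'$ between them does not leave the feasible region.
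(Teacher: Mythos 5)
Your proposal is correct: the reduction to $B=1$, the coordinate-by-coordinate interpolation $f_m$, the conditional expansion over a single Rademacher variable, and the two-case (swap) argument for the one-coordinate contraction constitute the classical peeling proof, and your handling of near-maximizers and of the countable-dense-subset measurability issue is the standard fix. The paper itself does not prove \cref{supp_lem_5} but simply cites Ledoux--Talagrand, and your argument is essentially the proof given in that cited source, so there is nothing to flag.
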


\subsection{Auxiliary concentration inequalities}
This section provides some useful concentration or tail inequalities that are used in the proof of the Appendix.

The first lemma states a variant of the classic Bernstein's inequality for the sum of random variables from \cite{bernstein1946theory}.

\begin{lemma}
\label{bern_rv}
Let $\xi_1, \ldots, \xi_n$ be a sequence of independent and identically distributed random variables taking values in $\RR$ with zero mean. If there exist some $H, S>  0 $ such that $\max _{i\in [n]}\left|\xi_i\right| \leq H$ almost surely and ${\EE} \xi_i^2 \leq S$ for any $i \in [n]$, then for any $\eta\in (0,1)$,  the following holds with probability at least $1-\eta$,
$$
\left|\frac{1}{n} \sum_{i=1}^n \xi_i\right| \leq \frac{2 H }{3 n} \log \frac{2}{\eta}+\sqrt{\frac{2 S }{n}\log \frac{2}{\eta}}.
$$
\end{lemma}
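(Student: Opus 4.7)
The plan is to deduce this two-sided high-probability bound from the classical one-sided Bernstein tail inequality for bounded i.i.d.\ random variables. Specifically, let $S_n = \sum_{i=1}^n \xi_i$. Under the stated hypotheses (zero mean, $|\xi_i|\le H$ a.s., $\EE\xi_i^2 \le S$), the standard Bernstein inequality (see, e.g., Bernstein 1946, or Proposition 2.10 in Boucheron--Lugosi--Massart) gives
\begin{equation*}
\PP\l(S_n \ge t\r) ~ \le ~ \exp\l(-\frac{t^2/2}{nS + Ht/3}\r), \qquad \forall~ t\ge 0,
\end{equation*}
and an identical bound for $\PP(-S_n \ge t)$ by applying the same inequality to $-\xi_i$.

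The second step is inversion: I would union-bound the two one-sided events at level $\eta/2$, so that with probability at least $1-\eta$,
\begin{equation*}
|S_n| ~ \le ~ t_\eta,
\end{equation*}
where $t_\eta$ is the positive solution of $t^2/2 = (nS + Ht/3)\log(2/\eta)$. Rather than solve this quadratic exactly, I would use the elementary fact that if $t^2 \le 2A + 2Bt$ with $A,B\ge 0$, then $t \le 2B + \sqrt{2A}$ (since $t^2 \le 2A + 2Bt$ implies $(t-B)^2 \le 2A + B^2$, hence $t \le B + \sqrt{2A+B^2} \le 2B + \sqrt{2A}$). Applying this with $A = nS\log(2/\eta)$ and $B = \tfrac{H}{3}\log(2/\eta)$ yields
\begin{equation*}
t_\eta ~ \le ~ \frac{2H}{3}\log\frac{2}{\eta} + \sqrt{2nS\log\frac{2}{\eta}}.
\end{equation*}

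Dividing through by $n$ produces the stated bound on $|n^{-1}\sum_{i=1}^n \xi_i|$. No step here is delicate: the proof is essentially a citation of Bernstein's tail bound followed by an elementary inversion. The only minor care point is the union bound, which costs a factor of $2$ inside the logarithm and is already absorbed in the $\log(2/\eta)$ appearing in the statement.
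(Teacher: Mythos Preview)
Your proposal is correct and matches the paper's own treatment: the paper simply cites the standard Bernstein tail bound (Proposition 2.14 in Wainwright) and notes that the stated form ``follows after some simple algebra,'' which is precisely the union bound plus quadratic inversion you spell out. There is nothing to add.
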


\begin{proof}
    See, for instance, Proposition 2.14 in \cite{wainwright2019high} for the proof of  Bernstein's inequality. The form of Bernstein's inequality in \cref{bern_rv} 
then follows after some simple algebra. 
\end{proof}

The following concentration inequality is used for bounding the sum of random variables taking values in real separable Hilbert space, which is based on the result of  Theorem 3.3.4 in  \cite{yurinsky2006sums}. See also \cite{caponnetto2007optimal,rudi2015less,rudi2017generalization,lin2020convergences}.
\begin{lemma}\label{lem_concen}
Let $\cF$  
be a real separable Hilbert space equipped with norm $\|\cdot\|_v$ and $\xi \in \cF$ be a random element. Assume that there exist some constants $H,S>0$ such that
\begin{align}\label{lem1_eq1}
    \EE\B [ \big \|\xi-\EE\xi  \big \|_{v}^k  \B ] \leq \frac{1}{2} k ! S H^{k-2}, \qquad \ \text{for any} \ k \in \mathbb{N} \ \text{and} \ k\geq 2.
\end{align}
 Let $\xi_1, \ldots, \xi_n \in \cF$ be i.i.d. copies of $\xi$. 
Then for any  $\eta \in (0,1)$, the following holds with probability at least $1-\eta$,
\begin{align*}
    \l \|\frac{1}{n} \sum_{i=1}^n \xi_i- \EE\xi \r\|_{v} \leq \frac{2H}{n}\log \frac{2}{\eta}+ \sqrt{ \frac{2 S}{{n}}\log \frac{2}{\eta} },
\end{align*}
Particularly,  \eqref{lem1_eq1} is satisfied if 
\begin{align*}
  \|\xi\|_v \leq  \frac{H}{2} \quad \text{almost surely} \qquad \text{and}\qquad \EE \b[\|\xi\|^2_{v}\b]\leq S.
\end{align*}
\end{lemma}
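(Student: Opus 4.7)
The plan is to apply a Bernstein-type concentration inequality in Hilbert spaces (namely Theorem 3.3.4 in \cite{yurinsky2006sums}) to the centered summands and then invert the resulting tail bound. Without loss of generality, I will work with the centered element $\wt \xi := \xi - \EE\xi$, since the moment hypothesis \eqref{lem1_eq1} is already stated in terms of $\|\xi - \EE\xi\|_v$ and both sides of the desired inequality are invariant under centering; thus I need only bound $\|n^{-1}\sum_{i=1}^n \wt \xi_i\|_v$ where $\wt \xi_1, \ldots, \wt \xi_n$ are i.i.d.\ copies of $\wt \xi$.

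The first step is to invoke Yurinsky's Hilbert-space Bernstein inequality, which under the Cramér-type moment bound in \eqref{lem1_eq1} yields, for all $t \ge 0$,
\begin{align*}
    \PP\left( \left\| \sum_{i=1}^n \wt \xi_i \right\|_v \ge t \right) \le 2\exp\left( - \frac{t^2}{2(nS + Ht)} \right).
\end{align*}
Setting the right-hand side equal to $\eta$ gives the quadratic inequality $t^2 \le 2 (nS + Ht) \log(2/\eta)$. Solving this quadratic (or, more expediently, using the elementary inequality $\sqrt{a+b} \le \sqrt{a} + \sqrt{b}$ for $a,b \ge 0$ together with $\sqrt{2Ht\log(2/\eta)} \le \tfrac{1}{2}t + 2H\log(2/\eta)$ via the AM--GM inequality) yields the explicit bound
\begin{align*}
    t \le 2H \log\frac{2}{\eta} + \sqrt{2nS \log\frac{2}{\eta}}.
\end{align*}
Dividing through by $n$ produces precisely the stated bound on $\|n^{-1}\sum_i \xi_i - \EE\xi\|_v$.

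For the ``Particularly'' clause, I need to verify that the bounded/second-moment hypothesis implies \eqref{lem1_eq1}. The triangle inequality and Jensen's inequality give $\|\wt \xi\|_v \le \|\xi\|_v + \|\EE\xi\|_v \le 2 \cdot (H/2) = H$ almost surely. Consequently, for any integer $k \ge 2$,
\begin{align*}
    \EE\bigl[ \|\wt \xi\|_v^k \bigr] \le H^{k-2}\, \EE\bigl[ \|\wt \xi\|_v^2 \bigr] \le H^{k-2}\, \EE\bigl[ \|\xi\|_v^2 \bigr] \le S H^{k-2} \le \tfrac{1}{2} k!\, S H^{k-2},
\end{align*}
where I used $\EE\|\wt\xi\|_v^2 = \EE\|\xi\|_v^2 - \|\EE\xi\|_v^2 \le \EE\|\xi\|_v^2$ and $k! \ge 2$ for $k \ge 2$. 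This verifies \eqref{lem1_eq1} and closes the argument.

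The main substantive ingredient is Yurinsky's theorem itself; beyond that the argument is a routine inversion of a Bernstein tail bound, so no essential obstacle is anticipated. The only minor subtlety is being careful with the centering step and with the constants in the AM--GM decomposition used to linearize the quadratic, but these are standard manipulations.
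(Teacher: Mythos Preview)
Your proposal is correct and matches the paper's approach: the paper does not give a proof but simply cites Theorem 3.3.4 of \cite{yurinsky2006sums} (along with \cite{caponnetto2007optimal,rudi2015less,rudi2017generalization,lin2020convergences}), and your argument is precisely the standard derivation from Yurinsky's Hilbert-space Bernstein inequality followed by inversion of the tail bound and verification of the moment condition.
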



The following is Bernstein's inequality for the sum of random self-adjoint operators, which was proved by \cite{minsker2017some}. See also   \cite{rudi2015less,rudi2017generalization,lin2020convergences}.

\begin{lemma}\label{lemd3}
Let $\cF$ be a separable Hilbert space and let $\xi_1, \ldots, \xi_n$ be a sequence of independent and identically distributed self-adjoint random operators on $\cF$ endowed with the operator norm $\|\cdot\|_\op$. 
Assume that there exists $\EE \xi_i=0$ and $\|\xi_i\|_\op \leq H$ almost surely for some $H>0$ for any $i \in [n]$. 
Let $S$ be a positive operator such that $\EE[\xi_i^2] \leq S$. Then for any  $\eta \in(0,1)$, the following  holds with probability at least $1-\eta$,  
\begin{align*}
\l\|\frac{1}{n} \sum_{i=1}^n \xi_i\r\|_\op  \leq \frac{2 H \beta}{3 n}+ \sqrt{\frac{2\|S\|_\op \beta}{n}},
\end{align*}
where 
$\beta=\log \l(\frac{2 \tr (S)}{\|S\|_\op \eta}\r)$. 
\end{lemma}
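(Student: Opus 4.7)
The plan is to prove Lemma \ref{lemd3} by adapting the non-commutative Bernstein inequality of \citet{minsker2017some}, which extends the matrix Bernstein inequality (as popularized by Tropp) from the finite-dimensional matrix setting to bounded self-adjoint operators on a separable Hilbert space. The essential idea is that the effective dimension $d_{\mathrm{eff}}(S) := \tr(S)/\|S\|_\op$ (finite by the trace-class assumption on $S$) plays the role of the ambient dimension in Tropp's matrix bound. I would first reduce the two-sided bound to a one-sided bound via $\|A\|_\op = \max\{\lambda_{\max}(A), \lambda_{\max}(-A)\}$, apply the stated bound to $(\xi_i)$ and $(-\xi_i)$, and union-bound, so it suffices to control $\lambda_{\max}(n^{-1}\sum_i \xi_i)$.

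Next, I would run the Ahlswede--Winter/Tropp operator Chernoff argument: for $\theta>0$, $\lambda_{\max}(Y)\geq nt$ implies $e^{\theta\lambda_{\max}(Y)} = \lambda_{\max}(e^{\theta Y}) \leq \tr(e^{\theta Y})$ whenever $e^{\theta Y}-I$ is trace class, giving
\begin{equation*}
\Pr\bigl(\lambda_{\max}(Y)\geq nt\bigr)\ \leq\ e^{-\theta n t}\,\EE\,\tr\!\bigl(e^{\theta Y}\bigr),\qquad Y=\sum_{i=1}^n \xi_i .
\end{equation*}
Then I would invoke Lieb's concavity theorem together with independence (the subadditivity of cumulants in the operator sense) to obtain
$\EE\,\tr(e^{\theta Y}) \leq \tr\!\exp\!\bigl(\sum_i \log \EE[e^{\theta \xi_i}]\bigr).$
Using $\|\xi_i\|_\op\leq H$ almost surely together with the scalar bound $e^u\leq 1+u+u^2 g(u)$ with $g(u)=(e^{|u|}-1-|u|)/u^2$, applied spectrally to $\xi_i$, yields the operator inequality
$\log\EE[e^{\theta \xi_i}]\ \preceq\ \theta^2 g(\theta H)\,\EE[\xi_i^2]\ \preceq\ \theta^2 g(\theta H)\,S,$
so that $\EE\,\tr(e^{\theta Y}) \leq \tr\exp\!\bigl(n\theta^2 g(\theta H)\,S\bigr).$

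The main obstacle is the infinite-dimensional step: because $\tr(I)$ may be infinite, one cannot directly apply the matrix-Bernstein estimate $\tr(e^{A})\leq d\,e^{\|A\|_\op}$. Minsker's key lemma closes this gap: for any positive trace-class $A$ and any $M \ge \|A\|_\op$, the scalar monotonicity of $\lambda \mapsto (e^{\lambda}-1)/\lambda$ on $[0,M]$ gives, spectrally,
\begin{equation*}
\tr\!\bigl(e^{A}-I\bigr)\ =\ \sum_j \bigl(e^{\lambda_j(A)}-1\bigr)\ \leq\ \frac{e^{\|A\|_\op}-1}{\|A\|_\op}\sum_j \lambda_j(A)\ =\ \frac{\tr(A)}{\|A\|_\op}\bigl(e^{\|A\|_\op}-1\bigr).
\end{equation*}
Applied to $A=n\theta^2 g(\theta H) S$, this yields $\EE\,\tr(e^{\theta Y})\leq \tfrac{\tr(S)}{\|S\|_\op}\bigl(\exp(n\theta^2 g(\theta H)\|S\|_\op)-1\bigr)+\text{harmless term}$. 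Combining with the Chernoff bound and optimizing $\theta$ in the standard way (either explicitly solving or using the variational bound $g(u)\leq 1/(2(1-u/3))$ for $u<3$) produces
\begin{equation*}
\Pr\!\left(\lambda_{\max}(Y)\geq nt\right)\ \leq\ \frac{2\tr(S)}{\|S\|_\op}\exp\!\left(-\frac{n t^2}{2\|S\|_\op + 2Ht/3}\right),
\end{equation*}
and the factor $2$ from the two-sided union bound is absorbed into the logarithm. Setting the right-hand side equal to $\eta$ and inverting for $t$ produces the quadratic $t = \tfrac{2H\beta}{3n} + \sqrt{2\|S\|_\op\beta/n}$ with $\beta = \log(2\tr(S)/(\|S\|_\op\eta))$, which is exactly the stated bound. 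Thus the proof amounts to assembling the operator-Laplace transform step, Lieb's inequality, the moment bound from $\xi_i^2 \preceq S$ and $\|\xi_i\|_\op\leq H$, and Minsker's trace-to-effective-dimension reduction; the sole nontrivial piece is that last reduction, which is where we explicitly cite \citet{minsker2017some}.
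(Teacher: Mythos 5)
First, a framing point: the paper does not prove Lemma \ref{lemd3} at all --- it is quoted as an auxiliary result directly from \cite{minsker2017some} --- so your proposal is a reconstruction of the cited theorem rather than an alternative to an in-paper argument. Your overall architecture (reduce to a one-sided bound, operator Chernoff plus Lieb/Tropp subadditivity, the moment estimate $\log \EE[e^{\theta\xi_i}] \preceq \theta^2 g(\theta H) S$, Minsker's trace-to-effective-rank reduction, and the final inversion giving $t = \frac{2H\beta}{3n}+\sqrt{2\|S\|_\op\beta/n}$) is indeed the route by which the cited result and this user-friendly corollary (as used by Rudi et al.\ and Lin et al.) are obtained, and your closing algebra is right.

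However, as written the infinite-dimensional bookkeeping does not go through. If $\cF$ is infinite-dimensional then $e^{\theta Y}\succeq e^{-\theta\|Y\|_\op}I$, so $\tr(e^{\theta Y})=+\infty$ no matter whether $e^{\theta Y}-I$ is trace class; hence the step $\Pr(\lambda_{\max}(Y)\ge nt)\le e^{-\theta nt}\,\EE\tr(e^{\theta Y})$ is vacuous, and your subsequent display bounding $\EE\tr(e^{\theta Y})$ by $\frac{\tr(S)}{\|S\|_\op}\bigl(\exp(n\theta^2 g(\theta H)\|S\|_\op)-1\bigr)$ plus a ``harmless term'' bounds an infinite quantity by a finite one. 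Minsker's contribution is not only the monotonicity estimate $\tr(e^A-I)\le \frac{\tr(A)}{\|A\|_\op}(e^{\|A\|_\op}-1)$ that you quote, but a modification of the Markov/Chernoff step itself: one works with $\phi(u)=e^u-u-1$, which is nonnegative, increasing on $[0,\infty)$ and quadratic at the origin, so that (since $\tr(S)<\infty$ forces the $\xi_i$, hence $Y$, to be Hilbert--Schmidt almost surely) $\phi(\theta Y)$ is trace class and $\Pr(\lambda_{\max}(Y)\ge nt)\le \EE\tr\,\phi(\theta Y)/\phi(\theta nt)$; then $\EE\tr\,\phi(\theta Y)=\EE\tr(e^{\theta Y}-I)$ because $\EE Y=0$, and the Lieb/Tropp argument is applied to $\tr(e^{\theta Y}-I)$ (via finite-dimensional approximation). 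The price of replacing $e^{\theta nt}$ by $\phi(\theta nt)$ in the denominator is the restriction $t\gtrsim \sigma + H/3$ in Minsker's clean bound, which your inversion must verify; it does hold here because $\beta\ge\log 2$ makes the claimed deviation level at least of that order, but this check is part of the proof and is absent from your sketch. Without these repairs the central chain of inequalities in your proposal is not valid, even though the intended strategy and the final quadratic inversion are correct.
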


The following lemma states a tail inequality for quadratic forms of a sub-Gaussian random vector, which comes from Lemma 30 in \cite{hsu2014random}. 

\begin{lemma} \label{tail_eq_qua_gaussian}
Let $\xi$ be a random vector taking values in $\mathbb{R}^n$ such that for some $c \geq 0$, 
$$
\mathbb{E}[\exp (\langle u, \xi\rangle)] \leq \exp \left(c\|u\|_2^2 / 2\right), \quad \forall u \in \mathbb{R}^n. 
$$
For any  symmetric positive semidefinite matrices $A \succeq 0$, and any $t>0$,
$$
\PP\left[\xi^{\top} A \xi>c\left(\tr(A)+2 \sqrt{\tr\left(A^2\right) t}+2\|A\|_\op t\right)\right] \leq \mathrm{e}^{-t}. 
$$
\end{lemma}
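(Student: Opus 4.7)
}
The plan is a standard Chernoff-bound argument, with the key technical step being a Gaussian decoupling identity that reduces the MGF of the quadratic form $\xi^\T A \xi$ to an integral against a Gaussian chaos, which can be computed exactly as a determinant. The only nontrivial use of the hypothesis is via the defining sub-Gaussian MGF bound $\EE[\exp(\langle u,\xi\rangle)]\le \exp(c\|u\|_2^2/2)$.

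First, I would diagonalize: write $A=\sum_{i=1}^n \lambda_i v_iv_i^\T$ with $\lambda_i\ge 0$, so that $A^{1/2}$ is well defined. Then for any $s>0$ with $2cs\|A\|_{\op}<1$, I would introduce an auxiliary $g\sim N(0,I_n)$ independent of $\xi$ and use the Gaussian identity
\begin{equation*}
\exp\bigl(s\,\xi^\T A\xi\bigr) \;=\; \EE_g\bigl[\exp\bigl(\sqrt{2s}\,\langle g, A^{1/2}\xi\rangle\bigr)\bigr].
\end{equation*}
Taking $\EE_\xi$ and applying Fubini, then conditioning on $g$ and using the sub-Gaussian hypothesis with $u=\sqrt{2s}\,A^{1/2}g$, gives
\begin{equation*}
\EE\bigl[\exp(s\,\xi^\T A\xi)\bigr] \;\le\; \EE_g\bigl[\exp\bigl(cs\, g^\T A g\bigr)\bigr] \;=\; \prod_{i=1}^n(1-2cs\lambda_i)^{-1/2},
\end{equation*}
where the Gaussian chaos is computed explicitly via the usual determinant formula.

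Next I would apply Chernoff's inequality and take logs: for any $u>0$,
\begin{equation*}
\log\PP\{\xi^\T A\xi>u\} \;\le\; -su \;-\; \tfrac{1}{2}\sum_{i=1}^n\log(1-2cs\lambda_i).
\end{equation*}
The elementary Taylor-type inequality $-\tfrac{1}{2}\log(1-y)\le \tfrac{y}{2}+\tfrac{y^2}{4(1-y)}$ for $y\in[0,1)$, applied with $y=2cs\lambda_i$ and using $1-y\ge 1-2cs\|A\|_{\op}$, yields after summation
\begin{equation*}
-\tfrac{1}{2}\sum_{i=1}^n\log(1-2cs\lambda_i) \;\le\; cs\tr(A) \;+\; \frac{c^2s^2\,\tr(A^2)}{1-2cs\|A\|_{\op}}.
\end{equation*}
Plugging this in and substituting $u=c\tr(A)+v$ gives
\begin{equation*}
\log\PP\{\xi^\T A\xi>c\tr(A)+v\} \;\le\; -sv \;+\; \frac{c^2s^2\,\tr(A^2)}{1-2cs\|A\|_{\op}}.
\end{equation*}
This is precisely a Bernstein-type variational bound.

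Finally I would optimize in $s$. The main obstacle is the algebraic calibration to recover the exact form $v=2c\sqrt{\tr(A^2)\,t}+2c\|A\|_{\op}t$: one should choose
\begin{equation*}
s \;=\; \frac{v}{2c^2\tr(A^2)+2c\|A\|_{\op}v},
\end{equation*}
which satisfies $2cs\|A\|_{\op}<1$, and then verify by direct substitution that $sv-c^2s^2\tr(A^2)/(1-2cs\|A\|_{\op})\ge t$ whenever $v\ge 2c\sqrt{\tr(A^2)t}+2c\|A\|_{\op}t$ (one checks the two regimes $c\sqrt{\tr(A^2)t}\ge c\|A\|_{\op}t$ and its opposite separately, each giving $-su+\text{quadratic term}\le -t$). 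This yields $\log\PP\le -t$, i.e., the desired $e^{-t}$ tail.
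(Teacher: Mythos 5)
The paper itself does not prove this lemma; it simply quotes it from Hsu et al.\ (2014, Lemma 30), so what you have written is an attempt at a self-contained proof of that external result. Your outline is the standard one (Gaussian decoupling to get $\EE[\exp(s\,\xi^\T A\xi)]\le \prod_i(1-2cs\lambda_i)^{-1/2}$, then $-\tfrac12\log(1-y)\le \tfrac y2+\tfrac{y^2}{4(1-y)}$, then a Chernoff bound), and every step up to and including the Bernstein-type bound $\log\PP\{\xi^\T A\xi> c\tr(A)+v\}\le -sv+\frac{c^2s^2\tr(A^2)}{1-2cs\|A\|_\op}$ is correct.

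The gap is in the last calibration step: with your choice $s=\frac{v}{2c^2\tr(A^2)+2c\|A\|_\op v}$ one gets exactly $sv-\frac{c^2s^2\tr(A^2)}{1-2cs\|A\|_\op}=\frac{v^2}{2(\sigma^2+bv)}$ with $\sigma^2=2c^2\tr(A^2)$, $b=2c\|A\|_\op$, and at the claimed threshold $v=\sigma\sqrt{2t}+bt=2c\sqrt{\tr(A^2)t}+2c\|A\|_\op t$ this equals $t\cdot\frac{2\sigma^2+2\sqrt2\,\sigma b\sqrt t+b^2t}{2\sigma^2+2\sqrt2\,\sigma b\sqrt t+2b^2t}$, which is strictly less than $t$ whenever $\|A\|_\op t>0$; no case split rescues it. Concretely, take $c=1$, $A$ rank one with eigenvalue $1$, $t=1$: then $v=4$, your $s=0.4$, and $sv-\psi(s)=0.8<1$, so your argument only yields $e^{-0.8}$, i.e.\ it proves the lemma with $2\|A\|_\op t$ replaced by roughly $4\|A\|_\op t$ (solving $\frac{v^2}{2(\sigma^2+bv)}\ge t$ gives the threshold $bt+\sqrt{b^2t^2+2\sigma^2 t}$). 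The stated constants are tight for the $\chi^2$ case, so there is no slack to absorb this. The fix is a sharper choice of $s$: take $s=\frac{\sqrt{2t}}{\sigma+b\sqrt{2t}}=\frac{\sqrt t}{c\sqrt{\tr(A^2)}+2c\|A\|_\op\sqrt t}$, for which $1-bs=\frac{\sigma}{\sigma+b\sqrt{2t}}$ and a short computation gives $sv-\frac{\sigma^2s^2}{2(1-bs)}=t$ exactly at $v=\sigma\sqrt{2t}+bt$; equivalently, compute the Legendre transform of $s\mapsto\frac{\sigma^2s^2}{2(1-bs)}$ exactly (it is $\frac{\sigma^2}{b^2}\bigl(1+\tfrac{bv}{\sigma^2}-\sqrt{1+\tfrac{2bv}{\sigma^2}}\bigr)$, the standard sub-gamma bound). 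With that replacement your proof is complete and recovers the lemma with its exact constants.
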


\end{document}